\documentclass[11pt,reqno]{amsart}
\UseRawInputEncoding
\pdfoutput=1

\usepackage{graphicx}
\usepackage{amsmath,amssymb,amsthm,amsfonts}
\usepackage{enumerate}
\usepackage{tikz}
\usepackage{mathrsfs}
\usepackage{bbm}
\numberwithin{equation}{section}
\usepackage{hyperref}
\usepackage{marginnote}

\usepackage{color}

\newcommand{\R}{\mathbb{R}}

\newtheorem{theorem}{Theorem}[section]
\newtheorem{corollary}[theorem]{Corollary}
\newtheorem{lemma}[theorem]{Lemma}
\newtheorem{proposition}[theorem]{Proposition}
\newtheorem{remark}[theorem]{Remark}

\newtheorem{assumption}{Assumption}[section]

\def\v{\varepsilon}

\def\t{\theta}

\def\a{\alpha}

\def\di{\displaystyle}

\begin{document}
\title[Nonlinear Landau damping]{
Nonlinear Landau damping for the two-species screened Vlasov-Poisson system with large initial distributions
}

\author[Y. Wang]{Yi Wang}
\address[Y. Wang]{State Key Laboratory of Mathematical Sciences and Institute of Applied Mathematics, Academy of Mathematics and Systems Science, Chinese Academy of Sciences, Beijing 100190, P.R.~China; School of Mathematical Sciences, University of Chinese Academy of Sciences,Beijing 100049, P.R.~China}
\email{wangyi@amss.ac.cn}

\author[M.-X. Xiao]{Meixia Xiao}
\address[M.-X. Xiao]{Research Center of Nonlinear Science, School of Mathematics  and Statistics, Wuhan Textile University, 430200, P.R.~China }
\email{xiao\_meixia@163.com}

\author[H. Xiong]{Hang Xiong}
\address[H. Xiong]{{School of Mathematics and Computational Science, Xiangtan University \& National Center for Applied Mathematics in Hunan, Xiangtan 411105, P.R.~China; Institute of Applied Mathematics, AMSS, CAS, Beijing 100190, P.R.~China}}
\email{hbear0810@amss.ac.cn}

\date{\today}

\begin{abstract}
We investigate nonlinear Landau damping for the two-species screened Vlasov-Poisson system with large initial distributions on the phase space $\mathbb{R}^d \times \mathbb{R}^d$ (where $d \geq 3$). Under a structural quasi-neutrality condition, we establish the existence and uniqueness of global strong solutions to the two-species system with arbitrarily large initial distributions. Furthermore, we prove the time-asymptotic stability of Penrose-stable equilibria and establish the optimal decay rate $t^{-d}$ for the net charge density, thereby verifying the nonlinear Landau damping effect for the two-species screened Vlasov-Poisson system in the whole space. To the best of our knowledge, this represents the first result on Landau damping for the two-species Vlasov-Poisson system with large initial distributions that are significantly far from equilibrium.
\end{abstract}

%\thanks{}
\maketitle
\tableofcontents

\thispagestyle{empty}

%Keywords: Vlasov equation, global existence, large amplitude solution
%
%\
%
%AMS: 35Q35, 35B65, 76N10
%\end{abstract}

%\tableofcontents

%%%%%%%%%%%%%%%%%%%%%%%%%%%%%%%%%%%%%%%%%%%%%%%%%%%%%%%%%%%%%%%%%%%%%%%%%%%%%%%%%%%%%%%%%%%%%%%%%%
\section{Introduction}
\subsection{Landau Damping}
The classical Vlasov-Poisson system, first introduced by Vlasov \cite{Vlasov}  in 1938, is a fundamental kinetic model for collision-less plasmas, describing the transport of charged particles coupled with a self-consistent electric field that encodes the long-range Coulomb interactions. The well-posedness of the classical Vlasov-Poisson system has been well-established, even for large initial data,
due to its collision-less dynamics (see, e.g., \cite{Glassey000,Lions001,Pfaffelmoser001,Rein000} and the references therein). Physically, a fundamental phenomenon in collision-less plasma is the Landau damping, first proposed by Landau \cite{Landau000} in 1946, which describes the transfer of the energy from macroscopic waves to microscopic particles motion through the particle-wave interactions. Mathematically, this mechanism leads to the decay of the macroscopic density or the electric field in time despite the absence of collisions or any explicit dissipative terms.

In \cite{Landau000} Landau formally analyzed the damping effect for the linearized Vlasov-Poisson system near a spatially homogeneous Maxwellian but without the rigorous mathematical proof. In 2011, Mouhot and Villani \cite{Mouhot000} first proved the nonlinear Landau damping for the Vlasov-Poisson system on the spatial torus, for small analytic or near-analytic Gevrey perturbations of  general spatially homogeneous equilibria satisfying the Penrose stability condition  (see also the further results \cite{Bedrossian001,Grenier000}). However, the corresponding result in the whole space is much more complicated and remains largely open up to now. The main difficulty comes from that the Penrose stability condition does not hold at the low-frequency part of the whole space for the general spatially homogeneous equilibria, in particular the global Maxwellian type Gaussian distribution, while the Penrose condition is always satisfied in the frequency space of the spatial torus for general equilibria.
On the other hand, Bedrossian, Masmoudi and Mouhot \cite{Bedrossian006} proved the linear stability of global Maxwellian equilibrium in the whole space and showed that the macroscopic density or the electric field behaves like a Klein-Gordon-type oscillation at the low-frequency part and regularly at the high-frequency part. Furthermore, Nguyen \cite{Nguyen2026} proved the linear stability of general radially spatially homogeneous equilibria with the form $\mu(\frac{1}{2}|v|^2)$ having the connected support and including the non-monotone profiles, and found that the decay rate in the Landau damping at the low-frequency part is highly sensitive to the decay rate of the equilibria tails at large velocities: faster the tails decay, weaker the damping becomes.

Very recently, for Poisson-type equilibria with polynomial velocity decay, Ionescu, Pausader, Wang, and Widmayer \cite{Ionescu000} rigorously established nonlinear Landau damping to the Vlasov-Poisson system in the whole space, and then Nguyen, Wei, and Zhang \cite{Nguyen2024} provided an alternative simplified proof. However, for general spatially homogeneous equilibria including the  Maxwellian with the exponential velocity decay, the nonlinear Landau damping for the Vlasov-Poisson system in the whole space is still a very challenging open problem;  and one  can refer to the related literature {\cite{Bedrossian006,Glassey002,Glassey001,HanKwan000,Ionescu001}} and the references therein.

To address the low-frequency obstruction of the Coulomb interactions in the whole space, Bedross-ian, Masmoudi, and Mouhot \cite{Bedrossian000} proposed a screened Vlasov-Poisson system (also named as Vlasov-Yukawa system), with the standard Poisson kernel replaced by the screened Coulomb interactions
\begin{equation}\label{coulomb}
E(t,x)=\nabla_x (I-\Delta_x)^{-1}\rho(t,x).
\end{equation}
The above screened Coulomb interactions recover the strict ellipticity at the low frequency part, which circumvent the difficulty that the Penrose stability condition does not hold in the whole space for the general spatially homogeneous equilibria. And the nonlinear Landau damping is rigorously proved to the screened Vlasov-Poisson system in the whole space $\mathbb{R}^3$ for general equilibria under the finite Sobolev regularity assumptions in \cite{Bedrossian000}. Later this result %in %\cite{Bedrossian000}
was improved to obtaining the optimal decay rate  up to a logarithmic correction via Lagrangian and point-wise Green function method in \cite{HanKwanD2021} (see also \cite{Nguyen2020}), and  then the logarithmic correction was finally removed in \cite{Huang002, Huang2024}.

It should be emphasized that all the previous existing results on the nonlinear Landau damping for Vlasov-Poisson system, including both classical and screened, need the smallness
conditions on the initial perturbations around the equilibria. To the best of our knowledge, there is no result on the nonlinear Landau damping for Vlasov-Poisson system with large initial distributions that are significantly far from equilibrium.
The goal of the present paper is to verify the nonlinear Landau damping for screened two-species Vlasov-Poisson system in the whole space $\mathbb{R}^d (d\geq 3)$ for large initial distributions far from the equilibria under a structural quasi-neutrality condition.

The two-species screened
Vlasov-Poisson system (cf. \cite{Bedrossian000}) is read as
\begin{equation}\label{VKG0}
\left\{
\begin{array}{l}
\partial_{t}F^{+}+v\cdot\nabla_{x}F^++E(t,x)\cdot\nabla_{v}F^+=0,\\[1mm]
\partial_{t}F^{-}+v\cdot\nabla_{x}F^-- E(t,x)\cdot\nabla_{v}F^-=0,\\[1mm]
E(t,x)=\nabla_x(I-\Delta_x)^{-1}\rho_{F}(t,x),
\end{array}
\right.
\end{equation}
where $F^\pm:=F^\pm(t,x,v)$ denote the density distributions of ions (``$+$") and electrons (``$-$") respectively, at the time $t>0$ and in the phase-space $(x,v)\in\mathbb{R}^d\times\mathbb{R}^d$,
and the self-consistent field $E(t,x)$  is generated through the screened Coulomb  interactions \eqref{coulomb} by the net charge density $\rho_{F}:=\rho_{F}^{+}-\rho_{F}^{-}$ with
$\di \rho_{F}^\pm(t,x):=\int_{\mathbb{R}^d} F^\pm(t,x,v)\,dv.$
Note that the two-species Vlasov-Poisson system \eqref{VKG0} is a fundamental physical model due to the quasi-neutrality of the plasma in nature. For the physical background of the two-species system, one can refer to the related literature \cite{Baumann2023, Duan2024, Glassey09, Krall73}.

\subsection{Main Result}
Throughout the paper, the spatially homogeneous equilibrium $\mu=\mu(v)$ is always normalized as the unit density
\[
\int_{\mathbb{R}^d}\mu(v)\, dv=1,
\]
and assumed to satisfy the following regularity and structural conditions.

\medskip
\begin{assumption}[Regularity condition]\label{ass:regularity}
\begin{equation*}
\|\langle v\rangle^N\nabla_v\mu\|_{W^{3, \infty}(\mathbb{R}^d)}+\|\langle v\rangle^{d+5}\nabla_v\mu\|_{W^{2d+7, 1}(\mathbb{R}^d)}<\infty, \ \ \  \forall N>d.
\end{equation*}
\end{assumption}

\begin{assumption}[Penrose stability condition]\label{ass:penrose}
There exists $\kappa>0$ such that
\[
\inf_{\substack{\xi\in\mathbb{R}^d, \\ \lambda\in \mathbb{C}\ {\rm with}\ {\rm Im} \lambda<0}}\Big|1
+2\int_0^{\infty}e^{-i\lambda s}\frac{i\xi}{1+|\xi|^2}\cdot
\widehat{\nabla_v\mu}(s\xi)
\,ds\Big|\geq \kappa,
\]
where $\widehat{\nabla_v\mu}$
denotes the Fourier transform of $\nabla_v\mu$ and ${\rm Im} \lambda$ is the imaginary part of $\lambda\in \mathbb{C}$.
\end{assumption}

\vspace{0.5mm}

Denote the perturbations around the equilibrium $\mu(v)$ as
\[
f^\pm(t,x,v):=F^\pm(t,x,v)-\mu(v).
\]
Then the perturbations $f^\pm:=f^\pm(t,x,v)$ satisfy the system
\begin{equation}\label{VKG}
\left\{
\begin{array}{l}
\partial_{t}f^{+}+v\cdot\nabla_{x}f^++E(t,x)\cdot\nabla_{v}f^+
=- E(t,x)\cdot\nabla_{v}\mu,\\[2mm]
\partial_{t}f^{-}+v\cdot\nabla_{x}f^-- E(t,x)\cdot\nabla_{v}f^-
= E(t,x)\cdot\nabla_{v}\mu,\\[2mm]
E(t,x)=\nabla_x(I-\Delta_x)^{-1}\rho(t, x),
\end{array}
\right.
\end{equation}
with $\rho(t,x):=\rho^+(t,x)-\rho^-(t,x)=\rho_F(t,x)$ and
$
\di \rho^\pm(t,x):=\di \int_{\mathbb{R}^d} f^\pm(t,x,v)\, dv,
$ and the initial data
\[
f^{\pm}(0,x,v)=f_{\mathrm{in}}^{\pm}(x,v).
\]

Let $a\in(0,1)$ and $1\le p\le\infty$. We fix a homogeneous
Littlewood--Paley decomposition $\{\dot\Delta_j\}_{j\in\mathbb Z}$ on
$\mathbb R^d$. More precisely, if $\varphi$ is a smooth cutoff supported
in an annulus and chosen so that
$\sum_{j\in\mathbb Z}\varphi(2^{-j}\xi)=1, \xi\in\mathbb R^d\setminus\{0\}$,
then
\[
\mathcal F(\dot\Delta_j\phi)(\xi)
=
\varphi(2^{-j}\xi)\widehat\phi(\xi),
\qquad j\in\mathbb Z.
\]
We refer to \cite[Chapter~2]{BCD} for the standard construction. We define
the homogeneous Besov and Triebel--Lizorkin seminorms by
\begin{equation}\label{semi-norm}
\|\phi\|_{\dot B^a_{p,\infty}}
:=
\sup_{j\in\mathbb Z}
2^{ja}\|\dot\Delta_j\phi\|_{L^p(\mathbb R^d)},
\qquad
\|\phi\|_{\dot F^a_{p,\infty}}
:=
\Bigl\|
\sup_{j\in\mathbb Z}
2^{ja}|\dot\Delta_j\phi|
\Bigr\|_{L^p(\mathbb R^d)}.
\end{equation}
In the endpoint case $p=\infty$, these two seminorms coincide:
\begin{equation}\label{FTL0}
\|\phi\|_{\dot F^a_{\infty,\infty}}
=
\|\phi\|_{\dot B^a_{\infty,\infty}}.
\end{equation}
We shall also use the standard difference-quotient characterization of
$\dot B^a_{p,\infty}$:
\begin{equation}\label{Besov}
\|\phi\|_{\dot B^a_{p,\infty}}
\sim
\sup_{\alpha}
\frac{\|\delta_\alpha\phi\|_{L^p(\mathbb R^d)}}{|\alpha|^a},
\qquad
\delta_\alpha\phi(x):=\phi(x)-\phi(x-\alpha).
\end{equation}
%Throughout the paper, $\dot F^a_{p,\infty}$ is  in the dyadic
%sense \eqref{semi-norm}. The Besov seminorm  is
%defined by \eqref{semi-norm} and used mainly through the equivalent
%difference-quotient characterization \eqref{Besov}.

Throughout this paper, the space $\dot F^a_{p,\infty}$ is understood in the dyadic sense, with its associated seminorms defined in \eqref{semi-norm}. The Besov seminorm $\dot B^a_{p,\infty}$​, defined in \eqref{semi-norm}, can also be employed through its equivalent difference-quotient characterization  \eqref{Besov}.

For a function $h=h(x, v)$, we define
\[
D^ah:=D_1^ah+D_2^ah,
\]
where
\begin{equation}\label{Dhdef}
D_1^ah(x,v):=\sup\limits_{z}\frac{|h(x,v)-h(x-z,v)|}{|z|^a},  \  \
D_2^ah(x,v):=\sup\limits_{z}\frac{|h(x,v)-h(x,v-z)|}{|z|^a}.
\end{equation}
For $m\in \mathbb{R}$ and $j\in \mathbb{N}$, we define
\[
\|g\|_{W^{j,\infty}_m(\mathbb{R}^d\times \mathbb{R}^d)}
:=
\sum_{\ell=0}^{j}
\|\langle v\rangle^m \nabla_{x,v}^{\ell} g\|_{L^\infty_{x,v}},
\]
where $\nabla_{x,v}^{\ell} g$ denotes the collection of all derivatives of order $\ell$ with respect to $(x,v)$.
For the initial data, we denote
\begin{align*}
||| f_{\mathrm{in}}^\pm|||_1:=&\|f_{\mathrm{in}}^\pm\|_{W^{1,1}(\mathbb{R}^d\times \mathbb{R}^d)}+\|f_{\mathrm{in}}^\pm\|_{W^{2,\infty}_k(\mathbb{R}^d\times \mathbb{R}^d)}\\
&\quad +
\|\langle v\rangle^k\nabla_{x,v}f^\pm_{\mathrm{in}}\|_{ L_x^1L_v^\infty}+\|\langle v\rangle^k\nabla_{x,v}D^af^\pm_{\mathrm{in}}\|_{ L_x^1L_v^\infty}+\|\langle v\rangle^k\nabla_{x,v}^2f^\pm_{\mathrm{in}}\|_{ L_x^1L_v^\infty},
\end{align*}
and  $f_{\mathrm{in}}:=f^+_{\mathrm{in}}-f^-_{\mathrm{in}}$ and then denote
\[
|||f_{\mathrm{in}}|||_2:=\|f_{\mathrm{in}}\|_{W^{1,1}(\mathbb{R}^d\times \mathbb{R}^d)}+\|f_{\mathrm{in}}\|_{ L_v^1L_x^\infty\cap L_x^1L_v^\infty}+\|\nabla_{x,v}f_{\mathrm{in}}\|_{ L_v^1L_x^\infty\cap L_x^1L_v^\infty}+\|D^af_{\mathrm{in}}\|_{L_x^1L_v^1\cap L_x^1L_v^\infty}.
\]

%\vspace{1mm}

Throughout the paper, we denote $C$ as a generic positive constant whose value may change from line to line.
We use $C(\cdot)$ to denote the constant $C$ depending on the parameters in $(\cdot)$.
Denote
$\langle v\rangle:=\sqrt{1+|v|^2}$ for $v\in\mathbb{R}^d$.
We write $A \lesssim B$ if $A \leq C B$ for a generic positive constant $C$.
The notation $\nabla_{x,v}$ stands for the derivatives with respect to $(x,v)$.
Finally, $\ast_{(t,x)}$ and $\ast_x$ denote the convolution in $(t,x)$ and in $x$, respectively. Now we can state the main result.

\begin{theorem}\label{Theorem1}
Let $d \geq 3$, and suppose both Assumption \ref{ass:regularity} and \ref{ass:penrose} hold. For any fixed $M_0 \geq 1$, $k>d$, and  $a \in \big(\frac{\gamma}{d-\gamma-2}, 1 \big)$ with $\gamma>0$
being suitably  small constant,
if the initial perturbations satisfy
\begin{equation}\label{large-data}
\max\{|||f_{\mathrm{in}}^+|||_1, \ |||f_{\mathrm{in}}^-|||_1\}
\leq M_0,
\end{equation}
then there exists a small constant $\varepsilon_0>0$, depending on $M_0$, $a$, $d$, $k$, $\gamma$, $\mu$, such that if the following quasi-neutrality condition holds,
\begin{equation}\label{qn}
|||f_{\mathrm{in}}^+-f^-_{\mathrm{in}}|||_2\leq \varepsilon_0,
\end{equation}
then the perturbation system \eqref{VKG} admits a unique global strong solution
\[
f^\pm \in C\big([0,\infty); W^{1,1}(\mathbb{R}^d\times\mathbb{R}^d)
\cap W^{1,\infty}_{k-1}(\mathbb{R}^d\times\mathbb{R}^d)\big)
\cap L^\infty\big([0,\infty); W^{2,\infty}_k(\mathbb{R}^d\times\mathbb{R}^d)\big),
\]
satisfying the time-decay rates
\[
\|\rho(t,\cdot)\|_{L^1_x}
+\langle t\rangle^d \|\rho(t,\cdot)\|_{L^\infty_x}
+\langle t\rangle^a \|\rho(t,\cdot)\|_{\dot B^a_{1,\infty}}
+\langle t\rangle^{d+a} \|\rho(t,\cdot)\|_{\dot B^a_{\infty,\infty}}
\lesssim \varepsilon_0,
\]
and
\[
\langle t\rangle \|\nabla_x \rho(t,\cdot)\|_{L^1_x}
+\langle t\rangle^{d+1} \|\nabla_x \rho(t,\cdot)\|_{L^\infty_x}
\lesssim \varepsilon_0\ln(1+t).
\]
\end{theorem}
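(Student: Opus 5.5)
The plan is a continuity (bootstrap) argument that separates the two scales in the problem. The individual densities $f^\pm$ are large, of size $M_0$. The net quantity $f:=f^+-f^-$ and its density $\rho$ are small, of size $\varepsilon_0$. The only coupling between species is through $E=\nabla_x(I-\Delta_x)^{-1}\rho$, so if $\rho$ stays small and decays, $E$ acts on each $F^\pm$ as a small, integrable-in-time force. The large parts of $f^\pm$ then essentially free-stream.

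\textbf{Step 1 (bootstrap hypothesis and transport bounds).} On $[0,T]$ I would assume
\[
\|\rho\|_{L^1_x}+\langle t\rangle^d\|\rho\|_{L^\infty_x}+\langle t\rangle^a\|\rho\|_{\dot B^a_{1,\infty}}+\langle t\rangle^{d+a}\|\rho\|_{\dot B^a_{\infty,\infty}}\le 2C_1\varepsilon_0,
\]
together with the analogous $\ln(1+t)$-weighted bound for $\nabla_x\rho$. This gives $\|E\|_{W^{1,\infty}}\lesssim\varepsilon_0\langle t\rangle^{-d}\ln(2+t)$, which is integrable since $d\ge3$. Consequently the characteristics
\[
\dot X=V,\qquad \dot V=\pm E(t,X)
\]
are uniformly close to free transport: $|V(s)-v|\lesssim\varepsilon_0$, and $|X(s)-(x-(t-s)v)|\lesssim\varepsilon_0\min(t-s,1)$. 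The same holds for their $(x,v)$-derivatives in the straightened variables, and this is where the Hölder quantities $D^a$ enter. Integrating along characteristics gives the large-amplitude bounds
\[
\|f^\pm(t)\|_{W^{2,\infty}_k}+\|f^\pm(t)\|_{W^{1,1}}+\|\langle v\rangle^k\nabla_{x,v}(f^\pm,D^af^\pm)\|_{L^1_xL^\infty_v}\lesssim M_0,
\]
uniformly in time. These estimates also yield local existence and uniqueness and the stated regularity class. The source term $\mp E\cdot\nabla_v\mu$ contributes only $O(\varepsilon_0)$.

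\textbf{Step 2 (equation for the net charge).} Subtracting the two equations gives
\[
\partial_tf+v\cdot\nabla_xf+E\cdot\nabla_v(f^++f^-)=-2E\cdot\nabla_v\mu .
\]
The nonlinear term is linear in the small quantity $E$, but its coefficient $g:=f^++f^-$ is large. Following Bedrossian--Masmoudi--Mouhot and Han-Kwan--Nguyen--Rousset, I would write Duhamel along free transport and integrate in $v$ to obtain the Volterra equation
\[
\rho=\rho^0+K\ast_{(t,x)}\rho+\mathcal N,
\]
where $\rho^0(t,x)=\int f_{\rm in}(x-tv,v)\,dv$ and the kernel $K$ comes from $2\nabla_v\mu$ (the factor $2$ matches Assumption~\ref{ass:penrose}). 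The nonlinear term is
\[
\mathcal N(t,x)=-\int_0^t\!\!\int E(s,x-(t-s)v)\cdot\nabla_v g(s,x-(t-s)v,v)\,dv\,ds .
\]
By Assumption~\ref{ass:penrose} and the regularity in Assumption~\ref{ass:regularity}, the resolvent is available: $\rho=(\delta+G)\ast(\rho^0+\mathcal N)$ with $|G|$ bounded like $\langle t\rangle^{-d-1}$ in the relevant norms. It then suffices to prove $\rho^0+\mathcal N$ has the claimed decay with constant $C\varepsilon_0$, with $C$ independent of $C_1$. Dispersion of free transport gives $\|\rho^0\|_{L^\infty}\lesssim t^{-d}\|f_{\rm in}\|_{L^1_xL^\infty_v}$, and the Besov and gradient versions follow from the $\dot B^a$ and $\nabla$ parts of $|||f_{\rm in}|||_2$.

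\textbf{Step 3 (main obstacle: the nonlinear term with a large coefficient).} In a small-data proof $\mathcal N$ is quadratic, so it is automatically $O(\varepsilon_0^2)$. Here it is only $O(\varepsilon_0 M_0)$, so smallness has to come from the bootstrap constant rather than from a second power of $\varepsilon_0$. I would first replace $g$ by its characteristic representation from Step 1: $g(s)$ equals $g_{\rm in}$ composed with near-free characteristics, plus $O(\varepsilon_0)$ errors. Then I would change variables $y=x-(t-s)v$, using $\nabla_vg(s,y,v)\approx\nabla_v[g_{\rm in}(y-sv,v)]$. The key point is that $\int\nabla_v(\dots)\,dv$ produces a factor $(t-s)\nabla_x$ or $s\nabla_x$ falling on $E$ or on $g$. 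The resulting operator $E\mapsto\mathcal N$ then behaves like a time-convolution kernel with decay $\langle t-s\rangle^{-d}$, up to a mild loss, times $M_0$.

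Splitting $\int_0^t=\int_0^{t/2}+\int_{t/2}^t$: in the first piece I would use the dispersion in $t-s$ together with the $L^1$ bounds on $E$. In the second piece I would use the $L^\infty$ decay $\langle s\rangle^{-d}$ of $E$, together with the $\dot B^a$ regularity to absorb the derivative loss. This Hölder gain is why the restriction $a>\gamma/(d-\gamma-2)$ appears, via a time weight $\langle s\rangle^{\gamma}$ that balances the near-endpoint singularity. The derivative loss is also the source of the $\ln(1+t)$ in the $\nabla_x\rho$ estimate.

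The resulting bound would read $|\mathcal N|\lesssim M_0\,\varepsilon_0\,(\text{bootstrap norm})\lesssim M_0C_1\varepsilon_0^2$. Choosing $\varepsilon_0\ll(M_0C_1)^{-1}$ closes the bootstrap. This is the step I expect to be hardest: keeping every estimate linear in $M_0$ with no time growth, which requires the precise derivative structure of $\nabla_vg$ under free transport. Once the bootstrap closes, continuity gives $T=\infty$, and uniqueness follows by a Lipschitz estimate on differences of solutions.
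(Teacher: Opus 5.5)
\paragraph{The gap: Step~3 does not close.}
The term $\mathcal N$ is \emph{linear} in $E$, and its coefficient has size $M_0$. There is no second factor of $\varepsilon_0$ anywhere. Insert $g(s,y,v)\approx g_{\mathrm{in}}(y-sv,v)$ and change variables $y=x-tv$:
\[
|\mathcal N(t,x)|\lesssim \int_0^t\langle s\rangle\|E(s)\|_{L^\infty_x}\,ds\int_{\mathbb R^d}\big|\nabla_{x,v}g_{\mathrm{in}}(x-tv,v)\big|\,dv\lesssim t^{-d}M_0\,C_1\varepsilon_0 .
\]
For a generic large $g_{\mathrm{in}}$ no cancellation improves this bound. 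Your estimate therefore returns $C\varepsilon_0+CM_0C_1\varepsilon_0$ for a quantity assumed $\le 2C_1\varepsilon_0$, which you want to improve to $C_1\varepsilon_0$. That requires $CM_0\ll 1$. Every term is linear in $\varepsilon_0$, so shrinking $\varepsilon_0$ does not help. A bootstrap with sharp rates and constant proportional to $\varepsilon_0$ cannot close for large $M_0$.

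\paragraph{How the paper closes.}
The paper faces the same linear-in-$E$ term and closes by a different mechanism.
\begin{itemize}
\item Each species is integrated along its own nonlinear characteristics. The large data then enter only through $I^\pm=\int f^\pm_{\mathrm{in}}(\mathcal X^\pm(0),\mathcal V^\pm(0))\,dv$.
\item The mean value theorem bounds $I^+-I^-$ by $M_0\bigl(|Y^+_{0,t}-Y^-_{0,t}|+|W^+_{0,t}-W^-_{0,t}|\bigr)\lesssim M_0\varepsilon$, plus $f_{\mathrm{in}}$-terms. The rate $t^{-d}$ comes from the Jacobian bounds of Lemma~\ref{Lemma3.4-YKB}.
\item The remainder $R^\pm$ involves only $\mu$ and is genuinely $O(\varepsilon^2)$.
\item Crucially, the bootstrap \eqref{ziyouliu02} is posed with \emph{lossy} rates ($\langle t\rangle^{-d+\gamma}$, etc.) and constant $\varepsilon=A\varepsilon_0$.
\item For $t\ge1$, the large-time propositions return the sharp rates with constant $C(\varepsilon_0+\varepsilon)$. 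Here $C$ depends on $M_0$ but not on $A$. The surplus factor $\langle t\rangle^{-\gamma}$, which also absorbs the $\ln(1+t)$, beats $C$ for $t\ge T_*(C,\gamma)$.
\item On $[0,T_*]$, a linear Gronwall bound (Proposition~\ref{xiaoshijian008}) gives $C(T_*)\varepsilon_0$. This is absorbed by taking $A\ge 2C(T_*)$.
\item Only then is $\varepsilon_0$ chosen, so that $A\varepsilon_0$ is small enough for the characteristic estimates.
\end{itemize}
This decay-gap, two-scale closure is the missing idea. Your free-transport formulation might be combined with it. You would still have to prove sharp-rate $L^\infty$ and $\dot B^a$ bounds for $\mathcal N$, whose coefficient $s\nabla_x g$ grows in time. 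The per-species characteristic representation sidesteps this.

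\paragraph{A separate error in Step~1.}
The uniform-in-time bound $\|f^\pm(t)\|_{W^{2,\infty}_k}\lesssim M_0$ cannot hold in general. Already under free transport, $\nabla_vf=\nabla_vf_{\mathrm{in}}-t\nabla_xf_{\mathrm{in}}$. The paper's proof only propagates $M_0C(t)$ (Lemma~\ref{youjie010}). Its large-time analysis never needs more, since it only involves $\mu$ and $f^\pm_{\mathrm{in}}$ composed with the characteristics.
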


\vspace{0.5mm}

\begin{corollary}\label{tuilun}
Under the assumptions of Theorem~\ref{Theorem1}, the limits
\[
Y_\infty^\pm(x,v):=\lim_{t\to\infty}Y_{0,t}^\pm(x+tv,v),
\qquad
W_\infty^\pm(x,v):=\lim_{t\to\infty}W_{0,t}^\pm(x+tv,v),
\]
exist in \(L^\infty(\mathbb{R}^d_x\times\mathbb{R}^d_v)\) and satisfy
\[
\|Y^\pm_\infty\|_{L^\infty_{x,v}}+\|W^\pm_\infty\|_{L^\infty_{x,v}}
\lesssim \varepsilon_0,
\]
with $Y_{0,t}^\pm$ and $W_{0,t}^\pm $ being defined in \eqref{character001}.
Denote
\[
f^\pm_{in,\infty}(x,v)
:=f^\pm_{\mathrm{in}}\bigl(x+Y^\pm_\infty(x,v),\,v+W^\pm_\infty(x,v)\bigr)
\mp\bigl(\mu\bigl(v+W^\pm_\infty(x,v)\bigr)-\mu(v)\bigr).
\]
Then for all $t \ge 0$,
\begin{equation}\label{scatter}
\|f^\pm(t,x+tv,v)-f^\pm_{in,\infty}(x,v)\|_{L^\infty_{x,v}}
\lesssim
\varepsilon_0\frac{\ln(1+t)}{\langle t\rangle^{d-1}}.
\end{equation}
\end{corollary}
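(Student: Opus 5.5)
The plan is to work along the backward characteristics and write every quantity in the free-streaming frame $(x+tv,v)$. Since $F^\pm=\mu+f^\pm$ is transported by the flow of \eqref{VKG0}, we have
\[
f^\pm(t,z,v)=f^\pm_{\mathrm{in}}\bigl(X^\pm_{0,t}(z,v),V^\pm_{0,t}(z,v)\bigr)\mp\bigl(\mu(V^\pm_{0,t}(z,v))-\mu(v)\bigr).
\]
Here I take \eqref{character001} to mean $X^\pm_{0,t}(z,v)=z-tv+Y^\pm_{0,t}(z,v)$ and $V^\pm_{0,t}(z,v)=v+W^\pm_{0,t}(z,v)$. Writing the characteristic ODE backward from time $t$ gives
\[
W^\pm_{0,t}=\mp\int_0^tE(s,X^\pm_{s,t})\,ds,\qquad Y^\pm_{0,t}=\pm\int_0^t s\,E(s,X^\pm_{s,t})\,ds .
\]
Evaluating at $z=x+tv$ gives $f^\pm(t,x+tv,v)=f^\pm_{\mathrm{in}}(x+Y^\pm_{0,t},v+W^\pm_{0,t})\mp(\mu(v+W^\pm_{0,t})-\mu(v))$, with $Y^\pm_{0,t}$ and $W^\pm_{0,t}$ evaluated at $(x+tv,v)$. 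Since $f^\pm_{\mathrm{in}}\in W^{1,\infty}$ by \eqref{large-data} and $\mu\in W^{1,\infty}$ by Assumption \ref{ass:regularity}, \eqref{scatter} follows once the limits exist and
\[
\bigl|Y^\pm_{0,t}(x+tv,v)-Y^\pm_\infty(x,v)\bigr|+\bigl|W^\pm_{0,t}(x+tv,v)-W^\pm_\infty(x,v)\bigr|\lesssim\varepsilon_0\ln(1+t)\langle t\rangle^{1-d}.
\]

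The key input is a field bound obtained from Theorem \ref{Theorem1}. Writing $E=(I-\Delta_x)^{-1}\nabla_x\rho$ and $\nabla_xE=\nabla_x(I-\Delta_x)^{-1}\nabla_x\rho$, and using that these Bessel-type multipliers are bounded on $L^\infty$, the decay of $\nabla_x\rho$ gives
\[
\|E(s)\|_{L^\infty}+\|\nabla_xE(s)\|_{L^\infty}\lesssim\varepsilon_0\ln(1+s)\langle s\rangle^{-d-1}.
\]
It is essential to put the derivative on $\rho$ rather than bound $E$ by $\|\rho\|_{L^\infty}$. The cruder bound $\langle s\rangle^{-d}$ would only yield the rate $\langle t\rangle^{2-d}$ from $\int_t^\infty s\langle s\rangle^{-d}ds$, while the sharper one gives $\int_t^\infty s\ln(1+s)\langle s\rangle^{-d-1}ds\lesssim\ln(1+t)\langle t\rangle^{1-d}$. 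The same integrals yield the uniform bounds $|Y^\pm_{0,t}|+|W^\pm_{0,t}|\lesssim\varepsilon_0$, which pass to the limit.

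For the Cauchy property, take $t_1<t_2$ and use the flow identity $(X_{0,t_2},V_{0,t_2})(z)=(X_{0,t_1},V_{0,t_1})\bigl((X_{t_1,t_2},V_{t_1,t_2})(z)\bigr)$ with $z=(x+t_2v,v)$. Write $(X_{t_1,t_2},V_{t_1,t_2})(z)=(x'+t_1v',v')$. Integrating the ODE on $[t_1,t_2]$ gives
\[
|v'-v|\lesssim\int_{t_1}^{t_2}|E|\,ds\lesssim\varepsilon_0\ln(1+t_1)\langle t_1\rangle^{-d},\qquad |x'-x|\lesssim\int_{t_1}^{t_2}s|E|\,ds\lesssim\varepsilon_0\ln(1+t_1)\langle t_1\rangle^{1-d}.
\]
The second estimate uses the cancellation $x'=X_{t_1,t_2}-t_1v'$, which is exactly what the shifted frame captures. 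It remains to show that the shifted maps $\Psi_t(x,v):=(Y^\pm_{0,t},W^\pm_{0,t})(x+tv,v)$ are Lipschitz uniformly in $t$. Then
\[
|\Psi_{t_2}(x,v)-\Psi_{t_1}(x,v)|\le|\Psi_{t_1}(x',v')-\Psi_{t_1}(x,v)|+\bigl|(x'-x,\,v'-v)\bigr|\lesssim\varepsilon_0\ln(1+t_1)\langle t_1\rangle^{1-d}.
\]
This gives existence of the limits in $L^\infty_{x,v}$ and the claimed rate after letting $t_2\to\infty$.

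The main obstacle is this uniform Lipschitz bound. In the original variables $\partial_vX_{0,t}\approx-tI$ grows, so one must work in the shifted frame and show $\nabla_{x,v}\bigl[X^\pm_{s,t}(x+tv,v)-(x+sv)\bigr]$ stays bounded for $0\le s\le t$. Differentiating the integral equations and applying Gronwall, the kernel is controlled by $\int_0^\infty s\|\nabla_xE(s)\|_{L^\infty}ds\lesssim\varepsilon_0$, which is finite precisely because $d\ge3$ and by the field bound above. This is the step where I would check the bookkeeping carefully, in particular that the $v$-derivative of the free part cancels exactly so that no factor of $t$ survives.
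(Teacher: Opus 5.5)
Your argument is correct. It shares the paper's skeleton: the free-streaming frame, the field bound $\|E(s)\|_{L^\infty_x}\lesssim\varepsilon_0\ln(1+s)\langle s\rangle^{-d-1}$ obtained by putting the derivative on $\rho$, the rate $\int_t^\infty s\|E(s)\|_{L^\infty_x}\,ds$, and the mean-value step at the end.

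The routes differ at the Cauchy step. The paper sets $\widetilde Y_t,\widetilde W_t$ equal to your $\Psi_t$. It then asserts directly, ``by the definition of the characteristic flow'', that $|\widetilde W_{t_2}-\widetilde W_{t_1}|\le\int_{t_1}^{t_2}\|E(s)\|_{L^\infty_x}\,ds$ and $|\widetilde Y_{t_2}-\widetilde Y_{t_1}|\le\int_{t_1}^{t_2}s\|E(s)\|_{L^\infty_x}\,ds$.

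You instead use the flow identity to separate two contributions. The first comes from $[t_1,t_2]$: your $x'-x$ and $v'-v$, using the cancellation in $x'=X_{t_1,t_2}-t_1v'$. The second is the displacement of the whole trajectory on $[0,t_1]$, which you control by a uniform-in-$t$ Lipschitz bound on $\Psi_t$.

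Your route is the more careful one. Changing the endpoint time also moves the trajectory on $[0,t_1]$. So, for example, the true $W$-increment carries an extra term of size $\varepsilon\int_{t_1}^{t_2}s\|E(s)\|_{L^\infty_x}\,ds$, coming from the $x$-Lipschitz constant of the $W$-component of $\Psi_{t_1}$. This does not change the rate, but the paper's increment bounds leave it out, and only your Lipschitz bound accounts for it.

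That bound needs no new work. In the convention \eqref{z4}, $Y^\pm_{s,t}$ and $W^\pm_{s,t}$ are already functions of the free-streaming datum $x-tv$. So your $\Psi_t(x,v)$ is the paper's $(Y^\pm_{0,t},W^\pm_{0,t})(x,v)$, and Lemma~\ref{estimates001} with $s=0$ gives $\|\nabla_{x,v}Y^\pm_{0,t}\|_{L^\infty_{x,v}}+\|\nabla_{x,v}W^\pm_{0,t}\|_{L^\infty_{x,v}}\lesssim\varepsilon$ uniformly in $t$.

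If you redo the Gronwall argument yourself, note that $\nabla_v(x+\tau v)=\tau I$ leaves a factor $\tau$. The forcing is therefore $\int_s^t(\tau-s)\tau\|\nabla_xE(\tau)\|_{L^\infty_x}\,d\tau$. It is the finiteness of $\int_0^\infty\tau^2\|\nabla_xE(\tau)\|_{L^\infty_x}\,d\tau$ that uses $d\ge3$, not that of $\int_0^\infty\tau\|\nabla_xE\|$.

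One step you inherited from the statement is wrong. Transport of $F^\pm$ gives $f^\pm(t,z,v)=f^\pm_{\mathrm{in}}(X,V)+\mu(V)-\mu(v)$ for \emph{both} species. Equivalently, in the Duhamel formula at the start of Section~\ref{diwujie}, the identity $\frac{d}{ds}\mu(\mathcal V^\pm(s))=\pm E(s,\mathcal X^\pm(s))\cdot\nabla_v\mu(\mathcal V^\pm(s))$ turns the source integral into $\mu(\mathcal V^\pm(0))-\mu(v)$.

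With the $\mp$ as written, the difference for the $+$ species tends to $2\bigl(\mu(v+W^+_\infty)-\mu(v)\bigr)$ and does not decay. Your argument proves the corrected statement, with a $+$ sign in both cases. Likewise, $\ln(1+t)$ in \eqref{scatter}, and in your field bound, should be read as $\ln(2+t)$, since the left-hand side need not vanish at $t=0$.
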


\begin{remark}\label{Remark1.2}
Theorem~\ref{Theorem1} implies that the two-species screened Vlasov-Poisson system \eqref{VKG} exhibits the following time-decay rates
$$
\|\rho(t,\cdot)\|_{L^\infty_x}\lesssim\langle t\rangle^{-d},  \  \
\|E(t, \cdot)\|_{L^\infty_x}
\lesssim \langle t\rangle^{-(d+1)}\ln(1+t),
$$
which verifies the nonlinear Landau damping for arbitrarily large initial distributions with the quasi-neutrality condition \eqref{qn}.
In other words, despite the absence of the collision effects,  the phase mixing yields the time decay of the charge density $\rho$ and hence the self-consistent electric field $E$, and the solutions $f^\pm$ to the screened Vlasov-Poisson system \eqref{VKG} scatter asymptotically to the free transport states as in \eqref{scatter}.
\end{remark}

\begin{remark}\label{Remark1.3}
The initial distributions in \eqref{large-data} can be arbitrarily large with the quasi-neutrality condition \eqref{qn}, which can be described in Figure~\ref{fig:initial-density} as follows. Nevertheless, Theorem~\ref{Theorem1} implies the optimal dispersive estimate for each species density
\begin{equation}\label{dispersive-e}
\|\rho^\pm(t,\cdot)\|_{L^\infty_x}=\|\rho_{F}^{\pm}(t,\cdot)-1\|_{L^\infty_x}\lesssim \langle t\rangle^{-d},
\quad \forall t\ge 0.
\end{equation}
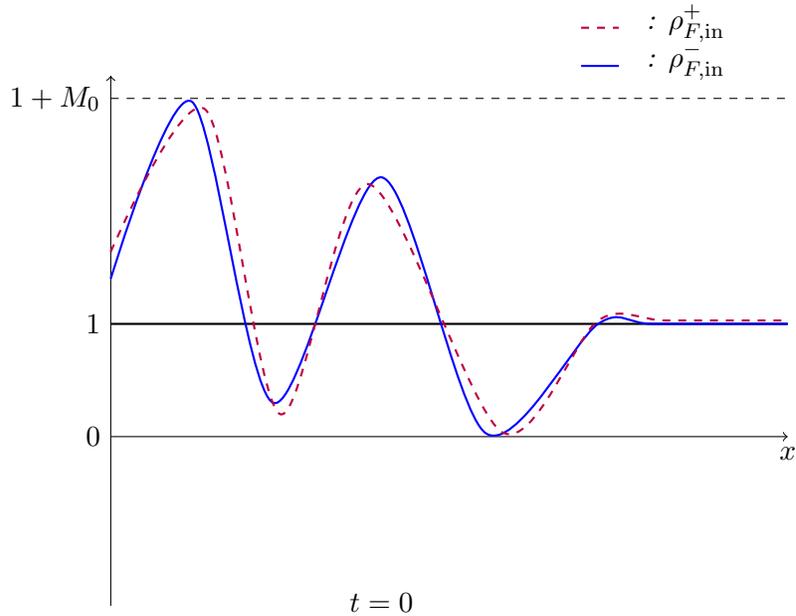
\begin{figure}[ht]
   \begin{tikzpicture}[xscale=1.8,yscale=1.48]
					\draw [->] (0,0)--(5,0);
					\draw [->] (0,0)--(0,3.2);
                    \draw [-] (0,0)--(0,-1.5);
					%\draw [dashed] (0,-1)--(5,-1);
					\draw [dashed] (0,3)--(5,3);
                    \draw [thick] (0,1)--(5,1);
					\node [left] at (0,1) {$1$};
                    %\node [left] at (0,-1) {$-1$};
					\node [left] at (0,0) {$0$};
					\node [below] at (5,0) {$x$};
					\node [left] at (0,3) {$1+M_0$};
					
\draw[blue, thick] plot[smooth] coordinates {(0,1.4)(0.6,2.97)(1.2,0.3)(2,2.3) (2.77,0.03) (3.6,1)(4,1)(5,1)};

\draw[purple, thick,dashed] plot[smooth] coordinates {(0,1.64) (0.7,2.9) (1.25,0.2)(1.9,2.24) (2.89,0.04)(3.6,1.03)(4.1,1.03)(5,1.03)};
					\node [below] at (2,-1.3) {$t=0$};
 \node at (4,3.5) {
                \begin{tabular}{cl}
                    \tikz{\draw[purple, thick,dashed] (3,4) -- (3.5,4);} & :  $\rho_{F, \mathrm{in}}^+$ \\
                    \tikz{\draw[blue, thick] (3,4) -- (3.5,4);} & :   $\rho_{F, \mathrm{in}}^-$ \\
                \end{tabular}
            };

				\end{tikzpicture}
			\caption{$\rho_{F, \mathrm{in}}^{\pm}:=\di \int_{\mathbb{R}^d} F^\pm(0,x,v)\, dv$  can be arbitrarily large with the quasi-neutrality condition}
\label{fig:initial-density}
\end{figure}
\end{remark}
\subsection{Key Challenges and Proof Strategy}
When the large initial distributions  \eqref{large-data}  are allowed, the key mathematical challenge for verifying nonlinear Landau damping to \eqref{VKG0} or \eqref{VKG} stems from the nonlinear terms, affecting both global well-posedness and the long-time stability.

As emphasized by Mouhot and Villani~\cite{Mouhot000}, nonlinear Landau damping can not be expected for arbitrarily large perturbations, which is also shown in the numerical study of Zhou, Guo, and Shu~\cite{ZhouGuoShu2001}. In this work, we introduce the quasi-neutrality condition \eqref{qn} as a pivotal stabilization mechanism to recover the macroscopic phase mixing. This condition constrains the net
charge density to keep the smallness, thereby ensuring the screened electric field $E(t,x)$ is suitably
weak to suppress unsteady perturbations and stabilize the characteristic flow. In this way, we have
\[
\text{quasi-neutrality} \Longrightarrow\ \text{weak screened field} \Longrightarrow\ \text{stable characteristics}
 \Longrightarrow\ \text{macroscopic phase mixing}.
\]
However, quasi-neutrality alone is insufficient to complete the proof, more accurate bounds on  $\nabla_v f^\pm$ must additionally be derived to close the a priori assumptions,
which is done by the bootstrap argument.

Based on the local-in-time existence of strong solution in Proposition~\ref{Local solutions}, we formulate a priori assumptions in \eqref{ziyouliu02}, that is,
\begin{equation*}
\left\{
\begin{aligned}
\|\rho(t)\|_{L^\infty_x}
&\le \varepsilon \langle t\rangle^{-d+\gamma},\\
\|\rho(t)\|_{\dot B^a_{\infty,\infty}}
&\le \varepsilon \langle t\rangle^{-(d+a)+\gamma},\\
\|\rho(t)\|_{\dot B^a_{1,\infty}}
&\le \varepsilon \langle t\rangle^{-a+\gamma},\\
\|\nabla_x \rho(t)\|_{L^\infty_x}
&\le \varepsilon \langle t\rangle^{-(d+1)+\gamma},\\
\|\nabla_x \rho(t)\|_{L^1_x}
&\le \varepsilon \langle t\rangle^{-1+\gamma}.
\end{aligned}
\right.
\end{equation*}
Then we can prove the global existence together with the optimal decay estimates for the net charge density.

To close the bootstrap assumptions in \eqref{ziyouliu02}, we combine the uniform estimates in finite time intervals with the sharp decay estimates for the large time. Precisely,
in finite time intervals, quasi-neutrality condition implies the smallness of the net charge, and together with Proposition~\ref{xiaoshijian008} yields
\[
\|\rho(t)\|_{L^1_x}+\|\rho(t)\|_{L^\infty_x}+\|\rho(t)\|_{\dot{B}^a_{1,\infty}}
+\|\rho(t)\|_{\dot{B}^a_{\infty,\infty}}
+\|\nabla_x\rho(t)\|_{L^1_x}+\|\nabla_x\rho(t)\|_{L^\infty_x}
\le M_0\,C(t)\,\varepsilon_0.
\]
For the large time, motivated by the methods developed in
\cite{Bedrossian000,HanKwanD2021,Landau000},  we rewrite the charge density along the nonlinear characteristics and represent it as a
convolution against a time-space kernel $\mathcal G$ (defined in \eqref{fangcheng000}),
\[
\rho=-\mathcal{G}\ast_{(t,x)}\left[(I^+-I^-)-(R^+ +R^-)\right]
+\left(I^+-I^-\right)-\left(R^+ +R^-\right),
\]
with $I^\pm$ and $R^\pm$ defined in \eqref{midufenjie001}. The problem is thus reduced to sharp bounds on $\mathcal G$ and on the associated convolution operator
(Lemmas~\ref{kernel}--\ref{juanji001}). To couple fractional regularity with dispersive decay, we work in Besov norms
and obtain, in particular, the estimate
\[
\langle t\rangle^d\|(\mathcal{G}\ast_{(t,x)}\hbar)(t)\|_{L^\infty_x}
\lesssim
\sup_{0\le s\le t}\left(\|\hbar(s)\|_{L^1_x}
+\langle s\rangle^{d+a}\|\hbar(s)\|_{\dot{B}_{\infty,\infty}^a}\right),
\]
which is proved in Lemma~\ref{juanji001}. This estimate constitutes the main input for the optimal decay of $\rho$.

By \eqref{eq:dq-1-local}, we can use the Triebel--Lizorkin-type seminorm for the electric field $\|E(t)\|_{\dot F^{a}_{1,\infty}}$ defined in \eqref{semi-norm}  to control the estimates of $R^{\pm}$. By \eqref{F03},
 $\|E(t)\|_{\dot F^{a}_{1,\infty}}$ can be interpolated by $\|E(t)\|_{W^{1,1}(\mathbb{R}^d)}$, which is already well-estimated in \eqref{ziyouliu01}.

% , the main obstruction comes from the low frequencies part,
%which is overcome by  and \eqref{eq:dq-1-local} in Lemmas~\ref{field02} and~ \ref{lem:difference-F}.

%naturally arise by \ref  Tof ,

% To overcome this low-frequency obstruction, we use \eqref{eq:dq-1-local}

%For this reason, \eqref{F03} assumes that $E\in W^{1,1}(\mathbb{R}^d)$, which removes the low-frequency obstruction and gives
%$$
%\|E\|_{\dot F^{a}_{1,\infty}}
%\lesssim
%\|E\|_{L^1_x}^{\,1-a}\|\nabla_x E\|_{L^1_x}^{\,a}.
%$$
%Likewise, \eqref{eq:dq-1-local} is restricted to $|\alpha|\le1$, and yields
%\[
%\Big\|\sup_{|\alpha|\le1}\frac{|\delta_\alpha \psi|}{|\alpha|^a}\Big\|_{L^1(\mathbb{R}^d)}
%\lesssim \|\psi\|_{\dot F^{a}_{1,\infty}(\mathbb{R}^d)},
%\]
%thereby discarding the large-scale difference-quotient contribution associated with the same low-frequency phenomenon.
Then by Propositions~\ref{Besovestimates} and~\ref{daoshu009}, we have
\begin{equation*}
\left\{
\begin{aligned}
\|\rho(t)\|_{L^\infty_x}
&\le C(\varepsilon_0+\varepsilon) \langle t\rangle^{-d},\\
\|\rho(t)\|_{\dot B^a_{\infty,\infty}}
&\le C(\varepsilon_0+\varepsilon) \langle t\rangle^{-(d+a)},\\
\|\rho(t)\|_{\dot B^a_{1,\infty}}
&\le C(\varepsilon_0+\varepsilon) \langle t\rangle^{-a},\\
\|\nabla_x\rho(t)\|_{L^\infty_x}
&\le C(\varepsilon_0+\varepsilon) \langle t\rangle^{-(d+1)}\ln(1+t),\\
\|\nabla_x\rho(t)\|_{L^1_x}
&\le C(\varepsilon_0+\varepsilon) \langle t\rangle^{-1}\ln(1+t),
\end{aligned}
\right.
\end{equation*}
which restore the optimal decay estimates for the net charge density and its spatial derivatives.

%It should be emphasized that our treatment for the initial data and the organization of the estimates differ substantially from those in earlier works under the quasi-neutrality assumption,
%Since the one-sided densities may be large, one cannot close separate estimates for the two species. We therefore organize the analysis around the net charge
% $\rho=\rho^+-\rho^-$, and rewrite the leading terms in difference form, setting $I:=I^+-I^-$. This allows the quasi-neutral cancellation to be exploited directly in the control of the electric field and the characteristic flow; see Section~\ref{diwujie} for further details.

Finally, suitable combinations of the estimates  in the finite-time and large-time imply
\begin{equation*}
\left\{
\begin{aligned}
\|\rho(t)\|_{L^\infty_x}
&\le \tfrac12 \varepsilon \langle t\rangle^{-d+\gamma},\\
\|\rho(t)\|_{\dot B^a_{\infty,\infty}}
&\le \tfrac12 \varepsilon \langle t\rangle^{-(d+a)+\gamma},\\
\|\rho(t)\|_{\dot B^a_{1,\infty}}
&\le \tfrac12 \varepsilon \langle t\rangle^{-a+\gamma},\\
\|\nabla_x \rho(t)\|_{L^\infty_x}
&\le \tfrac12 \varepsilon \langle t\rangle^{-(d+1)+\gamma},\\
\|\nabla_x \rho(t)\|_{L^1_x}
&\le \tfrac12 \varepsilon \langle t\rangle^{-1+\gamma},
\end{aligned}
\right.
\end{equation*}
which can close the bootstrap assumptions and obtain our main result in Theorem \ref{Theorem1}.

%The details can be seen in Section .

\subsection{Preliminary Lemmas}\label{dierjie}
In this subsection, we list some properties of the operator
$T:=\nabla_x(I-\Delta_x)^{-1}$
in Sobolev, Besov and Triebel--Lizorkin-type spaces, which will be frequently used in the sequel. %In particular, Lemmas~\ref{field02} and~\ref{lem:difference-F} provide the basic link between the dyadic \(\dot F^a_{p,\infty}\) bounds and the difference-quotient control needed in the Besov estimates of the subsequent sections.

\begin{lemma}\label{field01}
Let $p\in\{1,\infty\}$ and assume that
$\psi\in W^{1,p}(\mathbb{R}^d)$.
Then there exists a constant $C>0$ such that
$$
\|T\psi\|_{L^p(\mathbb{R}^d)}
+\|\nabla_x T\psi\|_{L^p(\mathbb{R}^d)}
\le
C\|\nabla_x\psi\|_{L^p(\mathbb{R}^d)}.
$$
Moreover, if $\psi\in W^{2,\infty}(\mathbb{R}^d)$, then
$$
\|\nabla_x^2 T\psi\|_{L^\infty(\mathbb{R}^d)}
\le
C\|\nabla_x^2\psi\|_{L^\infty(\mathbb{R}^d)}.
$$
\end{lemma}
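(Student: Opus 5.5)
The plan is to exploit the fact that $T=\nabla_x(I-\Delta_x)^{-1}$ commutes with derivatives. We write $T\psi=(I-\Delta_x)^{-1}\nabla_x\psi$. Then
\[
\nabla_x T\psi=(I-\Delta_x)^{-1}\nabla_x\nabla_x\psi \quad\text{and}\quad \nabla_x^2T\psi=(I-\Delta_x)^{-1}\nabla_x\nabla_x^2\psi .
\]
Every quantity in the statement is therefore the Bessel potential $(I-\Delta_x)^{-1}$, or the operator $\nabla_x(I-\Delta_x)^{-1}$, applied to derivatives of $\psi$. It suffices to show that both of these operators are bounded on $L^p(\mathbb{R}^d)$ for $p\in\{1,\infty\}$.

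Both operators are convolutions with explicit kernels. The first is convolution with the Bessel kernel $G_2$, where $\widehat{G_2}(\xi)=(1+|\xi|^2)^{-1}$. This kernel is positive, radial and integrable, with $\|G_2\|_{L^1}=\widehat{G_2}(0)=1$. Young's inequality then gives
\[
\|(I-\Delta_x)^{-1}g\|_{L^p}\le \|g\|_{L^p} \quad\text{for every } 1\le p\le\infty .
\]
Applied with $g=\nabla_x\psi$, this yields $\|T\psi\|_{L^p}\le C\|\nabla_x\psi\|_{L^p}$.

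For $\nabla_x T\psi$ and $\nabla_x^2 T\psi$ we would not put two derivatives on $G_2$, since $\nabla^2 G_2$ is a Calder\'on--Zygmund kernel and is unbounded on $L^1$ and $L^\infty$. Instead we write
\[
\nabla_x T\psi=(\nabla_x G_2)\ast\nabla_x\psi ,
\]
and show $\nabla_x G_2\in L^1(\mathbb{R}^d)$. Near the origin, $G_2(x)\sim c|x|^{2-d}$ for $d\ge3$, so $|\nabla G_2(x)|\lesssim |x|^{1-d}$, which is integrable near $0$. At infinity, $G_2$ and its gradient decay like $e^{-|x|/2}$. Both facts follow from the subordination formula
\[
G_2(x)=\int_0^\infty (4\pi s)^{-d/2}e^{-|x|^2/(4s)}e^{-s}\,ds .
\]
Differentiating under the integral gives $|\nabla G_2(x)|\lesssim\int_0^\infty s^{-d/2-1/2}e^{-|x|^2/(8s)}e^{-s}\,ds$, from which both regimes can be read off.

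Young's inequality then gives $\|\nabla_x T\psi\|_{L^p}\le\|\nabla G_2\|_{L^1}\|\nabla_x\psi\|_{L^p}$. The same identity with $\nabla_x\psi$ replaced by $\nabla_x^2\psi$ gives
\[
\|\nabla_x^2T\psi\|_{L^\infty}\le\|\nabla G_2\|_{L^1}\|\nabla_x^2\psi\|_{L^\infty},
\]
which is the second claim.

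The main obstacle is the one noted above. At the endpoints $p=1,\infty$ one cannot use Mikhlin or Calder\'on--Zygmund multiplier theory, so the proof must always leave at least one derivative on $\psi$ and only one on the kernel. The only real work is the integrability check $\nabla G_2\in L^1$ from the subordination formula. The hypothesis $\psi\in W^{1,p}$ or $W^{2,\infty}$ justifies the commutation of derivatives with the convolution, for example via a density argument or distributionally.
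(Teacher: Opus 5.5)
Your proposal is correct and follows essentially the same route as the paper: you write $T\psi=G_2*\nabla\psi$, $\nabla T\psi=(\nabla G_2)*\nabla\psi$ and $\nabla^2T\psi=(\nabla G_2)*\nabla^2\psi$, apply Young's inequality with $\|G_2\|_{L^1}=1$, and reduce everything to checking $\nabla G_2\in L^1$ via the subordination formula. The only difference is cosmetic: near the origin the paper splits the $\tau$-integral into three ranges, whereas you absorb the factor $|x|/\sqrt{s}$ into half of the Gaussian exponent, and both arguments give the same $|x|^{1-d}$ bound near $0$ and exponential decay at infinity.
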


\begin{lemma}\label{field02}
Let $a\in(0,1)$ and $p\in\{1,\infty\}$.

\medskip
\noindent (i)
Assume that $\psi\in \dot B^{a}_{p,\infty}(\mathbb{R}^d)$. Then there exists $C>0$ such that
\begin{equation}\label{field02:Besov1}
\|T\psi\|_{\dot B^{a}_{p,\infty}}
\le C\|\psi\|_{\dot B^{a}_{p,\infty}}.
\end{equation}

\medskip
\noindent (ii)
If $\psi\in \dot B^{a}_{\infty,\infty}(\mathbb{R}^d)$, then
\begin{equation*}
\|T\psi\|_{\dot F^{a}_{\infty,\infty}}
\le C\|\psi\|_{\dot B^{a}_{\infty,\infty}}.
\end{equation*}

\medskip
\noindent (iii)
If $T\psi\in W^{1,1}(\mathbb{R}^d)$, then
\begin{equation}\label{F03}
\|T\psi\|_{\dot F^{a}_{1,\infty}}
\lesssim
\|T\psi\|_{L^1}^{\,1-a}\|\nabla T\psi\|_{L^1}^{\,a}.
\end{equation}
\end{lemma}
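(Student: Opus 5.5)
For (i) and (ii) I will argue dyadically. The symbol of $T=\nabla_x(I-\Delta_x)^{-1}$ is $m(\xi)=i\xi/(1+|\xi|^2)$, which is a smooth symbol of order $-1$ and is bounded with all derivatives.

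\textbf{Parts (i) and (ii).} For each $j\in\mathbb Z$ I write $\dot\Delta_j T\psi = K_j\ast \dot\Delta_j\psi$, where $K_j$ is the inverse Fourier transform of $m(\xi)\tilde\varphi(2^{-j}\xi)$ and $\tilde\varphi$ is a fattened annular cutoff equal to one on the support of $\varphi$. Rescaling $\xi=2^j\eta$ gives $\widehat{K_j}(2^j\eta)=m(2^j\eta)\tilde\varphi(\eta)$. On the annulus $|\eta|\sim1$ this function, together with its $\eta$-derivatives up to order $d+1$, is bounded by $C\min(2^j,2^{-j})\le C$, uniformly in $j$. Integrating by parts then gives $\|K_j\|_{L^1}\le C$ uniformly in $j$. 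Young's inequality yields $\|\dot\Delta_jT\psi\|_{L^p}\le C\|\dot\Delta_j\psi\|_{L^p}$ for both $p=1$ and $p=\infty$. Multiplying by $2^{ja}$ and taking the supremum over $j$ proves \eqref{field02:Besov1}. Part (ii) follows from the $p=\infty$ case of (i) together with the identity \eqref{FTL0}.

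\textbf{Part (iii).} Here I set $g=T\psi\in W^{1,1}$ and bound $\sup_j 2^{ja}|\dot\Delta_j g|$ pointwise. For each $j$ there are two estimates:
\[
|\dot\Delta_j g|\le C\,\mathcal M g,
\qquad
|\dot\Delta_j g|=\big|2^{-j}\tilde\phi_j\ast\nabla g\big|\le C\,2^{-j}\mathcal M(\nabla g).
\]
The first holds because $\dot\Delta_j g=\phi_j\ast g$, and $\phi_j$ is an $L^1$-normalized Schwartz kernel dominated by a radially decreasing integrable function, so it is controlled by the Hardy--Littlewood maximal function $\mathcal M$. The second holds because $\varphi$ vanishes near the origin, so $\varphi(\xi)=\xi\cdot\tilde\varphi(\xi)$ with $\tilde\varphi$ Schwartz; this writes $\dot\Delta_j g$ as $2^{-j}$ times a Schwartz-kernel average of $\nabla g$. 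Combining the two bounds gives
\[
\sup_j 2^{ja}|\dot\Delta_j g|\le C\sup_j\min\big(2^{ja}A,\,2^{j(a-1)}B\big)\le C A^{1-a}B^a,
\qquad A=\mathcal Mg,\ B=\mathcal M\nabla g,
\]
where the last step optimizes at $2^j\sim B/A$.

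\textbf{Main obstacle.} The difficulty is the $L^1$ endpoint: $\mathcal M$ is unbounded on $L^1$, so $\|A^{1-a}B^a\|_{L^1}$ cannot be bounded by Hölder's inequality alone. My first attempt replaces the maximal functions by the Peetre-type maximal functions $\sup_y|\dot\Delta_jg(x-y)|/(1+2^j|y|)^N$, which are bounded on $L^p$ only for $p>d/N$ and so would require Hardy-space ($H^1$) control. The route I would actually take avoids maximal functions entirely. I would control $\sup_j 2^{ja}|\dot\Delta_jg|$ by the difference-quotient functional $\sup_{z}|\delta_zg(x)|/|z|^a$ plus averaged versions of it, and then prove directly, in one dimension along rays and integrating over directions, that
\[
\Big\|\sup_z\frac{|\delta_zg|}{|z|^a}\Big\|_{L^1}\lesssim\|g\|_{L^1}^{1-a}\|\nabla g\|_{L^1}^a.
\]
This is Gagliardo--Nirenberg-type interpolation between $L^1$ and $W^{1,1}$. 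If the endpoint fails, I expect this is exactly the step that must be weakened, for example to a weak-$L^1$ bound or a localized $|z|\le1$ version, as hinted by \eqref{eq:dq-1-local}.
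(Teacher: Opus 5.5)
Your parts (i) and (ii) are correct and follow the paper's argument: uniform $L^1$ bounds for the dyadic pieces of the symbol $i\xi/(1+|\xi|^2)$, Young's inequality for $p\in\{1,\infty\}$, and then \eqref{FTL0}. Your fattened cutoff $\tilde\varphi$ is in fact the right way to write $\dot\Delta_jT\psi$ as a convolution of $\dot\Delta_j\psi$; using $\varphi$ itself would produce the symbol $m\varphi^2$.

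Part (iii), however, is not proved. You correctly see that the pointwise bound $\sup_j2^{ja}|\dot\Delta_jg|\lesssim(\mathcal Mg)^{1-a}(\mathcal M\nabla g)^a$ cannot be integrated. But the inequality you then propose to prove along rays is false: for every nonzero $g\in L^1$ one has $\|\sup_z|\delta_zg|/|z|^a\|_{L^1}=\infty$. To see this, suppose $|g(x_0)|=c>0$. On the set $\{|g|\le c/2\}$, whose complement has finite measure, the choice $z=x-x_0$ gives $\sup_z|\delta_zg(x)|/|z|^a\ge\frac c2|x-x_0|^{-a}$. This is not integrable at infinity because $a<d$.

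Neither fallback rescues (iii). A weak-$L^1$ bound does not control an $L^1$ norm. The version localized to $|z|\le1$ cannot see the blocks with $2^{-j}\gg1$, and it even fails at small scales. Take $g=h(\cdot/\ell)$ with $h\in C_c^\infty(B_1)$ and $h(0)=1$. Choosing $z=x$ gives a left side $\ge\int_{\ell<|x|<1}|x|^{-a}\,dx\gtrsim1$, while $\|g\|_{L^1}^{1-a}\|\nabla g\|_{L^1}^a\sim\ell^{d-a}\to0$. The same example shows that \eqref{eq:dq-1-local} cannot hold together with \eqref{F03}, so it is not a reliable guide here.

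The missing idea is to trade the supremum in $j$ for a sum before integrating. Since $\sup_j2^{ja}|\dot\Delta_jg|\le\sum_j2^{ja}|\dot\Delta_jg|$ pointwise, one gets $\|g\|_{\dot F^a_{1,\infty}}\le\sum_j2^{ja}\|\dot\Delta_jg\|_{L^1}$, i.e. $\dot B^a_{1,1}\hookrightarrow\dot F^a_{1,\infty}$. Now only $L^1$ bounds on individual blocks are needed. The two kernel representations you already wrote down give them by Young's inequality, with no maximal function:
\[
\|\dot\Delta_jg\|_{L^1}\lesssim\min\{\|g\|_{L^1},\,2^{-j}\|\nabla g\|_{L^1}\}.
\]
Splitting at $J$ with $2^J\sim\|\nabla g\|_{L^1}/\|g\|_{L^1}$ gives
\[
\sum_{j\le J}2^{ja}\|g\|_{L^1}+\sum_{j>J}2^{j(a-1)}\|\nabla g\|_{L^1}\lesssim 2^{Ja}\|g\|_{L^1}+2^{J(a-1)}\|\nabla g\|_{L^1}\lesssim\|g\|_{L^1}^{1-a}\|\nabla g\|_{L^1}^{a},
\]
and both geometric series converge because $0<a<1$. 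This is exactly the paper's proof of \eqref{F03}. Your pointwise optimization $\sup_j\min(2^{ja}A,2^{j(a-1)}B)\lesssim A^{1-a}B^a$ is the right computation. It just has to be carried out on the $L^1$ norms of the blocks, where the geometric decay on both sides of $J$ makes replacing the supremum by a sum cost only a constant.
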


\begin{lemma}\label{lem:difference-F}
Let $a\in(0,1)$.

\medskip
\noindent{(i)} If $\varphi^\ast\in \dot F^a_{\infty,\infty}(\mathbb{R}^d)$, then
\begin{equation}\label{eq:dq-infty}
\Big\|\sup_{\alpha}\frac{|\delta_\alpha \varphi^\ast|}{|\alpha|^a}\Big\|_{L^\infty}
\lesssim \|\varphi^\ast\|_{\dot F^a_{\infty,\infty}}.
\end{equation}

\medskip
\noindent{ (ii)} If $\varphi^\ast\in \dot F^{a}_{1,\infty}(\mathbb{R}^d)$, then
\begin{equation}\label{eq:dq-1-local}
\Big\|\sup_{|\alpha|\le1}\frac{|\delta_\alpha \varphi^\ast|}{|\alpha|^a}\Big\|_{L^1}
\lesssim \|\varphi^\ast\|_{\dot F^{a}_{1,\infty}}.
\end{equation}
\end{lemma}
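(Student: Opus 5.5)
For (i), I would start from the dyadic decomposition $\varphi^\ast=\sum_{j\in\mathbb Z}\dot\Delta_j\varphi^\ast$. This is valid modulo polynomials, which difference quotients of order $a<1$ do not see after the usual normalization. Fix $x$ and $\alpha$, and choose $J\in\mathbb Z$ with $2^{-J}\sim|\alpha|$. I would split
\[
\delta_\alpha\varphi^\ast(x)=\sum_{j\le J}\delta_\alpha\dot\Delta_j\varphi^\ast(x)+\sum_{j>J}\delta_\alpha\dot\Delta_j\varphi^\ast(x).
\]
\emph{Low frequencies $j\le J$.} By the mean value theorem, $|\delta_\alpha\dot\Delta_j\varphi^\ast(x)|\le|\alpha|\,\|\nabla\dot\Delta_j\varphi^\ast\|_{L^\infty}$. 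Bernstein's inequality gives $\|\nabla\dot\Delta_j\varphi^\ast\|_{L^\infty}\lesssim 2^j\|\dot\Delta_j\varphi^\ast\|_{L^\infty}\le 2^{j(1-a)}\|\varphi^\ast\|_{\dot B^a_{\infty,\infty}}$. Summing the geometric series over $j\le J$ yields a bound of order $|\alpha|2^{J(1-a)}\sim|\alpha|^a$.

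\emph{High frequencies $j>J$.} Here I would use $|\delta_\alpha\dot\Delta_j\varphi^\ast|\le 2\|\dot\Delta_j\varphi^\ast\|_{L^\infty}\le 2\cdot 2^{-ja}\|\varphi^\ast\|_{\dot B^a_{\infty,\infty}}$. The sum over $j>J$ is of order $2^{-Ja}\sim|\alpha|^a$.

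Dividing by $|\alpha|^a$, taking the supremum over $\alpha$ and then over $x$, and invoking \eqref{FTL0} gives \eqref{eq:dq-infty}.

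For (ii), the same splitting cannot be used with $L^1$ norms inside, because the supremum over $\alpha$ sits inside the $L^1$ norm. I therefore need pointwise bounds in terms of maximal functions. Set $g(x):=\sup_j 2^{ja}|\dot\Delta_j\varphi^\ast(x)|$, so that $\|g\|_{L^1}=\|\varphi^\ast\|_{\dot F^a_{1,\infty}}$.

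\emph{High frequencies.} Using $|\dot\Delta_j\varphi^\ast(x)|\le 2^{-ja}g(x)$ and $|\dot\Delta_j\varphi^\ast(x-\alpha)|\le 2^{-ja}g(x-\alpha)$, the high-frequency part is bounded by $|\alpha|^a\bigl(g(x)+g(x-\alpha)\bigr)$. To remove the shift uniformly in $|\alpha|\le 1$, I would replace pointwise values by the Peetre maximal function
\[
\dot\Delta_j^{\ast}\varphi^\ast(x):=\sup_y\frac{|\dot\Delta_j\varphi^\ast(x-y)|}{(1+2^j|y|)^{N}}.
\]
For $j>J$ one has $2^j|\alpha|\gtrsim 1$, so the polynomial factor costs $(2^j|\alpha|)^N$. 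This is the first place where the argument gets delicate.

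\emph{Low frequencies.} For $j\le J$, the mean value theorem plus Bernstein-type pointwise control gives
\[
|\nabla\dot\Delta_j\varphi^\ast(x-\theta\alpha)|\lesssim 2^j\,\dot\Delta_j^{\ast}\varphi^\ast(x),
\]
since $2^j|\alpha|\le 1$. Summing the low-frequency terms then gives $|\alpha|^a\sup_j 2^{ja}\dot\Delta_j^\ast\varphi^\ast(x)$.

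\emph{Main obstacle.} The crux is that the Peetre maximal inequality in $L^p$ requires $p>d/N$ and a vector-valued $\ell^\infty$ version. With $p=1$ and an $\ell^\infty$ norm, the Fefferman--Stein inequality fails at the endpoint. My first attempt would be to take $N>d$ and bound
\[
\sup_j 2^{ja}\dot\Delta_j^\ast\varphi^\ast\lesssim \Bigl(\mathcal M\bigl(g^{r}\bigr)\Bigr)^{1/r},\qquad r<1,\ Nr>d,
\]
and then try to integrate. However, $\mathcal M$ is not bounded on $L^{1/r}$ with $1/r>1$ applied to $g^r\in L^{1/r}$... wait, $g^r\in L^{1/r}$ with $1/r>1$, so $\mathcal M$ is in fact bounded there. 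This gives $\|(\mathcal M(g^r))^{1/r}\|_{L^1}\lesssim\|g\|_{L^1}$.

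This is the route I would pursue. The restriction $|\alpha|\le1$ would enter by keeping the set of relevant scales $J\ge 0$, with the factor $(2^j|\alpha|)^N$ for high frequencies absorbed by summing against the extra decay of $\dot\Delta_j$ against a rescaled kernel. I expect verifying this absorption to be the main technical point of (ii).
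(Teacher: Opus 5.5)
Your part (i) is correct and is the same argument as the paper's. You split at $2^{-J}\sim|\alpha|$, use the mean value theorem and Bernstein for $j\le J$ and the trivial bound for $j>J$, and then invoke \eqref{FTL0}.

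Part (ii) has a genuine gap, exactly at the step you defer. Your low-frequency bound and the unshifted high-frequency term are fine. With $h:=\sup_j 2^{ja}\dot\Delta_j^{\ast}\varphi^\ast\lesssim(\mathcal M(g^r))^{1/r}$ you get, for every $\alpha$ (not only $|\alpha|\le1$),
\[
\frac{1}{|\alpha|^a}\Bigl|\sum_{j\le J}\delta_\alpha\dot\Delta_j\varphi^\ast(x)\Bigr|+\frac{1}{|\alpha|^a}\sum_{j>J}|\dot\Delta_j\varphi^\ast(x)|\lesssim h(x),\qquad \|h\|_{L^1}\lesssim\|g\|_{L^1},
\]
and your maximal-function computation for the last bound is right. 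The problem is the shifted term $|\alpha|^{-a}\sum_{j>J}|\dot\Delta_j\varphi^\ast(x-\alpha)|$.
\begin{itemize}
\item Moving it to the point $x$ via the Peetre function costs $\sum_{j>J}2^{-ja}(2^j|\alpha|)^N\sim\sum_{j>J}2^{-ja}2^{(j-J)N}=\infty$.
\item A single band-limited block has no extra decay to absorb this factor. Its size at distance $|\alpha|\gg 2^{-j}$ from $x$ is simply not controlled by data near $x$.
\item What remains is $g(x-\alpha)$ (or $h(x-\alpha)$). But $\sup_{|\alpha|\le1}g(\cdot-\alpha)$ is a local supremum of an $L^1$ function, which $\|g\|_{L^1}$ does not control.
\end{itemize}
No choice of $N$, $r$, or use of $|\alpha|\le1$ repairs this, because \eqref{eq:dq-1-local} is false as stated. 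Suppose $M:=\sup_{|\alpha|\le1}|\delta_\alpha\varphi^\ast|/|\alpha|^a$ lies in $L^1$. Then for a.e.\ $x$ one has $|\varphi^\ast(y)|\le|\varphi^\ast(x)|+M(x)<\infty$ for a.e.\ $y\in B(x,1)$, so $\varphi^\ast\in L^\infty_{\mathrm{loc}}$. Now take $d\ge2$ and $\varphi^\ast(x)=|x|^{-\beta}\chi(x)$, with $0<\beta<d-1$ and $\chi$ a cutoff equal to $1$ near $0$. Then $\varphi^\ast\in W^{1,1}$, so by the argument of Lemma~\ref{field02}(iii),
$\|\varphi^\ast\|_{\dot F^a_{1,\infty}}\le\sum_j2^{ja}\|\dot\Delta_j\varphi^\ast\|_{L^1}\lesssim\|\varphi^\ast\|_{L^1}^{1-a}\|\nabla\varphi^\ast\|_{L^1}^{a}<\infty$.
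Yet $\varphi^\ast$ is unbounded near $0$.

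The paper's proof of (ii) does not get past this point either. Its high-frequency step (``arguing as above'') produces $G(x)+G(x-\alpha)$ and then silently drops the supremum over $\alpha$ of the shifted term. Its low-frequency step proves an $L^1$ bound for each fixed $\alpha$ and then moves the supremum inside the norm; your Peetre argument would fix that part.

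What your tools do prove is the two-point estimate of Haj\l{}asz type:
\[
|\delta_\alpha\varphi^\ast(x)|\lesssim|\alpha|^a\bigl(h(x)+h(x-\alpha)\bigr)\quad\text{for all }\alpha,\qquad \|h\|_{L^1}\lesssim\|\varphi^\ast\|_{\dot F^a_{1,\infty}}.
\]
This is the form that should be fed into $\mathcal D_{21}$ and $\mathcal D_3$. There the shifted point is $x-(t-s)v-\frac{s\alpha}{t}$ in $\mathcal D_{21}$ and $\mathcal X^\pm(s;t,x-\alpha,v-\frac{\alpha}{t})$ in $\mathcal D_3$. In both cases it is again a point where a change of variables with controlled Jacobian is available.
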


We also use the following interpolation inequality, which controls the fractional  seminorm in terms of first-order Sobolev norms.

\begin{lemma}\label{chazhi00}
Let $a\in(0,1)$ and $p\in\{1,\infty\}$. If $g\in W^{1,p}(\mathbb{R}^d)$, then
\[
\|g\|_{\dot B^a_{p,\infty}}
\lesssim
\|g\|_{L^p}^{1-a}\|\nabla g\|_{L^p}^a,
\]
where the implicit constant depends only on $a$ and $d$.
\end{lemma}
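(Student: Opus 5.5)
We prove the interpolation bound of Lemma~\ref{chazhi00} through the difference-quotient characterization \eqref{Besov} of $\dot B^a_{p,\infty}$. It therefore suffices to show that for every $\alpha\in\mathbb R^d\setminus\{0\}$,
\[
\frac{\|\delta_\alpha g\|_{L^p}}{|\alpha|^a}\lesssim \|g\|_{L^p}^{1-a}\|\nabla g\|_{L^p}^{a},
\]
with a constant independent of $\alpha$. Taking the supremum over $\alpha$ then gives the claim, with the implicit constant depending only on $a$ and $d$ through the equivalence constants in \eqref{Besov}.

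We bound $\|\delta_\alpha g\|_{L^p}$ in two ways. First, the triangle inequality and translation invariance of $L^p$ give $\|\delta_\alpha g\|_{L^p}\le 2\|g\|_{L^p}$. Second, for smooth $g$ we write
\[
\delta_\alpha g(x)=\int_0^1 \alpha\cdot\nabla g(x-\alpha+\theta\alpha)\,d\theta,
\]
and Minkowski's integral inequality (valid for $p=1$ and $p=\infty$) yields $\|\delta_\alpha g\|_{L^p}\le |\alpha|\,\|\nabla g\|_{L^p}$. For $p=1$ we extend this to all of $W^{1,1}$ by density of smooth functions. For $p=\infty$ no density argument is needed: a $W^{1,\infty}$ function has a Lipschitz representative with constant $\|\nabla g\|_{L^\infty}$, which gives the bound directly.

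Combining the two bounds,
\[
\frac{\|\delta_\alpha g\|_{L^p}}{|\alpha|^a}\le \min\bigl\{2|\alpha|^{-a}\|g\|_{L^p},\ |\alpha|^{1-a}\|\nabla g\|_{L^p}\bigr\}.
\]
We then use $\min\{A,B\}\le A^{1-a}B^{a}$ for $A,B\ge0$. The powers of $|\alpha|$ cancel exactly, since $-a(1-a)+(1-a)a=0$, and we obtain the bound $2^{1-a}\|g\|_{L^p}^{1-a}\|\nabla g\|_{L^p}^a$ uniformly in $\alpha$. This is the desired estimate. Equivalently, one can split into $|\alpha|\le \lambda$ and $|\alpha|>\lambda$ with the optimal threshold $\lambda=\|g\|_{L^p}/\|\nabla g\|_{L^p}$. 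The degenerate case $\nabla g=0$ forces $g=0$ in $L^p$ for $p=1$, and for $p=\infty$ it makes $g$ constant, so the seminorm vanishes.

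There is no serious obstacle. The only delicate point is that the Besov quantity in \eqref{semi-norm} is defined dyadically, so the argument rests on the equivalence \eqref{Besov}, which the paper takes as standard. If one prefers a dyadic proof, the alternative is to estimate $\|\dot\Delta_j g\|_{L^p}\lesssim \min\{\|g\|_{L^p},\,2^{-j}\|\nabla g\|_{L^p}\}$ using Bernstein's inequality, multiply by $2^{ja}$, and apply the same $\min\le A^{1-a}B^a$ trick. The cancellation of the $2^{j}$ powers is identical.
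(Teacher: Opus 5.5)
Your proposal is correct and follows essentially the same route as the paper: the difference-quotient characterization \eqref{Besov}, the two bounds $2\|g\|_{L^p}$ and $|\alpha|\,\|\nabla g\|_{L^p}$ for $\|\delta_\alpha g\|_{L^p}$, and then balancing them. The paper does the balancing by splitting at $|\alpha|=R$ and optimizing in $R$, while your inequality $\min\{A,B\}\le A^{1-a}B^a$ is the same optimization done pointwise in $\alpha$. Your density and Lipschitz-representative remarks justify the mean value step, which the paper uses without comment.
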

\begin{proof}
We estimate the Besov difference quotient by separating large and small increments.

\noindent \textit{Case 1. $|\alpha| \geq R$.}
Using the triangle inequality, we have
\[
\Big\|\frac{g(\cdot)-g(\cdot-\alpha)}{|\alpha|^a}\Big\|_{L^p}
\le 2|\alpha|^{-a}\|g\|_{L^p}
\le 2R^{-a}\|g\|_{L^p}.
\]

\noindent \textit{Case 2. $|\alpha| < R$.}
By the mean value theorem,
\[
\Big\| \frac{g(\cdot)-g(\cdot-\alpha)}{|\alpha|^a} \Big\|_{L^p} \leq R^{1-a} \|\nabla g\|_{L^p}.
\]

Combining the two bounds, we obtain
\[
\|g\|_{\dot B^a_{p,\infty}}
\lesssim
R^{-a}\|g\|_{L^p}+R^{1-a}\|\nabla g\|_{L^p}.
\]
Optimizing in $R$ yields the desired estimate.
\end{proof}

%\subsection{Organization of the Paper}
The remainder of this paper is organized as follows.
%In Section~\ref{dierjie}, we collect several preliminary estimates for the electric field and its derivatives, together with the interpolation inequalities that will be used throughout the paper.
Section~\ref{disanjie} is devoted to the local well-posedness theory for the system \eqref{VKG} and the formulation of the bootstrap framework.
In Section~\ref{disijie}, we derive the estimates for the density and its spatial derivatives on any finite time intervals.
Section~\ref{diwujie} is concerned with the large-time analysis, based on Fourier analysis and pointwise estimates for the associated kernels.
Finally, in Section~\ref{diliujie}, we combine the preceding estimates to close the bootstrap argument and complete the proof of Theorem~\ref{Theorem1} and Corollary \ref{tuilun}.
In Appendix~\ref{diqijie} we present the proofs of several technical lemmas and propositions.

%%%%%%%%%%%%%%%%%%%%%%%%%%%%%%%%%%%%%%%%%%%%%%%%%%%%%
%\section{Preliminaries}

%%%%%%%%%%%%%%%%%%%%%%%%%%%%%%%%%%%%%%%%%%%%%%%%%%
\setcounter{equation}{0}
\section{Bootstrap Framework and Characteristic Estimates}\label{disanjie}
\subsection{Bootstrap Assumptions}
We begin with a local well-posedness result for the perturbation system \eqref{VKG}.
In addition to local existence and uniqueness in the strong-solution class, we show that
the smallness of the initial net perturbation is propagated on the local existence interval.
This provides the starting point for the bootstrap argument used in the proof of Theorem~\ref{Theorem1}.
For completeness, the proof is given in Appendix~\ref{diqijie}.

\begin{proposition}[Local well-posedness]\label{Local solutions}
Assume that the initial perturbations satisfy
\[
\|f_{\mathrm{in}}^\pm\|_{W^{1,1}(\mathbb{R}^d\times\mathbb{R}^d)}+\| f_{\mathrm{in}}^\pm\|_{W^{2,\infty}_k(\mathbb{R}^d\times\mathbb{R}^d)}\leq M_0, \quad k>d, \quad M_0\geq1.
\]
Then there exists a sufficiently small constant $c_*>0$, depending only on
$d$, $k$, and $\mu$, such that
\[
t_1:=\min\Bigl\{1,\frac{c_*}{C_0(M_0+1)}\Bigr\}>0,
\]
and the system \eqref{VKG} admits a unique strong solution
\[
f^\pm\in \mathcal{C}([0,t_1]; W^{1,1}(\mathbb{R}^d\times \mathbb{R}^d)
\cap W_{k-1}^{1,\infty}(\mathbb{R}^d\times \mathbb{R}^d))
\cap L^\infty([0,t_1]; W_k^{2,\infty}(\mathbb{R}^d\times \mathbb{R}^d)).
\]

Moreover, there exists a small constant $\varepsilon_0=\varepsilon_0(M_0,d,k,\mu)>0$ such that, if
\[
\|f_{\mathrm{in}}\|_{{L_v^1L_x^1}}+
\|f_{\mathrm{in}}\|_{{L_v^1L_x^\infty}}+
\|\nabla_xf_{\mathrm{in}}\|_{{L_v^1L_x^1}}
+\|\nabla_xf_{\mathrm{in}}\|_{{L_v^1L_x^\infty}}\leq \varepsilon_0,
\]
then
\[
\sup\limits_{0\leq t\leq t_1}\left(\|\rho(t)\|_{{L^1_x}}
+\|\rho(t)\|_{{L^\infty_x}}+\|\nabla_x\rho(t)\|_{{L^1_x}}
+\|\nabla_x\rho(t)\|_{{L^\infty_x}}\right)\leq 2\varepsilon_0.
\]
\end{proposition}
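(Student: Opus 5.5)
The proof splits into two parts: local existence via a contraction/iteration on a short interval, then propagation of smallness of the net charge by a Gr\"onwall argument for the difference $f=f^+-f^-$.

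\textbf{Local existence.} I would run a standard Picard iteration along characteristics. Given a field $E^{n}$, define $f^{\pm,n+1}$ by solving the linear transport equations
\[
\partial_t f^{\pm,n+1}+v\cdot\nabla_x f^{\pm,n+1}\pm E^n\cdot\nabla_v f^{\pm,n+1}=\mp E^n\cdot\nabla_v\mu,
\]
with characteristics $(X^\pm,V^\pm)$ generated by $\dot X=V$, $\dot V=\pm E^n(X)$. Then set $E^{n+1}=T\rho^{n+1}$.

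By Lemma~\ref{field01}, $\|E^n\|_{W^{1,\infty}}+\|\nabla^2E^n\|_{L^\infty}$ is controlled by $\|\rho^n\|_{W^{2,\infty}}$. This in turn is controlled by $\|f^{\pm,n}\|_{W^{2,\infty}_k}$, using $k>d$ so that $\int\langle v\rangle^{-k}dv<\infty$. The $W^{1,1}$ part is controlled the same way.

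Differentiating the equations, the weighted norm $\mathcal E_n(t):=\sum_\pm\big(\|f^{\pm,n}\|_{W^{1,1}}+\|f^{\pm,n}\|_{W^{2,\infty}_k}\big)$ satisfies
\[
\mathcal E_{n+1}(t)\le M_0+C_0\int_0^t(1+\mathcal E_n)(1+\mathcal E_{n+1})\,ds .
\]
The weight $\langle v\rangle^k$ commutes with the flow up to a factor $e^{Ct\|E\|_\infty}$, and the source term uses Assumption~\ref{ass:regularity} for $\nabla_v\mu$. Choosing $t_1=\min\{1,c_*/(C_0(M_0+1))\}$ keeps $\mathcal E_n\le 2M_0+1$ uniformly in $n$.

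Contraction is shown in the weaker norm $W^{1,1}\cap W^{1,\infty}_{k-1}$; the loss of one weight and one derivative is absorbed by the uniform $W^{2,\infty}_k$ bound. Passing to the limit gives the solution, weak-$\ast$ in $L^\infty W^{2,\infty}_k$ and strongly in $C([0,t_1];W^{1,1}\cap W^{1,\infty}_{k-1})$. Uniqueness follows by the same difference estimate.

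\textbf{Propagation of smallness.} Subtract the two equations. With $f=f^+-f^-$ and $g=f^++f^-$,
\[
\partial_t f+v\cdot\nabla_x f+E\cdot\nabla_v g=-2E\cdot\nabla_v\mu .
\]
Integrating in $v$, the $\nabla_v$ terms vanish, giving $\partial_t\rho+\nabla_x\cdot j=0$ with $j=\int v f\,dv$. This route is not directly closable, because $j$ involves $f$ rather than $\rho$ alone.

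Instead I would estimate $f$ along free transport. By Duhamel,
\[
f(t,x,v)=f_{\mathrm{in}}(x-tv,v)-\int_0^t\big(E\cdot\nabla_v g+2E\cdot\nabla_v\mu\big)(s,x-(t-s)v,v)\,ds .
\]
Integrating in $v$ and taking $L^1_x$ and $L^\infty_x$ norms, the initial term is bounded by $\|f_{\mathrm{in}}\|_{L^1_vL^1_x}+\|f_{\mathrm{in}}\|_{L^1_vL^\infty_x}$ uniformly in $t$: the $L^\infty_x$ bound is taken before the $v$-integration, which is why the mixed $L^1_vL^\infty_x$ norm appears in the hypothesis. The integral terms are bounded by
\[
C\int_0^t\|E(s)\|_{L^1\cap L^\infty}\big(\|\langle v\rangle^k\nabla_v g\|_{L^\infty}+\|\nabla_v g\|_{L^1}+C_\mu\big)\,ds .
\]
Here $\|\nabla_v g\|$ is at most $CM_0$ by Step 1. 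By Lemma~\ref{field01}, $\|E\|_{L^p}\le C\|\nabla_x\rho\|_{L^p}$. The $\nabla_x$ quantities are handled identically after differentiating Duhamel in $x$. The differentiated equation has the extra term $\nabla_xE\cdot\nabla_v g$, controlled by $\|\nabla_x E\|_{L^p}\lesssim\|\nabla_x\rho\|_{L^p}$ by Lemma~\ref{field01}, and the term $E\cdot\nabla_v\nabla_x g$, controlled by the $W^{2,\infty}_k$ and $W^{1,1}$ bounds (the latter using $\nabla^2 f^\pm$ in $L^1$, which I would also propagate, or else interpolate).

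Setting $Y(t):=\|\rho\|_{L^1\cap L^\infty}+\|\nabla_x\rho\|_{L^1\cap L^\infty}$, this yields
\[
Y(t)\le\varepsilon_0+C_0(M_0+1)\int_0^tY(s)\,ds,
\]
and Gr\"onwall gives $Y(t)\le\varepsilon_0e^{C_0(M_0+1)t_1}\le\varepsilon_0e^{c_*}\le2\varepsilon_0$ once $c_*\le\ln2$. The main obstacle is making the $L^\infty_x$ bounds on $\nabla_x\rho$ close using only the norms available. The key is to bound $\nabla_v g$ (large, order $M_0$) in $L^\infty_x$ and $L^1_x$ with the weight in $v$, while putting the smallness entirely on $E$; this is why $t_1$ must shrink like $1/M_0$.
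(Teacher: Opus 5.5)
Your proposal follows the paper's proof essentially step for step: a Picard iteration along the characteristics of $E_n$ with uniform $W^{1,1}\cap W^{2,\infty}_k$ bounds closed on $[0,t_1]$ by Gr\"onwall, followed by the free-transport Duhamel formula for $f=f^+-f^-$. In that formula the large factor $\nabla_v(f^++f^-)$ is bounded by $C(M_0+1)$ in $L^1_vL^\infty_x$ using the weight $\langle v\rangle^k$ with $k>d$, all the smallness is carried by $E$, and one gets $Y(t)\le\varepsilon_0e^{C(M_0+1)t}\le2\varepsilon_0$.

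The only difference is in the contraction step. The paper contracts at zeroth order, in $\sup_{s\le t}\|\rho^\pm_{n+1}(s)-\rho^\pm_n(s)\|_{L^\infty_x}$, whereas you contract at first order. If you do the latter, keep the weight $\langle v\rangle^k$ in the contraction norm, not $\langle v\rangle^{k-1}$. The difference equation loses no weight, and with weight $k-1\le d$ you could not control $\nabla_x(\rho^n-\rho^{n-1})$, and hence $\nabla_x(E^n-E^{n-1})$, in $L^\infty_x$. Reserve $k-1$ for the time-continuity statement, where $v\cdot\nabla_x f$ costs one power of $\langle v\rangle$.
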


Proposition~\ref{Local solutions} yields a solution on the initial time interval $[0,t_1]$, which provides the starting point for the continuation argument.
To extend the solution beyond $t_1$, we introduce the following bootstrap assumptions.
Let $T>t_1$ be the maximal time such that the solution exists on $[0,T)$ and, for every $t\in[0,T)$, the charge density satisfies
\begin{equation}\label{ziyouliu02}
\left\{
\begin{aligned}
\|\rho(t)\|_{L^\infty_x}
&\le \varepsilon \langle t\rangle^{-d+\gamma},\\
\|\rho(t)\|_{\dot B^a_{\infty,\infty}}
&\le \varepsilon \langle t\rangle^{-(d+a)+\gamma},\\
\|\rho(t)\|_{\dot B^a_{1,\infty}}
&\le \varepsilon \langle t\rangle^{-a+\gamma},\\
\|\nabla_x \rho(t)\|_{L^\infty_x}
&\le \varepsilon \langle t\rangle^{-(d+1)+\gamma},\\
\|\nabla_x \rho(t)\|_{L^1_x}
&\le \varepsilon \langle t\rangle^{-1+\gamma}.
\end{aligned}
\right.
\end{equation}
Here $\varepsilon>0$ is taken sufficiently small. We choose $a \in (\tfrac{\gamma}{\,d-\gamma-2\,},1)$, and fix $\gamma>0$ sufficiently small so that all estimates in the subsequent analysis can be closed.

Combining the bootstrap bounds on $\nabla_x\rho$ with Lemma~\ref{field01}, and then applying Lemma~\ref{chazhi00} to $E(t)$, we obtain
\begin{equation}\label{ziyouliu01}
\left\{
\begin{aligned}
\|E(t)\|_{L^\infty_x}
+\|E(t)\|_{\dot B^a_{\infty,\infty}}
+\|\nabla_x E(t)\|_{L^\infty_x}
&\lesssim \varepsilon \langle t\rangle^{-(d+1)+\gamma},\\[1mm]
\|E(t)\|_{L^1_x}
+\|E(t)\|_{\dot B^a_{1,\infty}}
+\|\nabla_x E(t)\|_{L^1_x}
&\lesssim \varepsilon \langle t\rangle^{-1+\gamma}.
\end{aligned}
\right.
\end{equation}
In particular,
\[
E\in L^\infty([0,T);W^{1,\infty}(\mathbb{R}^d)).
\]
A corresponding $W^{2,\infty}$ bound for $E$, established later from Lemma~\ref{youjie010} and the density estimates, will also be used in the characteristic analysis.

\subsection{Estimates on the Characteristic Flow}\label{sec3.2}
For $0\le s\le t$, let $(\mathcal{X}^{\pm},\mathcal{V}^{\pm})$ be the characteristic flow associated with \eqref{VKG}, defined by
\begin{equation}\label{character1}
\left\{
  \begin{array}{ll}
    \dot{\mathcal{X}}^{\pm}(s;t,x,v)=\mathcal{V}^{\pm}(s;t,x,v), & \mathcal{X}^{\pm}(t;t,x,v)=x,  \\[1mm]
    \dot{\mathcal{V}}^{\pm}(s;t,x,v)=\pm E(s,\mathcal{X}^{\pm}(s;t,x,v)), & \mathcal{V}^{\pm}(t;t,x,v)=v.
  \end{array}
\right.
\end{equation}
For convenience, we abbreviate
$$
(\mathcal{X}^{\pm}(s),\mathcal{V}^{\pm}(s))
:=(\mathcal{X}^{\pm}(s;t,x,v), \mathcal{V}^{\pm}(s;t,x,v)).
$$
Integrating \eqref{character1}, we obtain for all $0\le s\le t$,
\begin{align}\label{z4}
\left\{
  \begin{array}{ll}
  \mathcal{X}^{\pm}(s)=x-(t-s)v+Y_{s,t}^{\pm}(x-tv,v), \\[1mm]
    \mathcal{V}^{\pm}(s)=v+W_{s,t}^{\pm}(x-tv,v),
  \end{array}
\right.
\end{align}
where $(Y_{s,t}^{\pm},W_{s,t}^{\pm})$ satisfy
\begin{align}\label{character001}
\left\{
  \begin{array}{ll}
  Y_{s,t}^\pm(x,v)=\pm\di \int_{s}^{t}(\tau-s)E\left(\tau,x+\tau v+Y_{\tau,t}^\pm(x,v)\right)\, d\tau,\\
    W_{s,t}^\pm(x,v)=\mp\di \int_{s}^{t}E\left(\tau,x+\tau v+Y_{\tau,t}^\pm(x,v)\right)\, d\tau.
  \end{array}
\right.
\end{align}

The rest of this subsection is devoted to quantitative bounds on $Y_{s,t}^{\pm}$ and $W_{s,t}^{\pm}$.
These estimates measure the deviation of the characteristic flow from free transport and play a central role in the bootstrap argument developed later.

\begin{lemma}\label{estimates001}
Assume that the field estimates \eqref{ziyouliu01} hold on $[0,T)$.
Then for all $0\le s\le t<T$,
the functions $Y^\pm_{s,t}$ and $W^\pm_{s,t}$ defined in \eqref{z4}
satisfy
\begin{align}\label{tezhengY}
&\sup_{0\leq s\leq t}\langle s\rangle^{d-\gamma-1}\Big(\|Y_{s,t}^\pm\|_{L^\infty_{x,v}}
+\|\nabla_x Y_{s,t}^\pm\|_{L^\infty_{x,v}}+\sup_{\alpha}\frac{\|\delta_{\alpha}^x	Y_{s,t}^\pm\|_{L^\infty_{x,v}}}{|\alpha|^a}\Big)\nonumber\\
&\quad\quad +\sup_{0\leq s\leq t}\Big(\langle s\rangle^{d-\gamma-2}\|\nabla_v Y_{s,t}^\pm\|_{L^\infty_{x,v}}+\langle s\rangle^{d-a-\gamma-1}\sup_{\alpha}\frac{\|\delta_{\alpha}^v	Y_{s,t}^\pm\|_{L^\infty_{x,v}}}{|\alpha|^a}\Big)
\lesssim \varepsilon,
\end{align}
and
\begin{align}\label{tezhengW}
&\sup_{0\leq s\leq t}\langle s\rangle^{d-\gamma}\Big(\|W_{s,t}^\pm\|_{L^\infty_{x,v}}
+\|\nabla_x W_{s,t}^\pm\|_{L^\infty_{x,v}}+\sup_{\alpha}\frac{\|\delta_{\alpha}^x	W_{s,t}^\pm\|_{L^\infty_{x,v}}}{|\alpha|^a}\Big)\nonumber\\
&\quad\quad +\sup_{0\leq s\leq t}\Big(\langle s\rangle^{d-\gamma-1}\|\nabla_v W_{s,t}^\pm\|_{L^\infty_{x,v}}+\langle s\rangle^{d-a-\gamma}\sup_{\alpha}\frac{\|\delta_{\alpha}^v	W_{s,t}^\pm\|_{L^\infty_{x,v}}}{|\alpha|^a}\Big)
\lesssim \varepsilon.
\end{align}
Here
\[
\delta_{\alpha}^xY_{s,t}^\pm(x,v)=Y_{s,t}^\pm(x,v)-Y_{s,t}^\pm(x-\alpha,v), \  \ \delta_{\alpha}^vY_{s,t}^\pm(x,v)=Y_{s,t}^\pm(x,v)-Y_{s,t}^\pm(x,v-\alpha),
\]
and similarly for $W_{s,t}^{\pm}$.

Moreover, for all $0\le s\le t$ and for $x,v\in \mathbb{R}^d$,
the maps
\[
x\mapsto \mathcal X^\pm(s;t,x,v),
\qquad
v\mapsto \mathcal V^\pm(s;t,x,v),
\]
are $\mathcal C^1$ diffeomorphisms.
\end{lemma}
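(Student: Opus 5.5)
The plan is a continuity (fixed-point) argument on the integral equations \eqref{character001}, run for fixed $t<T$ and working backward in $s$. Set
\[
\mathcal{A}(s):=\langle s\rangle^{d-\gamma-1}\|Y_{s,t}^\pm\|_{L^\infty_{x,v}}+\langle s\rangle^{d-\gamma}\|W_{s,t}^\pm\|_{L^\infty_{x,v}},
\]
and let $\mathcal{B}(s)$ be the analogous weighted quantity built from $\nabla_x,\nabla_v$ of $Y,W$ with the weights in \eqref{tezhengY}–\eqref{tezhengW}. Since $Y_{t,t}=W_{t,t}=0$, we can assume a priori that $\mathcal{A}+\mathcal{B}\le C_1\varepsilon$ on $[s_0,t]$ and then improve this to $\tfrac12 C_1\varepsilon$ for small $\varepsilon$.

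\textbf{Zeroth-order bounds.} For $W$, \eqref{ziyouliu01} gives
\[
|W_{s,t}|\le\int_s^t\|E(\tau)\|_{L^\infty}\,d\tau\lesssim \varepsilon\int_s^\infty \langle\tau\rangle^{-(d+1)+\gamma}\,d\tau\lesssim\varepsilon\langle s\rangle^{-d+\gamma}.
\]
For $Y$, the extra factor $(\tau-s)\le \tau$ costs one power and yields $\varepsilon\langle s\rangle^{-d+\gamma+1}$. These bounds need no bootstrap at all, since only $\|E\|_{L^\infty}$ enters.

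\textbf{Derivative bounds.} Differentiating \eqref{character001} in $x$ gives
\[
\nabla_xY_{s,t}=\pm\int_s^t(\tau-s)\nabla E(\tau,\cdot)\,(I+\nabla_xY_{\tau,t})\,d\tau .
\]
Using $\|\nabla E\|_{L^\infty}\lesssim\varepsilon\langle\tau\rangle^{-(d+1)+\gamma}$ and the bootstrap bound $|\nabla_xY_{\tau,t}|\le C_1\varepsilon$, we get $|\nabla_xY_{s,t}|\lesssim \varepsilon\langle s\rangle^{-d+\gamma+1}(1+C_1\varepsilon)$. A Gronwall argument in $s$ closes this one. For $\nabla_v$, the argument of $E$ contains $\tau v$, so
\[
\nabla_vY_{s,t}=\pm\int_s^t(\tau-s)\nabla E\,(\tau I+\nabla_vY_{\tau,t})\,d\tau .
\]
The extra factor $\tau$ loses one more power, which gives exactly the weights $\langle s\rangle^{d-\gamma-2}$ for $\nabla_vY$ and $\langle s\rangle^{d-\gamma-1}$ for $\nabla_vW$. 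Here $d\ge3$ and small $\gamma$ keep $d-\gamma-2>0$, so $\int_s^\infty \tau^2\langle\tau\rangle^{-(d+1)+\gamma}\,d\tau$ converges. The $W$-derivatives go the same way without the factor $(\tau-s)$.

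\textbf{Hölder quotients.} Write
\[
\delta^x_\alpha Y_{s,t}=\pm\int_s^t(\tau-s)\Big[E(\tau,x+\tau v+Y_{\tau,t}(x,v))-E(\tau,x-\alpha+\tau v+Y_{\tau,t}(x-\alpha,v))\Big]\,d\tau .
\]
The spatial displacement inside $E$ is $\alpha+\delta^x_\alpha Y_{\tau,t}$, so by $\|E\|_{\dot B^a_{\infty,\infty}}$ from \eqref{ziyouliu01} (pointwise via Lemma~\ref{lem:difference-F}(i) together with Lemma~\ref{field02}(ii)) this is bounded by $\|E(\tau)\|_{\dot B^a_{\infty,\infty}}\,|\alpha+\delta^x_\alpha Y_{\tau,t}|^a$. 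Using $|\delta^x_\alpha Y|\le C_1\varepsilon|\alpha|^a$ is awkward for large $|\alpha|$. Instead I would bound $|\delta_\alpha^xY|\le\|\nabla_xY\|\,|\alpha|\lesssim\varepsilon|\alpha|$, so that $|\alpha+\delta Y|^a\lesssim|\alpha|^a$. This gives weight $\langle s\rangle^{-(d+a)+\gamma+2}$ after integration, which is better than needed. A simpler alternative is to interpolate $\min(2\|Y\|_{L^\infty},\|\nabla_xY\|\,|\alpha|)$, giving $\lesssim \varepsilon\langle s\rangle^{-d+\gamma+1}|\alpha|^a$ directly. I would use the interpolation for all four quotients. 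For $\delta^v_\alpha$, $\min(2\|Y\|,\|\nabla_vY\||\alpha|)\le \|Y\|^{1-a}\|\nabla_vY\|^a|\alpha|^a$, which gives weight $\langle s\rangle^{-(d-\gamma-1)+a}$, matching $\langle s\rangle^{d-a-\gamma-1}$. The same holds for $W$.

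\textbf{Diffeomorphism claim.} The map $x\mapsto\mathcal{X}^\pm(s;t,x,v)$ equals $x-(t-s)v+Y_{s,t}(x-tv,v)$, whose Jacobian is $I+\nabla_xY$ with $\|\nabla_xY\|\lesssim\varepsilon<1/2$. It is therefore a $C^1$ perturbation of a translation and a global diffeomorphism by Hadamard's theorem, or by a Banach fixed point giving injectivity and surjectivity. Similarly, $v\mapsto \mathcal V^\pm=v+W_{s,t}(x-tv,v)$ has Jacobian $I-t\nabla_xW+\nabla_vW$. This step is the main obstacle: the term $t\nabla_xW_{s,t}$ is only bounded by $t\,\varepsilon\langle s\rangle^{-d+\gamma}$, which is not small when $s\ll t$. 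I would instead use the representation $\mathcal V(s)=v\mp\int_s^t E(\tau,\mathcal X(\tau))\,d\tau$ directly. Differentiating in $v$ gives $\partial_v\mathcal X(\tau)$, which solves a linear ODE and satisfies $|\partial_v\mathcal X(\tau)|\lesssim (t-\tau)$. Hence
\[
|\partial_v\mathcal V(s)-I|\lesssim\int_s^t (t-\tau)\|\nabla E(\tau)\|_{L^\infty}\,d\tau .
\]
This is small only if $(t-\tau)\langle\tau\rangle^{-(d+1)+\gamma}$ is integrable uniformly in $t$, which fails because it grows like $t$. I would therefore fall back on a degree or ODE-flow argument: $v\mapsto\mathcal V(s;t,x,v)$ is a composition of the time-$(t\to s)$ flow maps of a smooth Hamiltonian-type vector field restricted to the fiber. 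Injectivity comes from uniqueness of characteristics, and surjectivity together with nondegeneracy of the Jacobian has to be checked through the smallness of $\nabla_vW$ combined with the chain rule. This is the point I would scrutinize most carefully.
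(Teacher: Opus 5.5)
\paragraph{The bounds.} Your proof of \eqref{tezhengY}--\eqref{tezhengW} is sound. The zeroth-order estimates are the paper's, and so is the Gronwall treatment of $\nabla_x$ and $\nabla_v$ (with the extra factor $\tau$ from $x+\tau v$). Your continuity framing is just a repackaging of its Gronwall step.

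For the H\"older quotients your final route is genuinely different. The paper estimates $\delta^x_\alpha Y_{s,t}$ and $\delta^v_\alpha Y_{s,t}$ directly from \eqref{character001}. It inserts $\|E(\tau)\|_{\dot B^a_{\infty,\infty}}$ and uses $|\alpha|+|\delta^x_\alpha Y_{\tau,t}|\lesssim|\alpha|(1+\|\nabla_xY_{\tau,t}\|_{L^\infty})$, resp.\ $|\alpha|\tau+|\delta^v_\alpha Y_{\tau,t}|\lesssim|\alpha|(\tau+\|\nabla_vY_{\tau,t}\|_{L^\infty})$. You instead interpolate $\min(2\|Y\|_{L^\infty},|\alpha|\|\nabla Y\|_{L^\infty})\le 2\|Y\|_{L^\infty}^{1-a}\|\nabla Y\|_{L^\infty}^{a}|\alpha|^a$ between bounds already proved. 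Your route is more elementary, uses no H\"older information on $E$, and gives the same four weights.

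There is a minor slip in your aside. With \eqref{ziyouliu01} the Besov route gives exactly $\langle s\rangle^{-(d-\gamma-1)}$, and the rate $\langle s\rangle^{-(d+a)+\gamma+2}$ you quote would be weaker than needed, not better. Since you discard that route, it does no harm. The claim for $x\mapsto\mathcal X^\pm$ is also handled correctly.

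\paragraph{The gap.} You leave open the claim that $v\mapsto\mathcal V^\pm(s;t,x,v)$ is a diffeomorphism, and the fallback you sketch does not work:
\begin{itemize}
\item Uniqueness of characteristics gives injectivity of the phase-space map $(x,v)\mapsto(\mathcal X^\pm(s),\mathcal V^\pm(s))$. It does not give injectivity of the velocity component on a fixed fiber $\{x\}\times\mathbb{R}^d$: characteristics through $(x,v_1)$ and $(x,v_2)$ at time $t$ may have equal velocities at time $s$ at different positions.
\item ``Smallness of $\nabla_vW$ plus the chain rule'' is exactly what generates the uncontrolled term $-t\nabla_xW^\pm_{s,t}$.
\end{itemize}

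Your diagnosis is correct, and it applies to the paper too. Its proof asserts $\nabla_v\mathcal V^\pm=I+\nabla_vW^\pm_{s,t}$. That is the Jacobian of the straightened map $v\mapsto\mathcal V^\pm(s;t,x+tv,v)=v+W^\pm_{s,t}(x,v)$, not of $v\mapsto\mathcal V^\pm(s;t,x,v)$.

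In fact the literal statement does not follow from \eqref{ziyouliu01} when $s\ll t$ and $\varepsilon t\gtrsim 1$. Take $E=\varepsilon\chi(\tau)\nabla\phi(y)$ with $\chi\ge0$ supported in $[0,1]$, $\int\chi=1$, and $\phi$ a Gaussian. Then for bounded $|tv|$ one finds $\nabla_v\mathcal V^\pm(0;t,0,v)=I\pm\varepsilon t\,\nabla^2\phi(-tv)+O(\varepsilon+\varepsilon^2 t)$. Its determinant changes sign once $\varepsilon t$ is a large constant.

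So this step can only be closed in a reformulated form. In the straightened variables the Jacobian is $I+\nabla_vW^\pm_{s,t}$, with $\|\nabla_vW^\pm_{s,t}\|_{L^\infty}\lesssim\varepsilon$ by \eqref{tezhengW}, and Hadamard's theorem applies exactly as for $\mathcal X^\pm$. In the original variables the claim holds only where $\varepsilon t\langle s\rangle^{-(d-\gamma)}\ll 1$.
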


\begin{proof}
We first estimate $Y_{s,t}^{\pm}$. From~\eqref{character001} and \eqref{ziyouliu01}, we obtain
\[
\|Y_{s,t}^\pm\|_{L^\infty_{x,v}}
\leq\int_s^t(\tau-s)\|E(\tau)\|_{L^\infty_x}\, d\tau\lesssim
\varepsilon \int_s^t \frac{\tau-s}{\langle \tau\rangle^{d+1-\gamma}}\,d\tau
\lesssim\frac{\varepsilon}{\langle s\rangle^{d-\gamma-1}}.
\]

Differentiating \eqref{character001} with respect to $x_i$, and using the chain rule, we obtain
\[
\|\partial_{x_i}Y_{s,t}^{\pm}\|_{L^\infty_{x,v}}
\leq \int_s^t(\tau-s)\|\nabla_xE(\tau)\|_{L^\infty_x}
\Big(1+\|\partial_{x_i}Y_{\tau,t}^{\pm}\|_{L^\infty_{x,v}}\Big)\, d\tau.
\]
It follows from~\eqref{ziyouliu01} that
\[
\|\partial_{x_i}Y_{s,t}^{\pm}\|_{L^\infty_{x,v}}
\lesssim  \varepsilon\int_s^t \frac{\tau-s}{\langle \tau\rangle^{d-\gamma+1}}\,d\tau
+\varepsilon\int_s^t\frac{\tau-s}{\langle \tau\rangle^{d-\gamma+1}}
\|\partial_{x_i}Y_{\tau,t}^{\pm}\|_{L^\infty_{x,v}}\, d\tau.
\]
Multiplying by $\langle s\rangle^{d-\gamma-1}$, and applying Gronwall's inequality, we have
\[
\sup\limits_{0\leq s\leq t}\langle s\rangle^{d-\gamma-1}
\|\nabla_xY_{s,t}^\pm\|_{L^\infty_{x,v}}\lesssim\varepsilon.
\]

Differentiating \eqref{character001} with respect to $v_i$ yields
\[
\|\partial_{v_i}Y_{s,t}^{\pm}\|_{L^\infty_{x,v}}
\leq \int_s^t(\tau-s)\|\nabla_xE(\tau)\|_{L^\infty_x}
\Big(\tau+\|\partial_{v_i}Y_{\tau,t}^{\pm}\|_{L^\infty_{x,v}}\Big)\, d\tau.
\]
Using~\eqref{ziyouliu01}, it holds that
\[
\|\partial_{v_i}Y_{s,t}^{\pm}\|_{L^\infty_{x,v}}
\lesssim \varepsilon\int_s^t \frac{(\tau-s)\tau}{\langle \tau\rangle^{d-\gamma+1}}\,d\tau
+\varepsilon\int_s^t\frac{\tau-s}{\langle \tau\rangle^{d-\gamma+1}}
\|\partial_{v_i}Y_{\tau,t}^{\pm}\|_{L^\infty_{x,v}}\, d\tau.
\]
By Gronwall's inequality, we have
\[
\sup\limits_{0\leq s\leq t}\langle s\rangle^{d-\gamma-2}
\|\nabla_vY_{s,t}^\pm\|_{L^\infty_{x,v}}\lesssim\varepsilon.
\]

Applying the mean-value inequality and the bound on $\nabla_x Y_{\tau,t}^\pm$, we find
\begin{align*}
\sup_{\alpha}\frac{\|\delta_{\alpha}^x	Y_{s,t}^\pm\|_{L^\infty_{x,v}}}{|\alpha|^a}
\leq&  \int_{s}^{t}(\tau-s)\| E(\tau)\|_{\dot{B}^a_{\infty,\infty}} \sup_{\alpha}\frac{(|\alpha|+\|\delta_\alpha^x Y_{\tau,t}^\pm\|_{L^\infty_{x,v}})^a}{|\alpha|^a}\, d\tau\\
\lesssim&\ \varepsilon\int_{s}^{t}\frac{(\tau-s)}{\langle \tau\rangle^{d-\gamma+1}} \Big(1+\|\nabla_xY_{\tau,t}^\pm\|_{L^\infty_{x,v}}\Big)^a\, d\tau \\ \lesssim&\frac{\varepsilon}{\langle s\rangle^{d-\gamma-1}}.
\end{align*}
Similarly,
\begin{align*}
\sup_{\alpha}\frac{\|\delta_{\alpha}^vY_{s,t}^\pm\|_{L^\infty_{x,v}}}{|\alpha|^a}
\leq& \int_{s}^{t}(\tau-s)\| E(\tau)\|_{\dot{B}^a_{\infty,\infty}} \sup_{\alpha}\frac{(|\alpha\tau|+\|\delta_\alpha^v Y_{\tau,t}^\pm\|_{L^\infty_{x,v}})^a}{|\alpha|^a}\,d\tau\\
\lesssim&\ \varepsilon\int_{s}^{t}\frac{(\tau-s)}{\langle \tau\rangle^{d-\gamma+1}} \left(\tau+\|\nabla_vY_{\tau,t}^\pm\|_{L^\infty_{x,v}}\right)^{a}\,d\tau\\
\lesssim&\frac{\varepsilon}{\langle s\rangle^{d-a-\gamma-1}}.
\end{align*}

The estimates for $W_{s,t}^\pm$ follow in the same way from the second identity in \eqref{character001}.
Finally, since
\[
\nabla_x\mathcal X^\pm
=I+\nabla_xY^\pm_{s,t},
\qquad
\nabla_v\mathcal V^\pm
=I+\nabla_vW^\pm_{s,t},
\]
and
\[
\|\nabla_xY^\pm_{s,t}\|_{L^\infty}
+\|\nabla_vW^\pm_{s,t}\|_{L^\infty}
\le \varepsilon,
\]
for $\varepsilon$ sufficiently small,
both maps are global
$\mathcal C^1$ diffeomorphisms.
\end{proof}

Specializing Lemma~\ref{estimates001} to the case \(s=0\) and applying the triangle inequality, we obtain the following corollary.

\begin{corollary}
Under the assumptions of Lemma~\ref{estimates001}, for all \(t\ge0\),
\begin{align}\label{cor:estimates001-difference}
&\|W_{0,t}^+-W_{0,t}^-\|_{L^\infty_{x,v}}
+\|Y_{0,t}^+-Y_{0,t}^-\|_{L^\infty_{x,v}}
+\|\nabla_v(W_{0,t}^+-W_{0,t}^-)\|_{L^\infty_{x,v}}\nonumber\\
&\quad+\|\nabla_v(Y_{0,t}^+-Y_{0,t}^-)\|_{L^\infty_{x,v}}+\sup_{\alpha}\frac{\|\delta_\alpha^v(W_{0,t}^+-W_{0,t}^-)\|_{L^\infty_{x,v}}}{|\alpha|^a}
+\sup_{\alpha}\frac{\|\delta_\alpha^v(Y_{0,t}^+-Y_{0,t}^-)\|_{L^\infty_{x,v}}}{|\alpha|^a}
\lesssim \varepsilon.
\end{align}
\end{corollary}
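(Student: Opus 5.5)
The corollary follows directly from Lemma~\ref{estimates001} evaluated at $s=0$, combined with the triangle inequality. The point is that at $s=0$ every time weight $\langle s\rangle^{\beta}$ appearing in \eqref{tezhengY}--\eqref{tezhengW} equals $1$, regardless of the sign of the exponent. For instance, $\langle 0\rangle^{d-\gamma-2}=1$ and $\langle 0\rangle^{d-a-\gamma-1}=1$. So even the $v$-derivative and $v$-difference-quotient bounds, which carry the weakest decay and would grow in $t$ if taken at $s=t$, become bounds uniform in $t$.

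Concretely, I will proceed in three steps.

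\begin{enumerate}
\item Take $s=0$ in \eqref{tezhengY}. Since the left side is a supremum over $0\le s\le t$, it dominates its value at $s=0$. This gives
\[
\|Y^\pm_{0,t}\|_{L^\infty_{x,v}}+\|\nabla_vY^\pm_{0,t}\|_{L^\infty_{x,v}}+\sup_\alpha\frac{\|\delta^v_\alpha Y^\pm_{0,t}\|_{L^\infty_{x,v}}}{|\alpha|^a}\lesssim\varepsilon,
\]
with an implicit constant independent of $t<T$.
\item Do the same with \eqref{tezhengW} to get the analogous bound for $W^\pm_{0,t}$.
\item For each quantity, write the $+$ minus $-$ difference and bound it by the sum of the two individual norms. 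For the difference quotients, use linearity of $\delta^v_\alpha$, so that $\delta^v_\alpha(W^+-W^-)=\delta^v_\alpha W^+-\delta^v_\alpha W^-$. Summing the six terms gives \eqref{cor:estimates001-difference}.
\end{enumerate}

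There is no real obstacle here. The only point to check is uniformity in $t$. This holds because the constants in Lemma~\ref{estimates001} come from integrals such as $\int_0^t \tau^{1+a}\langle\tau\rangle^{-(d+1-\gamma)}\,d\tau$. These are bounded independently of $t$ since $d\ge3$ and $\gamma$ is small, so the integrals converge.

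Finally, the estimate is stated for all $t\ge0$. It holds on $[0,T)$ under the bootstrap assumptions, and therefore globally once the bootstrap is closed.
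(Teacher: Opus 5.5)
Your proposal is correct and is exactly the paper's argument: specialize Lemma~\ref{estimates001} to $s=0$, where every weight $\langle s\rangle^{\beta}$ equals $1$, and apply the triangle inequality (with linearity of $\delta^v_\alpha$) to the $\pm$ differences. Your observation that the bound is really established on $[0,T)$, and extends to all $t\ge0$ once the bootstrap closes, is also consistent with how the paper uses it.
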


\begin{lemma}\label{Lemma3.4-YKB}
Under the assumptions of Lemma~\ref{estimates001}, let
$$
\Xi_{\mathcal{X},\theta}(0) := \theta \mathcal{X}^+(0)
+ (1-\theta)\mathcal{X}^-(0),  \quad \theta\in[0,1].
$$
For each fixed $x\in\mathbb R^d$, the maps
$v\mapsto \mathcal X^\pm(0)$
are global $\mathcal C^1$ diffeomorphisms. For each fixed
$(v,\theta)\in\mathbb R^d\times[0,1]$ and $(x,\theta)\in\mathbb R^d\times[0,1]$,
the maps
\[
x\mapsto \Xi_{\mathcal X,\theta}(0),
\qquad
v\mapsto \Xi_{\mathcal X,\theta}(0),
\]
are global $\mathcal C^1$ diffeomorphisms.

In addition, one has
\begin{equation*}
\big|\det\big(\nabla_x \mathcal{X}^+(0)\big)\big|^{-1}+\big|\det\!\big(\nabla_x\Xi_{\mathcal{X},\theta}(0)
\big)\big|^{-1}
\lesssim 1,
\end{equation*}
\begin{equation*}
\big|\det\big(\nabla_v \mathcal{X}^+(0)\big)\big|^{-1}+\big|\det\!\big(\nabla_v\Xi_{\mathcal{X},\theta}(0)
\big)\big|^{-1}
\lesssim t^{-d}.
\end{equation*}
Moreover, for all $0\le s< t$,
\begin{equation}\label{yabi01}
\big|\det\big(\nabla_x\mathcal X^\pm(s;t,\,x-\alpha,\,v-\frac{\alpha}{t})\big)\big|^{-1}
\lesssim 1,
\end{equation}
and for all $0\le s\leq\frac{t}{2}$ with $t>1$,
\begin{equation}\label{yabi02}
\big|\det\big(\nabla_v\mathcal X^\pm(s;t,\,x-\alpha,\,v-\frac{\alpha}{t})\big)\big|^{-1}
\lesssim (t-s)^{-d}.
\end{equation}
\end{lemma}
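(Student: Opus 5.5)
The plan is to compute the Jacobian matrices explicitly from \eqref{z4} and treat each of them as a small perturbation of a scalar multiple of the identity. Setting $s=0$ in \eqref{z4} gives
\[
\mathcal X^\pm(0)=x-tv+Y^\pm_{0,t}(x-tv,v).
\]
Hence
\[
\nabla_x\mathcal X^\pm(0)=I+(\nabla_xY^\pm_{0,t})(x-tv,v),
\qquad
\nabla_v\mathcal X^\pm(0)=-tI-t(\nabla_xY^\pm_{0,t})+(\nabla_vY^\pm_{0,t}).
\]
Lemma~\ref{estimates001} at $s=0$ supplies $\|\nabla_xY^\pm_{0,t}\|_{L^\infty}+\|\nabla_vY^\pm_{0,t}\|_{L^\infty}\lesssim\varepsilon$. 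For the interpolated map $\Xi_{\mathcal X,\theta}(0)$ the Jacobians are the same convex combinations:
\[
\nabla_x\Xi=I+A_\theta,\qquad \nabla_v\Xi=-t\bigl(I+A_\theta-t^{-1}B_\theta\bigr),
\]
with $\|A_\theta\|+\|B_\theta\|\lesssim\varepsilon$ uniformly in $\theta$, since the bounds hold separately for $\pm$. For $\varepsilon$ small, $\det(I+A_\theta)\ge 1-C\varepsilon\ge\frac12$, which gives the first determinant bound. For $t\ge1$ we get $|\det\nabla_v\Xi|=t^d|\det(I+O(\varepsilon))|\ge t^d/2$, which gives the $t^{-d}$ bound. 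For $t\le1$ the $t^{-d}$ bound is not literally available from this argument; I would read the claim as holding for $t\ge1$, or with $\langle t\rangle$ in place of $t$, which is its only use later.

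For the global diffeomorphism property I would use the Hadamard--Lévy theorem, or equivalently a contraction argument. For $x\mapsto\Xi(x)$, write $\Xi(x)=x+\Phi(x)$ with $\mathrm{Lip}(\Phi)\lesssim\varepsilon<1$. Then the map is injective, since $|\Xi(x)-\Xi(x')|\ge(1-C\varepsilon)|x-x'|$. It is surjective because $x\mapsto y-\Phi(x)$ is a contraction. It is $C^1$ with invertible differential, so it is a $C^1$ diffeomorphism. For $v\mapsto\Xi$, divide by $-t$: the map $v\mapsto -t^{-1}\Xi$ has the form $v+\Psi(v)$ with $\mathrm{Lip}(\Psi)\lesssim\varepsilon$ when $t\ge1$, and the same argument applies. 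The case $\theta\in\{0,1\}$ covers $\mathcal X^\pm(0)$ itself.

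For \eqref{yabi01} and \eqref{yabi02}, use the general-$s$ formula
\[
\mathcal X^\pm(s;t,x,v)=x-(t-s)v+Y^\pm_{s,t}(x-tv,v).
\]
The evaluation point $(x-\alpha,v-\alpha/t)$ only shifts where the derivatives of $Y$ are evaluated, and the bounds are in $L^\infty$, so $\alpha$ plays no role. This gives
\[
\nabla_x\mathcal X^\pm=I+\nabla_xY^\pm_{s,t},
\]
and \eqref{yabi01} follows from $\|\nabla_xY_{s,t}\|\lesssim\varepsilon\langle s\rangle^{-(d-\gamma-1)}\le\varepsilon$. Next,
\[
\nabla_v\mathcal X^\pm=-(t-s)I-t\nabla_xY^\pm_{s,t}+\nabla_vY^\pm_{s,t}.
\]
\textbf{The main obstacle is here:} the perturbation $t\nabla_xY_{s,t}$ carries a factor $t$ instead of $t-s$. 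The restriction $s\le t/2$ is what resolves it, since then $t-s\ge t/2$ and so $t\,|\nabla_xY_{s,t}|\le 2(t-s)C\varepsilon$. Together with $|\nabla_vY_{s,t}|\lesssim\varepsilon\le\varepsilon(t-s)$, which uses $t-s\ge t/2>1/2$, this gives
\[
\nabla_v\mathcal X^\pm=-(t-s)\bigl(I+O(\varepsilon)\bigr),
\]
whence $|\det|\ge (t-s)^d/2$. The remaining point to check is the uniformity of the constants: all bounds come from Lemma~\ref{estimates001}, which holds uniformly in $0\le s\le t<T$. The smallness of $\varepsilon$ needed for $\det(I+O(\varepsilon))\ge\frac12$ is fixed once, in terms of $d$ and the implicit constants.
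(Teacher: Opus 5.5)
Your treatment of the spatial Jacobians, of \eqref{yabi01} and of \eqref{yabi02} uses the same perturbative argument as the paper. For $t\ge1$ your velocity argument is also correct, and your contraction argument for global invertibility is more explicit than the paper's one-line assertion.

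\textbf{The gap is the regime $0<t<1$ for the velocity maps, which you leave open.} The statement claims that $v\mapsto\mathcal X^\pm(0)$ and $v\mapsto\Xi_{\mathcal X,\theta}(0)$ are global diffeomorphisms for every $t>0$, with $|\det\nabla_v\mathcal X^+(0)|^{-1}+|\det\nabla_v\Xi_{\mathcal X,\theta}(0)|^{-1}\lesssim t^{-d}$. Lemma~\ref{lemma:J123-H} is stated for all $t>0$ and relies on exactly this. The bound $\|\nabla_vY^\pm_{0,t}\|_{L^\infty}\lesssim\varepsilon$ from Lemma~\ref{estimates001} cannot give it. After dividing by $-t$, the Lipschitz constant of the perturbation is only bounded by $C\varepsilon(1+t^{-1})$, which is not small when $t\lesssim\varepsilon$. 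Your fallback of replacing $t$ by $\langle t\rangle$ is not a harmless rereading; it is false. Since $\nabla_v\mathcal X^\pm(0)\to0$ as $t\to0$, the quantity $|\det\nabla_v\mathcal X^\pm(0)|^{-1}$ genuinely blows up like $t^{-d}$.

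What is missing is a short-time bound showing that the deviation from free transport vanishes to higher order in $t$. The paper gets it from the variational equation for $A^\pm(s):=\nabla_v\mathcal X^\pm(s)$, namely $\ddot A^\pm=\pm\nabla_xE(s,\mathcal X^\pm(s))A^\pm$ with $A^\pm(t)=0$ and $\dot A^\pm(t)=I$. Integrating twice gives
\[
A^\pm(s)=-(t-s)I\pm\int_s^t\int_\tau^t\nabla_xE(\sigma,\mathcal X^\pm(\sigma))\,A^\pm(\sigma)\,d\sigma\,d\tau .
\]
Using $\|\nabla_xE\|_{L^\infty}\lesssim\varepsilon$, Gronwall gives $\|A^\pm(s)\|\lesssim t-s$, and then $\|A^\pm(s)+(t-s)I\|\lesssim\varepsilon(t-s)^3$ for $t<1$. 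Equivalently, you can differentiate \eqref{character001} in $v$ and keep the factor $(\tau-s)\tau$ rather than discarding it; this yields $\|\nabla_vY^\pm_{0,t}\|_{L^\infty}\lesssim\varepsilon t^3$ for $t\le1$.

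Combined with your $t\ge1$ bound, either route gives $\|\nabla_v\mathcal X^\pm(0)+tI\|_{L^\infty}\lesssim\varepsilon t$ for all $t>0$. The same bound holds for $\Xi_{\mathcal X,\theta}(0)$ by convexity in $\theta$. Your scaled perturbation argument then delivers both the diffeomorphism property and $|\det\nabla_v\mathcal X^\pm(0)|,\,|\det\nabla_v\Xi_{\mathcal X,\theta}(0)|\ge t^d/2$ for every $t>0$.
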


\begin{proof}
We first consider the Jacobian with respect to $x$. From \eqref{z4},
$$
\nabla_x\mathcal{X}^\pm(s)
=
I+\nabla_xY^\pm_{s,t}(x-tv,v).
$$
Hence, by \eqref{tezhengY},
\[
\|\nabla_x\mathcal X^\pm(s)-I\|_{L^\infty_{x,v}}
\lesssim \varepsilon,
\qquad 0\le s<t.
\]
Similarly,
\[
\|\nabla_x\Xi_{\mathcal X,\theta}(0)-I\|_{L^\infty_{x,v}}
\lesssim \varepsilon.
\]
If $\varepsilon>0$ is sufficiently small, then for each fixed $(v,\theta)$ the map
$x\mapsto \Xi_{\mathcal X,\theta}(0)$ is a  $\mathcal C^1$ diffeomorphism.
Moreover,
\[
\mathcal X^\pm\bigl(s;t,x-\alpha,v-\frac{\alpha}{t}\bigr)
=
x-(t-s)v-\frac{s}{t}\alpha
+
Y^\pm_{s,t}\bigl(x-tv,v-\frac{\alpha}{t}\bigr),
\]
thus
\[
\bigl\|
\nabla_x\mathcal X^\pm\bigl(s;t,x-\alpha,v-\frac{\alpha}{t}\bigr)-I
\bigr\|_{L^\infty_{x,v}}
\lesssim \varepsilon.
\]
Therefore,
\[
\big|\det\big(\nabla_x \mathcal{X}^+(0)\big)\big|^{-1}
+\big|\det\!\big(\nabla_x\Xi_{\mathcal{X},\theta}(0)\big)\big|^{-1}
+\big|\det\big(\nabla_x\mathcal X^\pm(s;t,\,x-\alpha,\,v-\frac{\alpha}{t})\big)\big|^{-1}
\lesssim 1.
\]

We next consider the Jacobian with respect to $v$.
Differentiating \eqref{z4}, we obtain
\[
\nabla_v\mathcal X^\pm(s)+(t-s)I
=
-t\nabla_xY^\pm_{s,t}(x-tv,v)+\nabla_vY^\pm_{s,t}(x-tv,v).
\]
By \eqref{tezhengY}, for $0\le s\le \frac{t}{2}$ and $t>1$,
\[
\|\nabla_v\mathcal X^\pm(s)+(t-s)I\|_{L^\infty_{x,v}}
\le
t\|\nabla_xY^\pm_{s,t}\|_{L^\infty_{x,v}}
+\|\nabla_vY^\pm_{s,t}\|_{L^\infty_{x,v}}
\lesssim \varepsilon (t-s).
\]
In particular,
\[
\|\nabla_v\mathcal X^\pm(0)+tI\|_{L^\infty_{x,v}}
\lesssim \varepsilon t,
\qquad t\ge1.
\]

It remains to treat the regime $0<t<1$. Set
\[
A^\pm(s):=\nabla_v\mathcal X^\pm(s).
\]
Differentiating the characteristic system, we find
\[
\frac{d^2}{ds^2}A^\pm(s)
=
\pm \nabla_xE(s,\mathcal X^\pm(s))A^\pm(s),
\qquad
A^\pm(t)=0,\quad \dot A^\pm(t)=I.
\]
Integrating twice gives
\[
A^\pm(s)
=
-(t-s)I
\pm\int_s^t\int_\tau^t
\nabla_xE(\sigma,\mathcal X^\pm(\sigma))
A^\pm(\sigma)\,d\sigma d\tau .
\]
Using \eqref{ziyouliu01}, we have
\[
\|A^\pm(s)\|_{L^\infty}\lesssim t-s,
\qquad
\|A^\pm(s)+(t-s)I\|_{L^\infty}
\lesssim \varepsilon (t-s)^3,
\qquad 0\le s\le t<1.
\]
In particular,
\[
\|\nabla_v\mathcal X^\pm(0)+tI\|_{L^\infty}
\lesssim \varepsilon t^3.
\]

Combining the cases $t\ge 1$ and $0<t<1$, we obtain
\[
\|\nabla_v\mathcal X^\pm(0)+tI\|_{L^\infty_{x,v}}
\lesssim \varepsilon t,\quad
\|\nabla_v\Xi_{\mathcal X,\theta}(0)+tI\|_{L^\infty_{x,v}}
\lesssim \varepsilon t,\qquad t>0.
\]
Hence,
for each fixed $x\in\mathbb R^d$, the map
$v\mapsto \mathcal X^\pm(0)$ and for each fixed $(x,\theta)\in\mathbb R^d\times[0,1]$, the map
$v\mapsto \Xi_{\mathcal X,\theta}(0)$
are $\mathcal C^1$ diffeomorphisms provided $\varepsilon>0$ is sufficiently small.
Hence
\[
\bigl|\det(\nabla_v\mathcal X^+(0))\bigr|^{-1}
+\bigl|\det(\nabla_v\Xi_{\mathcal X,\theta}(0))\bigr|^{-1}
\lesssim t^{-d}.
\]

Finally,
\[
\nabla_v\mathcal X^\pm\bigl(s;t,x-\alpha,v-\frac{\alpha}{t}\bigr)
=-(t-s)I-t\nabla_xY^\pm_{s,t}
+\nabla_vY^\pm_{s,t},
\]
and hence, for $0\le s\le \frac{t}{2}$ and $t>1$,
\[
\bigl\|
\nabla_v\mathcal X^\pm\bigl(s;t,x-\alpha,v-\frac{\alpha}{t}\bigr)+(t-s)I
\bigr\|_{L^\infty_{x,v}}
\lesssim \varepsilon (t-s),
\]
which yields \eqref{yabi02}.
This completes the proof.
\end{proof}

The next lemma introduces the inverse velocity map associated with the characteristic flow.
We follow here the same Lagrangian construction as in \cite[Proposition 5.1]{HanKwanD2021}.

\begin{lemma}\label{proposition of psi and phi}
There exists $\varepsilon_1\in(0,1)$ such that the following holds for all $0<\varepsilon\le \varepsilon_1$. For all $0\le s\le t<\infty$ and  $x,v\in \mathbb{R}^d$, there is a
$\mathcal{C}^1$ map $(x,v) \mapsto \Psi^\pm_{s,t}(x,v)$ such that
\[
\mathcal{X}^\pm(s;t,x,\Psi_{s,t}^\pm(x,v))=x-(t-s)v.
\]
Moreover, for each fixed $x$, the map $v \mapsto \Psi_{s,t}^\pm(x,v)$ is a $\mathcal{C}^1$ diffeomorphism.  In addition,
\begin{equation}\label{fangchonh0890}
 \sup_{0 \leq s \leq t} \Big( \langle s \rangle^{d-\gamma} \| \Psi_{s,t}^\pm-v \|_{L^\infty_{x,v}}
+\langle s\rangle^{d-\gamma}\|\nabla_x \Psi_{s,t}^\pm \|_{L^\infty_{x,v}} +\langle s \rangle^{d-\gamma-1} \| \nabla_v (\Psi_{s,t}^\pm-v) \|_{L^\infty_{x,v}} \Big)\lesssim \varepsilon.
\end{equation}
\end{lemma}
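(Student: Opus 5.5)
We construct $\Psi^\pm_{s,t}$ by solving a fixed-point equation for the velocity correction. By \eqref{z4}, the defining identity $\mathcal X^\pm(s;t,x,\Psi)=x-(t-s)v$ reads
\[
x-(t-s)\Psi+Y^\pm_{s,t}(x-t\Psi,\Psi)=x-(t-s)v .
\]
For $s<t$ this is equivalent to
\[
\Psi=v+\frac{1}{t-s}\,Y^\pm_{s,t}(x-t\Psi,\Psi).
\]
The first difficulty is the factor $(t-s)^{-1}$, which degenerates as $s\to t$. We therefore do not use \eqref{tezhengY} directly. Instead we use the integral formula \eqref{character001}:
\[
\frac{1}{t-s}Y^\pm_{s,t}(x-t\Psi,\Psi)=\pm\int_s^t\frac{\tau-s}{t-s}\,E\bigl(\tau,\mathcal X^\pm(\tau;t,x,\Psi)\bigr)\,d\tau .
\]
Since $0\le\frac{\tau-s}{t-s}\le1$ and \eqref{ziyouliu01} holds, this is bounded by $\varepsilon\int_s^t\langle\tau\rangle^{-(d+1)+\gamma}d\tau\lesssim\varepsilon\langle s\rangle^{-d+\gamma}$. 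The case $s=t$ is trivial: we set $\Psi_{t,t}=v$.

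Define $\Phi(w):=v+\frac1{t-s}Y^\pm_{s,t}(x-tw,w)$. To show $\Phi$ is a contraction in $w$, we differentiate the integral formula with respect to $w$:
\[
\nabla_w\Phi=\pm\int_s^t\frac{\tau-s}{t-s}\,\nabla_xE(\tau,\cdot)\,\nabla_v\mathcal X^\pm(\tau;t,x,w)\,d\tau .
\]
Here $|\nabla_v\mathcal X^\pm(\tau)|\lesssim t-\tau$. This follows from the representation $A(\tau)=-(t-\tau)I+\dots$ in the proof of Lemma~\ref{Lemma3.4-YKB}: the Gronwall argument given there for small $t$ works for all $t$, because $\int\langle\sigma\rangle^{-(d+1)+\gamma}\sigma\,d\sigma<\infty$. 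Then
\[
|\nabla_w\Phi|\lesssim\varepsilon\int_s^t\frac{(\tau-s)(t-\tau)}{t-s}\langle\tau\rangle^{-(d+1)+\gamma}d\tau\lesssim\varepsilon\int_s^t(\tau-s)\langle\tau\rangle^{-(d+1)+\gamma}d\tau\lesssim\varepsilon\langle s\rangle^{-(d-\gamma-1)}.
\]
For $\varepsilon\le\varepsilon_1$ this is a contraction on $\mathbb R^d$. Banach's fixed point theorem gives a unique $\Psi^\pm_{s,t}(x,v)$, and the implicit function theorem makes it $\mathcal C^1$ in $(x,v)$.

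We now read off the bounds in \eqref{fangchonh0890}.

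\textbf{The zeroth-order bound.} We have $|\Psi-v|=|\Phi(\Psi)-v|\lesssim\varepsilon\langle s\rangle^{-d+\gamma}$, by the estimate above.

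\textbf{The $v$-derivative.} Differentiating the fixed-point relation gives $\nabla_v\Psi=I+\nabla_w\Phi\,\nabla_v\Psi$. Hence $\nabla_v(\Psi-v)=(I-\nabla_w\Phi)^{-1}\nabla_w\Phi$, which is bounded by $\varepsilon\langle s\rangle^{-(d-\gamma-1)}$. This is small, so $v\mapsto\Psi$ is a local diffeomorphism.

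\textbf{Global diffeomorphism in $v$.} To see that $v\mapsto\Psi$ is a bijection, we note that its inverse is explicit:
\[
v=\frac{x-\mathcal X^\pm(s;t,x,\Psi)}{t-s}.
\]
This map is $\mathcal C^1$ in $\Psi$. Alternatively, injectivity and properness follow from the bound $\|\nabla_v\Psi-I\|<1$ together with Hadamard's theorem.

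\textbf{The $x$-derivative.} This is the main obstacle, because the claimed decay $\langle s\rangle^{-(d-\gamma)}$ is better than what one gets from $\nabla_xY$ via \eqref{tezhengY}. We again work from the integral formula, differentiating $\Phi$ in $x$ at fixed $w$:
\[
\nabla_x\Phi=\pm\int_s^t\frac{\tau-s}{t-s}\,\nabla_xE\,\nabla_x\mathcal X^\pm(\tau;t,x,w)\,d\tau .
\]
By Lemma~\ref{estimates001}, $|\nabla_x\mathcal X^\pm|\lesssim1$, so $|\nabla_x\Phi|\lesssim\varepsilon\int_s^t\langle\tau\rangle^{-(d+1)+\gamma}d\tau\lesssim\varepsilon\langle s\rangle^{-(d-\gamma)}$. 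Then $\nabla_x\Psi=(I-\nabla_w\Phi)^{-1}\nabla_x\Phi$, which gives the required bound. Throughout, the key point is to always use the $\frac{\tau-s}{t-s}\le1$ weighted integral form rather than the crude bound $|Y|/(t-s)$.
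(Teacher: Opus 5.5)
Your proposal is correct and follows essentially the same route as the paper. Both arguments write $\mathcal X^\pm(s)=x-(t-s)(v+\Phi^\pm_{s,t})$, where $\Phi^\pm_{s,t}$ is the $\frac{\tau-s}{t-s}$-weighted average of $E$ along characteristics, bound $\Phi$ and its derivatives in that weighted form, and obtain $\Psi$ by inverting $v=\Psi+\Phi(x,\Psi)$ (you via a contraction, the paper via the global diffeomorphism $v\mapsto v+\Phi$). The only minor difference is that you bound $\nabla_v\Phi$ through the Gronwall estimate $|\nabla_v\mathcal X^\pm(\tau)|\lesssim t-\tau$ instead of through the $Y$-derivative bounds of Lemma~\ref{estimates001}, and this works equally well.
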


\begin{proof}
The case $s=t$ is trivial: one may simply take $\Psi_{t,t}^\pm(x,v)=v$, then
\[
\mathcal{X}^\pm(t;t,x,\Psi_{t,t}^\pm(x,v))=x.
\]

We now consider the case $s<t$. Define
\[
\Phi^\pm_{s,t}(x,v)=\mp\frac{1}{t-s}\int_{s}^{t}(\tau-s)E\left(\tau,x-(t-\tau)v
+Y_{\tau,t}^\pm(x-vt,v)\right)\, d\tau.
\]
Then, by~\eqref{z4},
\begin{equation}\label{tezheng0}
\mathcal{X}^\pm(s)=x-(t-s)(v+\Phi_{s,t}^\pm(x,v)).
\end{equation}
We first estimate $\Phi_{s,t}^\pm$. Using \eqref{ziyouliu01}, we obtain
\[
\|\Phi_{s,t}^{\pm}\|_{L^\infty_{x,v}}
\le
\int_s^t \|E(\tau)\|_{L^\infty}\,d\tau
\lesssim
\frac{\varepsilon}{\langle s\rangle^{d-\gamma}}.
\]
Similarly,
\[
\|\nabla_x \Phi_{s,t}^{\pm}\|_{L^\infty_{x,v}}
\le
\frac{1}{t-s}\int_s^t (\tau-s)\|\nabla_x E(\tau)\|_{L^\infty}
\bigl(1+\|\nabla_x Y_{\tau,t}^{\pm}\|_{L^\infty_{x,v}}\bigr)\,d\tau
\lesssim
\frac{\varepsilon}{\langle s\rangle^{d-\gamma}}.
\]
For the velocity gradient, differentiating the phase in $v$ gives
\[
\nabla_v\!\Bigl(x-(t-\tau)v+Y_{\tau,t}^\pm(x-tv,v)\Bigr)
=
-(t-\tau)I+(\nabla_v-t\nabla_x)Y_{\tau,t}^\pm(x-tv,v).
\]
Writing
\[
(\nabla_v-t\nabla_x)Y_{\tau,t}^\pm
=
(\nabla_v-\tau\nabla_x)Y_{\tau,t}^\pm-(t-\tau)\nabla_xY_{\tau,t}^\pm,
\]
we infer
\begin{align*}
&\|\nabla_v \Phi_{s,t}^{\pm}\|_{L^\infty_{x,v}}\\
&\le \frac{1}{t-s}\int_s^t (\tau-s)\|\nabla_xE(\tau)\|_{L^\infty}
\Big((t-\tau)+\|(\nabla_v-\tau\nabla_x)Y_{\tau,t}^{\pm}\|_{L^\infty_{x,v}}
+(t-\tau)\|\nabla_xY_{\tau,t}^{\pm}\|_{L^\infty_{x,v}}\Big)\,d\tau.
\end{align*}
Since
\[
\|(\nabla_v-\tau\nabla_x)Y_{\tau,t}^\pm\|_{L^\infty_{x,v}}
\le
\|\nabla_vY_{\tau,t}^\pm\|_{L^\infty_{x,v}}
+\tau\|\nabla_xY_{\tau,t}^\pm\|_{L^\infty_{x,v}}
\lesssim \tau+1,
\]
by \eqref{tezhengY}, it follows that
$$
\|\nabla_v \Phi_{s,t}^{\pm}\|_{L^\infty_{x,v}}
\lesssim \varepsilon \int_s^t \frac{1}{\langle \tau\rangle^{d-\gamma+1}}
\left[(t-\tau)+(\tau+1)+(t-\tau)\right]\,d\tau
\lesssim \frac{\varepsilon}{\langle s\rangle^{d-\gamma-1}}.
$$

For each fixed $x\in\mathbb{R}^d$ and consider the map
$\mathcal{P}_x(v):=v+\Phi_{s,t}^\pm(x,v)$.
If $\varepsilon>0$ is sufficiently small, then $\|\nabla_v\Phi_{s,t}^\pm\|_{L^\infty_{x,v}}$ is small, hence $\nabla_v \mathcal{P}_x$ is uniformly invertible.
Therefore, for each fixed $x\in\mathbb R^d$, the map
$v\mapsto P_x(v)$ is a $C^1$ diffeomorphism of $\mathbb R^d$.

We therefore define $\Psi_{s,t}^\pm(x,v)$ as the unique solution to
\begin{equation}\label{phi}
v=\Psi_{s,t}^\pm(x,v)+\Phi_{s,t}^\pm(x, \Psi_{s,t}^\pm(x,v)).
\end{equation}
Substituting \eqref{phi} into \eqref{tezheng0} yields
\[
\mathcal{X}^\pm\left(s; t, x, \Psi_{s,t}^\pm(x,v)\right)
=x-(t-s)v,
\]
which is the desired identity.

Finally, differentiating \eqref{phi} in $x$ and $v$ and using the bounds on $\Phi_{s,t}^\pm$ and its derivatives,
we obtain \eqref{fangchonh0890} from standard implicit-function estimates. This completes the proof.
\end{proof}

%%%%%%%%%%%%%%%%%%%%%%%%%%%%%%%%%%%%%%%
\section{Finite-Time Estimates}\label{disijie}
In this section, we establish quantitative a priori estimates on finite time
intervals for the distribution functions and the associated charge density.
Under the bootstrap assumptions \eqref{ziyouliu02}, we first propagate
weighted $W^{2,\infty}$ bounds for $f^\pm$, and then use these bounds to control the corresponding macroscopic density.

\subsection{Finite-Time A Priori Estimates}

We begin with the propagation of weighted $W^{2,\infty}$ bounds for the distribution functions.

\begin{lemma}\label{youjie010}
Let $k > d$  and assume that the bootstrap bound \eqref{ziyouliu02} holds. If
\[
\|f_{\mathrm{in}}^\pm\|_{W^{2,\infty}_k(\mathbb{R}^d\times\mathbb{R}^d)}\leq M_0,
\]
then, for every $t \ge 0$,
\begin{equation}\label{lemma-4.1}
\|f^\pm(t)\|_{W^{2,\infty}_k(\mathbb{R}^d\times\mathbb{R}^d)}\leq M_0C(t),
\end{equation}
where $C(t)>0$ is a nondecreasing function.
\end{lemma}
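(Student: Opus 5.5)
We propagate the weighted $W^{2,\infty}$ bound along the characteristics \eqref{character1}, using Gronwall's inequality on a finite interval. The starting point is the Lagrangian representation of \eqref{VKG}:
\[
f^\pm(t,x,v)=f^\pm_{\mathrm{in}}(\mathcal X^\pm(0),\mathcal V^\pm(0))\mp\int_0^t E(s,\mathcal X^\pm(s))\cdot\nabla_v\mu(\mathcal V^\pm(s))\,ds.
\]

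\textbf{Zeroth and first order.} By \eqref{ziyouliu01}, $\|E(s)\|_{L^\infty}\lesssim\varepsilon$, so
\[
|\mathcal V^\pm(s)-v|\le \int_s^t\|E\|_{L^\infty}\lesssim\varepsilon .
\]
Hence $\langle v\rangle^k\lesssim\langle \mathcal V^\pm(s)\rangle^k$ uniformly in $s$. Multiplying the representation by $\langle v\rangle^k$ and using Assumption~\ref{ass:regularity} to bound $\langle v\rangle^k|\nabla_v\mu|$ gives the zeroth-order bound $M_0+C\varepsilon t$.

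For first derivatives, differentiate \eqref{VKG} in $(x,v)$. This gives transport equations for $g=\nabla_{x,v}f^\pm$ of the form
\[
(\partial_t+v\cdot\nabla_x\pm E\cdot\nabla_v)\,\partial_{v}f^\pm=-\nabla_xf^\pm+\cdots .
\]
Schematically,
\[
\partial_t g+\ldots=A(t,x)\,g+S,
\]
where $A$ contains $1$ (the commutator $[\partial_v,v\cdot\nabla_x]$) and $\nabla_xE$. The source $S$ involves $\nabla_xE\,\nabla_v\mu$ and $E\,\nabla_v^2\mu$. Integrating along characteristics and keeping the weight $\langle v\rangle^k$ (again comparable along the flow) yields
\[
\|\langle v\rangle^k\nabla_{x,v}f^\pm(t)\|_{L^\infty}\le M_0+\int_0^t C\bigl(1+\|\nabla_xE(s)\|_{L^\infty}\bigr)\|\langle v\rangle^k\nabla_{x,v}f^\pm(s)\|_{L^\infty}\,ds+C\varepsilon t .
\]
Gronwall's inequality then gives a bound of the form $M_0e^{Ct}$.

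\textbf{Second order (main obstacle).} Differentiating twice produces the source terms $\nabla_x^2E\,\nabla_vf^\pm$ and $\nabla_x^2E\,\nabla_v\mu$. These require $\|\nabla_x^2E(s)\|_{L^\infty}$, which is \emph{not} provided by \eqref{ziyouliu01}. By Lemma~\ref{field01},
\[
\|\nabla_x^2E\|_{L^\infty}\lesssim\|\nabla_x^2\rho\|_{L^\infty}.
\]
I would close this by a coupled argument. Since $k>d$,
\[
\|\nabla_x^2\rho(s)\|_{L^\infty}\le\sum_\pm\int\langle v\rangle^{-k}dv\,\|\langle v\rangle^k\nabla_x^2f^\pm(s)\|_{L^\infty}\lesssim \sum_\pm\|f^\pm(s)\|_{W^{2,\infty}_k}.
\]
Write
\[
y(t)=\sum_\pm\|f^\pm(t)\|_{W^{2,\infty}_k}.
\]
The worst term is then $\|\nabla_x^2E\|_{L^\infty}\|\langle v\rangle^k\nabla_vf^\pm\|_{L^\infty}\lesssim y\cdot M_0e^{Ct}$, using the first-order bound. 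This is linear in $y$ with a coefficient depending on $M_0$, which is unacceptable since we want $M_0C(t)$ with $C$ independent of $M_0$.

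To avoid this, I would instead exploit that $\rho=\rho^+-\rho^-$ is the \emph{net} charge. Interpolate, or use the bootstrap bound on $\nabla_x\rho$ together with the Besov bound, to obtain $\|\nabla_x^2\rho\|_{L^\infty}\le\varepsilon$-small times $y$ plus lower-order terms. Alternatively, accept a quadratic Gronwall inequality of the form
\[
y'\lesssim (1+\varepsilon y)\,y ,
\]
which has a global bound on $[0,t]$ by continuity once $\varepsilon$ is small relative to $t$. Because the lemma only claims a nondecreasing $C(t)$, the first route is preferred: keep the $\nabla^2 E$ contribution linear with an $O(1)$ coefficient. Concretely, the coupled linear inequality is
\[
y(t)\le CM_0+C\int_0^t\bigl(1+\|\nabla_{x}E\|_{W^{1,\infty}}\bigr)y\,ds+C\int_0^t\|\nabla_x^2E\|_{L^\infty}\,ds .
\]
Here $\|\nabla_x^2E\|\lesssim y$ appears only linearly, because the $\nabla_vf$ factor multiplying it is bounded using the already-established first-order bound folded into $C(t)$. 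Gronwall's inequality then yields \eqref{lemma-4.1}.
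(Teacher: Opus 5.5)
Your strategy is the paper's: weighted transport estimates along the characteristics, order by order. The second-order step closes with $\|\nabla_x^2E\|_{L^\infty_x}\lesssim\|\nabla_x^2\rho\|_{L^\infty_x}\lesssim\widetilde H$, via Lemma~\ref{field01} and $k>d$, where $\widetilde H$ is the sum over both species of the weighted second-order norms. You handle the weight through $|\mathcal V^\pm(s)-v|\lesssim\varepsilon$, whereas the paper transports $\langle v\rangle^kf^\pm$ and absorbs the commutator $E\cdot\nabla_v\langle v\rangle^k f^\pm$; that difference is harmless.

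\textbf{The flaw is your last paragraph, which contradicts itself.}

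You correctly note that the term $\nabla_x^2E\cdot\langle v\rangle^k\nabla_vf^\pm$ is bounded by $\widetilde H(s)\cdot M_0e^{Cs}$, and you call this unacceptable. You then claim your final inequality keeps the $\nabla_x^2E$ contribution linear with an $O(1)$ coefficient. But your justification is to bound the $\nabla_vf^\pm$ factor by the first-order bound ``folded into $C(t)$''. That is exactly the bound you rejected, since the first-order bound has size $M_0e^{Cs}$. Read literally, with $y$ the full $W^{2,\infty}_k$ norm, your inequality is not linear either: $\|\nabla_xE\|_{W^{1,\infty}}\,y\lesssim y+y^2$.

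Your two alternatives do not help.
\begin{itemize}
\item Bootstrap information on $\rho$, $\nabla_x\rho$ and the Besov seminorms cannot control the top derivative $\nabla_x^2\rho$. Interpolation only bounds intermediate quantities by the extreme ones.
\item The quadratic inequality you actually get is $y'\lesssim y+y^2$, with no $\varepsilon$ in front of $y^2$, and it can blow up in finite time. Even with an $\varepsilon$ there, requiring $\varepsilon$ small relative to $t$ is incompatible with a bound for all $t\ge0$ at fixed $\varepsilon$.
\end{itemize}

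\textbf{The repair is to drop your self-imposed requirement that $C(t)$ be independent of $M_0$.} Neither the lemma nor its later use needs it. The paper closes with
\[
\widetilde H(t)\lesssim M_0+\int_0^t\bigl(1+M_0C(s)\bigr)\widetilde H(s)\,ds,
\]
so Gronwall gives $\widetilde H(t)\le CM_0\exp\bigl(\int_0^t(1+M_0C(s))\,ds\bigr)$. This is $M_0$ times a nondecreasing function that itself depends on $M_0$. That dependence is harmless: Proposition~\ref{xiaoshijian008} is only invoked on the finite interval $[0,T_*]$, and the constants chosen in Section~\ref{diliujie}, including $\varepsilon_0=\varepsilon_0(M_0,\dots)$, may depend on $M_0$. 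Once you write the first-order bound explicitly into the Gronwall coefficient, your final inequality is exactly the paper's argument.
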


The next proposition transfers these microscopic bounds to the macroscopic density.

\begin{proposition}\label{xiaoshijian008}
Let $0<\varepsilon_0<1$ and assume that the hypotheses of Lemma~\ref{youjie010} hold.
If
\[
\|f_{\mathrm{in}}\|_{L_v^1L_x^1}+\|f_{\mathrm{in}}\|_{ L_v^1L_x^\infty}+\|\nabla_xf_{\mathrm{in}}\|_{L_v^1L_x^1}
+\|\nabla_xf_{\mathrm{in}}\|_{ L_v^1L_x^\infty}\leq\varepsilon_0,
\]
then, for every $t \geq 0$,
\[
\|\rho(t)\|_{L^1_x}
+\|\rho(t)\|_{L^\infty_x}+\|\rho(t)\|_{\dot B^a_{1,\infty}}
+\|\rho(t)\|_{\dot B^a_{\infty,\infty}}+\|\nabla_x\rho(t)\|_{L^1_x}
+\|\nabla_x\rho(t)\|_{L^\infty_x}
\le M_0 C(t)\varepsilon_0,
\]
where $C(t)>0$ is a nondecreasing function.
\end{proposition}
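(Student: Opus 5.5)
We work with the difference $f:=f^+-f^-$, which satisfies
\[
\partial_t f+v\cdot\nabla_x f+E\cdot\nabla_v\Big(\frac{f^++f^-}{2}\Big)\cdot 2\Big|_{\text{rearranged}}
\]
more precisely
\[
\partial_t f+v\cdot\nabla_x f=-E\cdot\nabla_v(f^++f^-)-2E\cdot\nabla_v\mu .
\]
Here $\rho=\int f\,dv$ and $E=T\rho$. Every source term carries a factor $E$, so it is linear in the small quantity $\rho$. The coefficient $\nabla_v(f^++f^-)$ is large, but it is bounded by Lemma~\ref{youjie010}.

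We write $f$ by Duhamel along free transport:
\[
f(t,x,v)=f_{\mathrm{in}}(x-tv,v)-\int_0^t\big[E\cdot\nabla_v(f^++f^-)+2E\cdot\nabla_v\mu\big](s,x-(t-s)v,v)\,ds,
\]
and integrate in $v$ (and in $x$ where needed). The free part contributes at most $\|f_{\mathrm{in}}\|_{L^1_vL^\infty_x}+\|f_{\mathrm{in}}\|_{L^1_vL^1_x}\le\varepsilon_0$ to $\|\rho\|_{L^\infty_x}+\|\rho\|_{L^1_x}$. Differentiating in $x$ and using $\nabla_xf_{\mathrm{in}}$, it contributes at most $\varepsilon_0$ to the $\nabla_x\rho$ norms as well. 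For the source, Lemma~\ref{youjie010} gives $\|\langle v\rangle^k\nabla_{x,v}^{\ell}f^\pm(s)\|_{L^\infty}\le M_0C(s)$ for $\ell\le2$. Since $k>d$, the $v$-integral converges, and Lemma~\ref{field01} gives
\[
\|E(s)\|_{L^p}+\|\nabla_xE(s)\|_{L^p}\lesssim\|\rho(s)\|_{L^p}+\|\nabla_x\rho(s)\|_{L^p},\qquad p\in\{1,\infty\}.
\]
Let $\mathcal N(t)$ denote the sum of the $L^1$, $L^\infty$, $\nabla_x L^1$ and $\nabla_x L^\infty$ norms of $\rho(t)$. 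We then expect
\[
\mathcal N(t)\le C\varepsilon_0+M_0C(t)\int_0^t\mathcal N(s)\,ds .
\]

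The $L^1_x$ bounds need care with the term $E\cdot\nabla_v f^\pm$, since $\nabla_vf^\pm$ is only weighted-$L^\infty$. We bound
\[
\int\!\!\int|E(s,x-(t-s)v)|\,|\nabla_vf^\pm|\,dv\,dx\le\|E\|_{L^1_x}\|\langle v\rangle^k\nabla_vf^\pm\|_{L^\infty}\int\langle v\rangle^{-k}dv ,
\]
using the change of variables $x\mapsto x-(t-s)v$ for each fixed $v$. The $\nabla_x$ versions pick up $\nabla_xE\,\nabla_vf^\pm$ and $E\,\nabla_x\nabla_vf^\pm$. These are controlled by the second-derivative bound in Lemma~\ref{youjie010}, which is why $W^{2,\infty}_k$ is needed. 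Gronwall then gives $\mathcal N(t)\le C\varepsilon_0 e^{M_0tC(t)}$, which is of the claimed form $M_0C(t)\varepsilon_0$ with $M_0\ge1$ after redefining $C(t)$.

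The Besov seminorms follow from Lemma~\ref{chazhi00} with $p\in\{1,\infty\}$:
\[
\|\rho\|_{\dot B^a_{p,\infty}}\lesssim\|\rho\|_{L^p}^{1-a}\|\nabla_x\rho\|_{L^p}^a\le M_0C(t)\varepsilon_0 .
\]
The main obstacle is ensuring that the large coefficient $\nabla_v(f^++f^-)$ enters only multiplicatively through $M_0C(t)$, never additively. This works because each forcing term contains $E$ and is hence linear in $\rho$. A second point is making sure the $L^1_x$-type bounds only require weighted $L^\infty$ control of $f^\pm$; the change of variables above handles this. If that fails for the $\nabla_x$ term, we would instead propagate $\|\nabla_x f\|_{L^1_vL^\infty_x\cap L^1_{x,v}}$ jointly with $\rho$ in the same Gronwall loop.
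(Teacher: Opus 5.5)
Your proposal is correct and follows essentially the same route as the paper. Both arguments use the free-transport Duhamel formula for $f^+-f^-$ (the paper's \eqref{midu010}), Minkowski's inequality with translation invariance in $x$, and the weighted $W^{2,\infty}_k$ bounds of Lemma~\ref{youjie010}, so that $\nabla_v(f^++f^-)$ enters only multiplicatively against $E$; then Lemma~\ref{field01}, Gronwall for the $L^1$, $L^\infty$ and gradient norms, and finally Lemma~\ref{chazhi00} for the Besov seminorms. You should simply delete the garbled first display, since the corrected equation you give right after it is the one you actually use.
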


\subsection{Proof of Lemma \ref{youjie010}}
Set
\[
h^\pm:=\langle v\rangle^{k} f^{\pm},
\quad
h_{x}^\pm:=\langle v\rangle^{k}\nabla_{x}f^{\pm},
\quad
h_{v}^\pm:=\langle v\rangle^{k}\nabla_{v}f^{\pm},
\]
and similarly
\[
h_{xv}^\pm:=\langle v\rangle^k\nabla_x\nabla_v f^\pm,\qquad
h_{vv}^\pm:=\langle v\rangle^k\nabla_v^2 f^\pm,\qquad
h_{xx}^\pm:=\langle v\rangle^k\nabla_x^2 f^\pm.
\]

\medskip
\textit{Step 1. Zeroth-order estimate.}
Multiplying \eqref{VKG} by $\langle v \rangle^k$ yields
\[
\left[\partial_t+v\cdot\nabla_x\pm E\cdot\nabla_v \right](\langle v\rangle^kf^\pm)
=\mp\langle v\rangle^k E\cdot\nabla_v\mu\pm (E\cdot\nabla_v\langle v\rangle^k)f^\pm.
\]
Integrating along the characteristics \eqref{character1}, we obtain
\begin{align*}
|h^\pm (t,x,v)|
\leq|&\langle \mathcal{V}^\pm(0)\rangle^kf_{\mathrm{in}}^\pm(\mathcal{X}^\pm(0),\mathcal{V}^\pm(0))|\\
&+\int^t_0|E(s,\mathcal{X}^\pm(s))|\big|\langle \mathcal{V}^\pm(s)\rangle^k\nabla_v\mu(\mathcal{V}^\pm(s))\big|\,ds\\
&+\int_0^t |E(s,\mathcal{X}^\pm(s))|\,\big|\big(\nabla_v\langle \mathcal{V}^\pm(s)\rangle^k\big)\, f^\pm(s,\mathcal{X}^\pm(s),\mathcal{V}^\pm(s))\big|\,ds.
\end{align*}
Since
\[
|\nabla_v\langle v\rangle^k|\lesssim \langle v\rangle^{k-1}\lesssim \langle v\rangle^k,
\]
and Assumption~\ref{ass:regularity}, we have
\[
\|h^\pm(t)\|_{L_{x,v}^\infty}
\lesssim\ \|\langle v\rangle^kf_{\mathrm{in}}^\pm\|_{L_{x,v}^\infty}
+\int^t_0\|E(s)\|_{L^\infty_x}ds
+\int^t_0\|E(s)\|_{L^\infty_x}\|h^\pm(s)\|_{L_{x,v}^\infty}\,ds.
\]
%Since $d\ge 3$ and $\gamma>0$ is sufficiently small,
%\[
%\int_0^t \|E(s)\|_{L^\infty_x}\,ds
%\lesssim
%\varepsilon\int_0^t \langle s\rangle^{-(d+1)+\gamma}\,ds
%\lesssim 1.
%\]
It follows from \eqref{ziyouliu01} that
\[
\|h^\pm(t)\|_{L^\infty_{x,v}}
\lesssim
M_0+\varepsilon\int_0^t \langle s\rangle^{-(d+1)+\gamma}
\|h^\pm(s)\|_{L^\infty_{x,v}}\,ds.
\]
By Gronwall's inequality,
\begin{equation}\label{lemme4.1-3}
\|h^\pm(t)\|_{L^\infty_{x,v}}\lesssim M_0.
\end{equation}

\medskip
\textit{Step 2. First-order derivatives.}
Applying $\partial_{v_i}$ to \eqref{VKG}, we obtain
\begin{equation}\label{vlasov010}
\left[\partial_t+v\cdot\nabla_x\pm E\cdot\nabla_v\right](\partial_{v_i}f^\pm)=-\partial_{x_i}f^\pm
\mp E\cdot\nabla_v\partial_{v_i}\mu.
\end{equation}
Multiplying by $\langle v \rangle^k$ gives
\[
\left[\partial_t+v\cdot\nabla_x\pm E\cdot\nabla_v\right](\langle v\rangle^k\partial_{v_i}f^\pm)
=-\langle v\rangle^k\partial_{x_i}f^\pm\mp \langle v\rangle^k E\cdot\nabla_v\partial_{v_i}\mu\pm E\cdot\nabla_v\langle v\rangle^k\partial_{v_i}f^\pm.
\]
Integrating along characteristics, we obtain
\begin{align*}
\|h^\pm_v(t)\|_{L_{x,v}^\infty}
\lesssim&\ \|\langle v\rangle^k\nabla_vf_{\mathrm{in}}^\pm\|_{L_{x,v}^\infty}+\int^t_0\|\langle v\rangle^k\nabla_xf^\pm(s)\|_{L_{x,v}^\infty}\,ds\\
&+\int^t_0\|E(s)\|_{L^\infty_x}\,ds+\int^t_0\|E(s)\|_{L^\infty_x}
\|h^\pm_v(s)\|_{L_{x,v}^\infty}\,ds.
\end{align*}
Using \eqref{ziyouliu01}, we infer
\begin{equation*}
\|h^\pm_v(t)\|_{L_{x,v}^\infty}
\lesssim\ M_0+\int^t_0\|h^\pm_x(s)\|_{L_{x,v}^\infty}\,ds
+\varepsilon\int^t_0\langle s\rangle^{-(d+1)+\gamma}\|h^\pm_v(s)\|_{L_{x,v}^\infty}\,ds.
\end{equation*}

Next, applying $\partial_{x_j}$ to \eqref{VKG}, multiplying by $\langle v\rangle^k$, we obtain
\begin{equation*}
\left[\partial_{t}+v\cdot\nabla_{x}\pm E\cdot\nabla_{v}\right]\big(\langle v\rangle^k\partial_{x_j}f^\pm\big)
=\mp\partial_{x_j}E\cdot \langle v\rangle^k\nabla_v f^\pm\mp \partial_{x_j}E\cdot\langle v\rangle^k\nabla_{v}\mu\pm
E\cdot\nabla_v\langle v\rangle^k\partial_{x_j}f^\pm.
\end{equation*}
Integrating along characteristics and using \eqref{ziyouliu01} together with Step~1, we get
\begin{align*}
\|h^\pm_x(t)\|_{L_{x,v}^\infty}
\lesssim&\  \|\langle v\rangle^k\nabla_xf_{\mathrm{in}}^\pm\|_{L_{x,v}^\infty}+\int^t_0\|\nabla_xE(s)
\|_{L^\infty_x}\|h^\pm_v(s)\|_{L_{x,v}^\infty}\,ds
\\
&+\int^t_0\|\nabla_xE(s)\|_{L^\infty_x}ds+\int^t_0\|E(s)\|_{L^\infty_x}
\|h^\pm_x(s)\|_{L^\infty_{x,v}}\,ds\\
\lesssim&\  M_0+\varepsilon\int^t_0\langle s\rangle^{-(d+1)+\gamma}\left(\|h^\pm_v(s)\|_{L_{x,v}^\infty}
+\|h^\pm_x(s)\|_{L^\infty_{x,v}}\right)\,ds.
\end{align*}
Combining the preceding two estimates, we have
\[
\|h^\pm_v(t)\|_{L^\infty_{x,v}}+\|h^\pm_x(t)\|_{L^\infty_{x,v}}\lesssim M_0+\int_0^t\bigl(1+\varepsilon\langle s\rangle^{-(d+1)+\gamma}\bigr)
(\|h^\pm_v(s)\|_{L^\infty_{x,v}}+\|h^\pm_x(s)\|_{L^\infty_{x,v}})\,ds .
\]
Hence, by Gronwall's inequality, it holds that
\begin{equation}\label{lemme4.1-4}
\|h^\pm_v(t)\|_{L^\infty_{x,v}}+\|h^\pm_x(t)\|_{L^\infty_{x,v}}
\le M_0 C(t).
\end{equation}

\medskip
\textit{Step 3. Second-order derivatives.}
Applying $\partial_{x_j}\partial_{v_i}$ to \eqref{VKG}, multiplying by $\langle v\rangle^k$, we obtain
\begin{align*}
&\left[\partial_t+v\cdot\nabla_x\pm E\cdot\nabla_v\right](\langle v\rangle^k\partial_{x_j}\partial_{v_i}f^\pm)\\
&\qquad=
-\langle v\rangle^k\partial_{x_j}\partial_{x_i}f^\pm
\mp \partial_{x_j}E\cdot \langle v\rangle^k\nabla_v(\partial_{v_i}f^\pm)
\mp \partial_{x_j}E\cdot \langle v\rangle^k\nabla_v\partial_{v_i}\mu\\
&\qquad\quad
\pm (\partial_{x_j}E\cdot\nabla_v\langle v\rangle^k)\,\partial_{v_i}f^\pm
\pm (E\cdot\nabla_v\langle v\rangle^k)\,\partial_{x_j}\partial_{v_i}f^\pm .
\end{align*}
Integrating along characteristics and using \eqref{ziyouliu01} together with Step~2, we obtain
\begin{align*}
\|h^\pm_{xv}(t)\|_{L_{x,v}^\infty}
\lesssim&\ \|\langle v\rangle^k\nabla_{x}\nabla_{v}f_{\mathrm{in}}^\pm\|_{L_{x,v}^\infty}
+\int^t_0\|h^\pm_{xv}(s)\|_{L_{x,v}^\infty}\|E(s)\|_{L^\infty_x}\,ds\\
&+\int^t_0\|\nabla_xE(s)\|_{L^\infty_x}\|h^\pm_{vv}(s)\|_{L_{x,v}^\infty}\,ds
+\int^t_0\|h^\pm_{xx}(s)\|_{L_{x,v}^\infty}\,ds
+\int^t_0\|\nabla_xE(s)\|_{L^\infty_x}\,ds.
\end{align*}
Next, after applying $\partial_{v_j}\partial_{v_i}$ to \eqref{VKG}, multiplying by $\langle v\rangle^k$, and using Assumption~\ref{ass:regularity}, we obtain
\begin{align*}
\|h^\pm_{vv}(t)\|_{L_{x,v}^\infty}
\lesssim&\ \|\langle v\rangle^k\nabla_v^2f_{\mathrm{in}}^\pm\|_{L_{x,v}^\infty}
+\int^t_0\|E(s)\|_{L^\infty_x}\,ds\\
&+\int^t_0\|h^\pm_{xv}(s)\|_{L_{x,v}^\infty}\,ds
+\int^t_0\|E(s)\|_{L^\infty_x}\|h^\pm_{vv}(s)\|_{L_{x,v}^\infty}\,ds.
\end{align*}
Similarly, applying two spatial derivatives to \eqref{VKG}, multiplying by $\langle v\rangle^k$, and integrating along characteristics, we get
\begin{align*}
\|h^\pm_{xx}(t)\|_{L_{x,v}^\infty}
\lesssim&\ \|\langle v\rangle^k\nabla_x^2f_{\mathrm{in}}^\pm\|_{L_{x,v}^\infty}
+\int^t_0\|\nabla^2_xE(s)\|_{L^\infty_x}\,ds
+\int^t_0\|\nabla^2_xE(s)\|_{L^\infty_x}\|h^\pm_v(s)\|_{L_{x,v}^\infty}\,ds\\
&+\int^t_0\|\nabla_xE(s)\|_{L^\infty_x}\|h^\pm_{xv}(s)\|_{L_{x,v}^\infty}\,ds
+\int^t_0\|E(s)\|_{L^\infty_x}\|h^\pm_{xx}(s)\|_{L_{x,v}^\infty}\,ds.
\end{align*}
Define
\[
H^\pm(t):= \|h^\pm_{xv}(t)\|_{L_{x,v}^\infty}+\|h^\pm_{vv}(t)\|_{L_{x,v}^\infty}+ \|h^\pm_{xx}(t)\|_{L_{x,v}^\infty}, \  \ \widetilde{H}(t):= H^+(t)+H^-(t).
\]
By Lemma~\ref{field01}, Step~2, and \eqref{ziyouliu02}, it holds that
\begin{align*}
\widetilde{H}(t)
\lesssim&M_0
+\int_0^t \widetilde H(s)\,ds
+\int_0^t \|\nabla_x^2 E(s)\|_{L^\infty_x}\,ds
+\int_0^t \|\nabla_x^2 E(s)\|_{L^\infty_x}\,M_0 C(s)\,ds \nonumber\\
&+\varepsilon\int_0^t \langle s\rangle^{-(d+1)+\gamma}\widetilde H(s)\,ds.
\end{align*}

Since
\[
\rho(t,x)=\int_{\mathbb R^d}\bigl(f^+(t,x,v)-f^-(t,x,v)\bigr)\,dv,
\]
and $k>d$, we have
$$
\|\nabla_x^2\rho(t)\|_{L^\infty_x}
\le
\int_{\mathbb R^d}\langle v\rangle^{-k}\,dv\,
\Big(\|h^+_{xx}(t)\|_{L^\infty_{x,v}}+\|h^-_{xx}(t)\|_{L^\infty_{x,v}}\Big)\lesssim \widetilde H(t).
$$
Hence Lemma~\ref{field01} yields
\[
\|\nabla_x^2 E(t)\|_{L^\infty_x}
\lesssim \|\nabla_x^2 \rho(t)\|_{L^\infty_x}
\lesssim \widetilde H(t).
\]
Therefore
\[
\widetilde{H}(t)\lesssim M_0+\int^t_0\left(1+M_0C(s)\right) \widetilde{H}(s)\,ds.
\]
By Gronwall's inequality,
\begin{equation}\label{lemme4.1-5}
\widetilde{H}(t) \leq M_0C(t).
\end{equation}
Combining \eqref{lemme4.1-3} with \eqref{lemme4.1-4} and \eqref{lemme4.1-5}, we conclude \eqref{lemma-4.1}.
This completes the proof.

\subsection{Proof of Proposition \ref{xiaoshijian008}}
The starting point is the standard Duhamel representation
\begin{align}\label{midu010}
\rho(t,x)=&\int_{\mathbb{R}^d}f_{\mathrm{in}}(x-tv,v)\,dv\nonumber\\
&-\int_0^t\int_{\mathbb{R}^d}E\left(s,x-v(t-s)\right)\cdot
\nabla_v(f^++f^-)\left(s,x-v(t-s),v\right)\,dv\,ds\nonumber\\
&-2\int_0^t\int_{\mathbb{R}^d}E\left(s,x-v(t-s)\right)\cdot\nabla_v\mu(v)
\,dv\,ds.
\end{align}

\medskip
\textit{$L^1$-estimate for $\rho$.}
Taking the $L^1_x$ norm in \eqref{midu010}, using Minkowski's inequality, the translation invariance of the $L^1_x$ norm, $\|\nabla_v\mu\|_{L^1_v}<\infty$, and
$
\|E(s)\|_{L^1_x}\lesssim \|\rho(s)\|_{L^1_x},
$
we obtain
\begin{align}\label{midu001}
\|\rho(t)\|_{L^1_x}
\leq&\ \|f_{\mathrm{in}}\|_{L^1_vL^1_x}
+2\int_0^t\|\nabla_v\mu\|_{L^1_v}\,\|E(s)\|_{L^1_x}\,ds \nonumber\\
&+\int_0^t \|E(s)\|_{L^1_x}
\int_{\R^d}\big\|\nabla_v(f^++f^-)(s,\cdot,v)\big\|_{L^\infty_x}\,dv\,ds \nonumber\\
\leq&\ \|f_{\mathrm{in}}\|_{L^1_vL^1_x}
+C\int_0^t\big(1+M_0C(s)\big)\,\|\rho(s)\|_{L^1_x}\,ds .
\end{align}
In the last step we used Lemma~\ref{youjie010} and the velocity weight $k>d$,
\[
\int_{\R^d}\|\nabla_v f^\pm(s,\cdot,v)\|_{L^\infty_x}\,dv
\le \|\langle v\rangle^k\nabla_v f^\pm(s)\|_{L^\infty_{x,v}}
\int_{\R^d}\langle v\rangle^{-k}\,dv
\lesssim M_0C(s).
\]

\medskip
\textit{$L^\infty$-estimate for $\rho$.}
Taking the $L^\infty_{x}$ norm in \eqref{midu010} and
using Lemma~\ref{youjie010}, we have
\begin{align}\label{midu002}
\|\rho(t)\|_{L^\infty_x}
\leq&\ \|f_{\mathrm{in}}\|_{L^1_vL^\infty_x}
+2\int_0^t\|\nabla_v\mu\|_{L^1_v}\,\|E(s)\|_{L^\infty_x}\,ds \nonumber\\
&+\int_0^t \|E(s)\|_{L^\infty_x}
\int_{\R^d}\big\|\nabla_v(f^++f^-)(s,\cdot,v)\big\|_{L^\infty_x}\,dv\,ds \nonumber\\
\leq&\ \|f_{\mathrm{in}}\|_{L^1_vL^\infty_x}
+C\int_0^t\big(1+M_0C(s)\big)\,\|\rho(s)\|_{L^\infty_x}\,ds.
\end{align}

\medskip
\textit{Estimates for $\nabla_x\rho$.}
Differentiating \eqref{midu010} with respect to $x_i$ yields
\begin{align*}
|\partial_{x_i}\rho(t,x)|
\leq&\int_{\mathbb{R}^d}|\partial_{x_i}
f_{\mathrm{in}}(x-tv,v)|\,dv\\
&+2\int_0^t\int_{\mathbb{R}^d}|\partial_{x_i}E(s,x-v(t-s))|
|\nabla_v\mu(v)|\,dv\,ds\\
&+\int_0^t\int_{\mathbb{R}^d}|\partial_{x_i}E(s,x-v(t-s))|
\left|\nabla_v(f^++f^-)\left(s,x-v(t-s),v\right)\right|\,dv\,ds\\
&+\int_0^t\int_{\mathbb{R}^d}|E(s,x-v(t-s))|
\left|\partial_{x_i}\nabla_v(f^++f^-)\left(s,x-v(t-s),v\right)\right|\,dv\,ds.
\end{align*}
For $p \in \{1, \infty\}$, taking the $L^p$ norm and applying Minkowski's inequality, we obtain
\begin{align*}
\|\nabla_x\rho(t)\|_{L^p_x}
\leq&\ \|\nabla_x f_{\mathrm{in}}\|_{L^1_vL^p_x}
+2\int_0^t\|\nabla_v\mu\|_{L^1_v}\,\|\nabla_xE(s)\|_{L^p_x}\,ds \nonumber\\
&+\int_0^t \|\nabla_xE(s)\|_{L^p_x}
\int_{\R^d}\big\|\nabla_v(f^++f^-)(s,\cdot,v)\big\|_{L^\infty_x}\,dv\,ds \nonumber\\
&+\int_0^t \|E(s)\|_{L^p_x}
\int_{\R^d}\big\|\nabla_x\nabla_v(f^++f^-)(s,\cdot,v)\big\|_{L^\infty_x}\,dv\,ds.
\end{align*}
Using Lemma~\ref{youjie010} and $k>d$ as above,
\[
\int_{\R^d}\|\nabla_x\nabla_v f^\pm(s,\cdot,v)\|_{L^\infty_x}\,dv
\lesssim \|\langle v\rangle^k\nabla_x\nabla_v f^\pm(s)\|_{L^\infty_{x,v}}
\lesssim M_0C(s).
\]
Therefore,
\begin{align*}
\|\nabla_x\rho(t)\|_{L^p_x}
\leq&\ \|\nabla_x f_{\mathrm{in}}\|_{L^1_vL^p_x}
+C\int_0^t\bigl(1+M_0C(s)\bigr)\|\nabla_xE(s)\|_{L^p_x}\,ds \\
&+C\int_0^t M_0C(s)\|E(s)\|_{L^p_x}\,ds .
\end{align*}
Applying Lemma~\ref{field01} with $\psi=\rho$, we have
\[
\|\nabla_x E(t)\|_{L^p_x}
\lesssim \|\nabla_x \rho(t)\|_{L^p_x},
\]
which yields
\begin{equation}\label{midu003}
\|\nabla_x\rho(t)\|_{L^p_x}
\le \|\nabla_x f_{\mathrm{in}}\|_{L^1_vL^p_x}
+C\int_0^t \big(1+M_0C(s)\big)\,\|\nabla_x\rho(s)\|_{L^p_x}\,ds .
\end{equation}

Combining \eqref{midu001}--\eqref{midu003} and applying Gronwall's inequality,
we obtain
\begin{align*}
\|\rho(t)\|_{L^1_x}+\|\rho(t)\|_{L^\infty_x} +\|\nabla_x \rho(t)\|_{L^1_x}+\|\nabla_x \rho(t)\|_{L^\infty_x}
\leq M_0C(t) \varepsilon_0.
\end{align*}
Finally, by Lemma~\ref{chazhi00}, we have
\[
\|\rho(t)\|_{\dot B^a_{1,\infty}}
\lesssim
\|\rho(t)\|_{L^1_x}^{1-a}\|\nabla_x\rho(t)\|_{L^1_x}^{a},
\qquad
\|\rho(t)\|_{\dot B^a_{\infty,\infty}}
\lesssim
\|\rho(t)\|_{L^\infty_x}^{1-a}\|\nabla_x\rho(t)\|_{L^\infty_x}^{a}.
\]
Hence,
\[
\|\rho(t)\|_{\dot B^a_{1,\infty}}
+\|\rho(t)\|_{\dot B^a_{\infty,\infty}}
\le M_0C(t)\varepsilon_0 .
\]
This completes the proof.
\hfill$\Box$

\section{Large-Time Analysis}\label{diwujie}

In this section, we establish the large-time decay estimates for the charge density.
The proof combines a refined analysis of the characteristic flow with the resolvent structure of the screened Poisson operator.

\subsection{Characteristic Representation of the Charge Density}
We begin with the Duhamel formula for the perturbations \(f^\pm\).
Integrating the equations in \eqref{VKG} along the characteristic flows
\((\mathcal X^\pm,\mathcal V^\pm)\) introduced in \eqref{character1}, we obtain
\[
f^{\pm}(t,x,v)=f_{\mathrm{in}}^{\pm}(\mathcal{X}^{\pm}(0), \mathcal{V}^{\pm}(0))
\mp\int_0^t E(s,\mathcal{X}^{\pm}(s))\cdot\nabla_{v}\mu(\mathcal{V}^{\pm}(s))\,ds.
\]
Integrating with respect to \(v\), one gets that
\begin{equation*}
\rho^{\pm}(t,x)=\int_{\mathbb{R}^d}f_{\mathrm{in}}^{\pm}(\mathcal{X}^{\pm}(0), \mathcal{V}^{\pm}(0))\,dv
\mp\int_0^t\int_{\mathbb{R}^d} E(s,\mathcal{X}^{\pm}(s))\cdot\nabla_{v}\mu(\mathcal{V}^{\pm}(s))\,dv\,ds.
\end{equation*}
Using $E=\nabla_x (I - \Delta_x)^{-1} \rho$, we decompose $\rho^{\pm}$ as
\begin{equation}\label{midufenjie001}
\rho^{\pm}(t,x)=I^\pm(t,x)\mp Q(t,x)\mp R^\pm(t,x),
\end{equation}
where
\begin{align*}
I^\pm(t,x):=&\int_{\mathbb{R}^d}f_{\mathrm{in}}^{\pm}(\mathcal{X}^{\pm}(0),
\mathcal{V}^{\pm}(0))\,dv,\nonumber\\
Q(t,x):=&\int_0^t\int_{\mathbb{R}^d}\left[\nabla_x(I-\Delta_x)^{-1}\rho\right](s, x-(t-s)v)\cdot\nabla_v\mu(v)\,dv\,ds,\nonumber\\
R^\pm(t,x):=&\int_0^t\int_{\mathbb{R}^d} \left[E(s,\mathcal{X}^{\pm}(s))\cdot\nabla_{v}\mu(\mathcal{V}^{\pm}(s))
- E(s,x-(t-s)v)\cdot\nabla_{v}\mu(v)\right]\,dv\,ds.
\end{align*}

We next rewrite the linear term \(Q(t,x)\) in convolution form.
Using the change of variables \(w=x-(t-s)v\), and then integrating by parts in \(v\), we obtain
\[
Q(t,x)=\int_0^t\int_{\mathbb{R}^d}\mathcal{Q}(t-s, x-w)\rho(s, w)\,dw\,ds
= (\mathcal{Q}*_{(t,x)}\rho)(t,x),
\]
where the space-time kernel is given by
\[
\mathcal{Q}(t, x)=\frac{1}{t^{d+1}}\left((1-\frac{1}{t^2}\Delta_v)^{-1} \Delta_v\mu\right)\left(\frac{x}{t}\right)\mathbf{1}_{t>0}.
\]
Consequently, since \(\rho=\rho^+-\rho^-\), we have
\[
\rho(t,x)=-2(\mathcal{Q}\ast_{(t,x)}\rho)(t,x)+\big(I^+-I^-\big)(t,x)
-\big(R^++R^-\big)(t,x).
\]

As in the linear analysis, we extend all time-dependent functions by zero to the region \(t<0\), and then take the Fourier transform in \((t,x)\).
Invoking the Penrose stability condition, we obtain
\begin{equation}\label{midu}
 \widehat{\rho}(\tau, \xi)=\frac{1}{1+2\widehat{\mathcal{Q}}(\tau, \xi)}
\Big[\Big(\widehat{I}^+-\widehat{I}^-\Big)(\tau, \xi)
-\Big(\widehat{R}^+ +\widehat{R}^-\Big)(\tau, \xi)\Big],
\end{equation}
where
\[
\widehat{\mathcal{Q}}(\tau, \xi)=\int_0^{+\infty}e^{-i\tau t}\frac{i\xi}{1+|\xi|^2}\cdot
\widehat{\nabla_v\mu}(t\xi)\,dt.
\]
We introduce the resolvent kernel
\begin{equation*}
\mathcal{G}(t,x)=\mathcal{F}_{(\tau,\xi)\rightarrow(t,x)}^{-1}\left(\frac{
2\widehat{\mathcal{Q}}(\tau, \xi)}
{1+2\widehat{\mathcal{Q}}(\tau, \xi)}\right).
\end{equation*}
Rearranging \eqref{midu} and inverting the Fourier transform, we obtain the representation
\begin{equation}\label{fangcheng000}
\rho=-\mathcal{G}\ast_{(t,x)}\left[(I^+-I^-)
-(R^+ +R^-)\right]+\left(I^+-I^-\right)-\left(R^+ +R^-\right).
\end{equation}
Moreover, since \(\widehat{\mathcal Q}(\tau,0)=0\) for every \(\tau\in\mathbb R\), the kernel \(\mathcal G\) satisfies the cancellation property
\begin{equation}\label{kernel006}
\int_{\mathbb{R}^d}\mathcal{G}(t,x)dx=0, \quad t > 0,
\end{equation}
which will be crucial for the large-time analysis.

Before deriving pointwise bounds for \(\rho\), we record the following estimate on the resolvent kernel \(\mathcal G\).
It is a direct consequence of \cite[Theorem~3.1]{Huang002}, after translating the notation and matching the weighted regularity assumptions with the present setting.

\begin{lemma}\label{kernel}
Under Assumptions~\ref{ass:regularity} and~\ref{ass:penrose}, for all $a\in(0,1)$
there exists a constant $C=C(\mu)>0$ such that
\begin{equation*}
|\nabla^j \mathcal G(t,x)|
\le
\frac{C}{(t+|x|)^{d+j-1}(1+t+|x|)^2}, \qquad j\ge0, \quad t\ge1, \quad x\in\mathbb R^d.
\end{equation*}
In particular,
\begin{equation}\label{lemma5.1-2}
\|\nabla^j \mathcal G(t)\|_{L^\infty_x}
\le \frac{C}{t^{d+j-1}(1+t)^2},
\qquad
\|\nabla^j \mathcal G(t)\|_{L^1_x}
\le \frac{C}{(1+t)^{j+1}},
\qquad j\in\{0,1\},
\end{equation}
\begin{equation}\label{lemma5.1-3}
\bigl\||x|^a\mathcal G(t)\bigr\|_{L^1_x}
\le \frac{C}{(1+t)^{1-a}},
\end{equation}
and
\begin{equation}\label{lemma5.1-4}
\bigl\||x|^a\delta_\alpha \mathcal G(t)\bigr\|_{L^1_x}
\le C\Bigl(
\frac{|\alpha|}{(1+t+|\alpha|)^{2-a}}
+\frac{|\alpha|^a\mathbf 1_{|\alpha|>t}}{1+t}
+\frac{|\alpha|^{d+a}\mathbf 1_{|\alpha|\le t}}{t^{d-1}(1+t)^2}
\Bigr).
\end{equation}
\end{lemma}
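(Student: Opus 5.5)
The pointwise bound on $\nabla^j\mathcal G$ is taken from \cite[Theorem~3.1]{Huang002}. The plan is to check that its hypotheses hold here, and then derive \eqref{lemma5.1-2}--\eqref{lemma5.1-4} by direct integration of that pointwise bound. The only translation needed concerns the symbol. Ours is $2\widehat{\mathcal Q}/(1+2\widehat{\mathcal Q})$, with the factor $2$ coming from the two species. Absorbing the $2$ into $\mu$ (that is, replacing $\mu$ by $2\mu$) gives exactly the one-species resolvent treated in \cite{Huang002}. The Penrose condition of Assumption~\ref{ass:penrose} is then exactly their stability hypothesis. The weighted $W^{3,\infty}$ and $W^{2d+7,1}$ bounds on $\langle v\rangle^{N}\nabla_v\mu$ in Assumption~\ref{ass:regularity} supply the decay and smoothness of $\widehat{\nabla_v\mu}$ that they need to resolve the singularity at $\tau\xi$ and to obtain the $(t+|x|)^{-(d+j-1)}(1+t+|x|)^{-2}$ envelope. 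I will take this pointwise estimate as given for $t\ge1$ and all $j\ge0$.

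The integrated bounds then follow from elementary integrals. The $L^\infty$ bound is the pointwise bound at $x=0$ worst case, since $t+|x|\ge t$. For $L^1$ I split $|x|\le t$ and $|x|>t$. On $|x|\le t$ the integrand is at most $t^{-(d+j-1)}(1+t)^{-2}$, which integrates to $\lesssim t^{1-j}(1+t)^{-2}\lesssim(1+t)^{-j-1}$ for $t\ge1$. On $|x|>t$ it is $\int_{|x|>t}|x|^{-(d+j+1)}dx\sim t^{-j-1}$. The moment bound \eqref{lemma5.1-3} is the same computation with weight $|x|^a$. The inner region gives $t^{a+1-2}$, and the outer region gives $\int_t^\infty r^{a-2}dr\sim t^{a-1}$; this converges because $a<1$.

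The main work is \eqref{lemma5.1-4}, which has three regimes in $|\alpha|$.

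\emph{Case $|\alpha|\le t$.} I use the mean value theorem, $|\delta_\alpha\mathcal G(x)|\le|\alpha|\sup_{|y-x|\le|\alpha|}|\nabla\mathcal G(y)|$. Since $|\alpha|\le t$, we have $t+|y|\sim t+|x|$, so the $j=1$ weighted-$L^1$ computation gives $|\alpha|\,t^{a-2}\lesssim |\alpha|(1+t+|\alpha|)^{a-2}$. I would expect the third term, $|\alpha|^{d+a}t^{1-d}(1+t)^{-2}$, not to be needed in this case; I will keep it only because the source states it, and it is harmless.

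\emph{Case $|\alpha|>t$.} I bound the two pieces of $\delta_\alpha\mathcal G$ separately, using $|x|^a\le|x-\alpha|^a+|\alpha|^a$. This gives
$$\||x|^a\delta_\alpha\mathcal G\|_{L^1}\le 2\||x|^a\mathcal G\|_{L^1}+|\alpha|^a\|\mathcal G\|_{L^1}\lesssim t^{a-1}+|\alpha|^a(1+t)^{-1}\lesssim |\alpha|^a(1+t)^{-1}.$$
This is the second term. For very large $|\alpha|$ one can also combine it with the first term via $|\alpha|(1+t+|\alpha|)^{a-2}\sim|\alpha|^{a-1}$, but the second term already dominates there, so that is not needed.

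The obstacle I expect is bookkeeping near $|y|\approx|\alpha|$ in the mean value step: I must make sure that the shifted sup of $\nabla\mathcal G$ stays controlled by $(t+|x|)^{-d}(1+t+|x|)^{-2}$ when $|\alpha|\le t$. Beyond that, the only thing to verify is the normalization match with \cite{Huang002}.
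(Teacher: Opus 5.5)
Your proposal is correct and follows the paper's route. The paper likewise takes the pointwise envelope directly from \cite[Theorem~3.1]{Huang002}, with the factor $2$ absorbed into $\mu$, which is compatible with Assumptions~\ref{ass:regularity}--\ref{ass:penrose}. It then treats \eqref{lemma5.1-2}--\eqref{lemma5.1-4} as immediate consequences, which you verify correctly by elementary integration. Your mean-value step for $|\alpha|\le t$ is sound, since $t+|y|\ge\tfrac12(t+|x|)$ whenever $|y-x|\le|\alpha|\le t$, and it shows that the third term in \eqref{lemma5.1-4} is not needed for $t\ge1$.
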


For a function $\hbar=\hbar(t,x)$, we define the space-time convolution
\begin{equation*}
(\mathcal{G}*_{(t,x)}\hbar)(t,x)=\int^t_0\int_{\mathbb{R}^d}
\mathcal{G}(t-s,y)\hbar(s,x-y)\,dy\,ds.
\end{equation*}
We next record its mapping properties in Lebesgue and Besov spaces.
\begin{lemma}\label{juanji001}
Let $a\in(0,1)$. For every $t\ge1$, one has
\begin{equation}\label{lemma5.2}
\left\{
\begin{aligned}
\|(\mathcal{G}\ast_{(t,x)}\hbar)(t)\|_{L^1_x}&\lesssim\sup_{0\leq s\leq t}\langle s\rangle^a\|\hbar(s)\|_{\dot{B}_{1,\infty}^a},\\
\langle t\rangle^d\|(\mathcal{G}\ast_{(t,x)}\hbar)(t)\|_{L^\infty_x}&\lesssim\sup_{0\leq s\leq t}(\|\hbar(s)\|_{L^1_x}
+\langle s\rangle^{d+a}\|\hbar(s)\|_{\dot{B}_{\infty,\infty}^a}),\\
\frac{\langle t\rangle}{\ln(1+t)}\|\nabla( \mathcal{G}\ast_{(t,x)}\hbar)(t)\|_{L^1_x}
&\lesssim\sup_{0\leq s\leq t}\big(\|\hbar(s)\|_{L^1_x}+\langle s\rangle\|\nabla\hbar(s)\|_{L^1_x}\big),\\
\frac{\langle t\rangle^{d+1}}{\ln (1+t)}\|\nabla(\mathcal{G}\ast_{(t,x)}\hbar)(t)\|_{L^\infty_x}&\lesssim
\sup_{0\leq s\leq t}\big(\|\hbar(s)\|_{L^1_x}+\langle s\rangle^{d+1}\|\nabla\hbar(s)\|_{L^\infty_x}\big),\\
\langle t\rangle^{a}\|(\mathcal{G}\ast_{(t,x)}\hbar)(t)\|_{\dot{B}_{1,\infty}^a}
&\lesssim \sup_{0\leq s\leq t}\langle s\rangle^{a}\|\hbar(s)\|_{\dot{B}_{1,\infty}^a},\\
\langle t\rangle^{d+a}\|(\mathcal{G}\ast_{(t,x)}\hbar)(t)\|_{\dot{B}_{\infty,\infty}^a}
&\lesssim \sup_{0\leq s\leq t}(\langle s\rangle^{a}\|\hbar(s)\|_{\dot{B}_{1,\infty}^a}+\langle s\rangle^{d+a}\|\hbar(s)\|_{\dot{B}_{\infty,\infty}^a}).
\end{aligned}
\right.
\end{equation}

\end{lemma}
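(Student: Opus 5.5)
The proof is a direct estimate of the space-time convolution using only the kernel bounds of Lemma~\ref{kernel}. The time integral is split at $s=t/2$ (for $t\ge1$), and a short layer near $s=t$ is isolated where needed. Lemma~\ref{kernel} gives bounds only for $t-s\ge1$, so the local piece $t-s\le1$ needs its own short-time kernel bound. There I would use that $\widehat{\mathcal G}$ is a Fourier multiplier whose time restriction to $[0,1]$ is an $L^1_x$-bounded family, with $\|\mathcal G(\tau)\|_{L^1_x}\lesssim1$ and $\||x|^a\mathcal G(\tau)\|_{L^1_x}\lesssim1$. This follows from the same analysis in \cite{Huang002}. The constraint $\int\mathcal G(\tau,x)\,dx=0$ from \eqref{kernel006} is the key tool: it lets us write
\[
\int\mathcal G(\tau,y)\hbar(s,x-y)\,dy=\int\mathcal G(\tau,y)\bigl(\hbar(s,x-y)-\hbar(s,x)\bigr)\,dy ,
\]
so each factor $|y|^a$ can be traded against a Besov seminorm of $\hbar$.

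For the first ($L^1$) estimate, I would apply the cancellation and Minkowski's inequality. This gives $\|\int\mathcal G(\tau,y)\delta_y\hbar(s)\,dy\|_{L^1}\le\||y|^a\mathcal G(\tau)\|_{L^1}\|\hbar(s)\|_{\dot B^a_{1,\infty}}$, and by \eqref{lemma5.1-3} this is bounded by $(1+\tau)^{-(1-a)}\langle s\rangle^{-a}$ times the supremum. Integrating in $s\in[0,t]$ gives $\int_0^t(1+t-s)^{a-1}\langle s\rangle^{-a}\,ds\lesssim1$, since $a\in(0,1)$.

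For the second ($L^\infty$) estimate, on $s\le t/2$ I would use $\|\mathcal G(t-s)\|_{L^\infty}\|\hbar(s)\|_{L^1}\lesssim t^{-d-1}$; integrating over $s\le t/2$ gives $t^{-d}$. On $s\ge t/2$ I would use the cancellation with $L^\infty$ differences, bounding by $\||y|^a\mathcal G(t-s)\|_{L^1}\langle s\rangle^{-d-a}\|\hbar\|$. The integral $\int_{t/2}^t(1+t-s)^{a-1}\,ds\lesssim t^a$ then gives $t^{-d}$.

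The gradient estimates are the same in structure, with $\nabla$ placed on $\mathcal G$ for $s\le t/2$ and on $\hbar$ for $s\ge t/2$. For the $L^1$ gradient estimate, $s\le t/2$ gives $\|\nabla\mathcal G(t-s)\|_{L^1}\|\hbar\|_{L^1}\lesssim t^{-2}$, which integrates to $t^{-1}$. On $s\ge t/2$ we get $\|\mathcal G(t-s)\|_{L^1}\langle s\rangle^{-1}\|\hbar\|$, and $\int_{t/2}^t(1+t-s)^{-1}\,ds\sim\ln t$ produces the logarithm. The $L^\infty$ gradient estimate is analogous, using $\|\nabla\mathcal G\|_{L^\infty}\lesssim t^{-d-2}$ and $\|\mathcal G\|_{L^1}\lesssim(1+\tau)^{-1}$.

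For the Besov estimates, I would use that $\delta_\alpha$ commutes with the convolution. It can be placed on $\hbar$: Young's inequality gives $\|\mathcal G(\tau)\|_{L^1}\|\hbar(s)\|_{\dot B^a}$, but that alone yields a logarithmic loss. So instead I would put $\delta_\alpha$ on $\mathcal G$ and use the cancellation, which leads to $\||y|^a\delta_\alpha\mathcal G(\tau)\|_{L^1}\|\hbar(s)\|_{\dot B^a_{1,\infty}}$, controlled by \eqref{lemma5.1-4}. The plan is to choose per $(\alpha,s)$ the better of the two placements. For the $\dot B^a_{1,\infty}$ bound, each of the three terms of \eqref{lemma5.1-4}, divided by $|\alpha|^a$ and integrated against $\langle s\rangle^{-a}$, is checked to give $\langle t\rangle^{-a}$. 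The cases $|\alpha|\le1$, $1\le|\alpha|\le t$ and $|\alpha|>t$ are treated separately, and the trivial placement on $\hbar$ is used for $t-s\lesssim1$. The $\dot B^a_{\infty,\infty}$ bound combines both: for $s\le t/2$ I use $\|\delta_\alpha\mathcal G(t-s)\|_{L^\infty}\lesssim\min\{|\alpha|\,t^{-d-2},\,t^{-d-1}\}$, interpolated to $|\alpha|^a t^{-d-1-a}$, against $\|\hbar\|_{L^1}$. A term with $\dot B^a_{1,\infty}$ decay $\langle s\rangle^{-a}$ still suffices there. For $s\ge t/2$ I use the cancellation with $\dot B^a_{\infty,\infty}$ as before.

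The main obstacle is the fifth estimate, the $\dot B^a_{1,\infty}$ bound without logarithmic loss. Summing the contributions of \eqref{lemma5.1-4} in the regime $|\alpha|\sim t-s$ is delicate, and the boundary layer near $s=t$ together with the short-time kernel behaviour must be handled carefully there.
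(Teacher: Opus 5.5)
\textbf{There is a genuine gap in the Besov estimates.} Your first four estimates are fine and coincide with the paper's argument: cancellation plus the $|y|^a$-moment for the $L^1$ bound, and the split at $s=t/2$ for the $L^\infty$ and gradient bounds. The $[0,t/2]$ part of your Besov argument is also fine (Young with $\delta_\alpha$ on $\hbar$, no log because $\|\mathcal G(t-s)\|_{L^p}$ is frozen at size $t$ there).

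The gap is on $s\in[t/2,t]$, in the regime $1\le t-s<|\alpha|\le t/2$, and neither of your two placements is lossless there. Write $\tau=t-s$.
\begin{itemize}
\item With $\delta_\alpha$ on $\hbar$ and Young's inequality you get $\|\mathcal G(\tau)\|_{L^1}\|\delta_\alpha\hbar(s)\|_{L^p}/|\alpha|^a\lesssim(1+\tau)^{-1}\|\hbar(s)\|_{\dot B^a_{p,\infty}}$.
\item With $\delta_\alpha$ on $\mathcal G$ and \eqref{lemma5.1-4}, the middle term $|\alpha|^a\mathbf 1_{|\alpha|>\tau}/(1+\tau)$, divided by $|\alpha|^a$, is again $(1+\tau)^{-1}$. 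With only Lemma~\ref{kernel} nothing better is available, since the shifted copy $\mathcal G(\tau,\cdot-\alpha)$ contributes $|\alpha|^a\|\mathcal G(\tau)\|_{L^1}$.
\end{itemize}
In both cases, for $|\alpha|\le t/2$,
\[
\int_{t-|\alpha|}^{t}\frac{ds}{(1+t-s)\langle s\rangle^{a}}\sim\langle t\rangle^{-a}\ln(1+|\alpha|),
\]
and the same logarithm appears in the $\dot B^a_{\infty,\infty}$ bound on $[t/2,t]$. So your claim that each term of \eqref{lemma5.1-4}, divided by $|\alpha|^a$ and integrated against $\langle s\rangle^{-a}$, gives $\langle t\rangle^{-a}$ is false for the middle term. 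Choosing the better of the two placements for each $(\alpha,s)$ cannot repair this. Note also that the critical regime is all of $t-s<|\alpha|$, not just $|\alpha|\sim t-s$.

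\textbf{The missing idea} is a third placement: keep $\delta_\alpha$ on $\hbar$, but still use \eqref{kernel006}:
\[
\int\mathcal G(\tau,y)\,\delta_\alpha\hbar(s,x-y)\,dy=\int\mathcal G(\tau,y)\bigl[\delta_\alpha\hbar(s,x-y)-\delta_\alpha\hbar(s,x)\bigr]\,dy .
\]
Then bound $\|\delta_y\delta_\alpha\hbar(s)\|_{L^p}\le 2\|\delta_y\hbar(s)\|_{L^p}\le 2|y|^a\|\hbar(s)\|_{\dot B^a_{p,\infty}}$. The kernel factor becomes $\||y|^a\mathcal G(\tau)\|_{L^1}\lesssim(1+\tau)^{a-1}$ by \eqref{lemma5.1-3}. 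Since $\int_0^{|\alpha|}(1+\tau)^{a-1}\,d\tau\lesssim|\alpha|^a$ (and $\le|\alpha|$ when $|\alpha|<1$), this exactly absorbs the factor $|\alpha|^{-a}$. This is the paper's term $\mathcal A_1$; the paper phrases it loosely as ``$|\delta_\alpha\mathcal G|\le2|\mathcal G|$'', but the representation above is what makes it correct.

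The rest of the paper's Besov argument then goes as follows.
\begin{itemize}
\item For $t-s\ge|\alpha|$, the middle term of \eqref{lemma5.1-4} vanishes, and the other two terms integrate to $\lesssim|\alpha|^a$ (the paper's $\mathcal A_2$).
\item For $|\alpha|\ge t/2$, use the trivial bound $\|\delta_\alpha F\|\le 2\|F\|$, applied only to the $[t/2,t]$ piece. Applying it to the whole convolution would bring $\sup_s\|\hbar(s)\|_{L^1_x}$ into the sixth estimate, where it is not allowed.
\end{itemize}
With the split at $t-s=|\alpha|$, both Besov bounds close without logarithmic loss.
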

\begin{proof}
By \eqref{kernel006}, we have
\[
\int_{\mathbb R^d}\mathcal{G}(t-s,y)\,dy=0.
\]
Hence
\begin{equation}\label{lemma5.2-L}
(\mathcal{G}\ast_{(t,x)}\hbar)(t,x)
=\int_0^t\int_{\R^d}\mathcal{G}(t-s,y)\,\big[\hbar(s,x-y)-\hbar(s,x)\big]\,dy\,ds .
\end{equation}

We first prove the \(L^1_x\) bound. Using \eqref{lemma5.2-L} and \eqref{lemma5.1-3}, we obtain
\begin{align*}
\|(\mathcal{G}\ast_{(t,x)}\hbar)(t)\|_{L^1_x}
\leq&
\int_0^t\int_{\mathbb R^d}
|\mathcal{G}(t-s,y)|\,\|\hbar(s,\cdot-y)-\hbar(s,\cdot)\|_{L^1_x}\,dy\,ds \nonumber\\
\leq& \int^t_0\||y|^a\mathcal{G}(t-s,y)\|_{L^1_y}
\sup\limits_y\frac{\|\delta_y\hbar(s)\|_{L_x^1}}{|y|^a}\,ds\nonumber\\
\lesssim&\ \sup\limits_{0\leq s\leq t}\langle s\rangle^a\|\hbar(s)\|_{\dot{B}_{1,\infty}^a}\int^t_0\frac{\,ds}{\langle s\rangle^a(1+t-s)^{1-a}}.
\end{align*}
Since $a\in(0,1)$, the last integral is bounded uniformly in $t\ge1$. Therefore
\begin{equation}\label{kernel02}
\|(\mathcal{G}\ast_{(t,x)}\hbar)(t)\|_{L^1_x}
\lesssim\sup\limits_{0\leq s\leq t}\langle s\rangle^a\|\hbar(s)\|_{\dot{B}_{1,\infty}^a}.
\end{equation}
We next prove the $L^\infty$ estimate.
Split $[0,t]=[0,\frac{t}{2}]\cup[\frac{t}{2},t]$.
On \([0,\frac{t}{2}]\), we use the \(L^\infty_x\)-bound on \(\mathcal{G}\); on \([\frac{t}{2},t]\), we use \eqref{lemma5.2-L}. Thus
$$
\|(\mathcal{G}\ast_{(t,x)}\hbar)(t)\|_{L^\infty_x}\leq
\int^{\frac{t}{2}}_0\|\mathcal{G}(t-s)\|_{L^\infty_x}\|\hbar(s)\|_{L^1_x}ds+
\int_{\frac{t}{2}}^t\||\cdot|^a\mathcal{G}(t-s,\cdot)\|_{L^1_x}\| \hbar(s)\|_{\dot{B}_{\infty,\infty}^a}ds.
$$
By \eqref{lemma5.1-2} and \eqref{lemma5.1-3}, it holds that
\begin{align}\label{kernel04}
\|(\mathcal{G}\ast_{(t,x)}\hbar)(t)\|_{L^\infty_x}
\lesssim&\ \sup\limits_{0\leq s\leq t}\|\hbar(s)\|_{L^1_x}
\int^\frac{t}{2}_0\frac{ds}{(t-s)^{d-1}(1+t-s)^{2}}\nonumber\\
&+\sup\limits_{0\leq s\leq t}\langle s\rangle^{d+a}\|\hbar(s)\|_{\dot{B}_{\infty,\infty}^a}
\int_{\frac{t}{2}}^t\frac{ds}{(1+t-s)^{1-a}\langle s\rangle^{d+a}}\nonumber\\
\lesssim& \ \langle t\rangle^{-d}\sup\limits_{0\leq s\leq t}(\|\hbar(s)\|_{L^1_x}
+\langle s\rangle^{d+a}\|\hbar(s)\|_{\dot{B}_{\infty,\infty}^a}).
\end{align}

We next estimate the spatial gradient. Using the same time splitting, we obtain
$$
\|\nabla( \mathcal{G}\ast_{(t,x)}\hbar)(t)\|_{L^1_x}\leq \int^{\frac{t}{2}}_0\|\nabla \mathcal{G}(t-s)\|_{L^1_x}\|\hbar(s)\|_{L^1_x}ds+\int_{\frac{t}{2}}^t\|
\mathcal{G}(t-s)\|_{L^1_x}\|\nabla\hbar(s)\|_{L^1_x}\,ds.
$$
This together with \eqref{lemma5.1-2} gives
\begin{align*}
\|\nabla( \mathcal{G}\ast_{(t,x)}\hbar)(t)\|_{L^1_x}
\lesssim&
\sup_{0\le s\le t}\|\hbar(s)\|_{L^1_x}\int_0^{\frac{t}{2}}\frac{ds}{(1+t-s)^2}
+\sup_{0\le s\le t}\langle s\rangle\|\nabla\hbar(s)\|_{L^1_x}
\int_{\frac{t}{2}}^{t}\frac{ds}{(1+t-s)\,\langle s\rangle}\\
\lesssim &\frac{\ln (1+t)}{\langle t\rangle}\sup\limits_{0\leq s\leq t}(\|\hbar(s)\|_{L^1_x}+\langle s\rangle\|\nabla\hbar(s)\|_{L^1_x}).
\end{align*}
Similarly,
$$
\|\nabla(\mathcal{G}\ast_{(t,x)}\hbar)(t)\|_{L^\infty_x}
\leq\int^{\frac{t}{2}}_0\|\nabla \mathcal{G}(t-s)\|_{L^\infty_x}\|\hbar(s)\|_{L^1_x}ds+
\int_{\frac{t}{2}}^t\|\mathcal{G}(t-s)\|_{L^1_x}\| \nabla \hbar(s)\|_{L^\infty_x}ds.
$$
By \eqref{lemma5.1-2}, we have
\begin{align*}
\|\nabla(\mathcal{G}\ast_{(t,x)}\hbar)(t)\|_{L^\infty_x}
\lesssim&\ \sup\limits_{0\leq s\leq t}\|\hbar(s)\|_{L^1_x}
\int^\frac{t}{2}_0\frac{ds}{(t-s)^{d}(1+t-s)^{2}}\nonumber\\
&+\sup\limits_{0\leq s\leq t}\langle s\rangle^{d+1}\|\nabla\hbar(s)\|_{L^\infty_x}
\int_{\frac{t}{2}}^t\frac{ds}{(1+t-s)\langle s\rangle^{d+1}}\\
\lesssim& \frac{\ln (1+t)}{\langle t\rangle^{d+1}}\sup\limits_{0\leq s\leq t}(\|\hbar(s)\|_{L^1_x}+\langle s\rangle^{d+1}\|\nabla\hbar(s)\|_{L^\infty_x}).
\end{align*}

It remains to establish the Besov estimates.
By the difference-quotient characterization of \(\dot B^a_{p,\infty}\), it suffices to estimate
\[
\sup_{\alpha}
\frac{\|\delta_\alpha(\mathcal G*_{(t,x)}\hbar)(t)\|_{L^p_x}}{|\alpha|^a},
\qquad p\in\{1,\infty\}.
\]
We decompose
\[
\delta_\alpha (\mathcal G*_{(t,x)}\hbar)(t,x)
=G_1(t,x)+G_2(t,x),
\]
where
\[
G_1(t,x):=\int_0^{\frac{t}{2}}\int_{\mathbb R^d}\mathcal G(t-s,y)\,
\delta_\alpha \hbar(s,x-y)\,dy\,ds,
\]
\[
G_2(t,x):=\int_{\frac{t}{2}}^{t}\int_{\mathbb R^d}\mathcal G(t-s,y)\,
\delta_\alpha \hbar(s,x-y)\,dy\,ds.
\]

We first estimate \(G_1\).
By Young's inequality, it holds that
$$
\frac{\|G_1(t)\|_{L^p_x}}{|\alpha|^a}
\le
\int_0^{\frac{t}{2}}\|\mathcal G(t-s)\|_{L^p_x}
\|\delta_\alpha \hbar(s)\|_{L^1_x}\frac{\,ds}{|\alpha|^a}
\lesssim
\int_0^{\frac{t}{2}}\|\mathcal G(t-s)\|_{L^p_x}
\|\hbar(s)\|_{\dot B^a_{1,\infty}}\,ds,
$$
which together with \eqref{lemma5.1-2} yields
\begin{equation}\label{eq:Besov-I1}
\frac{\|G_1(t)\|_{L^p_x}}{|\alpha|^a}\lesssim
\int_0^{\frac{t}{2}}\langle t-s\rangle^{-\frac{d(p-1)}{p}-1}
\|\hbar(s)\|_{\dot B^a_{1,\infty}}\,ds\lesssim
\langle t\rangle^{-\frac{d(p-1)}{p}-a}
\sup_{0\le s\le t}\langle s\rangle^{a}\|\hbar(s)\|_{\dot B^a_{1,\infty}}.
\end{equation}

We next estimate \(G_2\), and distinguish two cases.

\smallskip
\noindent\emph{Case 1: $|\alpha|\ge \frac{t}{2}$.}
In this regime,
$$
%\frac{\|\delta_\alpha G_2(t)\|_{L^p_x}}{|\alpha|^a}
\frac{\|G_2(t)\|_{L^p_x}}{|\alpha|^a}
\lesssim\langle t\rangle^{-a}\|G_2(t)\|_{L^p_x}.
$$
If \(p=1\), we use \eqref{kernel02}; if \(p=\infty\), we use the estimate on the interval $[\frac{t}{2},t]$ in \eqref{kernel04}. This yields
%\[
%\frac{\|\delta_\alpha G_2(t)\|_{L^p_x}}{|\alpha|^a}
%\lesssim\frac{2\|G_2(t)\|_{L^p_x}}{|\alpha|^a}
%\lesssim\frac{1}{\langle t\rangle^{a}}\sup\limits_{0\leq s\leq t}\langle s\rangle^{a}\| \hbar(s)\|_{\dot{B}_{1,\infty}^a},
%\]
%and
\begin{equation}\label{dashijian-0}
\frac{\| G_2(t)\|_{L^1_x}}{|\alpha|^a}
\lesssim\langle t\rangle^{-a}\sup\limits_{0\leq s\leq t}
\langle s\rangle^{a}\|\hbar(s)\|_{\dot{B}_{1,\infty}^a},\ \
\frac{\| G_2(t)\|_{L^\infty_x}}{|\alpha|^a}
\lesssim\langle t\rangle^{-d-a}\sup\limits_{0\leq s\leq t}\langle s\rangle^{d+a}\|\hbar(s)\|_{\dot{B}_{\infty,\infty}^a}.
\end{equation}

\smallskip
\noindent\emph{Case 2: $|\alpha|<\frac{t}{2}$.}
By expanding the difference quotient in \(\delta_\alpha\hbar\) and making the change of variables
\(z=y+\alpha\), we obtain
\[
G_2(t,x)=\int_{\frac{t}{2}}^{t}\int_{\mathbb R^d}
\delta_\alpha \mathcal G(t-s,z)\,\hbar(s,x-z)\,dz\,ds.
\]
Since \eqref{kernel006} implies
\[
\int_{\mathbb R^d}\delta_\alpha \mathcal G(t-s,z)\,dz=0,
\]
we have
\[
G_2(t,x)=\int_{\frac{t}{2}}^{t}\int_{\mathbb R^d}\delta_\alpha \mathcal G(t-s,z)
\bigl[\hbar(s,x-z)-\hbar(s,x)\bigr]\,dz\,ds.
\]

We further split the time interval into
$t-s\le |\alpha|$ and $t-s\ge |\alpha|$. Thus
$$
\|G_2(t)\|_{L_x^p}
\leq\mathcal{A}_1+\mathcal{A}_2,
$$
where
\begin{align*}
\mathcal{A}_1:= & \int_{t-|\alpha|}^t\int_{\mathbb{R}^d}
|\delta_\alpha \mathcal{G}(t-s,z)|\| \hbar(s,\cdot-z)- \hbar(s,\cdot)\|_{L_x^p}\,dz\, ds,\\
\mathcal{A}_2:= & \int_{\frac{t}{2}}^{t-|\alpha|}\int_{\mathbb{R}^d}|\delta_\alpha \mathcal{G}(t-s,z)|\| \hbar(s,\cdot-z)- \hbar(s,\cdot)\|_{L_x^p}\,dz\, ds.
\end{align*}
For $\mathcal{A}_1$, we use \(|\delta_\alpha \mathcal{G}|\le 2|\mathcal{G}|\) and \eqref{lemma5.1-3}. Since \(s\in[t-|\alpha|,t]\subset[\frac{t}{2},t]\), we have
\begin{align*}
\mathcal{A}_1\leq
&\ 2\int_{t-|\alpha|}^t
\||z|^a\mathcal{G}(t-s,z)\|_{L^1_z}\| \hbar(s)\|_{\dot{B}_{p,\infty}^a}\,ds\nonumber\\
\lesssim&\ \sup\limits_{0\leq s\leq t}\langle s\rangle^{\frac{d(p-1)}{p}+a}\| \hbar(s)\|_{\dot{B}_{p,\infty}^a}\int_{t-|\alpha|}^t
\frac{ds}{(1+t-s)^{1-a}\langle s\rangle^{\frac{d(p-1)}{p}+a}}\nonumber\\
%\lesssim&\ \sup\limits_{0\leq s\leq t}\langle s\rangle^{\frac{d(p-1)}{p}+a}\| \hbar(s)\|_{\dot{B}_{p,\infty}^a}\langle t\rangle^{\frac{-d(p-1)}{p}-a}\int^{|\alpha|}_0
%\frac{1}{(1+\tau)^{1-a}}
%\,d\tau\\
\lesssim&\ \frac{|\alpha|^a}{\langle t\rangle^{\frac{d(p-1)}{p}+a}}\sup\limits_{0\leq s\leq t}\langle s\rangle^{\frac{d(p-1)}{p}+a}\|\hbar(s)\|_{\dot{B}_{p,\infty}^a}.
\end{align*}
For $\mathcal{A}_2$, we note that
\begin{align*}
\mathcal{A}_2\leq&\int_\frac{t}{2}^{t-|\alpha|}\||z|^a\delta_\alpha \mathcal{G}(t-s,z)\|_{L^1_z}\sup\limits_z\frac{\|\delta_z \hbar(s)\|_{L^p_x}}{|z|^a}\,ds\nonumber\\
\lesssim&\ \langle t\rangle^{-\frac{d(p-1)}{p}-a}\sup\limits_{0\leq s\leq t}\langle s\rangle^{\frac{d(p-1)}{p}+a}\| \hbar(s)\|_{\dot{B}_{p,\infty}^a}
\int_\frac{t}{2}^{t-|\alpha|}
\||z|^a\delta_\alpha \mathcal{G}(t-s,z)\|_{L^1_z}\,ds.
\end{align*}
Set \(\tau=t-s\), it follows from \eqref{lemma5.1-4} that
\begin{align*}
\int_\frac{t}{2}^{t-|\alpha|}
\||z|^a\delta_\alpha \mathcal{G}(t-s,z)\|_{L^1_z}\,ds
\lesssim&\
\int_{|\alpha|}^\frac{t}{2}
\frac{|\alpha|}{(1+\tau+|\alpha|)^{2-a}}d\tau
+\int_{|\alpha|}^\frac{t}{2}\frac{|\alpha|^{d+a}\mathbf{1}_{\{|\alpha|\leq \tau\}}}{(1+\tau)^{2}\tau^{d-1}}\,d\tau\lesssim|\alpha|^{a}.
\end{align*}
Therefore,
\begin{equation*}
\mathcal{A}_2\leq \frac{|\alpha|^a}{\langle t\rangle^{\frac{d(p-1)}{p}+a}}\sup_{0\leq s\leq t}\langle s\rangle^{\frac{d(p-1)}{p}+a}\|\hbar(s)\|_{\dot{B}_{p,\infty}^a}.
\end{equation*}

Combining the bounds for $\mathcal{A}_1$ and $\mathcal{A}_2$, we conclude that, for $|\alpha|<\frac{t}{2}$,
\begin{equation}\label{lemma5.2-G2-small}
\frac{\|G_2(t)\|_{L^p_x}}{|\alpha|^a}
\lesssim
\langle t\rangle^{-\frac{d(p-1)}p-a}
\sup_{0\le s\le t}
\langle s\rangle^{\frac{d(p-1)}p+a}
\|\hbar(s)\|_{\dot B^a_{p,\infty}}.
\end{equation}

Taking $p=1$ in \eqref{eq:Besov-I1}, \eqref{dashijian-0}, and \eqref{lemma5.2-G2-small}, we obtain
$$
\sup_{\alpha}
\frac{\|\delta_\alpha(\mathcal G*_{(t,x)}\hbar)(t)\|_{L^1_x}}{|\alpha|^a}
\lesssim \langle t\rangle^{-a}\sup\limits_{0\leq s\leq t}\langle s\rangle^{a}\|\hbar(s)\|_{\dot{B}_{1,\infty}^a},
$$
which gives
$$
\|(\mathcal{G}\ast_{(t,x)}\hbar)(t)\|_{\dot{B}_{1,\infty}^a}
\lesssim \langle t\rangle^{-a}\sup\limits_{0\leq s\leq t}\langle s\rangle^{a}\|\hbar(s)\|_{\dot{B}_{1,\infty}^a}.
$$
Taking $p=\infty$, we similarly deduce
$$
\sup_{\alpha}
\frac{\|\delta_\alpha(\mathcal G*_{(t,x)}\hbar)(t)\|_{L^\infty_x}}{|\alpha|^a}
\lesssim \langle t\rangle^{-d-a}\sup\limits_{0\leq s\leq t}
(\langle s\rangle^{a}\|\hbar(s)\|_{\dot{B}_{1,\infty}^a}+\langle s\rangle^{d+a}\|\hbar(s)\|_{\dot{B}_{\infty,\infty}^a}),
$$
and hence
$$
\|( \mathcal{G}\ast_{(t,x)}\hbar)(t)\|_{\dot{B}_{\infty,\infty}^a}
\lesssim \langle t\rangle^{-d-a}\sup\limits_{0\leq s\leq t}
(\langle s\rangle^{a}\|\hbar(s)\|_{\dot{B}_{1,\infty}^a}+\langle s\rangle^{d+a}\|\hbar(s)\|_{\dot{B}_{\infty,\infty}^a}).
$$
This completes the proof.
\end{proof}

\begin{lemma}\label{lemma5.3}
Let \(\mathcal F=\mathcal F(t,x)\) be a vector field, and let \(\eta=\eta(v)\) be a velocity profile satisfying
\begin{equation}\label{shuaijian28}
|\nabla_v\eta(v)|+|\nabla_v^2\eta(v)|
\lesssim \frac{1}{\langle v\rangle^N}.
\end{equation}
Define
\begin{align}\label{zuoyongxiang-08}
\Gamma^\pm[\mathcal{F},\eta](t,x)
:=&\mp\int_0^t\int_{\mathbb{R}^d}\mathcal{F}
\left(s,\mathcal{X}^{\pm}(s))\cdot\nabla_v\eta(\mathcal{V}^{\pm}(s)\right)\,dv\,ds\nonumber\\
&\pm\int_0^t\int_{\mathbb{R}^d}\mathcal{F}(s,x-(t-s)v)
\cdot\nabla_v\eta(v)\,dv\,ds.
\end{align}
Then
\begin{equation}\label{zuoyong08}
\|\Gamma^\pm[\mathcal{F},\eta](t)\|_{L^1_x}
\lesssim \varepsilon\int_0^t\frac{\|\mathcal{F}(s)\|_{L^1_x}}{\langle s\rangle^{d-\gamma-1}}ds,
\end{equation}
and
\begin{equation}\label{zuoyong09}
\|\Gamma^\pm[\mathcal{F},\eta](t)\|_{L^\infty_x}
\lesssim
\frac{\varepsilon}{\langle t\rangle^d}
\int_0^{\frac{t}{2}}
\frac{\|\mathcal{F}(s)\|_{L^1_x}}{\langle s\rangle^{d-\gamma-1}}
\,ds
+
\varepsilon
\int_{\frac{t}{2}}^t
\frac{\|\mathcal{F}(s)\|_{L^\infty_x}}{\langle s\rangle^{d-\gamma-1}}
\,ds.
\end{equation}
\end{lemma}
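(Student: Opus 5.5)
The plan is to write $\Gamma^\pm$ as the difference between the nonlinear and the free-transport contributions, and to pay for this difference with the characteristic deviation bounds of Lemma~\ref{estimates001} and Lemma~\ref{proposition of psi and phi}. For fixed $s$, I would change velocity variables $v\mapsto\Psi^\pm_{s,t}(x,v)$ in the first integral of \eqref{zuoyongxiang-08}; this is legitimate by Lemma~\ref{proposition of psi and phi}. It gives $\mathcal X^\pm(s;t,x,\Psi^\pm_{s,t}(x,v))=x-(t-s)v$, so that
\[
\int_{\mathbb R^d}\mathcal F(s,\mathcal X^\pm(s))\cdot\nabla_v\eta(\mathcal V^\pm(s))\,dv
=\int_{\mathbb R^d}\mathcal F(s,x-(t-s)v)\cdot\nabla_v\eta\big(\widetilde{\mathcal V}^\pm_{s,t}(x,v)\big)\,\big|\det\nabla_v\Psi^\pm_{s,t}(x,v)\big|\,dv .
\]
Here $\widetilde{\mathcal V}^\pm_{s,t}(x,v):=\mathcal V^\pm(s;t,x,\Psi^\pm_{s,t}(x,v))$. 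The integrand of $\Gamma^\pm$ then becomes $\mathcal F(s,x-(t-s)v)\cdot K^\pm_{s,t}(x,v)$, with
\[
K^\pm_{s,t}:=\nabla_v\eta(\widetilde{\mathcal V}^\pm_{s,t})\,|\det\nabla_v\Psi^\pm_{s,t}|-\nabla_v\eta(v).
\]

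The key pointwise step is to show
\[
|K^\pm_{s,t}(x,v)|\lesssim \varepsilon\langle s\rangle^{-(d-\gamma-1)}\langle v\rangle^{-N'}
\]
for some $N'>d$. For the determinant, \eqref{fangchonh0890} gives $|\det\nabla_v\Psi^\pm_{s,t}-1|\lesssim\varepsilon\langle s\rangle^{-(d-\gamma-1)}$. For the velocity, $\widetilde{\mathcal V}^\pm_{s,t}-v=(\Psi^\pm_{s,t}-v)+W^\pm_{s,t}(\cdot)$, which by \eqref{fangchonh0890} and \eqref{tezhengW} is $O(\varepsilon\langle s\rangle^{-(d-\gamma)})$. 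The mean value theorem with the $\nabla_v^2\eta$ bound in \eqref{shuaijian28} then gives the claim. Since the displacement is at most $\varepsilon\le1$, the weights $\langle v+\theta(\cdots)\rangle^{-N}\sim\langle v\rangle^{-N}$ are preserved.

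With this bound in hand, the $L^1$ estimate \eqref{zuoyong08} follows from Minkowski's inequality and the translation invariance $\|\mathcal F(s,\cdot-(t-s)v)\|_{L^1_x}=\|\mathcal F(s)\|_{L^1_x}$, integrating $\langle v\rangle^{-N'}$ in $v$. For \eqref{zuoyong09}, I would split at $s=t/2$. On $[t/2,t]$, bounding $\mathcal F$ in $L^\infty$ and integrating the weight in $v$ gives the second term directly. On $[0,t/2]$, I would instead change variables $w=x-(t-s)v$, whose Jacobian is $(t-s)^{-d}\lesssim\langle t\rangle^{-d}$ for $t\ge1$, and use $\sup_v|K|\lesssim\varepsilon\langle s\rangle^{-(d-\gamma-1)}$; this yields the first term. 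For $t\le1$ the $L^\infty$ bound on all of $[0,t]$ suffices, since $\langle t\rangle\sim1$; the pieces on $[0,t/2]$ are then handled by the $L^\infty$ norm as well. If necessary I would restate the case $t\le 1$ with the $L^\infty$ term on the whole interval, which is harmless in the bootstrap.

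I expect the main obstacle to be the determinant bound: \eqref{fangchonh0890} only controls $\nabla_v(\Psi-v)$ by $\varepsilon\langle s\rangle^{-(d-\gamma-1)}$, and that rate is exactly what dictates the weight in \eqref{zuoyong08}. One must also check that the change of variables does not destroy the velocity decay needed for $v$-integrability. If $\widetilde{\mathcal V}-v$ were not uniformly small, I would fall back on the relation $\Psi^\pm_{s,t}(x,v)-v=-\Phi^\pm_{s,t}(x,\Psi^\pm_{s,t})$ from \eqref{phi} to control it directly.
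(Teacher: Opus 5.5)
Your proposal is correct and follows the paper's proof essentially step for step. Both arguments change variables $v=\Psi^\pm_{s,t}(x,w)$, bound $|\det\nabla_w\Psi^\pm_{s,t}-1|\lesssim\varepsilon\langle s\rangle^{-(d-\gamma-1)}$ and the $\nabla_v\eta$ difference by $\varepsilon\langle s\rangle^{-(d-\gamma)}\langle w\rangle^{-N}$ via the mean value theorem, and then use Fubini for the $L^1_x$ bound and the split at $s=t/2$ with $y=x-(t-s)v$ for the $L^\infty_x$ bound.

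Your caveat about $t\le1$ is well taken. The paper silently uses $(t-s)^{-d}\lesssim\langle t\rangle^{-d}$, which needs $t\gtrsim1$. This is harmless: the first term of \eqref{zuoyong09} is only used for $t\ge1$, and for $t\le1$ the pure $L^\infty_x$ bound on all of $[0,t]$ already gives what is needed.
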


\begin{proof}
Using the change of variables \(v=\Psi_{s,t}^{\pm}(x,w)\) and Lemma~\ref{proposition of psi and phi}, we rewrite
\begin{align*}
&\Gamma^\pm[\mathcal{F},\eta](t,x)\\
=&\mp\int_0^t\int_{\mathbb{R}^{d}}\mathcal{F}(s,x-(t-s)w)\,
\nabla_v\eta\bigl(\mathcal{V}^{\pm}(s;t,x,\Psi_{s,t}^{\pm}(x,w))\bigr)
\bigl[\det(\nabla_w\Psi_{s,t}^{\pm}(x,w))-1\bigr]\,dw\,ds\\
&\mp\int_0^t\int_{\mathbb{R}^{d}}\mathcal{F}(s,x-(t-s)w)\,
\bigl[
\nabla_v\eta\bigl(\mathcal{V}^{\pm}(s;t,x,\Psi_{s,t}^{\pm}(x,w))\bigr)
-\nabla_v\eta(w)
\bigr]\,dw\,ds.
\end{align*}
For convenience, we relabel \(w\) as \(v\) below.
By \eqref{shuaijian28} and $\langle \mathcal{V}^{\pm}(s;t,x,\Psi_{s,t}^{\pm}(x,v)) \rangle\sim \langle v \rangle$, we have
\[
|\nabla_v
\eta(\mathcal{V}^{\pm}(s;t,x,\Psi_{s,t}^{\pm}(x,v)))|
\lesssim\frac{1}{\langle v \rangle^N}.
\]
Moreover, since \(\nabla_v\Psi_{s,t}^\pm=I+O(\varepsilon \langle s\rangle^{-(d-\gamma-1)})\), the smoothness of the determinant near \(I\) gives
\[
\bigl|\det(\nabla_v\Psi_{s,t}^{\pm}(x,v))-1\bigr|\lesssim
\|\nabla_v\Psi_{s,t}^{\pm}(x,v)-I
\|_{L^\infty_{x,v}}
\lesssim\frac{\varepsilon}{\langle s\rangle^{d-\gamma-1}}.
\]
Next, by the mean value theorem, \eqref{shuaijian28}, \eqref{fangchonh0890}, \eqref{z4} and \eqref{tezhengW},
\begin{align*}
&|\nabla_v\eta\bigl(\mathcal{V}^{\pm}(s;t,x,\Psi_{s,t}^{\pm}(x,v))\bigr)
-\nabla_v\eta(v)|\\
%\lesssim&\frac{1}{\langle v\rangle^{N}}|\mathcal{V}^{\pm}(s;t,x,\Psi^{\pm}_{s,t}(x,v))-v|\\
\leq&\frac{1}{\langle v\rangle^{N}}\Big(|\mathcal{V}^{\pm}(s;t,x,
\Psi_{s,t}^{\pm}(x,v))-\mathcal{V}^{\pm}(s)|+|\mathcal{V}^{\pm}(s)-v|\Big)\\
\leq&\frac{1}{\langle v\rangle^{N}}\Big(\|\nabla_v\mathcal{V}^{\pm}(s)\|_{L_{x,v}^{\infty}}
|v-\Psi_{s,t}^{\pm}(x,v)|+\|W_{s,t}^{\pm}\|_{L_{x, v}^{\infty}}\Big)\\
\lesssim&\frac{1}{\langle v \rangle^N}\frac{\varepsilon}{\langle s\rangle^{d-\gamma}}.
\end{align*}
Combining the above bounds, we obtain
$$
|\Gamma^\pm[\mathcal{F},\eta](t,x)|
\lesssim\varepsilon\int_0^t\frac{1}{\langle s\rangle^{d-\gamma-1}}
\int_{\mathbb{R}^{d}}
\frac{|\mathcal{F}(s,x-(t-s)v)|}{\langle v\rangle^N}\,dv\,ds.
$$

By Fubini's theorem and the change of variables $y=x-(t-s)v$,
\begin{equation*}
\|\Gamma^\pm[\mathcal{F},\eta](t)\|_{L^1_x}
\lesssim\varepsilon\int_0^t\frac{1}{\langle s\rangle^{d-\gamma-1}}
\int_{\mathbb{R}^{d}}|\mathcal{F}(s,y)|dy\,\int_{\mathbb{R}^{d}}
\frac{\,dv\,ds}{\langle v\rangle^N}
\lesssim \varepsilon\int_0^t\frac{\|\mathcal{F}(s)\|_{L^1_x}}{\langle s\rangle^{d-\gamma-1}}ds.
\end{equation*}

To estimate the \(L^\infty_x\) norm, we split the time integral into \([0,\frac{t}{2}]\) and \([\frac{t}{2},t]\).

For \(0\le s\le \frac{t}{2}\), using the change of variables \(w=x-(t-s)v\), we get
\begin{align*}
\int_0^{\frac{t}{2}}
\frac{1}{\langle s\rangle^{d-\gamma-1}}
\int_{\mathbb{R}^{d}}
\frac{|\mathcal{F}(s,x-(t-s)v)|}{\langle v\rangle^N}\,dv\,ds
=&\int_0^{\frac{t}{2}}
\frac{1}{\langle s\rangle^{d-\gamma-1}}
\int_{\mathbb{R}^{d}}
|\mathcal{F}(s,w)|
\Bigl\langle \frac{x-w}{t-s}\Bigr\rangle^{-N}dw\,
\frac{ds}{(t-s)^d}\\
\lesssim&\frac{\varepsilon}{\langle t\rangle^d}
\int_0^{\frac{t}{2}}
\frac{\|\mathcal{F}(s)\|_{L^1_x}}{\langle s\rangle^{d-\gamma-1}}
\,ds.
\end{align*}
For \(\frac{t}{2}\le s\le t\), we use the \(L^\infty_x\) bound on \(\mathcal F\) and deduce
$$
\int_{\frac{t}{2}}^t\frac{1}{\langle s\rangle^{d-\gamma-1}}
\int_{\mathbb R^d}\frac{|\mathcal F(s,x-(t-s)v)|}{\langle v\rangle^N}\,dv\,ds
\lesssim
\int_{\frac{t}{2}}^t\frac{\|\mathcal F(s)\|_{L^\infty_x}}{\langle s\rangle^{d-\gamma-1}}
\,ds,
$$
since $\int_{\mathbb R^d}\langle v\rangle^{-N}\,dv<\infty$.

Combining the above bounds, we obtain \eqref{zuoyong09}.
This completes the proof.
\end{proof}

\begin{lemma}\label{lemma:J123-H}
Let $\mathcal H_i=\mathcal H_i(x,v)$ $(i=1,2,3)$ be measurable functions. Define
\begin{align*}
\mathcal{L}_1(t,x)&:=\int_0^1\int_{\mathbb{R}^{d}}
\big|\mathcal{H}_1\big(\mathcal{X}^+(0),
\Xi_{\mathcal{V},\theta}(0)\big)\big|\,dv\,d\theta,\\
\mathcal{L}_2(t,x)&:=\int_0^1\int_{\mathbb{R}^{d}}
\big|\mathcal{H}_2\big(
\Xi_{\mathcal{X},\theta}(0),\mathcal{V}^-(0)\big)\big|\,dv\,d\theta,\\
\mathcal{L}_3(t,x)&:=\int_{\mathbb{R}^{d}}
\big|\mathcal{H}_3
\big(\mathcal{X}^-(0),\mathcal{V}^-(0)\big)\big|\,dv,
\end{align*}
where
\begin{equation}\label{tezheng-V}
\Xi_{\mathcal{X},\theta}(0) := \theta \mathcal{X}^+(0)
+ (1-\theta)\mathcal{X}^-(0), \quad \Xi_{\mathcal{V},\theta}(0) := \theta \mathcal{V}^+(0) + (1-\theta)\mathcal{V}^-(0),   \quad \theta\in[0,1].
\end{equation}
Then for any $k>d$ and all $t>0$, the following estimates hold:

\medskip
\noindent
(i) \(L^1_x\) estimates:
\[
\|\mathcal{L}_1(t)\|_{L^1_x}
\lesssim
\|\langle v\rangle^k \mathcal H_1\|_{L^1_xL^\infty_v},
\quad
\|\mathcal{L}_2(t)\|_{L^1_x}
\lesssim
\|\langle v\rangle^k \mathcal H_2\|_{L^1_xL^\infty_v},
\quad
\|\mathcal{L}_3(t)\|_{L^1_x}
\lesssim
\|\mathcal H_3\|_{L^1_{x}L^1_{v}}.
\]

\medskip
\noindent
(ii) \(L^\infty_x\) estimates:
$$
\|\mathcal{L}_1(t)\|_{L^\infty_x}
\lesssim
t^{-d}\|\mathcal H_1\|_{L^1_xL^\infty_v},
\quad
\|\mathcal{L}_2(t)\|_{L^\infty_x}
\lesssim
t^{-d}\|\mathcal H_2\|_{L^1_xL^\infty_v},
\quad
\|\mathcal{L}_3(t)\|_{L^\infty_x}
\lesssim
t^{-d}\|\mathcal H_3\|_{L^1_xL^\infty_v}.
$$
\end{lemma}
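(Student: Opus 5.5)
The plan is to prove each bound by a change of variables in the phase-space integrals, using the diffeomorphism and Jacobian bounds of Lemma~\ref{estimates001} and Lemma~\ref{Lemma3.4-YKB}. Throughout, $(\mathcal X^\pm(0),\mathcal V^\pm(0))=(\mathcal X^\pm(0;t,x,v),\mathcal V^\pm(0;t,x,v))$ are regarded as functions of $(x,v)$.

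\emph{$L^1_x$ estimates.} For $\mathcal L_3$, integrate in $(x,v)$. The map $(x,v)\mapsto(\mathcal X^-(0),\mathcal V^-(0))$ is the backward flow of a Hamiltonian-type system with divergence-free field $(v,-E)$, so it preserves Lebesgue measure (Liouville). Hence $\|\mathcal L_3(t)\|_{L^1_x}=\|\mathcal H_3\|_{L^1_{x,v}}$ exactly.

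For $\mathcal L_1$, bound $|\mathcal H_1(y,w)|\le \langle w\rangle^{-k}\sup_{w'}\langle w'\rangle^{k}|\mathcal H_1(y,w')|=:\langle w\rangle^{-k}h_1(y)$, with $w=\Xi_{\mathcal V,\theta}(0)$. By \eqref{tezhengW}, $|\mathcal V^\pm(0)-v|\lesssim\varepsilon$, so $\langle \Xi_{\mathcal V,\theta}(0)\rangle\sim\langle v\rangle$. This gives
\[
\|\mathcal L_1(t)\|_{L^1_x}\lesssim\int_0^1\!\!\int\!\!\int\langle v\rangle^{-k}h_1(\mathcal X^+(0;t,x,v))\,dx\,dv\,d\theta .
\]
For fixed $v$, change variables $x\mapsto\mathcal X^+(0)$, a $\mathcal C^1$ diffeomorphism with $|\det\nabla_x\mathcal X^+(0)|^{-1}\lesssim1$ by Lemma~\ref{Lemma3.4-YKB}. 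The $x$-integral becomes $\lesssim\|h_1\|_{L^1}$, and since $k>d$ the $v$-integral converges.

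$\mathcal L_2$ is handled the same way. Use the weight at $\mathcal V^-(0)\sim v$, and for fixed $(v,\theta)$ change variables $x\mapsto\Xi_{\mathcal X,\theta}(0)$, again controlled by Lemma~\ref{Lemma3.4-YKB}.

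\emph{$L^\infty_x$ estimates.} Here $x$ is fixed and the integration variable is $v$. For $\mathcal L_1$, bound $|\mathcal H_1(y,w)|\le \sup_{w'}|\mathcal H_1(y,w')|=:\tilde h_1(y)$. For each fixed $x$, the map $v\mapsto\mathcal X^+(0)$ is a diffeomorphism with $|\det\nabla_v\mathcal X^+(0)|^{-1}\lesssim t^{-d}$ by Lemma~\ref{Lemma3.4-YKB}. Therefore $\int\tilde h_1(\mathcal X^+(0))\,dv\lesssim t^{-d}\|\tilde h_1\|_{L^1}$, and integrating in $\theta$ costs nothing. The term $\mathcal L_2$ is the same, using the map $v\mapsto\Xi_{\mathcal X,\theta}(0)$ and its Jacobian bound. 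The term $\mathcal L_3$ requires the analogous bound for $\mathcal X^-(0)$. This follows by the identical argument in the proof of Lemma~\ref{Lemma3.4-YKB}, which gives $\|\nabla_v\mathcal X^\pm(0)+tI\|\lesssim\varepsilon t$ for both signs.

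\emph{Main point of care.} The delicate ingredient is the $v$-Jacobian for small $t$ and for the convex combination $\Xi_{\mathcal X,\theta}$, which is where the factor $t^{-d}$ comes from. Lemma~\ref{Lemma3.4-YKB} already supplies it, including the $0<t<1$ regime. What remains is to verify that the comparability $\langle\Xi_{\mathcal V,\theta}(0)\rangle\sim\langle v\rangle$ holds uniformly in $\theta$; this follows from $\varepsilon$ being small.
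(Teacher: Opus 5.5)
Your proposal is correct and follows the paper's proof essentially step by step. The paper uses the same ingredients: Liouville measure preservation for the $L^1_x$ bound on $\mathcal L_3$; for the other $L^1_x$ bounds, the change of variables $x\mapsto\mathcal X^+(0)$ (resp.\ $x\mapsto\Xi_{\mathcal X,\theta}(0)$) with $|\det\nabla_x|^{-1}\lesssim 1$, together with $\langle\Xi_{\mathcal V,\theta}(0)\rangle\sim\langle v\rangle$, $\langle\mathcal V^-(0)\rangle\sim\langle v\rangle$ and $k>d$; and for the $L^\infty_x$ bounds, the change of variables in $v$ with $|\det\nabla_v|^{-1}\lesssim t^{-d}$ from Lemma~\ref{Lemma3.4-YKB}. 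The Jacobian bound for $\mathcal X^-(0)$ that you need for $\mathcal L_3$ is also simply the $\theta=0$ case of the stated bound for $\Xi_{\mathcal X,\theta}(0)$.
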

\begin{proof}

\medskip
\noindent
\textit{$L^1_x$ estimates.}
Integrating in $x$ and using Fubini's theorem, we obtain
$$
\|\mathcal{L}_1(t)\|_{L^1_x}
\le
\int_0^1\int_{\mathbb{R}^d}\int_{\mathbb{R}^d}
\big|\mathcal H_1\big(\mathcal X^+(0),\Xi_{\mathcal V,\theta}(0)\big)\big|
\,dx\,dv\,d\theta.
$$
For $v\in \mathbb{R}^d$, the change of variables $y=\mathcal X^+(0)$ yields
$$
\|\mathcal{L}_1(t)\|_{L^1_x}
\lesssim
\int_0^1\int_{\mathbb{R}^d}\int_{\mathbb{R}^d}
\big|\mathcal H_1\big(y,\Xi_{\mathcal V,\theta}(0;t,x(y),v)\big)\big|\big|\det\big(\nabla_x \mathcal{X}^+(0)\big)\big|^{-1}
\,dy\,dv\,d\theta.
$$
Since $\langle \Xi_{\mathcal V,\theta}(0)\rangle\sim \langle v\rangle$, we have
\[
\big|\mathcal H_1(y,\Xi_{\mathcal V,\theta}(0))\big|
\le
\|\langle v\rangle^k \mathcal H_1(y,v)\|_{L^\infty_v}
\langle \Xi_{\mathcal V,\theta}(0)\rangle^{-k}.
\]
Therefore, using \(k>d\) and \(|\det(\nabla_x\mathcal X^+(0))|^{-1}\lesssim 1\), we infer
$$
\|\mathcal{L}_1(t)\|_{L^1_x}
\lesssim
\|\langle v\rangle^k \mathcal H_1\|_{L^1_xL^\infty_v}.
$$

The estimate for \(\mathcal L_2\) follows in the same way. Indeed, for each fixed \((v,\theta)\in\mathbb R^d\times[0,1]\), the map
$x\mapsto \Xi_{\mathcal X,\theta}(0)$
is a global \(\mathcal C^1\) diffeomorphism. Thus, changing variables
$z=\Xi_{\mathcal X,\theta}(0)$,
and using \(|\det(\nabla_x\Xi_{\mathcal X,\theta}(0))|^{-1}\lesssim 1\) together with \(\langle \mathcal V^-(0)\rangle\sim \langle v\rangle\), we obtain
\[
\|\mathcal L_2(t)\|_{L^1_x}
\lesssim
\|\langle v\rangle^k\mathcal H_2\|_{L^1_xL^\infty_v}.
\]

For $\mathcal{L}_3$, using the measure-preserving property of the full
characteristic flow, we obtain
\[
\|\mathcal{L}_3(t)\|_{L^1_x}
\le
\int_{\mathbb{R}^d}\int_{\mathbb{R}^d}
\big|\mathcal H_3\big(\mathcal X^-(0),\mathcal V^-(0)\big)\big|
\,dx\,dv
=
\|\mathcal H_3\|_{L^1_{x}L^1_{v}}.
\]

\textit{$L^\infty_x$ estimates.}
For each fixed $x\in\mathbb R^d$, the map
$v\mapsto \mathcal X^+(0)$
is a $\mathcal C^1$ diffeomorphism by Lemma~\ref{Lemma3.4-YKB}. Therefore, making the change of variables
$y=\mathcal X^+(0)$,
we obtain
\[
\mathcal L_1(t,x)
=\int_0^1\int_{\mathbb R^d}\big|\mathcal H_1\big(
y,\Xi_{\mathcal V,\theta}(0;t,x,v(y))\big)\big|\big|\det(\nabla_v\mathcal X^+(0))
\big|^{-1}\,dy\,d\theta.
\]
Using $\big|\det(\nabla_v\mathcal X^+(0))\big|^{-1}\lesssim t^{-d}$,
we infer
\[
\mathcal L_1(t,x)
\lesssim
t^{-d}
\int_0^1\int_{\mathbb R^d}
\big|\mathcal H_1(y,\Xi_{\mathcal V,\theta}(0;t,x,v(y)))
\big|
\,dy\,d\theta.
\]
Therefore,
\[
\|\mathcal L_1(t)\|_{L^\infty_x}
\lesssim
t^{-d}\|\mathcal H_1\|_{L^1_xL^\infty_v}.
\]

Similarly, by the change of variables of $z=\Xi_{\mathcal X,\theta}(0)$,
 one has that
\[
\mathcal L_2(t,x)
=
\int_0^1\int_{\mathbb R^d}
\big|\mathcal H_2\big(z,\mathcal V^-(0;t,x,v(z))\big)\big|
\Big|\det\big(\nabla_v\Xi_{\mathcal X,\theta}(0;t,x,v(z))\big)\Big|^{-1}
\,dz\,d\theta.
\]
Using
\[
\Big|\det\big(\nabla_v\Xi_{\mathcal X,\theta}(0;t,x,v(z))\big)\Big|^{-1}\lesssim t^{-d},
\]
we infer
\[
\|\mathcal L_2(t)\|_{L^\infty_x}
\lesssim
t^{-d}\|\mathcal H_2\|_{L^1_xL^\infty_v}.
\]

Finally, by the same argument as above, using that for each fixed $x$ the map
$v\mapsto \mathcal X^-(0)$ is a $\mathcal C^1$ diffeomorphism and
$\big|\det(\nabla_v\mathcal X^-(0))\big|^{-1}\lesssim t^{-d}$, we obtain
\[
\|\mathcal L_3(t)\|_{L^\infty_x}
\lesssim
t^{-d}\|\mathcal H_3\|_{L^1_xL^\infty_v}.
\]
Combining the above estimates, we conclude the proof.
\end{proof}

\subsection{Besov Estimates for the Charge Density}
\begin{proposition}\label{Besovestimates}
For $t\geq1$ and $a \in \big(\frac{\gamma}{d-\gamma-2}, 1 \big)$, one has
\begin{equation}\label{miduguji-04}
\|\rho(t)\|_{L^1_x}+\langle t\rangle^{d}\|\rho(t)\|_{L^\infty_x}
+\langle t\rangle^{a}\|\rho(t)\|_{\dot{B}_{1,\infty}^a}+\langle t\rangle^{d+a}\|\rho(t)\|_{\dot{B}_{\infty,\infty}^a}
\lesssim\varepsilon_0+\varepsilon.
\end{equation}
\end{proposition}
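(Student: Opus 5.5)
I will start from the resolvent representation \eqref{fangcheng000}. Set $\hbar:=(I^+-I^-)-(R^++R^-)$, so that $\rho=-\mathcal G\ast_{(t,x)}\hbar+\hbar$. By Lemma~\ref{juanji001} (first, second, fifth and sixth lines of \eqref{lemma5.2}), \eqref{miduguji-04} reduces to showing, for all $0\le s\le t$,
\begin{equation*}
\|\hbar(s)\|_{L^1_x}+\langle s\rangle^d\|\hbar(s)\|_{L^\infty_x}+\langle s\rangle^a\|\hbar(s)\|_{\dot B^a_{1,\infty}}+\langle s\rangle^{d+a}\|\hbar(s)\|_{\dot B^a_{\infty,\infty}}\lesssim \varepsilon_0+\varepsilon .
\end{equation*}
The $L^1$ part of $\rho$ is then controlled by $\|\hbar\|_{L^1}$ plus the first line of \eqref{lemma5.2}. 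On $[0,1]$, I will use Proposition~\ref{xiaoshijian008} instead, with $C(1)$ absorbed into the constant.

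\emph{Initial-data term $I^+-I^-$.} I will write
\begin{align*}
f^+_{\rm in}(\mathcal X^+(0),\mathcal V^+(0))-f^-_{\rm in}(\mathcal X^-(0),\mathcal V^-(0))
&=f_{\rm in}(\mathcal X^-(0),\mathcal V^-(0))\\
&\quad+\big[f^+_{\rm in}(\mathcal X^+(0),\mathcal V^+(0))-f^+_{\rm in}(\mathcal X^+(0),\mathcal V^-(0))\big]\\
&\quad+\big[f^+_{\rm in}(\mathcal X^+(0),\mathcal V^-(0))-f^+_{\rm in}(\mathcal X^-(0),\mathcal V^-(0))\big].
\end{align*}
The last two brackets are handled by the mean value theorem in $\theta$. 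This produces the integrands $\nabla_vf^+_{\rm in}(\mathcal X^+(0),\Xi_{\mathcal V,\theta}(0))\cdot(W^+_{0,t}-W^-_{0,t})$ and $\nabla_xf^+_{\rm in}(\Xi_{\mathcal X,\theta}(0),\mathcal V^-(0))\cdot(Y^+_{0,t}-Y^-_{0,t})$. These are exactly of the form $\mathcal L_1,\mathcal L_2$ in Lemma~\ref{lemma:J123-H}, with coefficients of size $\varepsilon$ by \eqref{cor:estimates001-difference}. The first term is of the form $\mathcal L_3$ with $\mathcal H_3=f_{\rm in}$, which is small by \eqref{qn}. Lemma~\ref{lemma:J123-H} then gives
\begin{equation*}
\|I^+-I^-\|_{L^1}+t^d\|I^+-I^-\|_{L^\infty}\lesssim \varepsilon_0+M_0\varepsilon .
\end{equation*}
For the Besov norms, I will apply $\delta_\alpha$ in $x$ to each piece. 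For the $\mathcal L_3$ piece, the difference hits $f_{\rm in}$, giving $D^af_{\rm in}$, or hits the characteristics, giving $\nabla f_{\rm in}$ times $|\alpha|^a$ via \eqref{tezhengY}. For the other pieces, it hits $\nabla_{x,v}f_{\rm in}^+$, controlled by $\langle v\rangle^k\nabla_{x,v}D^af^+_{\rm in}$ in $|||\cdot|||_1$, or hits the differences $Y^+-Y^-$ and $W^+-W^-$, controlled by \eqref{tezhengY}--\eqref{tezhengW}. To obtain the extra $t^{-a}$, I will take the difference in the scaled direction $(x-\alpha,v-\alpha/t)$, which is what \eqref{yabi01}--\eqref{yabi02} were designed for. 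The resulting $v$-increment $\alpha/t$ produces the gain $|\alpha|^a t^{-a}$. For $|\alpha|\gtrsim t$, the trivial bound $2\|\cdot\|/|\alpha|^a$ suffices.

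\emph{Reaction terms $R^\pm$.} These equal $\Gamma^\pm[E,\mu]$, so Lemma~\ref{lemma5.3} together with \eqref{ziyouliu01} gives
\begin{equation*}
\|R^\pm\|_{L^1}\lesssim\varepsilon^2\int_0^t\langle s\rangle^{-(d-\gamma)}\,ds\lesssim \varepsilon^2,
\end{equation*}
and
\begin{equation*}
\|R^\pm\|_{L^\infty}\lesssim \varepsilon^2\langle t\rangle^{-d}+\varepsilon^2\int_{t/2}^t\langle s\rangle^{-2d+2\gamma}\,ds\lesssim\varepsilon^2\langle t\rangle^{-d},
\end{equation*}
using $d\ge3$. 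For the Besov bounds, I will apply $\delta_\alpha$ to $\Gamma^\pm$. In the representation after the change of variables $v=\Psi^\pm_{s,t}(x,w)$, the difference falls either on $E$, giving $\|E\|_{\dot F^a_{1,\infty}}$ via \eqref{eq:dq-1-local} and \eqref{F03} for $|\alpha|\le1$, or on $\Psi$ and $\mathcal V$, giving the Hölder bounds of Lemmas~\ref{estimates001} and~\ref{proposition of psi and phi}. For $|\alpha|>1$, the crude bound $2\|R\|_{L^p}$ suffices. The time integrals carry powers like $\langle s\rangle^{-(d-a-\gamma-1)}$ against $\langle s\rangle^{-1+\gamma}$. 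The condition $a>\gamma/(d-\gamma-2)$ is what I expect makes these integrals yield the weights $\langle t\rangle^{-a}$ and $\langle t\rangle^{-d-a}$.

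\emph{Main obstacle.} I expect the hardest step to be the Besov estimates with the sharp weights $t^{-a}$ and $t^{-d-a}$, especially for the $I^\pm$ terms. The naive $x$-difference loses the phase-mixing gain, so one must exploit the scaled difference $(\alpha,\alpha/t)$ and the Jacobian bounds \eqref{yabi01}--\eqref{yabi02} while keeping track of the large constant $M_0$. The approach needs $M_0$ to appear only multiplied by $\varepsilon$ or $\varepsilon_0$. This holds because every large-data term carries a factor $Y^+-Y^-$ or $W^+-W^-$. Choosing $\varepsilon_0$ small relative to $M_0$ then closes the estimate.
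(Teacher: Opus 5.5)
Your treatment of $I^+-I^-$ matches the paper: the three-term mean-value splitting, Lemma~\ref{lemma:J123-H}, and the scaled difference $(x-\alpha,v-\alpha/t)$ that produces increments of size $|\alpha|/t$. So do your $L^1_x$ and $L^\infty_x$ bounds for $R^\pm$ via Lemma~\ref{lemma5.3}. The gap is in the Besov bounds for $R^\pm$.

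\emph{The case split is at the wrong scale.} Since $\|R^\pm(t)\|_{L^1_x}\lesssim\varepsilon^2$ does not decay, the crude bound only gives $\|\delta_\alpha R^\pm(t)\|_{L^1_x}/|\alpha|^a\lesssim\varepsilon^2|\alpha|^{-a}$, and $\varepsilon^2\langle t\rangle^{-d}|\alpha|^{-a}$ in $L^\infty_x$. These are $\lesssim\varepsilon^2\langle t\rangle^{-a}$ (resp.\ $\varepsilon^2\langle t\rangle^{-d-a}$) only when $|\alpha|\gtrsim t$. The whole range $1<|\alpha|<t/2$ is therefore left uncovered; the paper splits at $|\alpha|=t/2$.

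\emph{The $t^{-a}$ gain for $R^\pm$ needs the scaled difference.} For $|\alpha|<t/2$, the factor $\langle t\rangle^{-a}$ cannot come from shifting $E$ by $\alpha$ itself. That gives the bound $\varepsilon|\alpha|^a\int_0^t\langle s\rangle^{-(d-\gamma-1)}\|E(s)\|_{\dot B^a_{1,\infty}}\,ds$, whose $s\lesssim1$ part is of order $\varepsilon^2|\alpha|^a$ with no decay in $t$. You need the same scaled difference as for $I$, applied to the representation \eqref{zuoyong01} in $w=x-tv$, so that $\delta^x_\alpha$ shifts $E$ by $s\alpha/t$ and $v$ by $\alpha/t$. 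If you difference in the $\Psi$-representation instead, the difference also lands on $\det\nabla_w\Psi^\pm_{s,t}$, and Lemma~\ref{proposition of psi and phi} gives no H\"older bound for it. In \eqref{zuoyong01} the Jacobian is the constant $t^{-d}$, and \eqref{yabi01}--\eqref{yabi02} are used only as upper bounds in later changes of variables.

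\emph{How the paper organises the pieces.} It writes $\delta^x_\alpha R^\pm=\mathcal D_1+\dots+\mathcal D_4$.
The exact shift $\mathcal D_1=\Gamma^\pm[\delta_{s\alpha/t}E,\mu]$ is handled for any $\alpha$ by the global $\dot B^a_{p,\infty}$ norm and Lemma~\ref{lemma5.3}. This is where your integrals $\langle s\rangle^{-(d-a-\gamma-1)}$ against $\langle s\rangle^{-1+\gamma}$ appear.
The local maximal quotient \eqref{eq:dq-1-local}, combined with \eqref{F03}, is needed only for the $(x,v)$-dependent residual increments
$\tau_1=Y^\pm_{s,t}(x-tv,v-\alpha/t)$ and $\tau_2=Y^\pm_{s,t}(x-tv,v)-Y^\pm_{s,t}(x-tv,v-\alpha/t)$.
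Both are $\lesssim\varepsilon\le1$ whatever $\alpha$ is, so the restriction $|\cdot|\le1$ concerns $\tau$, not $\alpha$.
The Lipschitz differences of $\nabla_v\mu$ by $\alpha/t$ are converted to $(|\alpha|/t)^a$ using $|\alpha|<t/2$.

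\emph{Where the hypothesis on $a$ enters.} The integrals you name converge for every $a\in(0,1)$, so they are not the source of $a>\gamma/(d-\gamma-2)$. In the paper it enters through $\tau_2$. The $\nabla_vY$ bound in \eqref{tezhengY} gives $|\tau_2|\lesssim\varepsilon\langle s\rangle^{-(d-\gamma-2)}|\alpha|/t$. Pairing $|\tau_2|^a$ with $\|E(s)\|_{\dot F^a_{1,\infty}}\lesssim\varepsilon\langle s\rangle^{-1+\gamma}$ leads to $\int_0^\infty\langle s\rangle^{-(d-\gamma-2)a-1+\gamma}\,ds$, which is finite exactly when $a>\gamma/(d-\gamma-2)$.

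A minor point: Proposition~\ref{xiaoshijian008} controls $\rho$, not $\hbar$. It therefore does not supply the $s\in[0,1]$ part of the suprema in Lemma~\ref{juanji001}.
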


\begin{proof}
The proof is based on decay and Besov estimates for the terms
\[
I:=I^+-I^-,
\qquad
R:=R^++R^-,
\]
in the decomposition \eqref{midufenjie001}.

\medskip
\textit{Step 1. Estimates for $I$.}
Applying the mean value theorem, we may rewrite $I(t,x)$ as
\begin{align}\label{chuzhiI-00}
I(t,x)
=&\int_0^1\int_{\mathbb{R}^{d}}
\nabla_vf_{\mathrm{in}}^+\left(\mathcal{X}^+(0),
\Xi_{\mathcal{V},\theta}(0)\right)\left(\mathcal{V}^+(0)-\mathcal{V}^-(0)\right)\,dv\,d\theta\nonumber\\
&+\int_0^1\int_{\mathbb{R}^{d}}
\nabla_xf_{\mathrm{in}}^+\left(
\Xi_{\mathcal{X},\theta}(0),\mathcal{V}^-(0)\right)
\left(\mathcal{X}^+(0)-\mathcal{X}^-(0)\right)\,dv\,d\theta\nonumber\\
&+\int_{\mathbb{R}^{d}}f_{\mathrm{in}}
\left(\mathcal{X}^-(0),\mathcal{V}^-(0)\right)\,dv,
\end{align}
where $\Xi_{\mathcal{X},\theta}(0)$ and $\Xi_{\mathcal{V},\theta}(0)$ are defined in \eqref{tezheng-V}.
By \eqref{cor:estimates001-difference},
\begin{align*}
|I(t,x)|
\lesssim&\ \varepsilon\int_0^1\int_{\mathbb{R}^{d}}
\left|\nabla_vf_{\mathrm{in}}^+\left(\mathcal{X}^+(0),
\Xi_{\mathcal{V},\theta}(0)\right)\right|\,dv\,d\theta\\
&+\varepsilon\int_0^1\int_{\mathbb{R}^{d}}
\left|\nabla_xf_{\mathrm{in}}^+\left(
\Xi_{\mathcal{X},\theta}(0),\mathcal{V}^-(0)\right)\right|\,dv\,d\theta\\
&+\int_{\mathbb{R}^{d}}\left|f_{\mathrm{in}}
\left(\mathcal{X}^-(0),\mathcal{V}^-(0)\right)\right|\,dv.
\end{align*}
Applying Lemma~\ref{lemma:J123-H} with
\[
\mathcal H_1=\nabla_v f_{\mathrm{in}}^+,\qquad
\mathcal H_2=\nabla_x f_{\mathrm{in}}^+,\qquad
\mathcal H_3=f_{\mathrm{in}},
\]
we infer that
\begin{equation}\label{chuzhi0082}
\|I(t)\|_{L^1_x}\lesssim \varepsilon\big(\|\langle v\rangle^k\nabla_vf_{\mathrm{in}}^+\|_{L_{x}^{1}L_{v}^{\infty}}
+\|\langle v\rangle^k\nabla_xf_{\mathrm{in}}^+\|_{L_{x}^{1}L_{v}^{\infty}}\big)
+\|f_{\mathrm{in}}\|_{L_{x}^{1}L_{v}^{1}}\lesssim\varepsilon+\varepsilon_0,
\end{equation}
and
\begin{equation}\label{chuzhi008}
\|I(t)\|_{L^\infty_x}\lesssim \frac{1}{\langle t\rangle^{d}}\left(\varepsilon\|\nabla_vf_{\mathrm{in}}^+\|_{L_{x}^{1}L_{v}^{\infty}}
+\varepsilon\|\nabla_xf_{\mathrm{in}}^+\|_{L_{x}^{1}L_{v}^{\infty}}
+\|f_{\mathrm{in}}\|_{L_{x}^{1}L_{v}^{\infty}}\right)
\lesssim\frac{1}{\langle t\rangle^{d}}(\varepsilon+\varepsilon_0),
\end{equation}
where in the last step we used \eqref{large-data} and \eqref{qn}.

We next establish the Besov estimates of $I$.
Substituting \eqref{z4} into \eqref{chuzhiI-00} and using the change of variables \(w=x-tv\), we obtain
\begin{equation}\label{chuzhi0123}
I(t,x)=I_1(t,x)+I_2(t,x)+I_3(t,x),
\end{equation}
where
\begin{align*}
I_1(t,x)
:=&\frac{1}{t^{d}}\int_0^1\!\int_{\mathbb{R}^d}
\nabla_v f^{+}_{\mathrm{in}}
\Big(w+Y^{+}_{0,t}(w,\frac{x-w}{t}),\ \frac{x-w}{t}
+\Big(\theta W^{+}_{0,t}+(1-\theta)W^{-}_{0,t}\Big)(w,\frac{x-w}{t})\Big)\\
&\quad \cdot
\Big(W^{+}_{0,t}-W^{-}_{0,t}\Big)(w,\frac{x-w}{t})
\,  dw\,d\theta,
\end{align*}
\begin{align*}
I_2(t,x)
:=&\frac{1}{t^{d}}\int_0^1\!\int_{\mathbb{R}^d}
\nabla_x f^{+}_{\mathrm{in}}
\Big(w+\Big(\theta Y^{+}_{0,t}
+(1-\theta)Y^{-}_{0,t}\Big)(w,\frac{x-w}{t}), \  \frac{x-w}{t}
+W^{-}_{0,t}(w,\frac{x-w}{t})\Big) \ \\
&\quad \cdot
\Big(Y^{+}_{0,t}-Y^{-}_{0,t}\Big)(w,\frac{x-w}{t})
\, dw\,d\theta,
\end{align*}
$$
I_3(t,x)
=\frac{1}{t^{d}}\int_{\mathbb{R}^d}
f_{\mathrm{in}}\Big(w+Y^{-}_{0,t}(w,\frac{x-w}{t}), \
 \frac{x-w}{t}+W^{-}_{0,t}(w,\frac{x-w}{t})\Big)\,dw.
$$

\medskip
\textit{Estimate for $I_1$.}
Applying $\delta_\alpha^x$ to $I_1$, we write
$$
\delta_\a^xI_1(t,x)=I_{11}(t,x)+I_{12}(t,x),
$$
where
\begin{align*}
I_{11}(t,x):=&\frac{1}{t^d}\int_0^1\int_{\mathbb{R}^d}\delta_\a^x\nabla_v
f^+_{\mathrm{in}}\Big(w+Y^+_{0,t}(w,\frac{x-w}{t}),\ \frac{x-w}{t}
+\Big(\t W^+_{0,t}+(1-\t)W_{0,t}^-\Big)(w,\frac{x-w}{t})\Big)\\
&\quad \ \Big(W^+_{0,t}-W_{0,t}^-\Big)(w,\frac{x-w}{t})\,dw\, d\theta,
\end{align*}
\begin{align*}
I_{12}(t,x):=&\frac{1}{t^d}\int_0^1\int_{\mathbb{R}^d}\nabla_vf^+_{\mathrm{in}}
\Big(w+Y^+_{0,t}(w,\frac{x-w-\alpha}{t}),\ \frac{x-w-\alpha}{t} \\
&\quad  +\left(\t W^+_{0,t}+(1-\t)W_{0,t}^-\right)(w, \frac{x-w-\alpha}{t})\Big)\ \delta_\a^x\Big(W^+_{0,t}
-W_{0,t}^-\Big)(w,\frac{x-w}{t})\,dw\, d\theta.
\end{align*}
For $I_{11}$,
by inserting an intermediate term,
\begin{align*}
&|I_{11}(t,x)|\\
\lesssim&\int_0^1\int_{\mathbb{R}^d}\frac{\big|\delta_{z_1}
\nabla_vf^+_{\mathrm{in}}\Big(\cdot, \  \frac{x-w}{t}
+\Big(\t W^+_{0,t}+(1-\t)W_{0,t}^-\Big)(w,\frac{x-w}{t})\Big)
\big(w+Y^+_{0,t}(w,\frac{x-w}{t})\big)\big|}{|z_1|^a}\frac{|\a|^a}{t^{d+a}}\,dw\,d\theta\\
&+\int_0^1\int_{\mathbb{R}^d}\frac{\big|\delta_{z_2}\nabla_vf^+_{\mathrm{in}}\Big(w+Y^+_{0,t}(w,
\frac{x-w-\alpha}{t}),\cdot\Big)\Big(\frac{x-w}{t}
+\Big(\t W^+_{0,t}+(1-\t)W_{0,t}^-\Big)(w,\frac{x-w}{t})\Big)\big|
}{|z_2|^a} \frac{|\a|^a}{t^{d+a}}\,dw\,d\theta,
\end{align*}
where
\begin{align*}
z_1 &:= Y^+_{0,t}(w,\tfrac{x-w}{t}) - Y^+_{0,t}(w,\tfrac{x-w-\alpha}{t}), \\
z_2 &:= \theta \big[W^+_{0,t}(w,\tfrac{x-w}{t}) - W^+_{0,t}(w,\tfrac{x-w-\alpha}{t})\big] + (1-\theta)\big[W_{0,t}^-(w,\tfrac{x-w}{t}) - W_{0,t}^-(w,\tfrac{x-w-\alpha}{t})\big] + \tfrac{\alpha}{t}.
\end{align*}
From Lemma~\ref{estimates001}, one has that
\[
|z_1|+|z_2|
\lesssim\frac{|\a|}{t}.
\]
Using the change of variables $w=x-tv$ and \eqref{z4}, we get
\begin{align*}
\frac{|I_{11}(t,x)|}{|\a|^a}
\lesssim&\ \frac{1}{t^a}\int_0^1\int_{\mathbb{R}^d}\frac{\big|
\delta_{z_1}\nabla_vf^+_{\mathrm{in}}\Big(\cdot,\ \Xi_{\mathcal{V},\theta}(0)\Big)\big(\mathcal{X}^+(0)\big)\big|
}{|z_1|^a}\,dv\,d\theta\nonumber\\
&+\frac{1}{t^a}\int_0^1\int_{\mathbb{R}^d}\frac{\big|\delta_{z_2}
\nabla_vf^+_{\mathrm{in}}\Big(\mathcal{X}^+(0;t,x-\alpha,v-\frac{\alpha}{t}),\ \cdot\Big)\big(\Xi_{\mathcal{V},\theta}(0)\big)\big|}{|z_2|^a}\,dv\,d\theta.
\end{align*}
By \eqref{cor:estimates001-difference} and \eqref{Dhdef},
\begin{align}\label{chuzhi-I11}
\frac{|I_{11}(t,x)|}{|\a|^a}
\lesssim&\ \frac{\v}{\langle t\rangle^a}\int_0^1\int_{\mathbb{R}^d}\mathcal{D}_1^a\nabla_v
f^+_{\mathrm{in}}\big(\mathcal{X}^+(0),\ \Xi_{\mathcal{V},\theta}(0)\big)\,dv\, d\theta \nonumber\\
&+\frac{\v}{\langle t\rangle^a}\int_0^1\int_{\mathbb{R}^d}\mathcal{D}_2^a
\nabla_vf^+_{\mathrm{in}}\Big(\mathcal{X}^+(0;t,x-\alpha,v-\frac{\a}{t}), \
\big(\Xi_{\mathcal{V},\theta}(0)\big)\Big)\,dv \,d\theta.
\end{align}

For $I_{12}$, by the change of variables $w=x-tv$ and \eqref{z4},
$$
|I_{12}(t,x)|\lesssim\int_0^1\int_{\mathbb{R}^d}|\nabla_vf^+_{\mathrm{in}}
(\mathcal{X}^+(0),\Xi_{\mathcal{V},\theta}(0))|\frac{\Big|\delta_{\frac{\a}{t}}
\Big(\big(W^+_{0,t}-W^-_{0,t}\big)(x-tv, \cdot)(v)\Big)\Big|}{|\frac{\a}{t}|^a}|\frac{\a}{t}|^a\,dv\,d\theta.
$$
It follows from~\eqref{cor:estimates001-difference} that
\begin{equation}\label{chuzhi-I12}
\frac{|I_{12}(t,x)|}{|\a|^a}\lesssim\frac{\varepsilon}{\langle t\rangle^a}\int_0^1\int_{\mathbb{R}^d}
|\nabla_vf^+_{\mathrm{in}}\left(\mathcal{X}^+(0),\Xi_{\mathcal{V},\theta}(0)\right)|\,dv\, d\theta.
\end{equation}

Combining \eqref{chuzhi-I11} and \eqref{chuzhi-I12}, we have
\begin{align*}
\frac{|\delta_\a^xI_1(t,x)|}{|\a|^a}
\lesssim&\ \frac{\v}{\langle t\rangle^a}\int_0^1\int_{\mathbb{R}^d}\mathcal{D}_1^a\nabla_v
f^+_{\mathrm{in}}\big(\mathcal{X}^+(0),\ \Xi_{\mathcal{V},\theta}(0)\big)\,dv\, d\theta \nonumber\\
&+\frac{\v}{\langle t\rangle^a}\int_0^1\int_{\mathbb{R}^d}\mathcal{D}_2^a
\nabla_vf^+_{\mathrm{in}}\Big(\mathcal{X}^+(0;t,x-\alpha,v-\frac{\a}{t}), \
\Xi_{\mathcal{V},\theta}(0)\Big)\,dv \,d\theta\\
&+\frac{\varepsilon}{\langle t\rangle^a}\int_0^1\int_{\mathbb{R}^d}
\left|\nabla_vf^+_{\mathrm{in}}\left(\mathcal{X}^+(0),\Xi_{\mathcal{V},\theta}(0)\right)\right|\,dv\, d\theta.
\end{align*}
Taking the supremum over $\alpha$ and applying Lemma~\ref{lemma:J123-H} to the three terms above, each viewed as an instance of $\mathcal L_2$, with
\[
\mathcal H=\mathcal D_1^a\nabla_v f^+_{\mathrm{in}},\qquad
\mathcal D_2^a\nabla_v f^+_{\mathrm{in}},\qquad
\nabla_v f^+_{\mathrm{in}},
\]
we obtain
\begin{equation}\label{chuzhi-I01}
\|I_1(t)\|_{\dot{B}_{1,\infty}^a}
\lesssim\frac{\v}{\langle t\rangle^{a}}\left(\|\langle v\rangle^k\mathcal{D}_1^a\nabla_vf^+_{\mathrm{in}}\|_{L_x^1L_v^\infty}
+\|\langle v\rangle^k\mathcal{D}_2^a\nabla_vf^+_{\mathrm{in}}\|_{L_x^1L_v^\infty}
+\|\langle v\rangle^k\nabla_vf^+_{\mathrm{in}}\|_{L_x^1L_v^\infty}\right).
\end{equation}
The second term is handled by the same change of variables, justified by Lemma~\ref{Lemma3.4-YKB}, together with the comparability of the corresponding velocity variable and $v$. Similarly,
\begin{equation}\label{chuzhi-I02}
\|I_1(t)\|_{\dot{B}_{\infty,\infty}^a}
\lesssim\frac{\v}{\langle t\rangle^{d+a}}\Big(\|\mathcal{D}_1^a\nabla_vf^+_{\mathrm{in}}\|_{L_x^1L_v^\infty}
+\|\mathcal{D}_2^a\nabla_vf^+_{\mathrm{in}}\|_{L_x^1L_v^\infty}
+\|\nabla_vf^+_{\mathrm{in}}\|_{L_x^1L_v^\infty}\Big).
\end{equation}

\textit{Estimate for $I_2$.}
Applying $\delta_{\alpha}^{x}$ to $I_2$ gives
$$
\delta_\a^xI_2(t,x)=I_{21}(t,x)+I_{22}(t,x),
$$
where
\begin{align*}
I_{21}(t,x):=&\frac{1}{t^d}\int_0^1\int_{\mathbb{R}^d}\delta_\a^x\nabla_x
f^+_{\mathrm{in}}\Big(w+\big(\t Y^+_{0,t}+(1-\t)Y^-_{0,t}\big)(w,\frac{x-w}{t}),\ \frac{x-w}{t}+W_{0,t}^-(w,\frac{x-w}{t})\Big) \\
&\quad \cdot
\Big(Y_{0,t}^+-Y^-_{0,t}\Big)(w,\frac{x-w}{t})\,dw\,d\theta,
\end{align*}
\begin{align*}
I_{22}(t,x):=
&\frac{1}{t^d}\int_0^1\int_{\mathbb{R}^d}\nabla_xf^+_{\mathrm{in}}\Big(w+\big(\t Y^+_{0,t}+(1-\t)Y^-_{0,t}\big)(w,\frac{x-w}{t}),\
\frac{x-w}{t}+W_{0,t}^-(w,\frac{x-w}{t})\Big) \\
&\quad \cdot
\delta_\a^x\Big(Y_{0,t}^+-Y^-_{0,t}\Big)(w,\frac{x-w}{t})
\,dw\,d\theta.
\end{align*}

By \eqref{cor:estimates001-difference} and insert an intermediate term, we obtain
\begin{align*}
&|I_{21}(t,x)|\\
\lesssim
&\frac{|\a|^a}{t^{d+a}}\int_0^1\int_{\mathbb{R}^d}\frac{\delta_{z_3}\nabla_x
f^+_{\mathrm{in}}\Big(w+\Big(\t Y^+_{0,t}+(1-\t)Y^-_{0,t}\Big)(w,\frac{x-w}{t}),\  \cdot\Big)
\big(\frac{x-w}{t}+W_{0,t}^-(w,\frac{x-w}{t})
\big)}{|z_3|^a}
\,dw\,d\theta\\
&+\frac{|\a|^a}{t^{d+a}}\int_0^1\int_{\mathbb{R}^d}
\frac{\delta_{z_4}\nabla_xf^+_{\mathrm{in}}\Big(\cdot, \frac{x-w-\alpha}{t}+W_{0,t}^-
(w,\frac{x-w-\alpha}{t})\Big)\Big(w+\Big(\t Y^+_{0,t}+(1-\t)Y^-_{0,t}\Big)(w,\frac{x-w}{t})
\Big)
}{|z_4|^a}\,dw\,d\theta,
\end{align*}
where
\begin{align*}
z_3 &:= \tfrac{\alpha}{t} + W_{0,t}^-(w,\tfrac{x-w}{t}) - W_{0,t}^-(w,\tfrac{x-w-\alpha}{t}), \\
z_4 &:= \theta\big[Y^+_{0,t}(w,\tfrac{x-w}{t})-Y^+_{0,t}(w,\tfrac{x-w-\alpha}{t})\big] + (1-\theta)\big[Y^-_{0,t}(w,\tfrac{x-w}{t})-Y^-_{0,t}(w,\tfrac{x-w-\alpha}{t})\big].
\end{align*}
By Lemma~\ref{estimates001},
\[
|z_3|+|z_4|\lesssim\frac{|\a|}{t}.
\]
Using the change of variables $w=x-tv$ and \eqref{z4}, we have
\begin{align*}
&|I_{21}(t,x)|\\
\lesssim
&\frac{|\a|^a}{t^{a}}\int_0^1\int_{\mathbb{R}^d}\frac{\Big|\delta_{z_3}
\nabla_xf^+_{\mathrm{in}}\Big(\Xi_{\mathcal{X},\theta}(0),\
\cdot\Big)\big(\mathcal{V}^-(0)\big)\Big|}{|z_3|^a}
\,dv\,d\theta\\
&+\frac{|\a|^a}{t^{a}}\int_0^1\int_{\mathbb{R}^d}
\frac{\Big|\delta_{z_4}\nabla_xf^+_{\mathrm{in}}
\Big(\cdot, \  \mathcal{V}^-(0;t,x-\alpha,v-\frac{\a}{t})\Big)\big(\t \mathcal{X}^+(0)+(1-\t)\mathcal{X}^-(0)\big)\Big|}{|z_4|^a}\,dv\,d\theta.
\end{align*}
This together with \eqref{cor:estimates001-difference} and \eqref{Dhdef} implies
\begin{align*}
\frac{|I_{21}(t,x)|}{|\a|^a}
\lesssim&\ \frac{\v}{\langle t\rangle^a}\int_0^1\int_{\mathbb{R}^d}
\mathcal{D}_2^a
\nabla_xf^+_{\mathrm{in}}\Big(\Xi_{\mathcal{X},\theta}(0),\mathcal{V}^-(0)\Big)\,dv\, d\theta \\
&+ \frac{\v}{\langle t\rangle^a}\int_0^1\int_{\mathbb{R}^d}
\mathcal{D}_1^a\nabla_xf^+_{\mathrm{in}}\Big(\Xi_{\mathcal{X},\theta}(0),\mathcal{V}^-(0;t,x-\alpha,v-\frac{\a}{t})\Big)\,dv \,d\theta.
\end{align*}
Applying the change of variables $w=x-tv$ and \eqref{z4}, we have
$$
|I_{22}(t,x)|\lesssim\frac{|\a|^a}{t^a}\int_0^1\int_{\mathbb{R}^d}\left|\nabla_xf^+_{\mathrm{in}}
\Big(\Xi_{\mathcal{X},\theta}(0),\mathcal{V}^-(0)\Big)\right|\frac{\Big|\delta_{\frac{\a}{t}}
\left(\big(Y^+_{0,t}-Y^-_{0,t}\big)(x-tv, \cdot)(v)\right)\Big|}
{|\frac{\a}{t}|^a}\,dv\,d\theta.
$$
It follows from ~\eqref{cor:estimates001-difference} that
$$
\frac{|I_{22}(t,x)|}{|\a|^a}\lesssim\frac{\varepsilon}{\langle t\rangle^a}\int_0^1\int_{\mathbb{R}^d}\big|\nabla_xf^+_{\mathrm{in}}
\big(\Xi_{\mathcal{X},\theta}(0),\mathcal{V}^-(0)\big)\big|\,dv \,d\theta.
$$
Hence,
\begin{align*}
\frac{|\delta_\a^xI_2(t,x)|}{|\a|^a}
\lesssim&\ \frac{\v}{\langle t\rangle^a}\int_0^1\int_{\mathbb{R}^d}
\mathcal{D}_2^a
\nabla_xf^+_{\mathrm{in}}\Big(\Xi_{\mathcal{X},\theta}(0),\mathcal{V}^-(0)\Big)\,dv\, d\theta \\
&+ \frac{\v}{\langle t\rangle^a}\int_0^1\int_{\mathbb{R}^d}
\mathcal{D}_1^a\nabla_xf^+_{\mathrm{in}}\Big(\Xi_{\mathcal{X},\theta}(0),
\mathcal{V}^-(0;t,x-\alpha,v-\frac{\a}{t})\Big)\,dv \,d\theta\\
&+\frac{\varepsilon}{\langle t\rangle^a}\int_0^1\int_{\mathbb{R}^d}\big|\nabla_xf^+_{\mathrm{in}}
\big(\Xi_{\mathcal{X},\theta}(0),\mathcal{V}^-(0)\big)\big|\,dv \,d\theta.
\end{align*}
Taking the supremum over $\alpha$ and applying Lemma~\ref{lemma:J123-H} with $\mathcal L_2$
to the three terms above, with
\[
\mathcal H=\mathcal D_2^a\nabla_xf^+_{\mathrm{in}},\qquad
\mathcal D_1^a\nabla_xf^+_{\mathrm{in}},\qquad
\nabla_xf^+_{\mathrm{in}},
\]
we obtain
\begin{equation}\label{chuzhi-I03}
\|I_2(t)\|_{\dot{B}_{1,\infty}^a}
\lesssim\frac{\v}{\langle t\rangle^{a}}\Big(\|\langle v\rangle^k\mathcal{D}_1^a\nabla_xf^+_{\mathrm{in}}\|_{L_x^1L_v^\infty}
+\|\langle v\rangle^k\mathcal{D}_2^a\nabla_xf^+_{\mathrm{in}}\|_{L_x^1L_v^\infty}
+\|\langle v\rangle^k\nabla_xf^+_{\mathrm{in}}\|_{L_x^1L_v^\infty}\Big).
\end{equation}
For the second term, we use
\[
\big\langle \mathcal V^-\big(0;t,x-\alpha,v-\tfrac{\alpha}{t}\big)\big\rangle
\sim \langle v\rangle.
\]
Similarly, we obtain
\begin{equation}\label{chuzhi-I04}
\|I_2(t)\|_{\dot{B}_{\infty,\infty}^a}
\lesssim\frac{\v}{\langle t\rangle^{d+a}}\Big(\|\mathcal{D}_1^a\nabla_xf^+_{\mathrm{in}}\|_{L_x^1L_v^\infty}
+\|\mathcal{D}_2^a\nabla_xf^+_{\mathrm{in}}\|_{L_x^1L_v^\infty}
+\|\nabla_xf^+_{\mathrm{in}}\|_{L_x^1L_v^\infty}\Big).
\end{equation}

\textit{Estimate for $I_3$.}
Applying $\delta_{\alpha}^{x}$ to $I_3$, we decompose the resulting expression as
\begin{align*}
\delta_\a^xI_3(t,x)
=&\frac{1}{t^d}\int_{\mathbb{R}^d}\delta_{z_5}f_{\mathrm{in}}
\Big(\cdot,\  \frac{x-w}{t}
+W_{0,t}^-(w,\frac{x-w}{t})\Big)\Big(w+ Y^-_{0,t}(w,\frac{x-w}{t})\Big)\,dw\\
&+\frac{1}{t^d}\int_{\mathbb{R}^d}\delta_{z_6}f_{\mathrm{in}}\Big(w+ Y^-_{0,t}(w,\frac{x-w-\alpha}{t}),\ \cdot\Big)\Big(\frac{x-w}{t}
+W_{0,t}^-(w,\frac{x-w}{t})\Big)\,dw,
\end{align*}
where
\[
z_5 := Y^-_{0,t}(w,\tfrac{x-w}{t}) - Y^-_{0,t}(w,\tfrac{x-w-\alpha}{t}), \  \  \
z_6 := \tfrac{\alpha}{t} + W_{0,t}^-(w,\tfrac{x-w}{t}) - W_{0,t}^-(w,\tfrac{x-w-\alpha}{t}).
\]
By Lemma~\ref{estimates001},
$$
|z_5|+|z_6|\lesssim\frac{|\alpha|}{t}.
$$
Therefore, one gets that
\begin{align*}
|\delta_\a^xI_3(t,x)|
\leq&\frac{1}{t^{d+a}}\int_{\mathbb{R}^d}\frac{\Big|\delta_{z_5}f_{\mathrm{in}}
\Big(\cdot,\frac{x-w}{t}
+W_{0,t}^-(w,\frac{x-w}{t})\Big)\Big(w+ Y^-_{0,t}(w,\frac{x-w}{t})\Big)\Big|}{|z_5|^a}|\a|^a\,dw\\
&+\frac{1}{t^{d+a}}\int_{\mathbb{R}^d}\frac{\Big|\delta_{z_6}f_{\mathrm{in}}\Big(w+ Y^-_{0,t}(w,\frac{x-w-\alpha}{t}),\cdot\Big)\Big(\frac{x-w}{t}
+W_{0,t}^-(w,\frac{x-w}{t})\Big)
\Big|}{|z_6|^a}|\a|^a\,dw.
\end{align*}
Changing variables $w=x-tv$ and using \eqref{z4}, we have
\begin{align*}
\frac{|\delta_\a^xI_3(t,x)|}{|\alpha|^a}
\lesssim&\frac{1}{\langle t\rangle^a}\int_{\mathbb{R}^d}\mathcal{D}_1^af_{\mathrm{in}}
\Big(\mathcal{X}^-(0),\mathcal{V}^-(0)\Big)
\,dv\\
&+\frac{1}{\langle t\rangle^a}\int_{\mathbb{R}^d}\mathcal{D}_2^a
f_{\mathrm{in}}\Big(\mathcal{X}^-(0;t,x-\alpha,v-\frac{\a}{t}),\
\mathcal{V}^-(0)\Big)\,dv.
\end{align*}
Taking the supremum over \(\alpha\) and applying Lemma~\ref{lemma:J123-H} with \(\mathcal L_3\) to the two terms above,  with
\[
\mathcal H=\mathcal D_1^a f_{\mathrm{in}},\qquad
\mathcal D_2^a f_{\mathrm{in}},
\]
we infer that
\begin{equation}\label{chuzhi-I05}
\|I_3(t)\|_{\dot B^a_{1,\infty}}
\lesssim
\frac{1}{\langle t\rangle^{a}}
\Bigl(
\|\mathcal D_1^a f_{\mathrm{in}}\|_{L_x^1L_v^1}
+\|\mathcal D_2^a f_{\mathrm{in}}\|_{L_x^1L_v^1}
\Bigr).
\end{equation}
For the second term, we also use \eqref{yabi01} to justify the same change of variables for
\[
\mathcal X^-\bigl(0;t,x-\alpha,v-\tfrac{\alpha}{t}\bigr).
\]
Likewise,
\begin{equation}\label{chuzhi-I06}
\|I_3(t)\|_{\dot B^a_{\infty,\infty}}
\lesssim
\frac{1}{\langle t\rangle^{d+a}}
\Bigl(
\|\mathcal D_1^a f_{\mathrm{in}}\|_{L_x^1L_v^\infty}
+\|\mathcal D_2^a f_{\mathrm{in}}\|_{L_x^1L_v^\infty}
\Bigr).
\end{equation}

Combining \eqref{chuzhi-I01}--\eqref{chuzhi-I06} with \eqref{large-data} and \eqref{qn}, and using the triangle inequality, we obtain
\begin{equation}\label{chuzhi076}
\langle t\rangle^{a}\|I(t)\|_{\dot{B}_{1,\infty}^a}+
 \langle t\rangle^{d+a}\|I(t)\|_{\dot{B}_{\infty,\infty}^a}\lesssim \varepsilon+\varepsilon_0.
\end{equation}

\textit{Step 2. Estimates for $R^{\pm}$.}
Taking \(\mathcal F=E\) and $\eta=\mu$  in \eqref{zuoyongxiang-08}, we have
$$
R^{\pm}(t,x)=\Gamma^\pm[E,\mu](t,x).
$$
Therefore, by \eqref{zuoyong08}, \eqref{zuoyong09} and  \eqref{ziyouliu01},
\begin{equation}\label{zuoyong028}
\|R^{\pm}(t)\|_{L^1_x}\lesssim\varepsilon^2,
\qquad
\|R^{\pm}(t)\|_{L^\infty_x}\lesssim\frac{\varepsilon^2}{\langle t\rangle^d}.
\end{equation}

It remains to estimate the Besov seminorm.
Fix $p\in\{1,\infty\}$.

\noindent\emph{Case 1: $|\alpha|\ge \frac{t}{2}$.}
Using
$\|\delta_\alpha^x R^\pm(t)\|_{L^p_x}\le 2\|R^\pm(t)\|_{L^p_x}$ and \eqref{zuoyong028},
we get
\begin{equation}\label{zuoyong028p}
\frac{\|\delta^x_\alpha R^\pm(t)\|_{L^p_x}}{|\alpha|^a}
\lesssim\frac{\varepsilon^2}{\langle t\rangle^{\frac{d(p-1)}{p}+a}}.
\end{equation}

\noindent\emph{Case 2: $|\alpha|<\frac{t}{2}$.}
By \eqref{z4} and the change of variables $w=x-tv$, we may write
\begin{align}\label{zuoyong01}
R^{\pm}(t,x)
=&\mp\frac{1}{t^d}\int_{0}^{t}\int_{\mathbb{R}^{d}}
E\big(s,w+\frac{x-w}{t}s+Y_{s,t}^{\pm}(w,\frac{x-w}{t})\big)
\cdot\nabla_{v}\mu\big(\frac{x-w}{t}+W_{s,t}^\pm(w,\frac{x-w}{t})\big)
\,dw\,ds\nonumber\\
&\pm\frac{1}{t^d}\int_{0}^{t}\int_{\mathbb{R}^{d}}E(s,w+\frac{x-w}{t}s) \cdot\nabla_{v}\mu(\frac{x-w}{t})\,dw\,ds.
\end{align}
Applying $\delta^x_\alpha$ to \eqref{zuoyong01},
changing variables back to $v$ and \eqref{z4}, we decompose
\[
\delta^x_\alpha R^\pm(t,x)=\mathcal{D}_1(t, x)+\mathcal{D}_2(t, x)+\mathcal{D}_3(t, x)+\mathcal{D}_4(t, x),
\]
where
\begin{align*}
 \mathcal{D}_1(t, x) := & \mp\int_0^t\int_{\mathbb{R}^d}\big[\delta_{\frac{s\alpha}{t}}
E(s,\cdot)(\mathcal{X}^\pm(s))\cdot\nabla_v\mu(\mathcal{V}^\pm(s))-\delta_{\frac{s\alpha}{t}}
E(s,\cdot)(x-(t-s)v)\cdot\nabla_v\mu(v)\big]\,dv\,ds,\\
\mathcal{D}_2(t, x):= & \mp\int_0^t\int_{\mathbb{R}^d}\big[E\big(s,\mathcal X^\pm(s;t,\,x-\alpha,\,v-\frac{\alpha}{t})\big)
\cdot\big(\delta_{\frac{\alpha}{t}}\nabla_v\mu\big)
\big(\mathcal{V}^\pm(s)\big)\\
&\quad-E\big(s,x-(t-s)v-\frac{s\alpha}{t}\big)
\cdot\big(\delta_{\frac{\alpha}{t}}\nabla_v\mu\big)(v)\big]\,dv\,ds,\\
\mathcal{D}_3(t, x):= & \mp\int_0^t\int_{\mathbb{R}^d}
\big[E\big(s,\mathcal{X}^\pm(s)-\frac{s\alpha}{t}\big)-
E\big(s,\mathcal X^\pm(s;t,\,x-\alpha,\,v-\frac{\alpha}{t})\big)\big]
\cdot\nabla_v\mu(\mathcal{V}^\pm(s))\,dv\,ds,\\
\mathcal{D}_4(t, x):= & \pm\int_0^t\int_{\mathbb{R}^d}E\big(s,\mathcal X^\pm(s;t,\,x-\alpha,\,v-\frac{\alpha}{t})\big)\\
&\quad\cdot\big[\nabla_v\mu\big(\mathcal{V}^\pm(s)-\frac{\alpha}{t}\big)
-\nabla_v\mu\big(\mathcal V^\pm(s;t,\,x-\alpha,\,v-\frac{\alpha}{t})\big)\big] \,dv\,ds.
\end{align*}

\noindent\textit{Estimate for $\mathcal{D}_1$.}
By definition,
$$
\mathcal{D}_1(t,x)=\Gamma^\pm[\delta_{\frac{s\alpha}{t}}E(s,\cdot),\mu](t,x).
$$
It follows from \eqref{zuoyong08} that
$$
\|\mathcal{D}_1(t)\|_{L^1_x}
\lesssim \varepsilon\int_0^t
\frac{\|\delta_{\frac{s\alpha}{t}}E(s)\|_{L^1_x}}
{{|\frac{s\alpha}{t}|}^a}{\frac{|s\alpha|^a}{t^a}}\frac{ds}{
\langle s\rangle^{d-\gamma-1}}.
$$
Using \eqref{Besov} and \eqref{ziyouliu01}, we obtain
\[
\|D_1(t)\|_{L^1_x}
\lesssim
\frac{\varepsilon |\alpha|^a}{t^a}
\int_0^t
\frac{\|E(s)\|_{\dot B^a_{1,\infty}}}{\langle s\rangle^{d-a-\gamma-1}}\,ds
\lesssim
\frac{\varepsilon^2 |\alpha|^a}{\langle t\rangle^a}.
\]
Similarly, by \eqref{zuoyong09},
$$
\|\mathcal{D}_1(t)\|_{L^\infty_x}\lesssim \frac{\varepsilon}{\langle t\rangle^d}
\int_0^{\frac{t}{2}}
\frac{1}{\langle s\rangle^{d-\gamma-1}}
\frac{\|\delta_{\frac{s\alpha}{t}}E(s)\|_{L^1_x}}
{{|\frac{s\alpha}{t}|}^a}{\frac{|s\alpha|^a}{t^a}} \,ds
+
\varepsilon
\int_{\frac{t}{2}}^t
\frac{1}{\langle s\rangle^{d-\gamma-1}}
\frac{\|\delta_{\frac{s\alpha}{t}}E(s)\|_{L^\infty_x}}
{{|\frac{s\alpha}{t}|}^a}{\frac{|s\alpha|^a}{t^a}} \,ds.
$$
Again using \eqref{Besov} and \eqref{ziyouliu01}, we infer
\begin{align*}
\|\mathcal{D}_1(t)\|_{L^\infty_x}
%=&\ \|\Gamma^\pm[\delta_{\frac{s\alpha}{t}}E,\mu]
%(t)\|_{L^\infty_x}\\
\lesssim&\ \frac{\varepsilon}{t^{d+a}}\int_0^\frac{t}{2}
\frac{|s\alpha|^a}{\langle s\rangle^{d-\gamma-1}}\|E(s)\|_{\dot{B}^a_{1,\infty}}ds+\frac{\varepsilon}{t^a}
\int_\frac{t}{2}^t
\frac{|s\alpha|^a}{\langle s\rangle^{d-\gamma-1}}\|E(s)\|_{\dot{B}^a_{\infty,\infty}}ds\\
\lesssim& \frac{\varepsilon^2|\alpha|^a}{\langle t\rangle^{d+a}}.
\end{align*}

\noindent\textit{Estimate for $\mathcal{D}_2$.}
We decompose
$$
\mathcal{D}_2(t,x)=\mathcal{D}_{21}(t,x)+\mathcal{D}_{22}(t,x),
$$
where
\begin{align*}
\mathcal{D}_{21}(t,x):=&\mp \int_0^t \int_{\mathbb{R}^d} \big[ E\big(s,\mathcal X^\pm(s;t,\,x-\alpha,\,v-\frac{\alpha}{t})\big)-E\big(s, x-(t-s)v-\tfrac{s\alpha}{t}\big) \big]\\
&\qquad\cdot \big(\delta_{\frac{\alpha}{t}} \nabla_v \mu\big)(\mathcal{V}^\pm(s)) \, dv \, ds,
\end{align*}
$$
\mathcal{D}_{22}(t,x):=\mp \int_0^t \int_{\mathbb{R}^d} E\big(s, x-(t-s)v-\tfrac{s\alpha}{t}\big) \cdot \big[ \big(\delta_{\frac{\alpha}{t}} \nabla_v \mu\big)(\mathcal{V}^\pm(s))-\big(\delta_{\frac{\alpha}{t}} \nabla_v \mu\big)(v) \big] \, dv \, ds.
$$

For $\mathcal{D}_{21}$. By the mean value theorem and Assumption~\ref{ass:regularity},
\begin{equation}\label{guji329}
|(\delta_{\frac{\alpha}{t}} \nabla_v \mu )(\mathcal{V}^\pm(s))| \lesssim \frac{ |\alpha|}{t}\frac1{\langle v\rangle^N},
\end{equation}
since
$\langle \mathcal{V}^{\pm}(s)-(1+\theta)\frac{\alpha}{t} \rangle\sim \langle v \rangle, \theta\in[0,1]$.
Set $\tau_1 := Y_{s,t}^\pm(x - tv, v - \tfrac{\alpha}{t})$.
By Lemma \ref{estimates001},
\[
|\tau_1|\le \|Y_{s,t}^\pm\|_{L^\infty_{x,v}}\lesssim \varepsilon\langle s\rangle^{-(d-\gamma-1)}, \quad |\tau_1|\ll 1.
\]
Since
\[
\mathcal X^\pm\bigl(s;t,\,x-\alpha,\,v-\frac{\alpha}{t}\bigr)
=
x-(t-s)v-\frac{s\alpha}{t}+\tau_1,
\]
it follows from \eqref{guji329} that
%$$
%|\mathcal{D}_{21}(t,x)|
%\lesssim \varepsilon^a\frac{ |\alpha|}{t}\int_0^t
%\int_{\mathbb{R}^d}
%\frac{|\delta_{\tau_1}E(s,\cdot)
%(\mathcal{X}^\pm(s;t,\,x-\alpha,\,v-\frac{\alpha}{t}))|}{|\tau_1|^a}
%\frac{dv \, ds}{\langle v\rangle^N\langle s\rangle^{(d-\gamma-1)a}}.
%$$
%Using $|\tau_1|\le 1$ and $|\alpha|<\frac t2$, hence $\frac{|\alpha|}{t}\lesssim \frac{|\alpha|^a}{\langle t\rangle^a}$, we obtain
\begin{align*}
|\mathcal{D}_{21}(t,x)|\lesssim&\ \frac{\varepsilon^a|\alpha|^a}{\langle t\rangle^a}\int_0^t
\int_{\mathbb{R}^d}
\sup\limits_{|\tau_1|\leq1}\frac{|\delta_{\tau_1}E(s,\cdot)
(\mathcal{X}^\pm(s;t,\,x-\alpha,\,v-\frac{\alpha}{t}))|}{|\tau_1|^a}
\frac{dv \, ds}{\langle v\rangle^N\langle s\rangle^{(d-\gamma-1)a}}.
\end{align*}

Integrating in $x$ and applying the change of variables
$\mathbf{X}=\mathcal{X}^\pm(s;t,\,x-\alpha,\,v-\frac{\alpha}{t})$,
together with \eqref{yabi01}, we get
\begin{align}\label{D2estimate}
\|\mathcal{D}_{21}(t)\|_{L^1}
\lesssim&\frac{\varepsilon^a|\alpha|^a}{\langle t\rangle^a}\int_0^t
\int_{\mathbb{R}^d}\int_{\mathbb{R}^d}
\sup\limits_{|\tau_1|\leq1}\frac{|\delta_{\tau_1}E(s,\cdot)
(\mathbf{X})|}{|\tau_1|^a}\big|\det\big(\nabla_x\mathcal X^\pm(s;t,\,x-\alpha,\,v-\frac{\alpha}{t})\big)\big|^{-1}d\mathbf{X}\nonumber\\
&\qquad\cdot\frac{dv \, ds}{\langle v\rangle^N\langle s\rangle^{(d-\gamma-1)a}}\nonumber\\
\lesssim&
\frac{\varepsilon^a|\alpha|^a}{\langle t\rangle^a}
\int_0^t
\Bigl\|
\sup_{|\tau_1|\le 1}\frac{|\delta_{\tau_1}E(s)|}{|\tau_1|^a}
\Bigr\|_{L^1_x}
\frac{ds}{\langle s\rangle^{(d-\gamma-1)a}}.
\end{align}
By \eqref{eq:dq-1-local} and \eqref{F03},
\begin{equation}\label{DL-21}
\Big\|\sup_{|\tau_1|\le 1}\frac{|\delta_{\tau_1}E(s)|}{|\tau_1|^a}\Big\|_{L^1_x}
\lesssim \|E(s)\|_{\dot F^a_{1,\infty}}\lesssim\|E(s)\|_{L^1_x}^{1-a}\|\nabla_x E(s)\|_{L^1_x}^{a}.
\end{equation}
Substituting this into \eqref{D2estimate} and using \eqref{ziyouliu01}, we get
$$
\|\mathcal{D}_{21}(t)\|_{L^1_x}
\lesssim
\frac{\varepsilon^{a+1}|\alpha|^a}{\langle t\rangle^a}
\int_0^t
\frac{ds}{\langle s\rangle^{(d-\gamma-1)a+1-\gamma}}\lesssim
\frac{\varepsilon^{a+1}|\alpha|^a}{\langle t\rangle^a}.
$$

Split the time integral into $[0,\frac{t}{2}]$ and $[\frac{t}{2},t]$. For $0\le s\le t/2$, we use the change of variables
$\mathbf{X}:=X^\pm(s;t,\,x-\alpha,\,v-\frac{\alpha}{t})$,
together with \eqref{yabi02}, and infer that
\begin{align*}
|\mathcal{D}_{21}(t,x)|
%\lesssim&\ \frac{\varepsilon^a|\alpha|^a}{\langle t\rangle^a}\int_0^{\frac{t}{2}}
%\int_{\mathbb{R}^d}\sup\limits_{|\tau_1|\leq1}\frac{|\delta_{\tau_1}E(s,\cdot)
%(\mathbf{X})|}{|\tau_1|^a}d\mathbf{X}
%\frac{ ds}{(t-s)^d\langle s\rangle^{(d-\gamma-1)a}}\\
%&+\frac{\varepsilon^a|\alpha|^a}{\langle t\rangle^a}\int_{\frac{t}{2}}^t
%\|\sup\limits_{|\tau_1|\leq1}\frac{|\delta_{\tau_1}E(s)|}{|\tau_1|^a}\|_{L^\infty_x}\int_{\mathbb{R}^d}\langle v\rangle^{-N}dv\frac{ ds}{\langle s\rangle^{(d-\gamma-1)a}}\\
\lesssim&\ \frac{\varepsilon^a|\alpha|^a}{\langle t\rangle^{d+a}}\int_0^{\frac{t}{2}}
\Big\|\sup\limits_{|\tau_1|\leq1}\frac{|\delta_{\tau_1}E(s)|}{|\tau_1|^a}\Big\|_{L^1_x}
\frac{ ds}{\langle s\rangle^{(d-\gamma-1)a}}\\
&+\frac{\varepsilon^a|\alpha|^a}{\langle t\rangle^a}\int_{\frac{t}{2}}^t
\Big\|\sup\limits_{|\tau_1|\leq1}\frac{|\delta_{\tau_1}E(s)|}{|\tau_1|^a}\Big\|_{L^\infty_x}
\frac{ ds}{\langle s\rangle^{(d-\gamma-1)a}}.
\end{align*}
Using \eqref{DL-21} for the first term, and \eqref{eq:dq-infty} together with \eqref{FTL0} for the second term, we obtain
\begin{align*}
|\mathcal{D}_{21}(t,x)|
\lesssim& \frac{\varepsilon^a|\alpha|^a}{\langle t\rangle^{d+a}}\int_0^{\frac{t}{2}}
|E(s)\|_{L^1_x}^{1-a}\|\nabla_x E(s)\|_{L^1_x}^{a}
\frac{ ds}{\langle s\rangle^{(d-\gamma-1)a}}\\
&\quad+\frac{\varepsilon^a|\alpha|^a}{\langle t\rangle^a}\int_{\frac{t}{2}}^t
\|E(s)\|_{\dot{B}^a_{\infty,\infty}}
\frac{ ds}{\langle s\rangle^{(d-\gamma-1)a}}.
\end{align*}
Therefore, by \eqref{ziyouliu01},
$$
\|\mathcal{D}_{21}(t)\|_{L^\infty_x}
\lesssim\ \frac{\varepsilon^{a+1}|\alpha|^a}{\langle t\rangle^{d+a}}\int_0^{\frac{t}{2}}\frac{ds}{\langle s\rangle^{(d-\gamma-1)a+1-\gamma}}
+\frac{\varepsilon^{a+1}|\alpha|^a}{\langle t\rangle^{a}}\int^t_{\frac{t}{2}}\frac{ds}{\langle s\rangle^{(d-\gamma-1)a+d+1-\gamma}}\lesssim \frac{\varepsilon^{a+1}|\alpha|^a}{\langle t\rangle^{d+a}}.
$$

For $\mathcal{D}_{22}$.
Using the mean value theorem twice and \eqref{tezhengW}, we have
% and $\langle \theta(\mathcal{V}^\pm(s)-v)+v-(1-\beta_1)\frac{\alpha}{t} \rangle \sim
%\langle v \rangle$,
\begin{align*}
&\big|(\delta_{\frac{\alpha}{t}} \nabla_v \mu) (\mathcal{V}^\pm(s))
-(\delta_{\frac{\alpha}{t}} \nabla_v \mu)(v)\big|\nonumber\\
\leq&\big|\nabla_v\big(\delta_{\frac{\alpha}{t}}\nabla_v \mu\big) \big(\theta\mathcal{V}^\pm(s)+(1-\theta)v\big)\big||\mathcal{V}^\pm(s)-v|\nonumber\\
%\leq&\big|\nabla_v^3 \mu \big(\beta_1\big(\theta\mathcal{V}^\pm(s)+(1-\theta)v\big)
%+(1-\beta_1)\big(\theta\mathcal{V}^\pm(s)+(1-\theta)v-\frac{\alpha}{t}\big)\big)\big| \frac{|\alpha|}{ t } \|W_{s,t}^\pm\|_{L^\infty_{x,v}}\nonumber\\
%\lesssim&  \frac{|\alpha|}{ t } \sup_{\theta\in [0,1]} \sup_{\beta_1\in [0,1]} \frac{\varepsilon }{\langle \theta(\mathcal{V}^\pm(s)-v)+v-(1-\beta_1)\frac{\alpha}{t} \rangle^N  \langle s \rangle^{d-\gamma} }\nonumber\\
\lesssim&  \frac{|\alpha|}{ t }  \frac{\varepsilon }{\langle v\rangle^N  \langle s \rangle^{d-\gamma} }.
\end{align*}
Since $|\alpha|<\frac t2$, it follows that
$$
|\mathcal{D}_{22}(t,x)| \lesssim\frac{\varepsilon|\alpha|^a}{\langle t\rangle^a}\int_0^t
\int_{\mathbb{R}^d}
|E(s,x-(t-s)v-\frac{s\alpha}{t})|
\frac{dv \, ds}{\langle v\rangle^N\langle s\rangle^{d-\gamma}}.
$$
Integrating in $x$ and using the change of variables $y=x-(t-s)v-\frac{s\alpha}{t}$,
we have
$$
\|\mathcal{D}_{22}(t)\|_{L^1_x}\lesssim\frac{\varepsilon|\alpha|^a}{\langle t\rangle^a}\int_0^t\|E(s)\|_{L^1_x}\int_{\mathbb{R}^d}\langle v\rangle^{-N}dv\frac{ds}{\langle s\rangle^{d-\gamma}}.
$$
Hence, by \eqref{ziyouliu01},
\[
\|\mathcal{D}_{22}(t)\|_{L^1_x}
\lesssim
\frac{\varepsilon^{2} |\alpha|^a}{\langle t\rangle^a}
\int_0^t
\frac{ds}{\langle s\rangle^{d-2\gamma}}
\lesssim
\frac{\varepsilon^{a+1}|\alpha|^a}{\langle t\rangle^a}.
\]

To estimate the $L^\infty_x$ norm, we split again the time interval into
$[0,\frac t2]$ and $[\frac t2,t]$. By the change of variables
\[
y=x-(t-s)v-\frac{s\alpha}{t},
\qquad0\le s\le \frac t2,
\]
%By the change of variables $v\mapsto y=x-(t-s)v-\frac{s\alpha}{t}$ in $[0,\frac{t}{2}]$ ,
we obtain
\begin{align*}
|\mathcal{D}_{22}(t,x)|
\lesssim&
\frac{\varepsilon|\alpha|^a}{\langle t\rangle^a}\int_0^{\frac{t}{2}}
\|E(s)\|_{L^1_x}\frac{1}{(t-s)^d}
\frac{ds}{\langle s\rangle^{d-\gamma}}\\
&\quad+\frac{\varepsilon|\alpha|^a}{\langle t\rangle^a}\int_{\frac{t}{2}}^t
\|E(s)\|_{L^\infty}\int_{\mathbb{R}^d}\langle v\rangle^{-N}dv
\frac{ds}{\langle s\rangle^{d-\gamma}}.
\end{align*}
By \eqref{ziyouliu01}, it holds that
$$
\|\mathcal{D}_{22}(t)\|_{L^\infty_x}
\lesssim
\frac{\varepsilon^2|\alpha|^a}{\langle t\rangle^{d+a}}\int_0^{\frac{t}{2}}\frac{ds}{\langle s\rangle^{d-2\gamma+1}}
+\frac{\varepsilon^2|\alpha|^a}{\langle t\rangle^{a}}\int^t_{\frac{t}{2}}\frac{ds}{\langle s\rangle^{2d-2\gamma+1}}\lesssim \frac{\varepsilon^{a+1}|\alpha|^a}{\langle t\rangle^{d+a}}.
$$

Combining the estimates for $\mathcal{D}_{21}$ and $\mathcal{D}_{22}$, we obtain
\[
\|\mathcal{D}_{2}(t)\|_{L^1_x}
\lesssim
\frac{\varepsilon^{a+1}|\alpha|^a}{\langle t\rangle^a}, \quad
\|\mathcal{D}_{2}(t)\|_{L^\infty_x}
\lesssim
\frac{\varepsilon^{a+1}|\alpha|^a}{\langle t\rangle^{d+a}},
\]

\noindent\textit{Estimate for $\mathcal{D}_3$.}
By \eqref{z4},
\[
\mathcal X^\pm(s)-\frac{s\alpha}{t}
-\mathcal X^\pm\bigl(s;t,x-\alpha,v-\frac{\alpha}{t}\bigr)
=\tau_2,
\]
where
\[
\tau_2:=Y^\pm_{s,t}(x-tv,v)
-Y^\pm_{s,t}\bigl(x-tv,v-\frac{\alpha}{t}\bigr).
\]
Therefore,
$$
|\mathcal{D}_3(t,x)|
\lesssim \int_0^t\int_{\R^d}
|\delta_{\tau_2}E(s,\,\cdot
)(\mathcal{X}^\pm(s)-\frac{s\alpha}{t})|
\,|\nabla_v\mu(\mathcal{V}^\pm(s))|
\,dv\,ds.
$$
Moreover,
\begin{equation*}
|\tau_2|\lesssim \frac{\varepsilon}{\langle s \rangle^{d-\gamma-2}} \big| \frac{\alpha}{t} \big|, \qquad
|\tau_2| \lesssim\varepsilon\,\langle s\rangle^{-(d-\gamma-1)}
\ll 1,
\end{equation*}
and
$|\nabla_v\mu(\mathcal V^\pm(s))|
\lesssim
\langle v\rangle^{-N}$.
Therefore,
\begin{equation*}
|\mathcal D_3(t,x)|
%&\lesssim
%\int_0^t\int_{\mathbb R^d}\frac{\bigl|
%\delta_{\tau_2}E(s,\cdot)\bigl(\mathcal X^\pm(s)-\frac{s\alpha}{t}\bigr)
%\bigr|}{|\tau_2|^a}\,|\tau_2|^a\,
%\frac{dv\,ds}{\langle v\rangle^N} \\
\lesssim
\frac{\varepsilon^a|\alpha|^a}{\langle t\rangle^a}
\int_0^t\int_{\mathbb R^d}
\sup_{|\tau_2|\le1}\frac{\bigl|
\delta_{\tau_2}E(s,\cdot)\bigl(\mathcal X^\pm(s)-\frac{s\alpha}{t}\bigr)\bigr|}{|\tau_2|^a}
\frac{dv\,ds}{\langle v\rangle^N\langle s\rangle^{(d-\gamma-2)a}}.
\end{equation*}
Integrating in $x$ and applying the change of variables
$X=\mathcal X^\pm(s)-\frac{s\alpha}{t}$,
together with Lemma \ref{Lemma3.4-YKB}, we obtain
\begin{align*}
\|\mathcal D_3(t)\|_{L^1_x}
\lesssim&
\frac{\varepsilon^a|\alpha|^a}{\langle t\rangle^a}
\int_0^t\int_{\mathbb R^d}\int_{\mathbb R^d}
\sup_{|\tau_2|\le1}
\frac{|\delta_{\tau_2}E(s,\cdot)(X)|}{|\tau_2|^a}
\bigl|\det(\nabla_x\mathcal X^\pm(s))\bigr|^{-1}
\,dX\,
\frac{dv\,ds}{\langle v\rangle^N\langle s\rangle^{(d-\gamma-2)a}} \\
\lesssim&
\frac{\varepsilon^a|\alpha|^a}{\langle t\rangle^a}
\int_0^t
\Bigl\|
\sup_{|\tau_2|\le1}\frac{|\delta_{\tau_2}E(s)|}{|\tau_2|^a}
\Bigr\|_{L^1_x}
\frac{ds}{\langle s\rangle^{(d-\gamma-2)a}}.
\end{align*}
By \eqref{eq:dq-1-local} and \eqref{F03},
\begin{equation}\label{DL-2122}
\Bigl\|
\sup_{|\tau_2|\le1}\frac{|\delta_{\tau_2}E(s)|}{|\tau_2|^a}
\Bigr\|_{L^1_x}
\lesssim
\|E(s)\|_{\dot F^a_{1,\infty}}
\lesssim
\|E(s)\|_{L^1_x}^{1-a}\|\nabla_x E(s)\|_{L^1_x}^{a}.
\end{equation}
Hence, using \eqref{ziyouliu01},
$$
\|\mathcal{D}_3(t)\|_{L^1_x}
\lesssim\frac{\varepsilon^a|\alpha|^a}{\langle t\rangle^a}
\int_0^t\frac{\|E(s)\|_{\dot{F}^a_{1,\infty}}}{\langle s\rangle^{(d-\gamma-2)a}}ds\lesssim\frac{\varepsilon^{a+1}|\alpha|^a}{\langle t\rangle^a}
\int_0^t
\frac{ds}{\langle s\rangle^{(d-\gamma-2)a+1-\gamma}}
\lesssim\frac{\varepsilon^{a+1}
|\alpha|^a}{\langle t\rangle^a}.
$$

Similarly, splitting into $[0,\frac{t}{2}]$ and $[\frac{t}{2},t]$, using the change of variables
\[
X=\mathcal X^\pm(s)-\frac{s\alpha}{t},
\qquad 0\le s\le \frac t2,
\]
together with Lemma \ref{Lemma3.4-YKB}, we obtain
\begin{align*}
|\mathcal{D}_3(t, x)|
\lesssim&\ \frac{\varepsilon^{a}|\alpha|^{a}}{\langle t\rangle^{a}}
\int_0^{\frac{t}{2}}\frac{1}{(t-s)^d}
\Big\|\sup_{|\tau_2|\le 1}\frac{|\delta_{\tau_2}E(s)|}{|\tau_2|^{a}}\Big\|_{L^1_x}
\frac{ds}{\langle s\rangle^{(d-\gamma-2)a}}\\
&\ +\frac{\varepsilon^{a}|\alpha|^{a}}{\langle t\rangle^{a}}
\int_{\frac{t}{2}}^{t}
\Big\|\sup_{|\tau_2|\le 1}\frac{|\delta_{\tau_2}E(s)|}{|\tau_2|^{a}}\Big\|_{L^\infty_x}
\frac{ds}{\langle s\rangle^{(d-\gamma-2)a}}.
\end{align*}
By Lemma \ref{lem:difference-F},
\[
\Bigl\|
\sup_{|\tau_2|\le1}\frac{|\delta_{\tau_2}E(s)|}{|\tau_2|^a}
\Bigr\|_{L^\infty_x}
\lesssim
\|E(s)\|_{\dot F^a_{\infty,\infty}}
\sim
\|E(s)\|_{\dot B^a_{\infty,\infty}},
\]
using \eqref{DL-2122} and \eqref{ziyouliu01}, we obtain
\begin{align*}
\|\mathcal D_3(t)\|_{L^\infty_x}
\lesssim&
\frac{\varepsilon^{a+1}|\alpha|^a}{\langle t\rangle^{d+a}}
\int_0^{\frac{t}{2}}
\frac{ds}{\langle s\rangle^{(d-\gamma-2)a+1-\gamma}}
+
\frac{\varepsilon^{a+1}|\alpha|^a}{\langle t\rangle^a}
\int_{\frac{t}{2}}^{t}
\frac{ds}{\langle s\rangle^{(d-\gamma-2)a+d+1-\gamma}} \\
\lesssim&
\frac{\varepsilon^{a+1}|\alpha|^a}{\langle t\rangle^{d+a}},
\end{align*}
where the last step uses $a \in \big(\frac{\gamma}{d-\gamma-2}, 1 \big)$.

\noindent\emph{Estimate for $\mathcal{D}_4$.}
By \eqref{z4},
\[
\mathcal V^\pm\bigl(s;t,x-\alpha,v-\frac{\alpha}{t}\bigr)
=
v-\frac{\alpha}{t}
+
W^\pm_{s,t}\bigl(x-tv,v-\frac{\alpha}{t}\bigr).
\]
Hence, by the mean value theorem, Assumption \ref{ass:regularity}, and Lemma~\ref{estimates001},
\begin{align*}
&|\nabla_v\mu(\mathcal{V}^\pm(s)-\frac{\alpha}{t})
-\nabla_v\mu(\mathcal V^\pm(s;t,\,x-\alpha,\,v-\frac{\alpha}{t}))|\\
&\qquad\lesssim
\frac{1}{\langle \xi\rangle^N}
\bigl|
W^\pm_{s,t}(x-tv,v)
-
W^\pm_{s,t}\bigl(x-tv,v-\frac{\alpha}{t}\bigr)
\bigr|,
\end{align*}
where \(\xi\) is a point on the line segment joining the two arguments above. Since
$$
\langle \theta(W_{s,t}^\pm(x-tv,v)
-W_{s,t}^\pm(x-tv,v-\frac{\alpha}{t}))+v-\frac{\alpha}{t}
+W_{s,t}^\pm(x-tv,v-\frac{\alpha}{t})\rangle \sim\langle v\rangle,
$$
it follows that \(\langle \xi\rangle\sim \langle v\rangle\), and therefore
$$
|\nabla_v\mu(\mathcal{V}^\pm(s)-\frac{\alpha}{t})
-\nabla_v\mu(\mathcal V^\pm(s;t,\,x-\alpha,\,v-\frac{\alpha}{t}))|
\lesssim
\frac{1}{\langle v\rangle^N}
\frac{\varepsilon}{\langle s\rangle^{d-\gamma-1}}
\frac{|\alpha|^a}{t^a}.
$$
Hence,
\[
|\mathcal{D}_4(t, x)|
\lesssim\frac{\varepsilon|\alpha|^a}{\langle t\rangle^a}\int_0^t\int_{\mathbb{R}^d}
\frac{|E(s,\mathcal X^\pm(s;t,\,x-\alpha,\,v-\frac{\alpha}{t}))|}{\langle v\rangle^N\langle s\rangle^{d-\gamma-1}}dv\, ds.
\]
Integrating in $x$, applying the change of variables
$X=\mathcal X^\pm\bigl(s;t,x-\alpha,v-\frac{\alpha}{t}\bigr)$,
and using \eqref{yabi01}, we obtain
\begin{align*}
\|\mathcal{D}_4(t)\|_{L^1_x}
\lesssim&
\frac{\varepsilon |\alpha|^a}{\langle t\rangle^a}
\int_0^t\int_{\mathbb R^d}\int_{\mathbb R^d}
|E(s,\cdot)(X)|
\big|
\det\bigl(
\nabla_x\mathcal X^\pm\bigl(s;t,x-\alpha,v-\frac{\alpha}{t}\bigr)
\bigr)
\big|^{-1}
\,dX\,
\frac{dv\,ds}{\langle v\rangle^N\langle s\rangle^{d-\gamma-1}} \\
\lesssim&
\frac{\varepsilon |\alpha|^a}{\langle t\rangle^a}
\int_0^t
\frac{\|E(s)\|_{L^1_x}}{\langle s\rangle^{d-\gamma-1}}\,ds.
\end{align*}
Using \eqref{ziyouliu01}, we deduce
\[
\|\mathcal{D}_4(t)\|_{L^1_x}
\lesssim
\frac{\varepsilon^2 |\alpha|^a}{\langle t\rangle^a}
\int_0^t
\frac{ds}{\langle s\rangle^{d-2\gamma}}
\lesssim
\frac{\varepsilon^2 |\alpha|^a}{\langle t\rangle^a}.
\]

For the $L^\infty_x$ norm, arguing as above, we get
\begin{align*}
\|\mathcal{D}_4(t)\|_{L^\infty_x}
\lesssim&
\frac{\varepsilon |\alpha|^a}{\langle t\rangle^a}
\int_0^{\frac{t}{2}}
\frac{1}{(t-s)^d}
\frac{\|E(s)\|_{L^1_x}}{\langle s\rangle^{d-\gamma-1}}\,ds +
\frac{\varepsilon |\alpha|^a}{\langle t\rangle^a}
\int_{\frac{t}{2}}^{t}
\frac{\|E(s)\|_{L^\infty_x}}{\langle s\rangle^{d-\gamma-1}}\,ds.
\end{align*}
Invoking \eqref{ziyouliu01} once more, we conclude that
\[
\|\mathcal{D}_4(t)\|_{L^\infty_x}
\lesssim
\frac{\varepsilon^2 |\alpha|^a}{\langle t\rangle^{d+a}}
\int_0^{\frac{t}{2}}\frac{ds}{\langle s\rangle^{d-2\gamma}}
+\frac{\varepsilon^2 |\alpha|^a}{\langle t\rangle^{a}}
\int_{\frac{t}{2}}^{t}\frac{ds}{\langle s\rangle^{2d-2\gamma}}
\lesssim
\frac{\varepsilon^2 |\alpha|^a}{\langle t\rangle^{d+a}}.
\]

Combining the bounds for $\mathcal{D}_1$--$\mathcal{D}_4$, and using that \(0<\varepsilon<1\) and \(0<a<1\), so that
$\varepsilon^2\le \varepsilon^{a+1}$,
we infer that, for \(|\alpha|<\frac{t}{2}\),
\begin{equation}\label{zuoyong028-pp}
\frac{\|\delta^x_\alpha R^\pm(t)\|_{L^p_x}}{|\alpha|^a}
\lesssim
\frac{\varepsilon^{a+1}}{\langle t\rangle^{\frac{d(p-1)}{p}+a}},
\qquad p\in\{1,\infty\}.
\end{equation}
By \eqref{zuoyong028p}, \eqref{zuoyong028-pp}, and \eqref{Besov}, we obtain
\begin{equation}\label{zuoyongx009}
\langle t\rangle^a\|R^\pm(t)\|_{\dot B^a_{1,\infty}}
+
\langle t\rangle^{d+a}\|R^\pm(t)\|_{\dot B^a_{\infty,\infty}}
\lesssim
\varepsilon^{a+1}.
\end{equation}

\medskip
\textit{Step 3. Norm estimates for the charge density.}
To conclude the proof, we establish the temporal decay of $\rho(t)$ in the spaces: $\dot{B}_{\infty,\infty}^a$, $\dot{B}_{1,\infty}^a$, $L^\infty$, and $L^1$.

Let
$$
\mathcal{Z}(t,x):=(I^+-I^-)(t,x)-(R^++R^-)(t,x).
$$
Then \eqref{fangcheng000} can be rewritten as
\begin{equation}\label{jian04}
\rho=-\mathcal G*_{(t,x)}\mathcal{Z}+\mathcal{Z}.
\end{equation}

We first estimate $\mathcal{Z}$.
By \eqref{chuzhi0082} and \eqref{zuoyong028},
\begin{equation}\label{midu-hbar-L1}
\|\mathcal{Z}(t)\|_{L_x^1}\lesssim \varepsilon_0+\varepsilon+\varepsilon^2.
\end{equation}

By \eqref{chuzhi008} and \eqref{zuoyong028},
%\[
%\langle t\rangle^d\|I(t)\|_{L_x^\infty}\lesssim \varepsilon+\varepsilon_0,
%\qquad
%\langle t\rangle^d\|R^\pm(t)\|_{L_x^\infty}\lesssim \varepsilon^2,
%\]
%and therefore
\begin{equation}\label{midu-hbar-Linf}
\langle t\rangle^d\|\mathcal{Z}(t)\|_{L_x^\infty}
\lesssim
\varepsilon_0+\varepsilon+\varepsilon^2.
\end{equation}

Moreover, by \eqref{chuzhi076} and \eqref{zuoyongx009},
%\[
%\langle t\rangle^a\|I(t)\|_{\dot B_{1,\infty}^a}
%+\langle t\rangle^{d+a}\|I(t)\|_{\dot B_{\infty,\infty}^a}
%\lesssim \varepsilon+\varepsilon_0,
%\]
%and
%\[
%\langle t\rangle^a\|R^\pm(t)\|_{\dot B_{1,\infty}^a}
%+\langle t\rangle^{d+a}\|R^\pm(t)\|_{\dot B_{\infty,\infty}^a}
%\lesssim \varepsilon^{a+1}.
%\]
%Thus
\begin{equation}\label{midu-hbar-Besov}
\langle t\rangle^a\|\mathcal{Z}(t)\|_{\dot B_{1,\infty}^a}
+\langle t\rangle^{d+a}\|\mathcal{Z}(t)\|_{\dot B_{\infty,\infty}^a}
\lesssim
\varepsilon_0+\varepsilon+\varepsilon^{a+1}.
\end{equation}

We now estimate the convolution term.
Applying \eqref{lemma5.2} to $\mathcal{Z}$, together with
\eqref{midu-hbar-L1}, \eqref{midu-hbar-Linf}, and \eqref{midu-hbar-Besov}, yields
\[
\|(\mathcal G*_{(t,x)}\mathcal{Z})(t)\|_{L_x^1}
\lesssim
\sup_{0\le s\le t}\langle s\rangle^a\|\mathcal{Z}(s)\|_{\dot B_{1,\infty}^a}
\lesssim
\varepsilon_0+\varepsilon+\varepsilon^{a+1},
\]
\[
\langle t\rangle^d\|(\mathcal G*_{(t,x)}\mathcal{Z})(t)\|_{L_x^\infty}
\lesssim
\sup_{0\le s\le t}
\Big(
\|\mathcal{Z}(s)\|_{L_x^1}
+\langle s\rangle^{d+a}\|\mathcal{Z}(s)\|_{\dot B_{\infty,\infty}^a}
\Big)
\lesssim
\varepsilon_0+\varepsilon+\varepsilon^2+\varepsilon^{a+1},
\]
\[
\langle t\rangle^a\|(\mathcal G*_{(t,x)}\mathcal{Z})(t)\|_{\dot B_{1,\infty}^a}
\lesssim
\sup_{0\le s\le t}\langle s\rangle^a\|\mathcal{Z}(s)\|_{\dot B_{1,\infty}^a}
\lesssim
\varepsilon_0+\varepsilon+\varepsilon^{a+1},
\]
and
\[
\langle t\rangle^{d+a}\|(\mathcal G*_{(t,x)}\mathcal{Z})(t)\|_{\dot B_{\infty,\infty}^a}
\lesssim
\sup_{0\le s\le t}
\Big(
\langle s\rangle^a\|\mathcal{Z}(s)\|_{\dot B_{1,\infty}^a}
+\langle s\rangle^{d+a}\|\mathcal{Z}(s)\|_{\dot B_{\infty,\infty}^a}
\Big)
\lesssim
\varepsilon_0+\varepsilon+\varepsilon^{a+1}.
\]

Combining the above estimates with \eqref{jian04},
and using the triangle inequality, we obtain
\[
\|\rho(t)\|_{L_x^1}
+\langle t\rangle^d\|\rho(t)\|_{L_x^\infty}
+\langle t\rangle^a\|\rho(t)\|_{\dot B_{1,\infty}^a}
+\langle t\rangle^{d+a}\|\rho(t)\|_{\dot B_{\infty,\infty}^a}
\lesssim
\varepsilon_0+\varepsilon+\varepsilon^2+\varepsilon^{a+1}.
\]
Since $0<\varepsilon<1$ and $a \in \big(0, 1 \big)$ , we have
\[
\varepsilon^2+\varepsilon^{a+1}\le 2\varepsilon.
\]
Hence \eqref{miduguji-04} follows. This completes the proof.
\end{proof}

\subsection{Estimates for the Charge Density and its Gradient}
\begin{proposition}\label{daoshu009}
For $t\geq1$, one has
\begin{equation}\label{Pro5.6}
\langle t\rangle\|\nabla_x\rho(t)\|_{L^1_x}+\langle t\rangle^{d+1}\|\nabla_x\rho(t)\|_{L^\infty_x}\lesssim(\varepsilon_0+\varepsilon) \ln(1+t).
\end{equation}
\end{proposition}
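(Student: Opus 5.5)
We use the representation \eqref{jian04}, $\rho=-\mathcal G*_{(t,x)}\mathcal Z+\mathcal Z$ with $\mathcal Z=I-R$, and apply $\nabla_x$. The convolution part is handled by the two gradient estimates in \eqref{lemma5.2}:
\[
\frac{\langle t\rangle}{\ln(1+t)}\|\nabla(\mathcal G*\mathcal Z)(t)\|_{L^1_x}+\frac{\langle t\rangle^{d+1}}{\ln(1+t)}\|\nabla(\mathcal G*\mathcal Z)(t)\|_{L^\infty_x}
\lesssim \sup_{0\le s\le t}\Big(\|\mathcal Z(s)\|_{L^1_x}+\langle s\rangle\|\nabla\mathcal Z(s)\|_{L^1_x}+\langle s\rangle^{d+1}\|\nabla\mathcal Z(s)\|_{L^\infty_x}\Big).
\]
The term $\|\mathcal Z(s)\|_{L^1_x}\lesssim\varepsilon_0+\varepsilon$ is already given by \eqref{midu-hbar-L1}. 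The proposition therefore reduces to the non-logarithmic bounds
\[
\langle t\rangle\|\nabla_x I(t)\|_{L^1_x}+\langle t\rangle^{d+1}\|\nabla_x I(t)\|_{L^\infty_x}+\langle t\rangle\|\nabla_x R^\pm(t)\|_{L^1_x}+\langle t\rangle^{d+1}\|\nabla_x R^\pm(t)\|_{L^\infty_x}\lesssim\varepsilon_0+\varepsilon ,
\]
for all $t\ge0$. For $t\le1$ these follow from Proposition~\ref{xiaoshijian008} combined with the finite-time bounds.

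\emph{Gradient of $I$.} We follow Step 1 of Proposition~\ref{Besovestimates} but with $\nabla_x$ in place of $\delta^x_\alpha/|\alpha|^a$. Differentiate the representation $I=I_1+I_2+I_3$ in \eqref{chuzhi0123}, where $x$ enters through $v=(x-w)/t$. Each $x$-derivative produces a factor $1/t$ multiplying either a $\nabla_v$ of the integrand or $\nabla_v Y_{0,t}^\pm,\nabla_v W_{0,t}^\pm$. For these factors we have $\|\nabla_v W_{0,t}\|\lesssim\varepsilon$ and $\|\nabla_v(Y^+-Y^-)_{0,t}\|\lesssim\varepsilon$ by \eqref{cor:estimates001-difference}. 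The terms coming from $I_1,I_2$ then involve $\nabla^2_{x,v}f^\pm_{\mathrm{in}}$ multiplied by the small factors $W^+-W^-$ or $Y^+-Y^-$, or involve $\nabla f^\pm_{\mathrm{in}}$ multiplied by $\nabla_v(W^+-W^-)$. The terms from $I_3$ involve $\nabla_{x,v}f_{\mathrm{in}}$. We change variables back to $v$ and apply Lemma~\ref{lemma:J123-H}, with $\mathcal H\in\{\nabla^2_{x,v}f^+_{\mathrm{in}},\nabla_{x,v}f^+_{\mathrm{in}},\nabla_{x,v}f_{\mathrm{in}}\}$. This gives $\|\nabla I\|_{L^1}\lesssim t^{-1}(\varepsilon+\varepsilon_0)$ and $\|\nabla I\|_{L^\infty}\lesssim t^{-d-1}(\varepsilon+\varepsilon_0)$, using the weighted $L^1_xL^\infty_v$ norms of $\nabla^2_{x,v}f^\pm_{\mathrm{in}}$ in $|||\cdot|||_1$ and of $\nabla_{x,v}f_{\mathrm{in}}$ in $|||\cdot|||_2$.

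\emph{Gradient of $R^\pm$.} Starting from the $w$-representation \eqref{zuoyong01}, the derivative $\nabla_x$ splits into four pieces, in analogy with $\mathcal D_1$–$\mathcal D_4$:
\begin{enumerate}[(i)]
\item $\Gamma^\pm[\tfrac{s}{t}\nabla_x E,\mu]$;
\item a term with $\tfrac1t\nabla_v^2\mu$, controlled through Lemma~\ref{lemma5.3} with $\eta=\partial_v\mu$;
\item a term with $\tfrac1t\nabla_vY_{s,t}^\pm\cdot\nabla_xE$ at the perturbed point;
\item a term with $\tfrac1t\nabla_vW_{s,t}^\pm\cdot\nabla^2_v\mu$.
\end{enumerate}
For (i), Lemma~\ref{lemma5.3} together with \eqref{ziyouliu01} gives an $L^1$ bound $\varepsilon t^{-1}\int_0^t s\,\langle s\rangle^{-1+\gamma-(d-\gamma-1)}\,ds\lesssim\varepsilon^2 t^{-1}$ (since $d\ge3$). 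The $L^\infty$ bound is $\varepsilon^2t^{-d-1}$, obtained by splitting $[0,t/2]\cup[t/2,t]$. For (iii) and (iv) the Jacobian bounds \eqref{yabi01}–\eqref{yabi02} allow the same change of variables as in $\mathcal D_3,\mathcal D_4$. The extra decay $\langle s\rangle^{-(d-\gamma-2)}$ of $\nabla_vY$ makes every time integral converge.

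\emph{Main obstacle.} The critical piece is (i), the $\nabla_xE$ contribution inside $\Gamma$ near $s\sim t$. There one needs $\|\nabla_xE(s)\|_{L^\infty}\lesssim\varepsilon\langle s\rangle^{-(d+1)+\gamma}$, and it must be checked that the $s/t$ weight does not destroy the $t^{-d-1}$ rate. The available room is $\langle s\rangle^{-(d-\gamma-1)}$, which compensates. Once all four pieces are bounded as above, summing them and inserting the result into \eqref{lemma5.2} yields \eqref{Pro5.6}, with the $\ln(1+t)$ factor coming only from the kernel convolution.
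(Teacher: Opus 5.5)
Your proposal is correct and follows essentially the same route as the paper. Both arguments use the representation \eqref{jian04} together with the gradient bounds of Lemma~\ref{juanji001}, differentiate $I_1,I_2,I_3$ by the chain rule (each $x$-derivative gives a $1/t$ factor) before applying Lemma~\ref{lemma:J123-H}, and split $\nabla_x R^\pm$ into the four pieces $\mathcal{R}_1$--$\mathcal{R}_4$, with Lemma~\ref{lemma5.3} handling the first two. Your separate treatment of $s\in[0,1]$ inside the supremum, where the $1/t$ bounds on $\nabla_x I$ degenerate, is a small point that the paper passes over silently.
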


\begin{proof}
\textit{Step 1. Estimates for  $\nabla_x I$.}
Differentiating \eqref{chuzhi0123} with respect to $x$, we write
$$
\nabla_x I(t,x) = \nabla_x I_1(t,x) + \nabla_x I_2(t,x) + \nabla_x I_3(t,x).
$$

\textit{For $\nabla_x I_1$.}
Set
$$
\mathcal{U}^+(w):=w+Y^+_{0,t}\Big(w,\frac{x-w}{t}\Big),
\quad
\mathcal{N}_\theta(w):=\frac{x-w}{t}
+\theta W^+_{0,t}\Big(w,\frac{x-w}{t}\Big)
+(1-\theta)W^-_{0,t}\Big(w,\frac{x-w}{t}\Big).
$$
Then
$$
I_1(t,x)
=\frac1{t^d}\int_0^1\int_{\mathbb R^d}
\nabla_v f_{\mathrm{in}}^+\big(\mathcal{U}^+(w),\mathcal{N}_\theta(w)\big)
\cdot
\Big(W_{0,t}^+-W_{0,t}^-\Big)\Big(w,\frac{x-w}{t}\Big)
\,dw\,d\theta.
$$
By the chain rule, for each $i\in\{1,\dots,d\}$,
\begin{align*}
|\partial_{x_i} I_1(t,x)|
\lesssim{}&
\frac1{t^d}\int_0^1\int_{\mathbb R^d}
\big|\nabla_x\nabla_v f_{\mathrm{in}}^+\big(\mathcal{U}^+(w),\mathcal{N}_\theta(w)\big)\big|
\,\big|\partial_{x_i}\mathcal{U}^+(w)\big|
\,\big|W_{0,t}^+-W_{0,t}^-\big|\Big(w,\frac{x-w}{t}\Big)
\,dw\,d\theta \\
&+
\frac1{t^d}\int_0^1\int_{\mathbb R^d}
\big|\nabla_v^2 f_{\mathrm{in}}^+\big(\mathcal{U}^+(w),\mathcal{N}_\theta(w)\big)\big|
\,\big|\partial_{x_i}\mathcal{N}_\theta(w)\big|
\,\big|W_{0,t}^+-W_{0,t}^-\big|\Big(w,\frac{x-w}{t}\Big)
\,dw\,d\theta \\
&+
\frac1{t^d}\int_0^1\int_{\mathbb R^d}
\big|\nabla_v f_{\mathrm{in}}^+\big(\mathcal{U}^+(w),\mathcal{N}_\theta(w)\big)\big|
\,\big|\partial_{x_i}(W_{0,t}^+-W_{0,t}^-)\big|\Big(w,\frac{x-w}{t}\Big)
\,dw\,d\theta.
\end{align*}
%A direct application of the chain rule yields
%\begin{align*}
%&\partial_{x_i}I_1(t,x)\\
%=&\frac{1 }{t^d}\int_0^1\int_{\mathbb{R}^d}\nabla_{x}\nabla_vf^+_{\mathrm{in}}
%\Big(w+Y^+_{0,t}(w,\frac{x-w}{t})
%\ ,\frac{x-w}{t}+\Big(\t W^+_{0,t}
%+(1-\t)W_{0,t}^-\Big)(w,\frac{x-w}{t})\Big)\\
%&\quad \cdot\partial_{x_i}\Big(w+Y^+_{0,t}(w,\frac{x-w}{t})
%\Big) \  \Big(W^+_{0,t}-W_{0,t}^-\Big)(w,\frac{x-w}{t})
%\,dw \,d\theta\\
%&+\frac{1}{t^d}\int_0^1\int_{\mathbb{R}^d}\nabla^2_vf^+_{\mathrm{in}}
%\Big(w+Y^+_{0,t}(w,\frac{x-w}{t})
%\ ,\frac{x-w}{t}+\Big(\t W^+_{0,t}
%+(1-\t)W_{0,t}^-\Big)(w,\frac{x-w}{t})\Big)\\
%&\quad \cdot\partial_{x_i}\Big(\frac{x-w}{t}+\t W^+_{0,t}(w,\frac{x-w}{t})+(1-\t)W_{0,t}^-(w,\frac{x-w}{t})\Big)
% \Big(W^+_{0,t}-W_{0,t}^-\Big)(w,\frac{x-w}{t})
%\,dw\, d\theta\\
%&+\frac{1}{t^d}\int_0^1\int_{\mathbb{R}^d}\nabla_vf^+_{\mathrm{in}}
%\Big(w+Y^+_{0,t}(w,\frac{x-w}{t})
%\ ,\frac{x-w}{t}+\Big(\t W^+_{0,t}
%+(1-\t)W_{0,t}^-\Big)(w,\frac{x-w}{t})\Big)\\
%&\quad \cdot\partial_{x_i}\Big(W^+_{0,t}
%-W_{0,t}^-\Big)(w,\frac{x-w}{t})\,dw \,d\theta.
%\end{align*}
Since $w$ is independent of $x$, we have
\begin{equation}\label{tezheng-045}
\partial_{x_i}\Big[Y^\pm_{0,t}\Big(w,\frac{x-w}{t}\Big)\Big]
=
\frac1t\,\partial_{v_i}Y^\pm_{0,t}\Big(w,\frac{x-w}{t}\Big),
\partial_{x_i}\Big[W^\pm_{0,t}\Big(w,\frac{x-w}{t}\Big)\Big]
=
\frac1t\,\partial_{v_i}W^\pm_{0,t}\Big(w,\frac{x-w}{t}\Big).
\end{equation}
It then follows from \eqref{cor:estimates001-difference} and Lemma \ref{estimates001} that
$$
|W_{0,t}^+-W_{0,t}^-|\lesssim \varepsilon,
\quad
|\partial_{x_i}\mathcal{U}^+(w)|\lesssim \frac{\varepsilon}{t},
\quad
|\partial_{x_i}\mathcal{N}^+(w)|\lesssim \frac1t,
\quad
\big|\partial_{x_i}(W_{0,t}^+-W_{0,t}^-)\big|\lesssim \frac{\varepsilon}{t}.
$$
Hence
\begin{align*}
|\nabla_x I_1(t,x)|
\lesssim&
\frac{\varepsilon^2}{t^{d+1}}
\int_0^1\int_{\mathbb R^d}
\big|\nabla_x\nabla_v f_{\mathrm{in}}^+\big(\mathcal{U}^+(w),\mathcal{N}_\theta(w)\big)\big|
\,dw\,d\theta \\
&+\frac{\varepsilon}{t^{d+1}}
\int_0^1\int_{\mathbb R^d}
\big|\nabla_v^2 f_{\mathrm{in}}^+\big(\mathcal{U}^+(w),\mathcal{N}_\theta(w)\big)\big|
\,dw\,d\theta \\
&+\frac{\varepsilon}{t^{d+1}}
\int_0^1\int_{\mathbb R^d}
\big|\nabla_v f_{\mathrm{in}}^+\big(\mathcal{U}^+(w),\mathcal{N}_\theta(w)\big)\big|
\,dw\,d\theta.
\end{align*}
Using the change of variables $w=x-tv$, together with \eqref{z4} and the definition of $\Xi_{\mathcal V,\theta}(0)$ in \eqref{tezheng-V}, we obtain
\begin{align*}
|\nabla_xI_1(t,x)|
\lesssim&\ \frac{\varepsilon^2 }{t}\int_0^1\int_{\mathbb{R}^d}
\left|\nabla_{x}\nabla_vf^+_{\mathrm{in}}\left(\mathcal{X}^+(0),
\Xi_{\mathcal{V},\theta}(0)\right)\right| \,dv\, d\theta \nonumber\\
&+\frac{\varepsilon }{t}\int_0^1\int_{\mathbb{R}^d}
\left|\nabla^2_vf^+_{\mathrm{in}}\left(\mathcal{X}^+(0),\Xi_{\mathcal{V},\theta}(0)
\right)\right|\,dv \,d\theta \nonumber\\
&+\frac{\varepsilon}{t}\int_0^1\int_{\mathbb{R}^d}
\left|\nabla_vf^+_{\mathrm{in}}\left(\mathcal{X}^+(0),\Xi_{\mathcal{V},\theta}(0)\right)\right|\,dv\, d\theta.
%:=&\ \mathcal{I}_{11}(t,x)+\mathcal{I}_{12}(t,x)+\mathcal{I}_{13}(t,x),
\end{align*}
Applying Lemma~\ref{lemma:J123-H} with $\mathcal L_1$ to the three terms above, with
\[
\mathcal{H}=\nabla_x\nabla_v f_{\mathrm{in}}^+,\qquad
\nabla_v^2 f_{\mathrm{in}}^+,\qquad
\nabla_v f_{\mathrm{in}}^+,
\]
respectively. Since \(0<\varepsilon<1\), the factor \(\varepsilon^2\) may be absorbed into \(\varepsilon\). Therefore,
\[
\|\nabla_x I_1(t)\|_{L^1_x}
\lesssim\frac{\v}{\langle t\rangle}\left(\|\langle v\rangle^k\nabla_x\nabla_vf^+_{\mathrm{in}}\|_{L_x^1L_v^\infty}
+\|\langle v\rangle^k\nabla^2_vf^+_{\mathrm{in}}\|_{L_x^1L_v^\infty}
+\|\langle v\rangle^k\nabla_vf^+_{\mathrm{in}}\|_{L_x^1L_v^\infty}\right),
\]
and
\[
\|\nabla_x I_1(t)\|_{L^\infty_x}
\lesssim\frac{\v}{\langle t\rangle^{d+1}}\Big(\|\nabla_x\nabla_vf^+_{\mathrm{in}}\|_{L_x^1L_v^\infty}
+\|\nabla^2_vf^+_{\mathrm{in}}\|_{L_x^1L_v^\infty}
+\|\nabla_vf^+_{\mathrm{in}}\|_{L_x^1L_v^\infty}\Big).
\]

\textit{For $\nabla_x I_2$.}
Set
\[
\mathcal{U}_\theta(w):=
w+\theta Y^+_{0,t}\Big(w,\frac{x-w}{t}\Big)
+(1-\theta)Y^-_{0,t}\Big(w,\frac{x-w}{t}\Big),
\quad
\mathcal{N}^-(w):=
\frac{x-w}{t}+W^-_{0,t}\Big(w,\frac{x-w}{t}\Big).
\]
Then
\[
I_2(t,x)
=
\frac1{t^d}\int_0^1\int_{\mathbb R^d}
\nabla_x f_{\mathrm{in}}^+\big(\mathcal{U}_\theta(w),\mathcal{N}^-(w)\big)
\cdot
\big(Y_{0,t}^+-Y_{0,t}^-\big)\Big(w,\frac{x-w}{t}\Big)
\,dw\,d\theta.
\]
Similarly, by the chain rule,
\begin{align*}
|\partial_{x_i}I_2(t,x)|
\lesssim{}&
\frac1{t^d}\int_0^1\int_{\mathbb R^d}
\big|\nabla_x^2 f_{\mathrm{in}}^+\big(\mathcal{U}_\theta(w),\mathcal{N}^-(w)\big)\big|
\,\big|\partial_{x_i}\mathcal{U}_\theta(w)\big|
\,\big|Y_{0,t}^+-Y_{0,t}^-\big|\Big(w,\frac{x-w}{t}\Big)
\,dw\,d\theta \\
&+
\frac1{t^d}\int_0^1\int_{\mathbb R^d}
\big|\nabla_x\nabla_v f_{\mathrm{in}}^+\big(\mathcal{U}_\theta(w),\mathcal{N}^-(w)\big)\big|
\,\big|\partial_{x_i}\mathcal{N}^-(w)\big|
\,\big|Y_{0,t}^+-Y_{0,t}^-\big|\Big(w,\frac{x-w}{t}\Big)
\,dw\,d\theta \\
&+
\frac1{t^d}\int_0^1\int_{\mathbb R^d}
\big|\nabla_x f_{\mathrm{in}}^+\big(\mathcal{U}_\theta(w),\mathcal{N}^-(w)\big)\big|
\,\big|\partial_{x_i}(Y_{0,t}^+-Y_{0,t}^-)\big|\Big(w,\frac{x-w}{t}\Big)
\,dw\,d\theta.
\end{align*}
By \eqref{cor:estimates001-difference} and Lemma \ref{estimates001},
\[
\big|Y_{0,t}^+-Y_{0,t}^-\big|\lesssim \varepsilon, \quad
\big|\partial_{x_i}\mathcal U_\theta(w)\big|
\lesssim \frac{\varepsilon}{t},\quad
\big|\partial_{x_i}\mathcal N^-(w)\big|
\lesssim \frac{1}{t},\quad
\big|\partial_{x_i}(Y_{0,t}^+-Y_{0,t}^-)\big|
\lesssim \frac{\varepsilon}{t}.
\]
Consequently,
\begin{align*}
|\nabla_x I_2(t,x)|
\lesssim{}&
\frac{\varepsilon}{t^{d+1}}
\int_0^1\int_{\mathbb R^d}
\big|\nabla_x^2 f_{\mathrm{in}}^+\big(\mathcal{U}_\theta(w),\mathcal{N}^-(w)\big)\big|
\,dw\,d\theta \\
&+
\frac{\varepsilon}{t^{d+1}}
\int_0^1\int_{\mathbb R^d}
\big|\nabla_x\nabla_v f_{\mathrm{in}}^+\big(\mathcal{U}_\theta(w),\mathcal{N}^-(w)\big)\big|
\,dw\,d\theta \\
&+
\frac{\varepsilon}{t^{d+1}}
\int_0^1\int_{\mathbb R^d}
\big|\nabla_x f_{\mathrm{in}}^+\big(\mathcal{U}_\theta(w),\mathcal{N}^-(w)\big)\big|
\,dw\,d\theta.
\end{align*}
Using the change of variables $w=x-tv$, together with \eqref{z4} and the definition of $\Xi_{\mathcal X,\theta}(0)$, we obtain
\begin{align*}
|\nabla_{x}I_2(t,x)|
\lesssim&\ \frac{\varepsilon}{t}\int_0^1\int_{\mathbb{R}^d}
\left|\nabla^2_{x}f^+_{\mathrm{in}}\left(\Xi_{\mathcal{X},\theta}(0), \mathcal{V}^-(0)\right)\right|\,dv \,d\theta \nonumber\\
&+\frac{\varepsilon}{t}\int_0^1\int_{\mathbb{R}^d}
\left|\nabla_{x}\nabla_vf^+_{\mathrm{in}}\left(\Xi_{\mathcal{X},\theta}(0), \mathcal{V}^-(0)\right)\right| \,dv\, d\theta \nonumber\\
&+\frac{\varepsilon}{t}\int_0^1\int_{\mathbb{R}^d}
\left|\nabla_xf^+_{\mathrm{in}}\left(\Xi_{\mathcal{X},\theta}(0), \mathcal{V}^-(0)\right)\right|\,dv\, d\theta.
%:=&\ \mathcal{I}_{21}(t,x)+\mathcal{I}_{22}(t,x)+\mathcal{I}_{23}(t,x),
\end{align*}
Applying Lemma~\ref{lemma:J123-H} with $\mathcal L_2$ to the three terms above, with
\[
\mathcal H=
\nabla_x^2 f^+_{\mathrm{in}},\quad
\nabla_x\nabla_v f^+_{\mathrm{in}},\quad
\nabla_x f^+_{\mathrm{in}},
\]
respectively, and using $t\sim\langle t\rangle$ for $t\ge1$, we obtain
\[
\|\nabla_xI_{2}(t)\|_{L^1_x}\lesssim\frac{\v}{\langle t\rangle}\Big(\|\langle v\rangle^k\nabla_x^2f^+_{\mathrm{in}}\|_{L_x^1L_v^\infty}
+\|\langle v\rangle^k\nabla_x\nabla_vf^+_{\mathrm{in}}\|_{L_x^1L_v^\infty}
+\|\langle v\rangle^k\nabla_xf^+_{\mathrm{in}}\|_{L_x^1L_v^\infty}\Big),
\]
and
\[
\|\nabla_x I_2(t)\|_{L^\infty_x}
\lesssim\frac{\v}{\langle t\rangle^{d+1}}\Big(\|\nabla_x^2f^+_{\mathrm{in}}\|_{L_x^1L_v^\infty}
+\|\nabla_x\nabla_vf^+_{\mathrm{in}}\|_{L_x^1L_v^\infty}
+\|\nabla_xf^+_{\mathrm{in}}\|_{L_x^1L_v^\infty}\Big).
\]

\textit{For $\nabla_x I_3$.}
Differentiating with respect to $x_i$ and applying the chain rule, we obtain
\begin{align*}
|\partial_{x_i}I_3(t,x)|
\lesssim{}&
\frac1{t^d}\int_{\mathbb R^d}
\big|\nabla_x f_{\mathrm{in}}\big(\mathcal{U}^-(w),\mathcal{N}^-(w)\big)\big|
\,\big|\partial_{x_i}\mathcal{U}^-(w)\big|
\,dw \\
&+\frac1{t^d}\int_{\mathbb R^d}
\big|\nabla_v f_{\mathrm{in}}\big(\mathcal{U}^-(w),\mathcal{N}^-(w)\big)\big|
\,\big|\partial_{x_i}\mathcal{N}^-(w)\big|
\,dw,
\end{align*}
where
$$
\mathcal{U}^-(w):=w+Y^-_{0,t}\Big(w,\frac{x-w}{t}\Big),
\quad
\mathcal{N}^-(w):=\frac{x-w}{t}+W^-_{0,t}\Big(w,\frac{x-w}{t}\Big).
$$
%Using again
%$$
%\partial_{x_i}\Big[Y^-_{0,t}\Big(w,\frac{x-w}{t}\Big)\Big]
%=\frac1t\,\partial_{v_i}Y^-_{0,t}\Big(w,\frac{x-w}{t}\Big),
%\quad
%\partial_{x_i}\Big[W^-_{0,t}\Big(w,\frac{x-w}{t}\Big)\Big]
%=\frac1t\,\partial_{v_i}W^-_{0,t}\Big(w,\frac{x-w}{t}\Big),
%$$
By \eqref{tezheng-045} and \eqref{cor:estimates001-difference},
\[
|\nabla_x I_3(t,x)|
\lesssim
\frac1{t^{d+1}}\int_{\mathbb R^d}
\big|\nabla_x f_{\mathrm{in}}\big(\mathcal{U}^-(w),\mathcal{N}^-(w)\big)\big|\,dw
+
\frac1{t^{d+1}}\int_{\mathbb R^d}
\big|\nabla_v f_{\mathrm{in}}\big(\mathcal{U}^-(w),\mathcal{N}^-(w)\big)\big|\,dw.
\]
Changing variables $w=x-tv$ and using \eqref{z4}, we get
\[
|\nabla_x I_3(t,x)|
\lesssim
\frac1t\int_{\mathbb R^d}
\big|\nabla_x f_{\mathrm{in}}\big(\mathcal X^-(0),\mathcal V^-(0)\big)\big|\,dv
+
\frac1t\int_{\mathbb R^d}
\big|\nabla_v f_{\mathrm{in}}\big(\mathcal X^-(0),\mathcal V^-(0)\big)\big|\,dv.
\]
Therefore, applying Lemma~\ref{lemma:J123-H} with $\mathcal L_3$ to the two terms above, with
$$
\mathcal H_3=\nabla_x f_{\mathrm{in}}, \quad \nabla_v f_{\mathrm{in}},
$$
respectively, we obtain
\[
\|\nabla_x I_3(t)\|_{L_x^1}
\lesssim
\frac1{\langle t\rangle}
\Big(
\|\nabla_x f_{\mathrm{in}}\|_{L_x^1L_v^1}
+
\|\nabla_v f_{\mathrm{in}}\|_{L_x^1L_v^1}
\Big),
\]
and
\[
\|\nabla_x I_3(t)\|_{L_x^\infty}
\lesssim
\frac1{\langle t\rangle^{d+1}}
\Big(
\|\nabla_x f_{\mathrm{in}}\|_{L_x^1L_v^\infty}
+
\|\nabla_v f_{\mathrm{in}}\|_{L_x^1L_v^\infty}
\Big).
\]
Combining the estimates for $\nabla_x I_1$, $\nabla_x I_2$, and $\nabla_x I_3$ with \eqref{large-data} and \eqref{qn}, and using the triangle inequality, we obtain
\begin{equation}\label{chizhi008}
\langle t\rangle\|\nabla_x I(t)\|_{L_x^1}
+
\langle t\rangle^{d+1}\|\nabla_x I(t)\|_{L_x^\infty}
\lesssim
\varepsilon_0+\varepsilon.
\end{equation}

\medskip
\textit{Step 2.  Estimates for $\nabla_x R^{\pm}$.}
%We now derive $L^p$ bounds for $\nabla_x R^{\pm}(t)$ with $p \in \{1, \infty\}$.
Differentiate \eqref{zuoyong01} with respect to $x_i$ and then using the change of variables $w=x-tv$, we obtain, in view of \eqref{z4},
\begin{align*}
\partial_{x_i}R^{\pm}(t,x)
=&\mathcal{R}_1(t,x)+\mathcal{R}_2(t,x)+\mathcal{R}_3(t,x)+\mathcal{R}_4(t,x),
\end{align*}
where
\begin{align*}
\mathcal{R}_1(t,x)&:=\mp\frac{1}{t}\int_{0}^{t}\int_{\mathbb{R}^{d}}
s\Big[\partial_{x_i}E\big(s,\mathcal{X}^{\pm}(s)\big)\cdot\nabla_{v}
\mu(\mathcal{V}^{\pm}(s))-
\partial_{x_i}E\big(s,x-(t-s)v\big)\cdot\nabla_v\mu(v)\Big]\,dv\,ds,\\
\mathcal{R}_2(t,x)&:=\mp \frac{1}{t}\int_{0}^{t}\int_{\mathbb{R}^{d}}
\Big[E(s,\mathcal{X}^{\pm}(s))\cdot\partial_{v_i}\nabla_{v}\mu(\mathcal{V}^{\pm}(s))
- E(s,x-(t-s)v)\cdot\partial_{v_i}\nabla_{v}\mu(v)\Big]\,dv\,ds,\\
\mathcal{R}_3(t,x)&:=\mp\frac{1}{t}\int_{0}^{t}\int_{\mathbb{R}^{d}}
\Big(\partial_{v_i}Y_{s,t}^{\pm}(x-tv,v)\cdot\nabla_{x}E(s,\mathcal{X}^{\pm}(s))\Big)
\cdot\nabla_{v}\mu(\mathcal{V}^{\pm}(s))\,dv\,ds,\\
\mathcal{R}_4(t,x)&:=\mp\frac{1}{t}\int_{0}^{t}\int_{\mathbb{R}^{d}}
E(s,\mathcal{X}^{\pm}(s))\cdot\Big(\partial_{v_i}W_{s,t}^{\pm}(x-tv,v)\cdot\nabla_{v}^{2}\mu(\mathcal{V}^{\pm}(s))
\Big)\,dv\,ds.
\end{align*}

\textit{For $\mathcal{R}_1$ and $\mathcal{R}_2$.}
We may rewrite $\mathcal{R}_1$ and $\mathcal{R}_2$ as
\[
\mathcal{R}_1(t,x)=\frac{1}{t}\Gamma^\pm[s\partial_{x_i}E,\mu](t,x),
\qquad
\mathcal{R}_2(t,x)=\frac1t \Gamma^\pm[E,\partial_{v_i}\mu](t,x).
\]
Therefore, by \eqref{zuoyong08},
$$
\|\mathcal{R}_1(t)\|_{L^1_x}+\|\mathcal{R}_2(t)\|_{L^1_x}
\lesssim\frac{\varepsilon}{t}\int_0^t
\frac{\| s\nabla_xE(s)\|_{L^1_x}}{\langle s\rangle^{d-\gamma-1}}\,ds+\frac{\varepsilon}{t}\int_0^t
\frac{\|E(s)\|_{L^1_x}}{\langle s\rangle^{d-\gamma-1}}\,ds.
$$
Using \eqref{ziyouliu01}, we obtain
\begin{align*}
\|\mathcal{R}_1(t)\|_{L^1_x}+\|\mathcal{R}_2(t)\|_{L^1_x}
\lesssim \frac{\varepsilon^2}{t}
\int_0^t\frac{ds}{\langle s\rangle^{d-2\gamma-1}}
\lesssim \frac{\varepsilon^2}{\langle t\rangle}.
\end{align*}
For the $L^\infty_x$ estimate, we split $[0,t]=[0,\frac{t}{2}]\cup[\frac{t}{2},t]$. By \eqref{zuoyong08},
$$
\|\mathcal{R}_1(t)\|_{L^\infty_x}
%&\lesssim \frac{\varepsilon}{t}\int_0^{\frac{t}{2}}
%\frac{\|s\nabla_xE(s)\|_{L^1_x}}{\langle s\rangle^{d-\gamma-1}}
%\frac{ds}{(t-s)^d}
%+\frac{\varepsilon}{t}\int_{\frac{t}{2}}^{t}
%\frac{\|s\nabla_xE(s)\|_{L^\infty_x}}{\langle s\rangle^{d-\gamma-1}}\,ds\\
\lesssim \frac{\varepsilon}{t^{d+1}}\int_0^{\frac{t}{2}}
\frac{s\,\|\nabla_xE(s)\|_{L^1_x}}{\langle s\rangle^{d-\gamma-1}}\,ds
+\frac{\varepsilon}{t}\int_{\frac{t}{2}}^{t}
\frac{s\|\nabla_xE(s)\|_{L^\infty_x}}{\langle s\rangle^{d-\gamma-1}}\,ds,
$$
$$
\|\mathcal{R}_2(t)\|_{L^\infty_x}\lesssim \frac{\varepsilon}{t^{d+1}}\int_0^{\frac{t}{2}}
\frac{\|E(s)\|_{L^1_x}}{\langle s\rangle^{d-\gamma-1}}\,ds
+\frac{\varepsilon}{t}\int_{\frac{t}{2}}^{t}
\frac{\|E(s)\|_{L^\infty_x}}{\langle s\rangle^{d-\gamma-1}}\,ds.
$$
Hence, by \eqref{ziyouliu01} and the smallness of $\gamma$,
$$
\|\mathcal{R}_1(t)\|_{L^\infty_x}+\|\mathcal{R}_2(t)\|_{L^\infty_x}
\lesssim\frac{\varepsilon^2}{\langle t\rangle^{d+1}}.
$$

\textit{For $\mathcal{R}_3$ and $\mathcal{R}_4$.}
By \eqref{tezhengY},
\begin{equation}\label{zuoyou-R3}
|\mathcal{R}_3(t,x)|\lesssim
\frac{\varepsilon}{t}\int_{0}^{t}\int_{\mathbb{R}^{d}}
\frac{|\nabla_{x}E(s,\mathcal{X}^{\pm}(s))|}{\langle s\rangle^{d-\gamma-2}}
|\nabla_{v}\mu(\mathcal{V}^{\pm}(s))|\,dv\,ds.
\end{equation}
Integrating in $x$, using the change of variables
$y=\mathcal{X}^\pm(s)$ and $w=\mathcal{V}^\pm(s)$,
and using Lemma \ref{estimates001}
together with the decay of $\nabla_v\mu$, we obtain
$$
\|\mathcal{R}_3(t)\|_{L^1_x}\lesssim\frac{\varepsilon}{t}\int_{0}^{t}
\frac{\|\nabla_{x}E(s)\|_{L^1_x}}{\langle s\rangle^{d-\gamma-2}}ds
\lesssim\frac{\varepsilon^2}{\langle t\rangle}.
$$
To estimate the $L^\infty_x$ norm, we use the change of variables furnished by Lemma~\ref{proposition of psi and phi}, together with the decay of $\nabla_v\mu$, to obtain
$$
|\mathcal{R}_3(t,x)|\lesssim \frac{\varepsilon}{t}\int_{0}^{t}\int_{\mathbb{R}^{d}}\frac{|\nabla_{x}E(s,{w})
|}{\langle s\rangle^{d-\gamma-2}}\Big(1+\Big|\frac{x-w}{t-s}\Big|\Big)^{-N}
\frac{dw\,ds}{(t-s)^{d}}.
$$
Moreover, since
$$
\int_{\mathbb{R}^{d}} |\nabla_{v}\mu(\mathcal{V}^{\pm}(s))|dv
\lesssim\int_{\mathbb{R}^{d}}\|\langle v\rangle^{N}\nabla_{v}\mu\|_{L_{v}^{\infty}}
\langle \mathcal{V}^{\pm}(s)\rangle^{-N}dv\lesssim1,
$$
it follows from \eqref{zuoyou-R3} that
$$
|\mathcal{R}_3(t,x)|\lesssim \frac{\varepsilon}{t}\int_{0}^{t}
\frac{\|\nabla_{x}E(s)\|_{L^\infty_x}}{\langle s\rangle^{d-\gamma-2}}\,ds.
$$
%\begin{align*}
%|\mathcal{R}_3(t,x)|=&\ \frac{1}{t}\Big|\int_{0}^{t}\int_{\mathbb{R}^{d}}\nabla_ xE\Big(s,\mathcal{X}^{\pm}(s;t,x,\Psi_{s,t}^{\pm}(x,v))\Big)
%\cdot\nabla_{v}Y_{s,t}^{\pm}\Big(x-t\Psi_{s,t}^{\pm}(x,v),
%{\Psi}_{s,t}^{\pm}(x,v)\Big)\nonumber\\
%&\nabla_v \mu\Big(\mathcal{V}^{\pm}(s;t,x,\Psi^\pm_{s,t}(x,v))\Big)
%\det(\nabla_{v}\Psi_{s,t}^{\pm}(x,v))\,dv\,ds\Big|\nonumber\\
%\lesssim&\ \frac{1}{t}\int_{0}^{t}\int_{\mathbb{R}^d}|\nabla_ xE(s,x-(t-s)v)|\|\nabla_ vY_{s,t}^{\pm}\|_{L_{x,v}^{\infty}}\frac{dv\,ds}{\langle v \rangle^{N}}
%\end{align*}
To recover the optimal decay, we split the time integral into
$[0,\frac{t}{2}]\cup[\frac{t}{2},t]$ and infer that
\begin{align*}
|\mathcal{R}_3(t,x)|
\lesssim&\ \frac{\varepsilon}{t}
\int_{0}^{\frac{t}{2}}\frac{\|\nabla_{x}E(s)\|_{L^1_x}}{\langle s \rangle ^{d-\gamma-2}}\,\frac{ds}{(t-s)^d}+\frac{\varepsilon}{t}\int_{\frac{t}{2}}^{t}
\frac{\|\nabla_{x}E(s)\|_{L^\infty_x}}{\langle s \rangle^{d-\gamma-2}}\,ds\\
\lesssim&\ \frac{\varepsilon^2}{t}
\int_{0}^{\frac{t}{2}}\frac{1}{\langle s \rangle ^{d-2\gamma-1}}\,\frac{ds}{(t-s)^d}+\frac{\varepsilon^2}{t}\int_{\frac{t}{2}}^{t}
\frac{1}{\langle s \rangle^{2d-2\gamma-1}}\,ds\\
\lesssim&\frac{\varepsilon^2}{\langle t\rangle^{d+1}}.
\end{align*}

The term $\mathcal{R}_4$ is estimated in exactly the same way, using the bound for
$\partial_{v_i}W_{s,t}^{\pm}$ from \eqref{tezhengY} and the decay of $\nabla_v^2\mu$.

Collecting the above estimates for $\mathcal{R}_1$--$\mathcal{R}_4$, we conclude that
\begin{equation}\label{zuoyong073}
\langle t\rangle\|\nabla_x R^{\pm}(t)\|_{L^1_x}
+\langle t\rangle^{d+1}\|\nabla_x R^{\pm}(t)\|_{L^\infty_x}
\lesssim \varepsilon^{2}.
\end{equation}

\medskip
\textit{Step 3. Estimates for the gradient of the charge density.}
Differentiating \eqref{jian04} with respect to $x$, we obtain
$$
\nabla_x\rho(t,x)
=-\nabla_x(\mathcal G*_{(t,x)}\mathcal{Z})(t,x)+\nabla_x\mathcal{Z}(t,x).
$$
Moreover, by \eqref{chizhi008} and \eqref{zuoyong073},
\begin{equation}\label{rho-hbar-grad}
\|\nabla_x\mathcal{Z}(t)\|_{L_x^1}
\lesssim
\frac{\varepsilon_0+\varepsilon+\varepsilon^2}{\langle t\rangle},
\quad
\|\nabla_x\mathcal{Z}(t)\|_{L_x^\infty}
\lesssim
\frac{\varepsilon_0+\varepsilon+\varepsilon^2}{\langle t\rangle^{d+1}}.
\end{equation}

We now apply \eqref{lemma5.2}  to $\mathcal{Z}$.
By \eqref{midu-hbar-L1} and \eqref{rho-hbar-grad},
\[
\sup_{0\le s\le t}
\Big(
\|\mathcal{Z}(s)\|_{L_x^1}
+\langle s\rangle \|\nabla_x\mathcal{Z}(s)\|_{L_x^1}+\langle s\rangle^{d+1}\|\nabla_x\mathcal{Z}(s)\|_{L_x^\infty}
\Big)
\lesssim
\varepsilon_0+\varepsilon+\varepsilon^2.
\]
Hence, by \eqref{lemma5.2}, it holds that
\[
\|\nabla_x(\mathcal G*_{(t,x)}\mathcal{Z})(t)\|_{L_x^1}
\lesssim
\frac{\ln(1+t)}{\langle t\rangle}\,
(\varepsilon_0+\varepsilon+\varepsilon^2),
\]
and
\[
\|\nabla_x(\mathcal G*_{(t,x)}\mathcal{Z})(t)\|_{L_x^\infty}
\lesssim
\frac{\ln(1+t)}{\langle t\rangle^{d+1}}\,
(\varepsilon_0+\varepsilon+\varepsilon^2).
\]

Combining  the preceding estimates with \eqref{rho-hbar-grad}, we obtain
\[
\|\nabla_x\rho(t)\|_{L_x^1}
\lesssim
\frac{\ln(1+t)}{\langle t\rangle}\,
(\varepsilon_0+\varepsilon+\varepsilon^2),\quad
\|\nabla_x\rho(t)\|_{L_x^\infty}
\lesssim
\frac{\ln(1+t)}{\langle t\rangle^{d+1}}\,
(\varepsilon_0+\varepsilon+\varepsilon^2).
\]
Since $0<\varepsilon<1$, we have $\varepsilon^2\le \varepsilon$, and thus
\eqref{Pro5.6} follows.
\end{proof}

%%%%%%%%%%%%%%%%%%%%%%%%%%%%%%%%%%%%%%%%%%%%%%%%

\section{Closure of the Bootstrap and Proof of Theorem~\ref{Theorem1}}\label{diliujie}

\noindent\textbf{Proof of Theorem~\ref{Theorem1}.}
We use the continuation framework introduced after Proposition~\ref{Local solutions}.
Thus, it remains to prove that the bootstrap assumptions \eqref{ziyouliu02}
can be strictly improved on their maximal interval of validity.

Possibly after decreasing the constant
\[
\varepsilon_0=\varepsilon_0(M_0,d,k,\mu)>0,
\]
from Proposition~\ref{Local solutions},
with additional dependence on $a$ and $\gamma$,
we fix $A\ge1$ and set
\begin{equation}\label{eq:S6-epsilon-choice}
\varepsilon:=A\varepsilon_0.
\end{equation}
Then
\begin{equation}\label{eq:S6-eps-ratio}
\varepsilon_0+\varepsilon
=(A+1)\varepsilon_0
=\Bigl(1+\frac1A\Bigr)\varepsilon
\le 2\varepsilon.
\end{equation}
We shall prove that, for $\varepsilon_0>0$ sufficiently small depending only on
$M_0$, $a$, $d$, $k$, and $\gamma$, such that
\begin{equation}\label{eq:improved-bootstrap}
\left\{
\begin{aligned}
\|\rho(t)\|_{L^\infty_x}
&\le \tfrac12\varepsilon \langle t\rangle^{-d+\gamma},\\
\|\rho(t)\|_{\dot B^a_{\infty,\infty}}
&\le \tfrac12\varepsilon \langle t\rangle^{-(d+a)+\gamma},\\
\|\rho(t)\|_{\dot B^a_{1,\infty}}
&\le \tfrac12\varepsilon \langle t\rangle^{-a+\gamma},\\
\|\nabla_x \rho(t)\|_{L^\infty_x}
&\le \tfrac12\varepsilon \langle t\rangle^{-(d+1)+\gamma},\\
\|\nabla_x \rho(t)\|_{L^1_x}
&\le \tfrac12\varepsilon \langle t\rangle^{-1+\gamma},
\end{aligned}
\right.
\end{equation}
for all $t\in[0,T)$.
Once \eqref{eq:improved-bootstrap} is established, a standard continuity argument contradicts the maximality of $T$, and therefore $T=\infty$.

We prove \eqref{eq:improved-bootstrap} separately on $[0,T_*]$ and $[T_*,T)$, where $T_*\ge1$ will be fixed below.

\medskip
\noindent
\textit{Step 1. The finite-time regime.}
By Proposition~\ref{xiaoshijian008}, for $t\in[0,T_*]$,
$$
\|\rho(t)\|_{L^\infty_x}
+\|\rho(t)\|_{\dot B^a_{\infty,\infty}}
+\|\rho(t)\|_{\dot B^a_{1,\infty}}
+\|\nabla_x\rho(t)\|_{L^\infty_x}
+\|\nabla_x\rho(t)\|_{L^1_x}
\le C(T_*)\varepsilon_0.
$$
Since $\langle t\rangle\sim 1$ on $[0,T_*]$, this implies
\[
\left\{
\begin{aligned}
\|\rho(t)\|_{L^\infty_x}
&\le C(T_*)\varepsilon_0 \langle t\rangle^{-d+\gamma},\\
\|\rho(t)\|_{\dot B^a_{\infty,\infty}}
&\le C(T_*)\varepsilon_0 \langle t\rangle^{-(d+a)+\gamma},\\
\|\rho(t)\|_{\dot B^a_{1,\infty}}
&\le C(T_*)\varepsilon_0 \langle t\rangle^{-a+\gamma},\\
\|\nabla_x\rho(t)\|_{L^\infty_x}
&\le C(T_*)\varepsilon_0 \langle t\rangle^{-(d+1)+\gamma},\\
\|\nabla_x\rho(t)\|_{L^1_x}
&\le C(T_*)\varepsilon_0 \langle t\rangle^{-1+\gamma}.
\end{aligned}
\right.
\]
Choosing
$$
A\ge 2C(T_*),
$$
we obtain \eqref{eq:improved-bootstrap} on $t\in[0,T_*]$.

\medskip
\noindent
\textit{Step 2. The large-time regime.}
By Propositions \ref{Besovestimates} and \ref{daoshu009}, for all $t\ge1$,
\begin{equation}\label{eq:S6-large}
\left\{
\begin{aligned}
\|\rho(t)\|_{L^\infty_x}
&\le C(\varepsilon_0+\varepsilon)\langle t\rangle^{-d},\\
\|\rho(t)\|_{\dot B^a_{\infty,\infty}}
&\le C(\varepsilon_0+\varepsilon)\langle t\rangle^{-(d+a)},\\
\|\rho(t)\|_{\dot B^a_{1,\infty}}
&\le C(\varepsilon_0+\varepsilon)\langle t\rangle^{-a},\\
\|\nabla_x\rho(t)\|_{L^\infty_x}
&\le C(\varepsilon_0+\varepsilon)\langle t\rangle^{-(d+1)}\ln(1+t),\\
\|\nabla_x\rho(t)\|_{L^1_x}
&\le C(\varepsilon_0+\varepsilon)\langle t\rangle^{-1}\ln(1+t).
\end{aligned}
\right.
\end{equation}
Using \eqref{eq:S6-eps-ratio}, we obtain
\[
\left\{
\begin{aligned}
\|\rho(t)\|_{L^\infty_x}
&\le 2C\varepsilon\,\langle t\rangle^{-\gamma}\langle t\rangle^{-d+\gamma},\\
\|\rho(t)\|_{\dot B^a_{\infty,\infty}}
&\le 2C\varepsilon\,\langle t\rangle^{-\gamma}\langle t\rangle^{-(d+a)+\gamma},\\
\|\rho(t)\|_{\dot B^a_{1,\infty}}
&\le 2C\varepsilon\,\langle t\rangle^{-\gamma}\langle t\rangle^{-a+\gamma},\\
\|\nabla_x\rho(t)\|_{L^\infty_x}
&\le 2C\varepsilon\,\tfrac{\ln(1+t)}{\langle t\rangle^\gamma}
   \langle t\rangle^{-(d+1)+\gamma},\\
\|\nabla_x\rho(t)\|_{L^1_x}
&\le 2C\varepsilon\,\tfrac{\ln(1+t)}{\langle t\rangle^\gamma}
   \langle t\rangle^{-1+\gamma}.
\end{aligned}
\right.
\]
Since $\gamma>0$, one can choose $T_*\ge1$, depending only on $C$ and $\gamma$, such that
$$
2C\langle t\rangle^{-\gamma}\le \tfrac12,
\qquad
2C\tfrac{\ln(1+t)}{\langle t\rangle^\gamma}\le \tfrac12,
\qquad \forall\, t\ge T_*.
$$
Therefore, \eqref{eq:improved-bootstrap} also holds for all $t\in[T_*,T)$.

Combining the two steps, we obtain \eqref{eq:improved-bootstrap} for every $t\in[0,T)$. This closes the bootstrap and completes the proof.
\hfill$\Box$

\vspace{2mm}

\noindent\textbf{Proof of Corollary \ref{tuilun}.}
The argument is analogous to the scattering argument in
\cite[Corollary 1.1]{HanKwanD2021}, but we provide the details for the present two-species system.

Set
\[
\widetilde Y_t^\pm(x,v):=Y^\pm_{0,t}(x+tv,v),\qquad
\widetilde W_t^\pm(x,v):=W^\pm_{0,t}(x+tv,v).
\]
By the characteristic formula,
\[
f^\pm(t,x+tv,v)
=f^\pm_{\mathrm{in}}\big(x+\widetilde Y_t^\pm(x,v),\,v+\widetilde W_t^\pm(x,v)\big)
\mp\big[\mu\bigl(v+\widetilde W_t^\pm(x,v)\bigr)-\mu(v)\big].
\]
Moreover, by the definition of the characteristic flow, for any $0\le t^*_1<t^*_2$,
\[
|\widetilde W_{t^*_2}^\pm-\widetilde W_{t^*_1}^\pm|
\le \int_{t^*_1}^{t^*_2}\|E(s)\|_{L^\infty_x}\,ds,
\qquad
|\widetilde Y_{t^*_2}^\pm-\widetilde Y_{t^*_1}^\pm|
\le \int_{t^*_1}^{t^*_2}s\|E(s)\|_{L^\infty_x}\,ds.
\]
Using Theorem~\ref{Theorem1}, we have
\[
\|E(s)\|_{L^\infty_x}\lesssim \varepsilon_0\frac{\ln(1+s)}{\langle s\rangle^{d+1}},
\quad s\geq0,
\]
and hence \(\widetilde Y_t^\pm,\widetilde W_t^\pm\) are Cauchy in \(L^\infty_{x,v}\).
This defines \(Y_\infty^\pm,W_\infty^\pm\), and moreover
\[
\|\widetilde Y_t^\pm-Y_\infty^\pm\|_{L^\infty_{x,v}}
+
\|\widetilde W_t^\pm-W_\infty^\pm\|_{L^\infty_{x,v}}
\lesssim
\varepsilon_0\frac{\ln(1+t)}{\langle t\rangle^{d-1}}.
\]
The stated scattering estimate then follows from the mean value theorem,
the boundedness of $\nabla_{x,v}f^\pm_{\mathrm{in}}$, and the regularity of $\mu$.
\hfill$\Box$

\vspace{2mm}

\noindent\textbf{Proof of \eqref{dispersive-e}.}
By \eqref{midufenjie001} and the triangle inequality,
\begin{equation}\label{eq:rho-pm-decomp}
\|\rho^\pm(t)\|_{L^\infty_x}
\le \|Q(t)\|_{L^\infty_x}+\|I^\pm(t)\|_{L^\infty_x}+\|R^\pm(t)\|_{L^\infty_x}.
\end{equation}
Moreover, the definition of $Q$ in \eqref{midufenjie001} gives
\begin{equation}\label{eq:Q-bound-short}
\|Q(t)\|_{L^\infty_x} \leq \frac{1}{2} \left(\|I^+(t) - I^-(t)\|_{L^\infty_x} + \|R^+(t)\|_{L^\infty_x} + \|R^-(t)\|_{L^\infty_x} + \|\rho(t)\|_{L^\infty_x} \right).
\end{equation}

We next estimate \(I^\pm\).
For \(t\ge1\) and \(x\in\mathbb R^d\). Using the change of variables
\(y=\mathcal X^\pm(0)\) and Lemma~\ref{Lemma3.4-YKB}, we obtain
\[
|I^\pm(t,x)|
\lesssim
\int_{\mathbb R^d}
\sup_{v\in\mathbb R^d}|f^\pm_{\mathrm{in}}(y,v)|
\,\big|\det(\nabla_v\mathcal X^\pm(0))\big|^{-1}\,dy
\lesssim
\langle t\rangle^{-d}\int_{\mathbb R^d}\sup_{\eta}|f^\pm_{\mathrm{in}}(y,\eta)|\,dy
\lesssim \langle t\rangle^{-d}.
\]
If \(0\le t\le1\), then the same change of variables
 together with Lemma~\ref{Lemma3.4-YKB} yields
\[
|I^\pm(t,x)|
\lesssim
\int_{\mathbb R^d}\sup_{\eta}|f^\pm_{\mathrm{in}}(y,\eta)|\,dy
\lesssim 1
\lesssim \langle t\rangle^{-d}.
\]
Therefore,
\begin{equation}\label{eq:Ipm-Linfty}
\|I^\pm(t)\|_{L^\infty_x}\lesssim \langle t\rangle^{-d},
\qquad \forall\, t\ge0.
\end{equation}

On the other hand, by \eqref{zuoyong028},
\begin{equation}\label{eq:Rpm-Linfty}
\|R^\pm(t)\|_{L^\infty_x}\lesssim \varepsilon^2\langle t\rangle^{-d},
\end{equation}
while \eqref{eq:improved-bootstrap} gives
\begin{equation}\label{eq:rho-Linfty}
\|\rho(t)\|_{L^\infty_x}\lesssim \langle t\rangle^{-d}.
\end{equation}
Since
\[
\|I^+(t)-I^-(t)\|_{L^\infty_x}
\le \|I^+(t)\|_{L^\infty_x}+\|I^-(t)\|_{L^\infty_x},
\]
it follows from \eqref{eq:Q-bound-short}, \eqref{eq:Ipm-Linfty}, \eqref{eq:Rpm-Linfty}, and \eqref{eq:rho-Linfty} that
\[
\|Q(t)\|_{L^\infty_x}\lesssim \langle t\rangle^{-d}.
\]
Substituting this bound, together with \eqref{eq:Ipm-Linfty} and \eqref{eq:Rpm-Linfty} into \eqref{eq:rho-pm-decomp}, we obtain the estimate \eqref{dispersive-e}.
\hfill$\Box$

\section{Appendix}\label{diqijie}
For reference, we gather in this appendix the proofs of several auxiliary statements used throughout the paper.

\noindent\textbf{Proof of Lemma \ref{field01}.}
Let $\mathcal{J}_2:=(I-\Delta)^{-1}$ denote the Bessel potential operator. Its kernel is
\[
G_2(x)=(4\pi)^{-d/2}\int_0^\infty e^{-\tau} e^{-\frac{|x|^2}{4\tau}} \tau^{\frac{2-d}{2}} \, \frac{d\tau}{\tau}.
\]
Since $T\psi=\nabla \mathcal{J}_2\psi$, we have
\[
T\psi=(\nabla G_2*)\psi=G_2*(\nabla\psi),
\qquad
\nabla T\psi=(\nabla G_2)*(\nabla\psi).
\]
Hence, by Young's inequality and $\|G_2\|_{L^1}=1$ (see \cite[Proposition 1.2.5]{Hao13}), for $p\in\{1,\infty\}$,
\[
\|T\psi\|_{L^p_x}\le \|\nabla\psi\|_{L^p_x},
\qquad
\|\nabla T\psi\|_{L^p_x}\le \|\nabla G_2\|_{L^1_x}\,\|\nabla\psi\|_{L^p_x}.
\]
Moreover, if $\nabla^2\psi\in L^\infty(\mathbb{R}^d)$, then
\[
\|\nabla^2T\psi\|_{L^\infty_x}
\le \|\nabla G_2\|_{L^1_x}\,\|\nabla^2\psi\|_{L^\infty_x}.
\]
It remains to verify that $\nabla G_2\in L^1(\mathbb R^d)$.

A direct differentiation gives
\[
\partial_{x_i}G_2(x)
=-\frac{x_i}{2}(4\pi)^{-d/2}
\int_0^\infty e^{-\tau}e^{-\frac{|x|^2}{4\tau}}
\tau^{-d/2-1}\,d\tau.
\]

\medskip
If $|x|\ge2$, then
\[
\tau+\frac{|x|^2}{4\tau}=\frac{\tau}{2}+\left(\frac{\tau}{2}+\frac{|x|^2}{8\tau}\right)+\frac{|x|^2}{8\tau}
 \geq \frac{\tau}{2}+\frac{1}{2\tau}+\frac{|x|}{2},
\]
and therefore
\[
|\nabla G_2(x)| \leq C(d) |x| e^{-\frac{|x|}{2}}, \qquad  |x| \geq 2,
\]
which is integrable on $\{|x|\ge2\}$.

\medskip
If $|x|<2$, we split
\[
|\nabla G_2(x)| \leq \frac{|x|}{2} (4\pi)^{-d/2}\Big( \int_0^{|x|^2} + \int_{|x|^2}^4 + \int_4^\infty \Big) e^{-\tau} e^{-\frac{|x|^2}{4\tau}} \tau^{-\frac{d}{2}-1} \, d\tau := A_1(x) + A_2(x) + A_3(x).
\]
For $A_1$, using the change of variables $y=\frac{|x|^2}{4\tau}$,
\[
A_1(x)
\lesssim |x|
\int_0^{|x|^2} e^{-\frac{|x|^2}{4\tau}}\tau^{-\frac{d}{2}-1}\,d\tau
= C |x|^{1-d}
\int_{1/4}^{\infty} e^{-y} y^{\frac{d}{2}-1}\,dy\lesssim |x|^{1-d}.
\]
For $A_2$, since $e^{-\tau-\frac{|x|^2}{4\tau}}\le1$,
\[
A_2(x) \leq C |x| \int_{|x|^2}^4 \tau^{-\frac{d}{2}-1} \, d\tau\leq \frac{2C}{d} |x|^{1-d}.
\]
For $A_3$, since $e^{-\frac{|x|^2}{4\tau}}\le1$ for $\tau\ge4$,
\[
A_3(x) \leq C |x| \int_4^\infty e^{-\tau} \tau^{-\frac{d}{2}-1}\,  d\tau \leq C |x|.
\]
Hence
\[
|\nabla G_2(x)| \lesssim |x|^{1-d} + |x|, \  \  |x| < 2.
\]
Since $|x|^{1-d}$ is integrable near the origin for $d\ge2$, we conclude that $\nabla G_2\in L^1(\mathbb{R}^d)$.
This completes the proof.
\hfill$\Box$
%%%%%%%%%%%%%%%%%%%%%%%%%%%%%%%%%%%%%%%%%
\vspace{2mm}

\noindent{\bf Proof of Lemma~\ref{field02}.}
Let
\[
m(\xi):=\frac{i\xi}{1+|\xi|^2},
\qquad
m_2(\xi):=\frac{\xi\otimes \xi}{1+|\xi|^2},
\]
so that
\[
\widehat{T\psi}(\xi)=m(\xi)\widehat{\psi}(\xi),
\qquad
\widehat{\nabla_x T\psi}(\xi)=m_2(\xi)\widehat{\psi}(\xi).
\]
For each \(j\in\mathbb Z\), define
\[
K_j:=\mathcal F^{-1}\!\big(m(\xi)\varphi(2^{-j}\xi)\big),
\qquad
L_j:=\mathcal F^{-1}\!\big(m_2(\xi)\varphi(2^{-j}\xi)\big).
\]

Since \(m\) and \(m_2\) are smooth on the support of \(\varphi(2^{-j}\cdot)\), we have
\begin{equation}\label{eq:dyadicL1}
\sup_{j\in\mathbb Z}\|K_j\|_{L^1}+\sup_{j\in\mathbb Z}\|L_j\|_{L^1}\le C.
\end{equation}
Moreover,
\begin{equation}\label{eq:dyadicconv}
\dot\Delta_j(T\psi)=K_j*\dot\Delta_j \psi,
\qquad
\dot\Delta_j(\nabla T\psi)=L_j*\dot\Delta_j \psi.
\end{equation}

\medskip
\textit{(i)} Let $p\in\{1,\infty\}$. Combining \eqref{eq:dyadicconv} with Young's inequality and
\eqref{eq:dyadicL1}, we obtain for all $j$,
$$
\|\dot\Delta_j(T\psi)\|_{L^p}\le \|K_j\|_{L^1}\|\dot\Delta_j\psi\|_{L^p}\le C\|\dot\Delta_j\psi\|_{L^p}.
$$
Multiplying by $2^{ja}$ and taking the supremum over $j$ gives
\[
\|T\psi\|_{\dot B^{a}_{p,\infty}}\le C\|\psi\|_{\dot B^{a}_{p,\infty}}.
\]

\medskip
\textit{(ii)} By \eqref{FTL0} and \eqref{field02:Besov1} with $p=\infty$, we get
\[
\|T\psi\|_{\dot F^a_{\infty,\infty}}
=\|T\psi\|_{\dot B^a_{\infty,\infty}}
\le C\|\psi\|_{\dot B^a_{\infty,\infty}}.
\]

\medskip
\textit{(iii)} We first prove that, for every \(\phi\in W^{1,1}(\mathbb R^d)\cap L^1(\mathbb R^d)\),
\begin{equation}\label{eq:L22-general-F11}
\|\phi\|_{\dot F^a_{1,\infty}}
\le C\|\phi\|_{L^1_x}^{1-a}\|\nabla_x\phi\|_{L^1_x}^{a}.
\end{equation}
Indeed,
\[
\|\phi\|_{\dot F^a_{1,\infty}}
\le
\sum_{j\in\mathbb Z}2^{ja}\|\dot\Delta_j\phi\|_{L^1_x}.
\]
Fix \(J\in\mathbb Z\), to be chosen later, and split
\[
\sum_{j\in\mathbb Z}2^{ja}\|\dot\Delta_j\phi\|_{L^1}
=
\sum_{j\le J}2^{ja}\|\dot\Delta_j\phi\|_{L^1}
+
\sum_{j>J}2^{ja}\|\dot\Delta_j\phi\|_{L^1}.
\]

For the low frequencies, since each $\dot\Delta_j$ is an $L^1$-bounded convolution operator, we have
\[
\|\dot\Delta_j\phi\|_{L^1}\lesssim \|\phi\|_{L^1},
\]
and hence
\[
\sum_{j\le J}2^{ja}\|\dot\Delta_j\phi\|_{L^1}
\lesssim
\sum_{j\le J}2^{ja}\|\phi\|_{L^1}
\lesssim
2^{Ja}\|\phi\|_{L^1}.
\]

For the high frequencies, by Bernstein's inequality,
\[
\|\dot\Delta_j\phi\|_{L^1}
\lesssim
2^{-j}\|\nabla \dot\Delta_j\phi\|_{L^1},
\]
and therefore
\[
\sum_{j>J}2^{ja}\|\dot\Delta_j\phi\|_{L^1}
\lesssim
\sum_{j>J}2^{j(a-1)}\|\nabla\phi\|_{L^1}
\lesssim
2^{J(a-1)}\|\nabla\phi\|_{L^1}, \quad a\in(0,1).
\]
Thus
\[
\|\phi\|_{\dot F^{a}_{1,\infty}}
\lesssim
2^{Ja}\|\phi\|_{L^1}
+
2^{J(a-1)}\|\nabla\phi\|_{L^1}.
\]
Choosing \(2^J\sim \|\nabla_x\phi\|_{L^1}/\|\phi\|_{L^1}\), we obtain \eqref{eq:L22-general-F11}.

Finally, using \eqref{eq:L22-general-F11} with \(\phi=T\psi\), we obtain \eqref{F03}
This completes the proof.
\hfill$\Box$

%%%%%%%%%%%%%%%%%%%%%%%%%%%%%%%%%%%%%%%55
\vspace{2mm}

\noindent{\bf Proof of Lemma \ref{lem:difference-F}.}
For \(N\in\mathbb N\), let
\[
\varphi^\ast_N:=\sum_{|j|\le N}\dot\Delta_j\varphi^\ast.
\]
Since \(\varphi^\ast_N\) is smooth and the estimates below are uniform in \(N\), it suffices to prove the lemma for smooth functions with finitely many nonzero dyadic blocks.
 Set
\[
G(x):=\sup_{j\in\mathbb Z}2^{ja}|\dot\Delta_j\varphi^\ast(x)|,
\qquad
\|\varphi^\ast\|_{\dot F^{a}_{p,\infty}}=\|G\|_{L^p},  \qquad p\in\{1,\infty\}.
\]

\medskip
\noindent\textit{(i)}
Fix $\alpha\neq0$ and choose $k\in\mathbb Z$ such that $2^{-k-1}<|\alpha|\le2^{-k}$.
Decompose
\[
\delta_\alpha\varphi^\ast
=\sum_{j\le k}\delta_\alpha\dot\Delta_j\varphi^\ast
+\sum_{j>k}\delta_\alpha\dot\Delta_j\varphi^\ast
:=I_{\mathrm{low}}+I_{\mathrm{high}}.
\]

For $j>k$, we use
\[
|\delta_\alpha\dot\Delta_j\varphi^\ast(x)|
\le |\dot\Delta_j\varphi^\ast(x)|+|\dot\Delta_j\varphi^\ast(x-\alpha)|,
\]
which yields
\[
\frac{|I_{\mathrm{high}}(x)|}{|\alpha|^a}
\lesssim
2^{ka}\sum_{j>k}\big(|\dot\Delta_j\varphi^\ast(x)|+|\dot\Delta_j\varphi^\ast(x-\alpha)|\big)
\lesssim G(x)+G(x-\alpha).
\]

For $j\le k$, the mean value formula and Bernstein's inequality give
\[
|\delta_\alpha\dot\Delta_j\varphi^\ast(x)|
\lesssim |\alpha|\,2^j\|\dot\Delta_j\varphi^\ast\|_{L^\infty},
\]
and hence
\[
\frac{|I_{\mathrm{low}}(x)|}{|\alpha|^a}
\lesssim |\alpha|^{1-a}\sum_{j\le k}2^j\|\dot\Delta_j\varphi^\ast\|_{L^\infty}
\lesssim \|G\|_{L^\infty}=
\|\varphi^\ast\|_{\dot F^a_{\infty,\infty}}.
\]
Taking the supremum over $x$ and $\alpha$ yields \eqref{eq:dq-infty}.

\medskip
\noindent\textit{(ii)}
Fix $|\alpha|\le1$ and choose $k\ge0$ such that $2^{-k-1}<|\alpha|\le2^{-k}$.
Write $\varphi^\ast=S_k\varphi^\ast+(\varphi^\ast-S_k\varphi^\ast)$ with $S_k\varphi^\ast:=\sum_{j\le k}\dot\Delta_j\varphi^\ast$.

For the high-frequency part, arguing as above,
\[
\Big\|\sup_{2^{-k-1}<|\alpha|\le2^{-k}}
\frac{|\delta_\alpha(\varphi^\ast-S_k\varphi^\ast)|}{|\alpha|^a}\Big\|_{L^1}
\lesssim \|G\|_{L^1}.
\]

For the low-frequency part, by the mean value formula,
\[
\frac{|\delta_\alpha(S_k\varphi^\ast)(x)|}{|\alpha|^a}
\le |\alpha|^{1-a}\int_0^1|\nabla S_k\varphi^\ast(x-\theta\alpha)|\,d\theta,
\]
and hence
\[
\Big\|\frac{|\delta_\alpha(S_k\varphi^\ast)|}{|\alpha|^a}\Big\|_{L^1}
\le |\alpha|^{1-a}\|\nabla S_k\varphi^\ast\|_{L^1}.
\]
By Bernstein's inequality,
\[
\|\nabla S_k\varphi^\ast\|_{L^1}
\lesssim \sum_{j\le k}2^{j(1-a)}\|G\|_{L^1}
\lesssim 2^{k(1-a)}\|G\|_{L^1}.
\]
Since $|\alpha|^{1-a}\sim 2^{-k(1-a)}$, we conclude
\[
\Big\|\sup_{2^{-k-1}<|\alpha|\le2^{-k}}
\frac{|\delta_\alpha(S_k\varphi^\ast)|}{|\alpha|^a}\Big\|_{L^1}
\lesssim \|G\|_{L^1}=
\|\varphi^\ast\|_{\dot F^a_{1,\infty}}.
\]
Combining the low/high contributions and taking the supremum over $|\alpha|\le 1$ yields \eqref{eq:dq-1-local}.
\hfill$\Box$

%%%%%%%%%%%%%%%%%%%%%%%%%%%%%%%%%55

\vspace{2mm}

\noindent{\bf Proof of Proposition \ref{Local solutions}.}
We divide the proof into three steps.

\noindent\textit{Step 1. Construction of approximate solutions and uniform estimates.}
Set
\[
f_0^\pm(t,x,v):=f_{\rm in}^\pm(x,v),
\qquad
\rho_0^\pm(t,x):=\int_{\mathbb R^d}f_{\rm in}^\pm(x,v)\,dv,
\qquad
\rho_0:=\rho_0^+-\rho_0^-,
\]
and let
\[
E_0:=\nabla_x(I-\Delta_x)^{-1}\rho_0.
\]
Assume inductively that $f_n^\pm$ is defined on $[0,t_1]$, and define
\[
\rho_n^\pm(t,x):=\int_{\mathbb R^d} f_n^\pm(t,x,v)\,dv,
\qquad
\rho_n:=\rho_n^+-\rho_n^-,
\qquad
E_n:=\nabla_x(I-\Delta_x)^{-1}\rho_n.
\]
We then define $f_{n+1}^\pm$ as the solution to
\begin{equation}\label{VKG01}
\left\{
\begin{aligned}
&\partial_t f_{n+1}^+ + v\cdot\nabla_x f_{n+1}^+ + E_n\cdot\nabla_v f_{n+1}^+
   = -E_n\cdot\nabla_v\mu,\\
&\partial_t f_{n+1}^- + v\cdot\nabla_x f_{n+1}^- - E_n\cdot\nabla_v f_{n+1}^-
   = E_n\cdot\nabla_v\mu,\\
&f_{n+1}^\pm(0,x,v)=f_{\rm in}^\pm(x,v).
\end{aligned}
\right.
\end{equation}

Assume that, for some $n\ge0$,
$$
\sup_{0\le t\le t_1}
\sum_{\pm}
\Bigl(
\|f_n^\pm(t)\|_{W^{1,1}(\mathbb R^{2d})}
+\|f_n^\pm(t)\|_{W^{2,\infty}_k(\mathbb R^{2d})}
\Bigr)
\le 2(M_0+1).
$$
Since $k>d$, this implies
\[
\|\rho_n(t)\|_{W^{1,1}_x}+\|\rho_n(t)\|_{W^{1,\infty}_x}\le C(M_0+1).
\]
Hence Lemma~\ref{field01} yields
\begin{equation}\label{eq:P31-En-final}
\|E_n(t)\|_{L^1_x}+\|E_n(t)\|_{L^\infty_x}
+\|\nabla_xE_n(t)\|_{L^1_x}+\|\nabla_xE_n(t)\|_{L^\infty_x}
\le C_0(M_0+1).
\end{equation}
Moreover, since $k>d$,
\[
\|\nabla_x^2\rho_n^\pm(t)\|_{L^\infty_x}
\le \int_{\mathbb R^d} |\nabla_x^2 f_n^\pm(t,x,v)|\,dv
\le C\|\langle v\rangle^k\nabla_x^2 f_n^\pm(t)\|_{L^\infty_{x,v}},
\]
and therefore Lemma~\ref{field01} yields
\begin{equation}\label{estimate9}
\|\nabla^2_x E_n(t)\|_{L^\infty_x} \leq C \|\nabla^2_x \rho_n(t)\|_{L^\infty_x}
\leq C (M_0 + 1).
\end{equation}
Therefore the vector fields $(v,\pm E_n)$ are globally Lipschitz on $[0,t_1]\times\mathbb R^{2d}$, and the corresponding characteristic flow,
for $0\le s\le t\le t_{1}$,
\begin{equation*}
\left\{
  \begin{array}{ll}
    \dot{\mathcal{X}}_n^{\pm}(s;t,x,v)=\mathcal{V}_n^{\pm}(s;t,x,v), & \mathcal{X}_n^{\pm}(t;t,x,v)=x,  \\  [2mm]
    \dot{\mathcal{V}}_n^{\pm}(s;t,x,v)=\pm E_n(s,\mathcal{X}_n^{\pm}(s;t,x,v)), & \mathcal{V}_n^{\pm}(t;t,x,v)=v,
  \end{array}
\right.
\end{equation*}
is globally well-defined.
Moreover,
\[
\nabla_{x,v}\cdot (\mathcal{V}_n^\pm,\pm E_n)=0,
\]
and hence the flow is measure-preserving in $(x,v)$.
For brevity, we write
\[
(\mathcal X_n^\pm(s),\mathcal V_n^\pm(s))
:=
(\mathcal X_n^\pm(s;t,x,v),\mathcal V_n^\pm(s;t,x,v)).
\]
Along the characteristics,
\begin{equation}\label{vlasov051}
f_{n+1}^\pm(t,x,v)
=
f_{\rm in}^\pm\bigl(\mathcal{X}_n^\pm(0),\mathcal{V}_n^\pm(0)\bigr)
\mp
\int_0^t
E_n\bigl(s,\mathcal{X}_n^\pm(s)\bigr)\cdot
\nabla_v\mu\bigl(\mathcal{V}_n^\pm(s)\bigr)\,ds.
\end{equation}

We next derive uniform bounds for $f_{n+1}^\pm$.
Using \eqref{vlasov051}, the measure-preserving property, \eqref{eq:P31-En-final}, and
$\nabla_v\mu\in L^1_v$, we obtain
\[
\|f_{n+1}^\pm(t)\|_{L_v^1 L_x^1}
\le M_0 + C\,C_0(M_0+1)t.
\]
Differentiating \eqref{VKG01} in $x$ and $v$, integrating the resulting equations along characteristics, and using \eqref{eq:P31-En-final}, we get
\begin{align*}
\|\nabla_x f_{n+1}^\pm(t)\|_{L_v^1 L_x^1}
&\le
\|\nabla_x f_{\rm in}^\pm\|_{L_v^1 L_x^1}
+
C\int_0^t
\|\nabla_xE_n(s)\|_{L^1_x}
\Bigl(
\|\nabla_v f_{n+1}^\pm(s)\|_{L^\infty_{x,v}}+1
\Bigr)\,ds,\\
\|\nabla_v f_{n+1}^\pm(t)\|_{L_v^1 L_x^1}
&\le
\|\nabla_v f_{\rm in}^\pm\|_{L_v^1 L_x^1}
+
\int_0^t\|\nabla_x f_{n+1}^\pm(s)\|_{L_v^1 L_x^1}\,ds
+
C\int_0^t\|E_n(s)\|_{L^1_x}\,ds.
\end{align*}

On the other hand, along the characteristics,
\[
\frac{d}{ds}\langle \mathcal{V}_n^\pm(s)\rangle^k
\le C |E_n(s,\mathcal{X}_n^\pm(s))|\langle \mathcal{V}_n^\pm(s)\rangle^{k-1},
\]
so that
\begin{equation}\label{eq:P31-V-short}
\langle \mathcal{V}_n^\pm(s)\rangle\le C\langle v\rangle,
\qquad 0\le s\le t\le t_1.
\end{equation}
Using \eqref{vlasov051}, \eqref{eq:P31-V-short}, and the decay of $\mu$, we infer
\[
\|\langle v\rangle^k f_{n+1}^\pm(t)\|_{L^\infty_{x,v}}
\le M_0 + C\,C_0(M_0+1)t.
\]
Similarly, differentiating \eqref{VKG01} once and twice, multiplying by
$\langle v\rangle^k$, and integrating along characteristics, we obtain
\begin{align*}
\|\langle v\rangle^k\nabla_{x,v}f_{n+1}^\pm(t)\|_{L^\infty_{x,v}}
&\le
CM_0
+C\int_0^t\bigl(1+C_0(M_0+1)\bigr)
\Bigl(
1+\|\langle v\rangle^k\nabla_{x,v}f_{n+1}^\pm(s)\|_{L^\infty_{x,v}}
\Bigr)\,ds,\\
\|\langle v\rangle^k\nabla^2_{x,v}f_{n+1}^\pm(t)\|_{L^\infty_{x,v}}
&\le
CM_0
+C\int_0^t\bigl(1+C_0(M_0+1)\bigr)
\Bigl(
1+\|f_{n+1}^\pm(s)\|_{W^{2,\infty}_k}
\Bigr)\,ds.
\end{align*}
Here we used that, after commuting \eqref{VKG01} with all second-order derivatives,
the resulting source terms involve only $\nabla_xE_n$, $\nabla_x^2E_n$, first- and second-order
derivatives of $f_{n+1}^\pm$, and derivatives of $\mu$ up to order three; hence
\eqref{eq:P31-En-final}-\eqref{estimate9} suffice to close the $W^{2,\infty}_k$ estimate.
Combining the preceding estimates and summing over the two species, we have
\[
\sum_\pm
\Bigl(
\|f_{n+1}^\pm(t)\|_{W^{1,1}}
+\|f_{n+1}^\pm(t)\|_{W^{2,\infty}_k}
\Bigr)
\le
C(M_0+1)
+
C\bigl(1+C_0(M_0+1)\bigr)
\int_0^t
\Bigl(
1+\sum_\pm \|f_{n+1}^\pm(s)\|_{W^{2,\infty}_k}
\Bigr)\,ds.
\]
Hence, by Gronwall's inequality,
\[
\sup_{0\le t\le t_1}
\sum_\pm
\Bigl(
\|f_{n+1}^\pm(t)\|_{W^{1,1}}
+\|f_{n+1}^\pm(t)\|_{W^{2,\infty}_k}
\Bigr)
\le
C(M_0+1)\exp\!\bigl(C(1+C_0(M_0+1))t_1\bigr).
\]
Recalling that
\[
t_1=\min\Bigl\{1,\frac{c_*}{C_0(M_0+1)}\Bigr\},
\]
with $c_*>0$ sufficiently small depending only on
$d$, $k$, and $\mu$, we infer that
\[
\sup_{0\le t\le t_1}
\sum_\pm
\Bigl(
\|f_{n+1}^\pm(t)\|_{W^{1,1}}
+\|f_{n+1}^\pm(t)\|_{W^{2,\infty}_k}
\Bigr)
\le 2(M_0+1).
\]
This closes the induction, and therefore
\begin{equation}\label{eq:P31-uniform-short}
\sup_{n\ge0}\sup_{0\le t\le t_1}
\sum_\pm
\Bigl(
\|f_n^\pm(t)\|_{W^{1,1}}
+\|f_n^\pm(t)\|_{W^{2,\infty}_k}
\Bigr)
\le 2(M_0+1).
\end{equation}

Finally, \eqref{VKG01}, \eqref{eq:P31-En-final}, and \eqref{eq:P31-uniform-short} imply
\[
\sup_{n\ge0}\sup_{0\le t\le t_1}
\sum_\pm
\Bigl(
\|\partial_t f_n^\pm(t)\|_{W^{1,1}}
+\|\langle v\rangle^{k-1}\partial_t f_n^\pm(t)\|_{W^{1,\infty}}
\Bigr)
\le C.
\]
Hence
\[
f_n^\pm\in
C\bigl([0,t_1];W^{1,1}(\mathbb R^{2d})\cap W^{1,\infty}_{k-1}(\mathbb R^{2d})\bigr)
\cap
L^\infty\bigl([0,t_1];W^{2,\infty}_k(\mathbb R^{2d})\bigr),
\]
uniformly in $n$.

\medskip
\noindent\textit{Step 2. Convergence and uniqueness.}
Set
\[
d_n(t):=
\sup_{0\le s\le t}
\Bigl(
\|\rho_{n+1}^+(s)-\rho_n^+(s)\|_{L^\infty_x}
+
\|\rho_{n+1}^-(s)-\rho_n^-(s)\|_{L^\infty_x}
\Bigr).
\]
By Lemma~\ref{field01},
\[
\|E_n(s)-E_{n-1}(s)\|_{L^\infty_x}\le C\, d_{n-1}(s).
\]
Subtracting the characteristic systems associated with $E_n$ and $E_{n-1}$, and using the uniform Lipschitz bound on $\nabla_xE_n$, we obtain
\[
\sup_{0\le \tau\le t}
\Bigl(
|\mathcal{X}_n^\pm(\tau)-\mathcal{X}_{n-1}^\pm(\tau)|
+
|\mathcal{V}_n^\pm(\tau)-\mathcal{V}_{n-1}^\pm(\tau)|
\Bigr)
\le C t\, d_{n-1}(t).
\]
Subtracting the representation formulas \eqref{vlasov051}, and using the uniform $W^{1,\infty}_k$ bounds from Step~1,
\[
|f_{n+1}^\pm(t,x,v)-f_n^\pm(t,x,v)|
\le C t\, d_{n-1}(t)\langle v\rangle^{-k}.
\]
Integrating in $v$ and using $k>d$, we get
\[
d_n(t)\le C t\, d_{n-1}(t),\qquad 0\le t\le t_1.
\]
Taking $c_*>0$ smaller if necessary so that $Ct_1\le \frac12$, we deduce
\[
d_n(t_1)\le \frac12 d_{n-1}(t_1).
\]
Hence $\{\rho_n^\pm\}$ is Cauchy in $C([0,t_1];L^\infty_x(\mathbb R^{d}))$, and so is $\{E_n\}$ in
$C([0,t_1];L^\infty_x(\mathbb R^{d}))$.
By the uniform Lipschitz bounds, the characteristic flows converge uniformly on compact time intervals to the flow generated by the limit field $E$, and passing to the limit in \eqref{vlasov051} gives a solution $f^\pm$ of \eqref{VKG}.
Moreover, \eqref{eq:P31-uniform-short} and the uniform convergence of the flows imply that
\[
f^\pm\in
C\bigl([0,t_1];W^{1,1}(\mathbb R^{2d})\cap W^{1,\infty}_{k-1}(\mathbb R^{2d})\bigr)
\cap
L^\infty\bigl([0,t_1];W^{2,\infty}_k(\mathbb R^{2d})\bigr).
\]
Uniqueness follows from the same difference estimate applied to two solutions with the same initial
data, since both solutions belong to
\[
C\bigl([0,t_1];W^{1,1}(\mathbb R^d\times\mathbb R^d)\cap W^{1,\infty}_{k-1}(\mathbb R^d\times\mathbb R^d)\bigr)
\cap
L^\infty\bigl([0,t_1];W^{2,\infty}_k(\mathbb R^d\times\mathbb R^d)\bigr),
\]
and therefore satisfy the same uniform bounds as the approximate sequence.

\medskip
\noindent\textit{Step 3. Propagation of the smallness of the net charge density.}
Since $f:=f^+-f^-$,
subtracting the two equations in \eqref{VKG}, we obtain
\[
\partial_t f+v\cdot\nabla_x f
=
-E\cdot\nabla_v(f^++f^-)-2E\cdot\nabla_v\mu.
\]
Applying the free-transport Duhamel formula and integrating in $v$, we get
\begin{equation}\label{eq:P31-rho-final}
\begin{aligned}
\rho(t,x)
={}&
\int_{\mathbb R^d}f_{\rm in}(x-tv,v)\,dv
-2\int_0^t\int_{\mathbb R^d}
E(s,x-v(t-s))\cdot\nabla_v\mu(v)\,dv\,ds\\
&\quad
-\int_0^t\int_{\mathbb R^d}
E(s,x-v(t-s))\cdot
(\nabla_vf^++\nabla_vf^-)(s,x-v(t-s),v)\,dv\,ds.
\end{aligned}
\end{equation}
Using Minkowski's inequality, the translation-invariance of the $L^p_x$ norm, $p\in\{1,\infty\}$, Lemma~\ref{field01}, the bound $k>d$, and the estimates obtained in Steps~1--2, we obtain
\begin{equation}\label{eq:P31-rho-est-final}
\|\rho(t)\|_{L^p_x}
\le
\|f_{\rm in}\|_{L^1_vL^p_x}
+
C(M_0+1)\int_0^t \|\rho(s)\|_{L^p_x}\,ds.
\end{equation}
Differentiating \eqref{eq:P31-rho-final} in $x$, the same argument gives
\begin{equation}\label{eq:P31-gradrho-est-final}
\|\nabla_x\rho(t)\|_{L^p_x}
\le
\|\nabla_xf_{\rm in}\|_{L^1_vL^p_x}
+
C(M_0+1)\int_0^t
\Bigl(
\|\rho(s)\|_{L^p_x}+\|\nabla_x\rho(s)\|_{L^p_x}
\Bigr)\,ds.
\end{equation}

Define
\[
\mathcal W(t):=
\|\rho(t)\|_{L^1_x}
+\|\rho(t)\|_{L^\infty_x}
+\|\nabla_x\rho(t)\|_{L^1_x}
+\|\nabla_x\rho(t)\|_{L^\infty_x}.
\]
By \eqref{eq:P31-rho-est-final} and \eqref{eq:P31-gradrho-est-final},
\[
\mathcal W(t)
\le
\mathcal W(0)
+
C(M_0+1)\int_0^t \mathcal W(s)\,ds.
\]
Under the additional assumption of the proposition,
\[
\mathcal W(0)
=
\|f_{\rm in}\|_{L^1_vL^1_x}
+\|f_{\rm in}\|_{L^1_vL^\infty_x}
+\|\nabla_xf_{\rm in}\|_{L^1_vL^1_x}
+\|\nabla_xf_{\rm in}\|_{L^1_vL^\infty_x}
\le \varepsilon_0.
\]
Therefore,
\[
\mathcal W(t)\le \varepsilon_0 e^{C(M_0+1)t},
\qquad 0\le t\le t_1.
\]
By the definition of $t_1$, after decreasing $c_*$ if necessary, we may assume
\[
e^{C(M_0+1)t_1}\le 2,
\]
and hence
\[
\sup_{0\le t\le t_1}
\Bigl(
\|\rho(t)\|_{L^1_x}
+\|\rho(t)\|_{L^\infty_x}
+\|\nabla_x\rho(t)\|_{L^1_x}
+\|\nabla_x\rho(t)\|_{L^\infty_x}
\Bigr)
\le 2\varepsilon_0.
\]
This completes the proof.
\hfill$\Box$

\bigskip

\noindent {\bf Acknowledgments:} %The authors would like to thank anonymous referees for their helpful comments and valuable suggestions concerning the presentation of this paper.
The work of Yi Wang is partially supported by NSFC grants (Grant No. 12421001 and 12288201) and CAS Project for Young Scientists in Basic Research, Grant No. YSBR-031. The work of Hang Xiong  is partially supported by China Postdoctoral Science Foundation 2021TQ0351 and 2021M693336.
Hang Xiong thanks Professor Quoc-Hung Nguyen for the fruitful discussions and kind helps.
\bigskip

\noindent {\bf Availability of data and material:}
Data sharing not applicable to this article as no datasets were generated or
analysed during the current study.
\bigskip

\noindent {\bf Declarations}
\bigskip

\noindent {\bf Conflict of interest:} The authors declared that they have no Conflict of interest to this work.
\bigskip

%\newpage

\end{document}